\numberwithin{equation}{section}
\numberwithin{figure}{section}
\theoremstyle{plain}
\newtheorem{thm}{\protect\theoremname}[section]
\theoremstyle{plain}
\newtheorem{prop}[thm]{\protect\propositionname}
\theoremstyle{remark}
\theoremstyle{definition}
\newtheorem*{example*}{\protect\examplename}
\theoremstyle{plain}
\newtheorem{lem}[thm]{\protect\lemmaname}
\theoremstyle{remark}
\theoremstyle{remark}
\newtheorem*{notation*}{\protect\notationname}
\theoremstyle{definition}
\theoremstyle{definition}
\theoremstyle{plain}
\newcommand{\mc}[1]{{\mathcal #1}}
\newcommand{\mf}[1]{{\mathfrak #1}}
\newcommand{\mb}[1]{{\boldsymbol #1}}
\newcommand{\bb}[1]{{\mathbb #1}}
\newcommand{\bs}[1]{{\boldsymbol #1}}
\newcommand{\ms}[1]{{\mathscr #1}}
\newtheorem{remark}[thm]{Remark}
\newtheorem{corollary}[thm]{Corollary}
\providecommand{\leftsquigarrow}{%
  \mathrel{\mathpalette\reflect@squig\relax}%
}
\newcommand{\reflect@squig}[2]{%
  \reflectbox{$\m@th#1\rightsquigarrow$}%
}
\providecommand{\assumptionname}{Assumption}
\providecommand{\conditionname}{Condition}
\providecommand{\definitionname}{Definition}
\providecommand{\examplename}{Example}
\providecommand{\lemmaname}{Lemma}
\providecommand{\notationname}{Notation}
\providecommand{\propositionname}{Proposition}
\providecommand{\theoremname}{Theorem}
\begin{document}

\title[Metastability of parabolic equations with
drift]{Metastability and time scales for parabolic equations with
drift 1: the first time scale}

\address{IMPA, Estrada Dona Castorina 110, J. Botanico, 22460 Rio de
Janeiro, Brazil and Univ. Rouen Normandie, CNRS,
LMRS UMR 6085,  F-76000 Rouen, France. \\
e-mail: \texttt{landim@impa.br} }

\address{School of Mathematics, Korea Institute for Advanced Study,
Republic of Korea. \\
e-mail: \texttt{jklee@kias.re.kr} }

\address{Department of Mathematical Sciences, Seoul National
University and
Research Institute of Mathematics, Republic of Korea. \\
e-mail: \texttt{insuk.seo@snu.ac.kr} }

\begin{abstract}
Consider the elliptic operator given by
\begin{equation}
\label{fx2}
\mathscr{L}_{\epsilon}f\,=\, \boldsymbol{b} \cdot \nabla f \,+\,
\epsilon\, \Delta f
\end{equation}
for some smooth vector field $\bs b\colon \bb R^d\to\bb R^d$ and a small
parameter $\epsilon>0$. Consider the initial-valued problem
\begin{equation}\label{fx00}
\left\{
\begin{aligned}
& \partial_ t u_\epsilon\,=\, \ms L_\epsilon u_\epsilon\;, \\
& u_\epsilon (0, \cdot) = u_0(\cdot) \;,
\end{aligned}
\right.
\end{equation}
for some bounded continuous function $u_0$. Denote by $\mc M_0$ the
set of critical points of $\bs b$ which are stable stationary points for
the ODE $\dot {\bs x} (t) = \bs b (\bs x(t))$. Under the hypothesis
that $\mc M_0$ is finite and $\bs b = -(\nabla U + \bs \ell)$, where
$\bs \ell$ is a divergence-free field orthogonal to $\nabla U$, the
main result of this article states that there exist a time-scale
$\theta^{(1)}_\epsilon$, $\theta^{(1)}_\epsilon \to \infty$ as
$\epsilon \rightarrow 0$, and a Markov semigroup $\{p_t : t\ge 0\}$
defined on $\mc M_0$ such that
\begin{equation*}
\lim_{\epsilon\to 0} u_\epsilon ( t \, \theta^{(1)}_\epsilon, \bs x )
\;=\; \sum_{\bs m'\in \mc M_0} p_t(\bs m, \bs m')\, u_0(\bs m')\;,
\end{equation*}
for all $t>0$ and $\bs x$ in the domain of attraction of $\bs m$ [for
the ODE $\dot {\bs x}(t) = \bs b(\bs x(t))$].  The time scale
$\theta^{(1)}$ is critical in the sense that, for all time scale
$\varrho_\epsilon$ such that $\varrho_\epsilon \to \infty$,
$\varrho_\epsilon/\theta^{(1)}_\epsilon \to 0$,
\begin{equation*}
\lim_{\epsilon\to 0} u_\epsilon ( \varrho_\epsilon, \bs x )
\;=\;   u_0(\bs m)
\end{equation*}
for all $\bs x \in \mc D(\bs m)$. Namely, $\theta_\epsilon^{(1)}$ is
the first scale at which the solution to the initial-valued problem
starts to change.  In a companion paper \cite{LLS2} we extend this
result finding all critical time-scales at which the solution of the
initial-valued problem \eqref{fx00} evolves smoothly in time and we
show that the solution $u_\epsilon$ is expressed in terms of the
semigroup of some Markov chain taking values in sets formed by unions
of critical points of $\bs b$.
\end{abstract}

\author{Claudio Landim, Jungkyoung Lee, and Insuk Seo}
\maketitle

\section{Introduction}
\label{sec0}

The main concern of the current article is the behavior of the
solution $u_{\epsilon}$ of the equation \eqref{fx00} in the regime
$\epsilon\rightarrow0$.  This problem is connected to the metastable
behavior of the diffusion process induced by the generator
$\mathscr{L}_{\epsilon}$ given in \eqref{fx2}, which has been a
serious issue in the probability community. Freidlin and Koralov
\cite{fk10a,fk10b} found a critical depth $D>0$ and showed that the
the solution $u_{\epsilon}(t,\,x)$ in the interval
$t\in[0,\,e^{(D-\eta)/\epsilon}]$ and
$t\in[e^{(D+\eta)/\epsilon},\,\infty)$ differ significantly for all
$\eta>0$. Therefore, a dramatic phase transition occurs at the scale
$\theta_{\epsilon}=e^{D/\epsilon}$.  This result has been extended by
Koralov and Tcheuko \cite{kt16} to cases which exhibit multiple
metastable time-scales. Ishii and Souganidis \cite{is15,is17} derived
similar results with purely analytical methods.

In this article and the companion paper \cite{LLS2}, we characterize
the solution $u_{\epsilon}$ assuming that the diffusion process
induced by the generator $\mathscr{L}_{\epsilon}$ has a Gibbs
invariant measure. More precisely, fix a smooth potential
$U\colon\mathbb{R}^{d}\to\mathbb{R}$, and a smooth vector field
$\boldsymbol{\ell}\colon\mathbb{R}^{d}\to\mathbb{R}$.  Assume that the
vector field $\boldsymbol{\ell}$ is divergence free and is orthogonal to
the gradient of $U$:
\begin{equation}
(\nabla\cdot\boldsymbol{\ell})(\boldsymbol{x})\,=\,0\;,\quad(\nabla
U)(\boldsymbol{x})\cdot\boldsymbol{\ell}(\boldsymbol{x})\,=\,0\;,\quad\boldsymbol{x}\in\mathbb{R}^{d}\;.
\label{27}
\end{equation}
For $\epsilon>0$, denote by $\mathscr{L}_{\epsilon}$ the elliptic
operator given by
\begin{equation}
\mathscr{L}_{\epsilon}f\,=\,-\,(\,\nabla
U\,+\,\boldsymbol{\ell}\,)\cdot\nabla f\,+\,\epsilon\,\Delta f\;,\quad
f\in C^{2}(\mathbb{R}^{d})\;,
\label{46}
\end{equation}
which corresponds to the operator \eqref{fx2} when
$\boldsymbol{b}= - (\nabla U+\boldsymbol{\ell})$. It has
been shown in \cite{LS-22} that the Gibbs measure
$\mu_\epsilon(d\bs x) = (1/Z_\epsilon) \exp \{U(\bs x)/\epsilon\}\,
d\bs x$ is an invariant measure for the diffusion induced by the
generator $\ms L_\epsilon$ for all $\epsilon>0$ if and only if
$\bs \ell$ satisfies conditions \eqref{27}.

Denote by $\mathscr{L}_{\epsilon}$ the generator
\eqref{46}, unless otherwise specified. Fix a bounded and
continuous function $u_{0}\colon\mathbb{R}^{d}\to\mathbb{R}$ and
consider the initial-valued problem.
\begin{equation}
\left\{ \begin{aligned} & \partial_{t}u_{\epsilon}\,=\,\mathscr{L}_{\epsilon}u_{\epsilon}\;,\\
 & u_{\epsilon}(0,\cdot)=u_{0}(\cdot)\;.
\end{aligned}
\right.
\label{44}
\end{equation}
The tools developed in \cite{BL1,BL2,LLM,LMS2,LMS} permit to describe
the solution of the parabolic equation \eqref{44}, in the domain of
attraction of a local minimum $\boldsymbol{m}$, at the time-scale in
which the solution is transformed from the value of the initial
condition $u_{0}(\cdot)$ at the local attractor $\boldsymbol{m}$ of
the field $\boldsymbol{b}=-(\nabla U+\boldsymbol{\ell})$ to a convex
combination of the initial condition calculated at several different
local attractors $\boldsymbol{m}'$. A similar result appeared in
\cite{bgl2} in the case of sequences of continuous-time Markov chains
on a fixed finite state space.

\subsection*{The first critical time scale}

Let us now explain our main result in more detail. For two positive
sequences $(\alpha_{\epsilon}:\epsilon>0)$,
$(\beta_{\epsilon}:\epsilon>0)$, we denote by
${\color{blue}\alpha_{\epsilon}\prec\beta_{\epsilon}}$,
$\color{blue} \beta_{\epsilon} \succ \alpha_{\epsilon}$ if
$\alpha_{\epsilon}/\beta_{\epsilon}\to0$ as $\epsilon\rightarrow0$.
The main results of the current article and the companion paper
\cite{LLS2} assert that there exist critical times scales
$\theta_{\epsilon}^{(1)}\prec\cdots\prec\theta_{\epsilon}^{(\mathfrak{q})}$
associated with the potential function $U$ at which the asymptotic
behavior of the solution $u_{\epsilon}$ changes dramatically. We not
only characterize these time scales explicitly but also provide
precise asymptotics of $u_{\epsilon}$ along these scales. We also
derive the asymptotics of the solution between these time-scales,
completely analyzing the behavior of $u_{\epsilon}$.

The current article concerns the first time-scale among such a complex
multi-scale structure. We explicitly find a time-scale
$\theta_{\epsilon}^{(1)} \succ 1$, and a Markov semigroup
$\{p_{t}:t\ge0\}$ defined on the set of local minima $\mathcal{M}_{0}$
of $U$ such that, for all local minimum
$\boldsymbol{m}\in\mathcal{M}_{0}$,
\begin{equation}
\lim_{\epsilon\to0}u_{\epsilon}(t\,\theta_{\epsilon}^{(1)},\boldsymbol{x})\;=\;\sum_{\boldsymbol{m}'\in\mathcal{M}_{0}}p_{t}(\boldsymbol{m},\boldsymbol{m}')\,u_{0}(\boldsymbol{m}')\;,\label{43}
\end{equation}
for all $t>0$, $\boldsymbol{x}\in\mathcal{D}(\boldsymbol{m})$. Here,
$\mathcal{D}(\boldsymbol{m})$ represents the domain of attraction of
$\boldsymbol{m}$ for the ODE
$\dot{\boldsymbol{x}}(t)=\boldsymbol{b}(\boldsymbol{x}(t))$, where
$\boldsymbol{b}=-(\nabla U+\boldsymbol{\ell})$.

We also show that for any sequence
$1 \prec \varrho_{\epsilon}\prec\theta_{\epsilon}^{(1)}$
\begin{equation}
\lim_{\epsilon\to0}u_{\epsilon}(\varrho_{\epsilon},
\boldsymbol{x})\;=\;u_{0}(\boldsymbol{m}) \;,
\label{51}
\end{equation}
for all $\boldsymbol{x}\in\mathcal{D}(\boldsymbol{m})$. Hence, the
solution does not change until the time-scale $\theta_{\epsilon}^{(1)}$,
and it starts to change exactly at $\theta_{\epsilon}^{(1)}$ in view
of \eqref{43} and \eqref{51}. The main achievement of the current
article is the verification of \eqref{43} and \eqref{51}. We remark
this scale $\theta_{\epsilon}^{(1)}$ is the scale $\theta_{\epsilon}=e^{D/\epsilon}$
obtained in \cite{fk10a,fk10b}.

To illustrate assertion \eqref{43}, consider a generic
potential $U$, namely a potential for which all critical points are at
different heights. Denote by $\bs m_1$ the local minimum associated to
the shallowest valley and by $\bs m_2$ the unique local minimum
separated from $\bs m_1$ by the shallowest saddle point disconnecting
$\bs m_1$ from the other local minima. The local minimum $\bs m_2$ is
unique because all saddle points are at different heights.  Then
$p_t(\bs m, \bs m') =0$ unless $\bs m=\bs m_1$, $\bs m'=\bs m_2$. In
other words, in the time-scale $\theta^{(1)}_\epsilon$, starting from
the basin of attraction of $\bs m_1$, the process waits an exponential
time and then jumps to a neighbourhood of $\bs m_2$, where it stays
forever.

\subsection*{Multi-scale structure }

The characterization of the remaining scales are the contents of the
companion paper \cite{LLS2}. We briefly explain the main result.

Let us start from the second scale which can be inferred from
\eqref{43}.  The theory of finite-state continuous-time Markov chains
asserts that there exist probability measures $\pi_{j}^{(1)}$,
$1\le j\le\mathfrak{n}_{1}$, on $\mathcal{M}_{0}$ with disjoint
supports, and probability measures
$\omega^{(1)}(\boldsymbol{m},\cdot)$,
$\boldsymbol{m}\in\mathcal{M}_{0}$, on $\{1,\dots,\mathfrak{n}_{1}\}$
such that
\begin{equation}
\lim_{t\to\infty}p_{t}(\boldsymbol{m},\boldsymbol{m}')
\;=\;\sum_{k=1}^{\mathfrak{n}_{1}}
\omega^{(1)}(\boldsymbol{m},k)\,\pi_{k}^{(1)}(\boldsymbol{m}')
\label{eq:52}
\end{equation}
for all $\boldsymbol{m}$, $\boldsymbol{m}'\in\mathcal{M}_{0}$. If
$\boldsymbol{m}'$ is a transient state all terms in the previous sum
vanish. Indeed, the measures $\pi_{j}^{(1)}$ represent the stationary
states of the Markov chain restricted to the closed irreducible
classes, which in turn are the support of the measures
$\pi_{j}^{(1)}$. The weight $\omega^{(1)}(\boldsymbol{m},k)$ stands
for the probability that the Markov chain starting from
$\boldsymbol{m}$ is absorbed at the support of the measure
$\pi_{k}^{(1)}$.

If there is only one stationary state or, equivalently, one closed
irreducible class, namely $\mathfrak{n}_{1}=1$, then for all time-scales
$\varrho_{\epsilon}$ such that $\theta_{\epsilon}^{(1)}\prec\varrho_{\epsilon}$,
we can readily guess from \eqref{43} and \eqref{eq:52} that (note
that $\omega^{(1)}(\boldsymbol{m},1)=1$ in this case)
\[
\lim_{\epsilon\to0}u_{\epsilon}(\varrho_{\epsilon},\boldsymbol{x})\;=\;\sum_{\boldsymbol{m}'\in\mathcal{M}_{0}}\pi_{1}^{(1)}(\boldsymbol{m}')\,u_{0}(\boldsymbol{m}')
\]
for all local minimum $\boldsymbol{m}\in\mathcal{M}_{0}$ and $\boldsymbol{x}\in\mathcal{D}(\boldsymbol{m})$.
Note that the limit does not depend on $\boldsymbol{m}$ or $\boldsymbol{x}$.
This behavior occurs when (a) all the wells associated to local minima
of $U$ which are not global minima have the same depth, and (b) either
there is only one global minimum or there is more than one and the
depth of all wells are the same. In this case of a unique closed irreducible
class, the support of the measure $\pi_{1}^{(1)}$ corresponds to
the set of global minima of $U$. This finishes the description of
multi-scale structure for the case $\mathfrak{n}_{1}=1$.

In contrast, if there are more than one closed irreducible classes,
the limit of $u_{\epsilon}(t\theta_{\epsilon}^{(1)},\boldsymbol{x})$
as $\epsilon\to0$ and then $t\to\infty$ depends on the local minimum
attracting $\boldsymbol{x}$. In this case, there exists a second and
longer time-scale $\theta_{\epsilon}^{(2)}$ such that
$\theta_{\epsilon}^{(1)}\prec\theta_{\epsilon}^{(2)}$ and a Markov
semigroup $\{p_{t}^{(2)}:t\ge0\}$ defined on the set of closed
irreducible classes $\{1,\dots,\mathfrak{n}_{1}\}$ obtained at the
first time-scale such that,
\[
\lim_{\epsilon\to0}u_{\epsilon}(t\,\theta_{\epsilon}^{(2)},\boldsymbol{x})\;=\;\sum_{k=1}^{\mathfrak{n}_{1}}\omega^{(1)}(\boldsymbol{m},k)\,\sum_{\ell=1}^{\mathfrak{n}_{1}}p_{t}^{(2)}(k,\ell)\,\sum_{\boldsymbol{m}'\in\mathcal{M}_{0}}\pi_{\ell}^{(1)}(\boldsymbol{m}')\,u_{0}(\boldsymbol{m}')
\]
for all $t>0$ where $\boldsymbol{m}\in\mathcal{M}_{0}$ is a local
minium of $U$ and $\boldsymbol{x}\in\mathcal{D}(\boldsymbol{m})$.
Mind that we may restrict the sum over $\boldsymbol{m}'$ to local
minima in the support of the measure $\pi_{\ell}^{(1)}$. We can also
verify that, for any sequence $\varrho_{\epsilon}$ such that $\theta_{\epsilon}^{(1)}\prec\varrho_{\epsilon}\prec\theta_{\epsilon}^{(2)}$,
we have
\[
\lim_{\epsilon\to0}u_{\epsilon}(\varrho_{\epsilon},\boldsymbol{x})\;=\;\sum_{k=1}^{\mathfrak{n}_{1}}\omega^{(1)}(\boldsymbol{m},k)\,\sum_{\boldsymbol{m}'\in\mathcal{M}_{0}}\pi_{k}^{(1)}(\boldsymbol{m}')\,u_{0}(\boldsymbol{m}')
\]
for all $\boldsymbol{m}\in\mathcal{M}_{0}$ and $\boldsymbol{x}\in\mathcal{D}(\boldsymbol{m})$.
This is exactly the behavior of the solution $u_{\epsilon}$ in the
time scale $t\theta_{\epsilon}^{(1)}$ as $\epsilon\to0$ and then
$t\to\infty$, and the one in the time scale $t\theta_{\epsilon}^{(2)}$
as $\epsilon\to0$ and then $t\to0$. This completes the description
of the asymptotics of $u_{\epsilon}$ until the second scale $\theta_{\epsilon}^{(2)}$.

More generally, there exist $\mathfrak{q}\ge1$ and time-scales $\theta_{\epsilon}^{(1)}\prec\cdots\prec\theta_{\epsilon}^{(\mathfrak{q})}$
such that
\begin{equation}
\lim_{\epsilon\to0}u_{\epsilon}(t\theta_{\epsilon}^{(p)},\boldsymbol{x})\;=\;\sum_{k=1}^{\mathfrak{n}_{p-1}}\omega^{(p-1)}(\boldsymbol{m},k)\,\sum_{\ell=1}^{\mathfrak{n}_{p-1}}p_{t}^{(p)}(k,\ell)\,\sum_{\boldsymbol{m}'\in\mathcal{M}_{0}}\pi_{\ell}^{(p-1)}(\boldsymbol{m}')\,u_{0}(\boldsymbol{m}')\label{lls2-1}
\end{equation}
for each $1\le p\le\mathfrak{q}$, $t>0$,
$\boldsymbol{m}\in\mathcal{M}_{0}$,
$\boldsymbol{x}\in\mathcal{D}(\boldsymbol{m})$. Furthermore, for each
$1\le p\le\mathfrak{q}+1$, sequence $(\varrho_{\epsilon}:\epsilon>0)$
such that
$\theta_{\epsilon}^{(p-1)}\,\prec\,\varrho_{\epsilon}\,\prec\,\theta_{\epsilon}^{(p)}$,
$\boldsymbol{m}\in\mathcal{M}_{0}$, and
$\boldsymbol{x}\in\mathcal{D}(\boldsymbol{m})$,
\begin{equation}
\lim_{\epsilon\to0}u_{\epsilon}(\varrho_{\epsilon},\boldsymbol{x})\;=\;\sum_{k=1}^{\mathfrak{n}_{p-1}}\omega^{(p-1)}(\boldsymbol{m},k)\,\sum_{\boldsymbol{m}'\in\mathcal{M}_{0}}\pi_{k}^{(p-1)}(\boldsymbol{m}')\,u_{0}(\boldsymbol{m}')\;.\label{lls2-2}
\end{equation}
In this formula, $\theta_{\epsilon}^{(0)}$,
$\theta_{\epsilon}^{(\mathfrak{q}+1)}$ are the constant sequences
equal to $1$, $+\infty$, respectively.  Summing up,

\begin{itemize}
\item Denote by $\color{blue} \mathfrak{n}_{0}$ the number of local
minima of $U$ so that
$\mathfrak{n}_{0}>\mathfrak{n}_{1}>\cdots>\mathfrak{n}_{\mathfrak{q}}=1$.

\item $p_{t}^{(p)}$, $t\ge0$, is a Markov semigroup on
$\{1,\dots,\mathfrak{n}_{p-1}\}$, $1\le p \le \mf q$. Here, the
semigroup $p_t$, introduced in \eqref{43}, has been represented by
$p^{(1)}_t$.

\item For a fixed $1\le p\le\mathfrak{q}$, $\pi_{j}^{(p)}$,
$1\le j\le\mathfrak{n}_{p}$, are probability measures on
$\mathcal{M}_{0}$ with disjoint supports. They correspond to the
extremal invariant probability measures of the Markov chain with
transition probability $p_{t}^{(p)}$.

\item $\omega^{(p)}(\boldsymbol{m},\cdot)$ are probability measures on
$\{1,\dots,\mathfrak{n}_{p}\}$, where $\omega^{(p)}(\boldsymbol{m},j)$
stands for the probability that $\boldsymbol{m}$ is absorbed at the
support of the probability measure $\pi_{j}^{(p)}$.
\end{itemize}

It turns out that all local minima which belong to the support of a
measure $\pi_{j}^{(p)}$ are at the same height:
$U(\boldsymbol{m}')=U(\boldsymbol{m}'')$ if $\boldsymbol{m}'$,
$\boldsymbol{m}''$ belong to the support of the same measure
$\pi_{j}^{(p)}$. On the other hand, the support of a measure
$\pi_{j}^{(p+1)}$ is formed by the union of the supports of measures
$\pi_{k}^{(p)}$, $k\in\{1,\dots,\mathfrak{n}_{p}\}$.  Moreover,
$\pi_{j}^{(p+1)}$ is a convex combination of the corresponding
measures $\pi_{k}^{(p)}$. The rigorous recursive construction of this
multi-scale structure is a delicate and complicated task and will be
done in the companion paper \cite{LLS2}. Assertions \eqref{lls2-1} and
\eqref{lls2-2} will be proven there as well.

\subsection*{Comments on the proof}

The analysis of the asymptotics of the solution $u_{\epsilon}(t,\,\boldsymbol{x})$
of \eqref{44} is closely related to that of the metastable behavior
of the process
\begin{equation}
d\boldsymbol{x}_{\epsilon}(t)\,=\,
-\, (\,\nabla U\,+\,\boldsymbol{\ell}\,)
(\boldsymbol{x}_{\epsilon}(t))\,dt\,+\,\sqrt{2\epsilon}\,dW_{t}
\label{sde0}
\end{equation}
where $\epsilon>0$ denotes a small parameter corresponding to the
temperature of the system, and $W_{t}$ a $d$-dimension Brownian
motion. This relation comes from well-known expression
\[
u_{\epsilon}(t,\,\boldsymbol{x})=\mathbb{E}_{\boldsymbol{x}}^{\epsilon}\left[u_{0}(\boldsymbol{x}_{\epsilon}(t))\right]\;,\;\;\;\;t\ge0,\,\boldsymbol{x}\in\mathbb{R}^{d},
\]
where $\mathbb{E}_{\boldsymbol{x}}^{\epsilon}$ denotes the expectation
with respect to the diffusion process \eqref{sde0} starting at
$\boldsymbol{x}\in\mathbb{R}^{d}$.

The proof of the result described above is purely probabilistic and
relies on the theory of metastable Markov processes developed in
\cite{BL1,LLM,LMS2,RS,LMS}. The metastable behavior of the process
\eqref{sde0} has been recently studied in several articles: \cite{LM}
provided sharp asymptotics on the low-lying spectra which is closely
related with the metastability of the process
$\boldsymbol{x}_{\epsilon}(\cdot)$, \cite{LS-22} established
Eyring-Kramers law precisely estimating the mean transition time from
a local minimum of $U$ to another one, and finally \cite{LS-22b}
investigated the metastability among the global minima (i.e., ground
states) of $U$. The last work can be regarded as the analysis of the
metastability at the final scale $\theta_{\epsilon}^{(\mathfrak{q})}$
described above.

The recursive construction of the multiscale structure
presented here appeared before in different contexts.  L. Michel
\cite{M19} introduced it to study the low-lying eigenvalues of the
semiclassical Witten Laplacian associated to a Morse function.  We
refer to \cite{bl3, lx16, fk17} for the same construction in the
context of finite state Markov chains.  

The analysis of the multi-scale structure is based on the resolvent
approach to metastability developed in \cite{LMS}.  The crucial
point consists in showing that the solution of a resolvent equation is
asymptotically constant in neighborhoods of local minima.  More
precisely, denote by $\mathcal{E}(\boldsymbol{m})$ a small
neighborhood of a local minimum $\boldsymbol{m}$. Fix $\lambda>0$,
$\boldsymbol{g}\colon\mathcal{M}_{0}\rightarrow\mathbb{R}$, and let
$\phi_{\epsilon}$ be the unique solution of the resolvent equation
\[
(\lambda-\theta_{\epsilon}^{(1)}\mathscr{L}_{\epsilon})\,\phi_{\epsilon}\;=\;G\;:=\;\sum_{\boldsymbol{m}\in\mathcal{M}_{0}}\mb g(\boldsymbol{m})\,\chi_{_{\mathcal{E}(\boldsymbol{m})}}\;,
\]
where $\chi_{\mathcal{A}}$, $\mathcal{A}\subset\mathbb{R}^{d}$,
represents the indicator function of the set $\mathcal{A}$. The
function on the right-hand side vanishes at
$(\cup_{\boldsymbol{m}\in\mathcal{M}_{0}}\mathcal{E}(\boldsymbol{m}))^{c}$
and is constant on each well $\mathcal{E}(\boldsymbol{m}')$. One of
the main results of this article asserts that the solution $\phi$ is
asymptotically constant in each well $\mathcal{E}(\boldsymbol{m})$:
\begin{equation}
\lim_{\epsilon\rightarrow0}\,\max_{\boldsymbol{m}\in\mathcal{M}_{0}}\,\sup_{\boldsymbol{x}\in\mathcal{E}(\mb
m)}\vert\,\phi_{\epsilon}(\boldsymbol{x})-\boldsymbol{f}(\mb
m)\,\vert\;=\;0\;,
\label{fx3}
\end{equation}
where $\mb f$ is the solution of the reduced resolvent equation
\begin{equation}
(\lambda-\mathfrak{L}_{1})\,\boldsymbol{f}\;=\;\mb g\;,\label{fx1}
\end{equation}
and $\mathfrak{L}_{1}$ is the generator of the $\mathcal{M}_{0}$-valued,
continuous-time Markov chain whose associated semigroup is the one
appearing in \eqref{43}. Property \eqref{43} of the solution of
the initial-valued problem \eqref{44} is deduced from this property
of the resolvent equation. \smallskip{}

\subsection*{Background}

In a sequence of seminal works, Freidlin and Wentzel, \cite{fw98} and
references therein, investigated the metastable behavior of random
perturbations of dynamical systems, and introduced the notion of
hierarchy of cycles. Assuming that each cycle has only one subsequent
cycle, they described the metastable behavior of the diffusion
$d\bs x_\epsilon (t) = \bs b(\bs x_\epsilon (t))\, dt +
\sqrt{2\epsilon} d W_t$ at all the time scales other than the
critical ones at which the diffusion may jump from one cycle to
another. We refer to \cite{fk10a, fk10b, is15, kt16, is17} for recent
developments of this theory.

With the notation introduced above, the hypothesis that each cycle has
only one subsequent cycle means that for each $1\le p\le \mf q$,
$1\le j\le \mf n_{p-1}$, there exists only one $k$ such that
$p^{(p)}_t(j,k) >0$.

In this article, assuming that the drift $\bs b(\cdot)$ can be written
as $\bs b = -(\nabla U +\bs \ell)$ for a vector field $\bs \ell$
satisfying conditions \eqref{27}, we extend the results in \cite{fw98}
in two directions: describing the behavior of the diffusion (or the
one of the solution of the parabolic equation \eqref{44}) at the
critical time-scales $\theta^{(p)}_\epsilon$, and  removing the
hypothesis that each cycle has only one subsequent cycle.

As mentioned at the beginning of this introduction, we impose the
conditions \eqref{27} on the vector field $\bs \ell (\cdot)$ for the
Gibbs measure
$\mu_\epsilon (d\bs x) = (1/Z_\epsilon)\, e^{-U(\bs x)/\epsilon}\,
d\bs x$ to be the stationary state of the diffusion process
$\bs x_\epsilon (\cdot)$. The precise description of the asymptotic
behavior of the solution $u_\epsilon$ in the time-scale
$\theta^{(1)}_\epsilon$ presented in \eqref{43} relies on explicit
computations which require an explicit formula for the stationary
state and smoothness of its density.

In general (that is, without the hypotheses \eqref{27}), the
quasi-potential, which plays the role of $U(\cdot)$ in the formula for
the stationary state, is not smooth and not known explicitly, making
it impossible to apply the approach proposed here.  We refer to
\cite{fg1, ls19}, for a model (a one-dimensional diffusion on the
torus) where the quasi-potential can be computed and the methods
presented here applied, despite the lack of smoothness of the
quasi-potential.

Estimates of the transition times expectation have been
obtained in \cite{BEGK, LMS2}. In the presence of many wells, these
expectations may not converge for the following reason. With a very
small probability, crossing a saddle point higher than the lowest one,
the diffusion may hit a very deep well and remain there a very long
time. This contribution might be dominant for the expectation, turning
it much larger than predicted.  

Uniform estimates, similar to \eqref{fx3}, for solutions of Dirichlet
problems go back at least to Devinatz and Friedman \cite{DF78}, and
Day \cite{Day82}. The convergence to a constant is called in the
literature the leveling property of the equation. We refer to Leli\`{e}vre,
Le Peutrec and Nectoux \cite{LLP22} for a recent account and further
references.

\subsection*{Organization} The paper is organized as follows. In Section
\ref{sec1}, we state the main results. The proof of Theorem \ref{t01}
is divided in two parts. In Section \ref{sec2}, we prove that the
solution of the resolvent equation is constant on each well, and,
in Section \ref{sec9}, that the solution of the resolvent equation
restricted to the set of local minima of $U$ is asymptotically the
solution of the reduced resolvent equation \eqref{fx1}.

  The proof of the local constancy relies on a diffusion mixing
time estimate presented in Section \ref{sec-ap3}. The proof of the
second property of the resolvent equation solution requires an estimate
of the time it takes to exit a neighborhood of an unstable equilibrium
point, presented in Section \ref{sec5}, estimates on the time needed
to reach a local minimum of $U$, the subject of Section \ref{sec6},
and test functions which approximate the equilibrium potential between
wells, introduced in Section \ref{sec10}. In Section \ref{sec4},
we add the last piece of the proof, extending the results of Section
\ref{sec2} by showing that the solution of the resolvent equation
is actually asymptotically constant in the domain of attractions of
a local minimum. In Section \ref{sec9} we prove Theorem \ref{t01},
and Theorem \ref{t00} in Section \ref{sec3}. In the appendices,
we present some results needed in the proofs.

\section{Model and Main Results}
\label{sec1}


Fix a function $U\colon \bb R^d \to \bb R$ in $C^{3}(\mathbb{R}^{d})$
admitting only a finite number of critical points, all non-degenerate
(hence $U$ is a Morse function, cf. \cite[Definition 1.7]{Nic18}).
Assume that
\begin{equation}
\label{26}
\begin{gathered}
\lim_{n\to\infty}\inf_{|\bm{x}|\geq n}\frac{U(\bm{x})}{|\bm{x}|}
\,=\,\infty\;,\quad
\lim_{|\bm{x}|\to\infty}\frac{\bm{x}}{|\bm{x}|}
\cdot\nabla U(\bm{x})\,=\,\infty\;, \\
\lim_{|\bm{x}|\to\infty} \big\{ \, |\nabla U(\bm{x})| \,-\, 2\, \Delta
U(\bm{x}) \, \big\} \,=\,\infty\;.
\end{gathered}
\end{equation}
In this formula and below, $\color{blue} |\boldsymbol{x}|$ represents
the Euclidean norm of $\bs x\in \mathbb{R}^{d}$. Suppose, without loss
of generality, that $\min_{\bs x\in \bb R^d} U(\bs x) = 0$. Consider a vector
field $\bs \ell\colon \bb R^d \to \bb R$ in $C^2(\bb R^d)$, assumed to
be divergence free and orthogonal to the graduent of $U$ as stated in
\eqref{27}.

\subsection*{Time-scale}

Denote by $\color{blue} \mathcal{M}_{0}$ the set of local minima of
$U$. For each pair
$\boldsymbol{m}' \neq \boldsymbol{m}''\in\mathcal{M}_{0}$, denote by
$\Theta(\boldsymbol{m}',\,\boldsymbol{m}'')$ the communication height
between $\boldsymbol{m}'$ and $\boldsymbol{m}''$:
\begin{equation}
\label{Theta}
{\color{blue} \Theta(\boldsymbol{m}',\,\boldsymbol{m}'') } \;:=\;
\inf_{\substack{\boldsymbol{z}:[0\,1]\rightarrow\mathbb{R}^{d}}}
\max_{t\in[0,\,1]}U(\boldsymbol{z}(t))\;,
\end{equation}
where the minimum is carried over all continuous paths
$\boldsymbol{z}(\cdot)$ such that $\boldsymbol{z}(0)=\boldsymbol{m}'$
and $\boldsymbol{z}(1)=\boldsymbol{m}''$.  Clearly,
$\Theta(\boldsymbol{m}',\,\boldsymbol{m}'') =
\Theta(\boldsymbol{m}'',\,\boldsymbol{m}')$.  Denote by $\Gamma(\bs m)$
the depth of the local minimum $\bs m\in\mc M_0$:
\begin{equation}
\label{53}
{\color{blue}  \Gamma(\bs m)} \, :=\, \min_{\bs m' \neq \bs m}
\Theta(\bs m, \bs m') \,-\,
U(\boldsymbol{m})\;.
\end{equation}
Denote by $d^{(1)}$ the depth of the shallowest well, and by
$\theta^{(1)}_\epsilon$ the corresponding time-scale:
\begin{equation*}
{\color{blue}   d^{(1)} } \;:=\; \min_{\bs m\in \mc M_0} \Gamma(\bs m)\;, \quad
{\color{blue}   \theta^{(1)}_\epsilon } \; :=\; e^{d^{(1)}/\epsilon}\;.
\end{equation*}

\subsection*{Gates}

Denote by $\color{blue} \Upsilon (\bs m)$ the set of gates of
$\bs m\in\mc M_0$. This is the set of points $\bs x\in \bb R^d$ for
which there exist $\bs m'\in \mc M_0$, $\bs m'\neq \bs m$, and a
continuous path $z\colon [0,1]\to \bb R^d$ such that $z(0) =\bs m$,
$z(1)=\bs m'$, $z(1/2) = \bs x$ and
$U(z(t)) < U(\bs x) = U(\bs m) + \Gamma (\bs m)$ for all $t\in [0,1]$,
$t\neq 1/2$.

Mind that there might be more than one local minima $\bs m'$ for the
same gate $\bs x\in \Upsilon (\bs m)$: there might exist
$\bs m_1\neq \bs m_2$, both different from $\bs m$,
$\bs x\in\Upsilon (\bs m)$, and continuous paths
$z_i\colon [0,1]\to \bb R^d$, $i=1$, $2$, such that $z_i(0) =\bs m$,
$z_i(1)=\bs m_i$, $z_i(1/2) = \bs x$ and
$U(z_i(t)) < U(\bs x) = U(\bs m) + \Gamma (\bs m)$ for all
$t\in [0,1]$, $t\neq 1/2$.

Mind also that in the definition of gate, we require $\bs m'$ to be
different from $\bs m$. In this way, we exclude from the set of gates
points $\bs x$ for which there exists a continuous path
$z\colon [0,1]\to \bb R^d$ such that $z(0) =\bs m$, $z(1)=\bs m$,
$z(1/2) = \bs x$ and $U(z(t)) < U(\bs x) = U(\bs m) + \Gamma (\bs m)$
for all $t\in [0,1]$, $t\neq 1/2$. 

Recall that $\color{blue} \bs b \,=\, -\, (\nabla U \,+\, \bs \ell)$
and that a heteroclinic orbit $\phi$ from $\bs x$ to
$\bs y\in \bb R^d$ is a solution $\phi\colon \bb R \to \bb R^d$ of the
ODE
\begin{equation}
\label{31}
\dot {\bs x}  (t) \,=\, \bs b(\bs x (t)) \;,
\end{equation}
such that
\begin{equation*}
\lim_{t\to - \infty} \phi(t) \,=\, \bs x\,,\quad
\lim_{t\to  \infty} \phi(t) \,=\, \bs y \;.
\end{equation*}
We represent this relation by
$\color{blue} \bs x \curvearrowright \bs y$. In other words,
$\bs x \curvearrowright \bs y$ indicates the existence of a
heteroclinic orbit from $\bs x$ to $\bs y$.  We assume also that for
all $\bs m\in \mc M_0$ such that $\Gamma(\bs m) = d^{(1)}$, and
$\bs \sigma \in \Upsilon (\bs m)$, there exists $\bs m'\in \mc M_0$,
$\bs m' \neq \bs m$, such that
\begin{equation}
\label{48}
\bs \sigma \curvearrowright \bs m \;\;\text{and}\;\; \bs \sigma
\curvearrowright \bs m'\;.
\end{equation}

The condition \eqref{48} is crucial. For instance, if there
is a heteroclinic orbit from a saddle point
$\bs \sigma \in \Upsilon (\bs m)$ to a saddle point $\bs \sigma'$, we
are not able to determine which is the local minimum visited after
$\bs m$ when the process starts from a neighbourhood of $\bs m$.  

The assumption \eqref{48} holds when the dynamical system
$\boldsymbol{x}(\cdot)$ defined in \eqref{31} is a Morse-Smale
system. In a Morse-Smale system, for two critical points
$\boldsymbol{c}_{1},\,\boldsymbol{c}_{2}$ of $U(\cdot)$, the unstable
manifold of $\boldsymbol{c}_{1}$ and the stable manifold of
$\boldsymbol{c}_{2}$ intersect transversally and thus when
$\boldsymbol{c}_{1}\curvearrowright\boldsymbol{c}_{2}$, the index (the
number of negative eigenvalue of the Hessian) must strictly decrease
along heteroclinic orbits; hence \eqref{48} follows naturally.

By Proposition \ref{l05a}, any gate
$\bs x \in {\color{blue} \Upsilon := \cup_{\bs m\in \mc M_0} \Upsilon
(\bs m)}$ belongs to the set of critical points of $U$, denoted by
$\color{blue} \mc C_0$:
\begin{equation*}
{\color{blue} \mc C_0} \;:=\; \{\, \bs x \in \bb R^d : (\nabla U)(\bs x) = 0\,\}\;.
\end{equation*}
By \cite[Theorem 2.1]{LS-22}, the divergence-free field $\bs \ell$
vanishes at the critical points of $U$: $\bs \ell(\bs x) =0$ for all
$\bs x\in \mc C_0$.  Denote by $\color{blue} (\nabla^{2}U)( \bs x)$ the
Hessian of $U$ at $\bs x$. Since $U$ is a Morse function, for all
$\bs \sigma \in \Upsilon$,
\begin{equation}
\label{47}
\text{ $(\nabla^{2}U)( \bs \sigma)$ has only
one negative eigenvalue, all the others being strictly positive} \;.
\end{equation}
Indeed, by definition, $\bs\sigma$ can not be a local minimum. On the
other hand, assume that $\bs \sigma$ is a gate between $\bs m$ and
$\bs m'$.  If the number of negative eigenvalues is greater than $1$,
the set $\{\bs x : U(\bs x) < U(\bs \sigma)\}$ would be locally
connected, and there would be a continuous path from $\bs m$ to
$\bs m'$ staying strictly below $ U(\bs \sigma)$, which is a
contradiction.

Denote by $\ms V(\bs m)$ the set of points $\bs m' \in\mc M_0$,
$\bs m'\neq \bs m$, for which \eqref{48} holds for some
$\bs \sigma \in \Upsilon (\bs m)$.  Hence, $\ms V(\bs m)$ is the set
of local minima $\bs m' \in \mc M_0$, $\bs m'\neq \bs m$, for which
there exist a critical point $\bs \sigma \in \Upsilon (\bs m)$ and
heteroclinic orbits from $\bs \sigma$ to $\bs m$ and $\bs \sigma$ to
$\bs m'$:
\begin{gather*}
{\color{blue} \ms V(\bs m) } \; :=\; \big \{ \, \bs m' \in \mc M_0
\setminus \{\bs m\}:
\exists \, \bs \sigma \in \Upsilon (\bs m) \;\; \text{such that}\;\;
\bs \sigma\curvearrowright  \bs m \;,\;\;
\bs \sigma\curvearrowright \bs m' \, \big\}\;.
\end{gather*}
Elements of $\ms V(\bs m)$ are called neighbors of the local minimum
$\bs m$ of $U$.  Denote by $\mc S (\bs m , \bs m')$,
$\bs m' \neq \bs m$, the set of critical points which separate $\bs m$
from $\bs m'$:
\begin{equation}
\label{38a}
{\color{blue} \mc S (\bs m , \bs m')} \; :=\; \big \{ \, \bs \sigma
\in \Upsilon (\bs m) :
\bs \sigma\curvearrowright  \bs m \;,\;\;
\bs \sigma\curvearrowright \bs m' \, \big\}\;.
\end{equation}

\subsection*{Reduced model}

Denote by $\color{blue} (D \bs \ell)(\bs x)$ the Jacobian of
$\bs \ell$ at $\bs x$.  By \eqref{47}, $(\nabla^{2}U)( \bs \sigma)$,
$\bs \sigma\in\Upsilon$, has only one negative eigenvalue.  By
\cite[Lemma 3.3]{LS-22},
$(\nabla^{2}U)( \bs \sigma) + (D \bs \ell)(\bs \sigma)$ has also one
negative eigenvalue, represented by
$\color{blue} -\mu_{\bs \sigma}<0$.  For $\bs m \in \mc M_0$,
$\bs \sigma \in \Upsilon(\bs m)$, let the weights $\nu (\bs m)$,
$\omega (\bs \sigma)$ be given by
\begin{equation}
\label{eq:nu}
{\color{blue}  \nu(\boldsymbol{m}) } \;:=\;
\frac{1}{\sqrt{\det(\nabla^{2}U)(\boldsymbol{m})}} \;, \quad
{\color{blue}  \omega(\boldsymbol{\sigma})} \;:=\;
\frac{\mu_{\bs \sigma}}
{2\pi\sqrt{-\det\nabla^{2}U(\boldsymbol{\sigma})}} \;
\cdot
\end{equation}

Let $\omega (\bs m, \bs m')$, $\bs m \neq \bs m' \in \mc M_0$, be the
weight given by
\begin{equation}
\label{39a}
{\color{blue} \omega (\bs m, \bs m') } \; :=\;
\sum_{\bs \sigma \in \mc S (\bs m , \bs m')}  \omega (\bs \sigma)\;.
\end{equation}
Note that $\omega (\bs m, \bs m')$ vanishes if
$\bs m' \notin \mc V(\bs m)$. Moreover, neither
$\mc S(\,\cdot\,,\, \cdot\,)$ nor $\omega (\,\cdot\,,\, \cdot\,)$ is
symmetric in its arguments. To include the depth of the local minimum
$\bs m$ in the definition of the weight $\omega (\bs m, \bs m') $, set
\begin{equation}
\label{40}
{\color{blue} \omega_1 (\bs m, \bs m') } \; :=\;
\omega (\bs m, \bs m') \, \bs 1 \{ \Gamma (\bs m) = d^{(1)} \, \}\;.
\end{equation}

Denote by $\mf L_1$ the generator of the $\mc M_0$-valued,
continuous-time Markov chain given by
\begin{equation}
\label{28}
(\mf L_1 \bs h)(\bs m) \;=\;
\frac{1}{\nu(\bs m)}\,
\sum_{\bs m'\in \mc M_0} \omega_1 (\bs m, \bs m') \,
[\, \bs h (\bs m') \,-\, \bs h (\bs m) \,] \;.
\end{equation}
As $\omega (\bs m, \bs m')$ vanishes if $\bs m'$ does not belong to
$\mc V(\bs m)$, the sum can be carried over $\mc V(\bs m)$.

\begin{thm}
\label{t00}
Assume that hypotheses \eqref{26}, \eqref{48} are in force.  Fix a
bounded and continuous function $u_0\colon \bb R^d \to \bb R$.  Denote
by $u_\epsilon$ the solution of the parabolic equation
\eqref{44}. Then, \eqref{43} and \eqref{51} hold for all $t>0$, where
$p_t(\cdot,\cdot)$ is the semigroup associated to the generator
$\mf L_1$.
\end{thm}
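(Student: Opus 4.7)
The plan is to reduce the parabolic statements \eqref{43} and \eqref{51} to the resolvent convergence \eqref{fx3} advertised in the introduction, and then to invert the Laplace transform. Starting from the Feynman--Kac representation $u_\epsilon(t, \boldsymbol{x}) = \mathbb{E}^\epsilon_{\boldsymbol{x}}[u_0(\boldsymbol{x}_\epsilon(t))]$, for each $\lambda>0$ I would introduce
\[
V_\epsilon^\lambda(\boldsymbol{x}) \;:=\; \lambda \int_0^\infty e^{-\lambda t}\, u_\epsilon(t\, \theta^{(1)}_\epsilon, \boldsymbol{x})\, dt\;,
\]
which is bounded uniformly in $\epsilon$ and $\boldsymbol{x}$ and satisfies the time-rescaled resolvent equation $(\lambda - \theta^{(1)}_\epsilon \mathscr{L}_\epsilon) V_\epsilon^\lambda = \lambda\, u_0$. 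On the reduced side, the unique solution $\boldsymbol{f}$ of \eqref{fx1} with data $\boldsymbol{g}(\boldsymbol{m}) = u_0(\boldsymbol{m})$ admits the representation $\boldsymbol{f}(\boldsymbol{m}) = \int_0^\infty e^{-\lambda t}\, (p_t u_0)(\boldsymbol{m})\, dt$, so \eqref{43} translates, via Laplace transform, into the pointwise convergence $V_\epsilon^\lambda(\boldsymbol{x}) \to \lambda\, \boldsymbol{f}(\boldsymbol{m})$ for all $\boldsymbol{x} \in \mathcal{D}(\boldsymbol{m})$ and every $\lambda>0$.

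To establish this convergence I would first approximate $u_0$ in the source term by the step function $G := \sum_{\boldsymbol{m}\in\mathcal{M}_{0}} u_0(\boldsymbol{m})\, \chi_{\mathcal{E}(\boldsymbol{m})}$ and show that the difference between the corresponding resolvent solutions tends to zero; this uses that, on the scale $\theta^{(1)}_\epsilon$, the diffusion spends a negligible fraction of time outside $\cup_{\boldsymbol{m}} \mathcal{E}(\boldsymbol{m})$, an estimate available from the hitting-time bounds of Section \ref{sec6}. Then I would invoke the local constancy \eqref{fx3}, proved in Sections \ref{sec2} and \ref{sec4}, together with the identification of the constants as the solution of \eqref{fx1}, carried out in Section \ref{sec9}, to conclude that $V_\epsilon^\lambda(\boldsymbol{x}) \to \lambda \boldsymbol{f}(\boldsymbol{m})$ for every $\lambda>0$ and $\boldsymbol{x}\in\mathcal{D}(\boldsymbol{m})$. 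Since $t \mapsto u_\epsilon(t\theta^{(1)}_\epsilon, \boldsymbol{x})$ is uniformly bounded, standard weak-$*$ compactness in $L^\infty_{\mathrm{loc}}$ combined with uniqueness of Laplace transforms forces every subsequential limit to coincide with $(p_t u_0)(\boldsymbol{m})$, and continuity of this limit in $t$ upgrades the convergence to pointwise, yielding \eqref{43}.

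The assertion \eqref{51} is independent and purely dynamical. For $\boldsymbol{x} \in \mathcal{D}(\boldsymbol{m})$ the deterministic orbit of $\dot{\boldsymbol{y}} = \boldsymbol{b}(\boldsymbol{y})$ reaches any prescribed neighborhood $\mathcal{E}(\boldsymbol{m})$ of $\boldsymbol{m}$ in a time bounded uniformly on compacts, and by the hitting-time estimates of Section \ref{sec6} the diffusion $\boldsymbol{x}_\epsilon(\cdot)$ does the same well before time $\varrho_\epsilon$, with probability tending to $1$. On the other hand, since $\varrho_\epsilon \prec \theta^{(1)}_\epsilon = e^{d^{(1)}/\epsilon} \le e^{\Gamma(\boldsymbol{m})/\epsilon}$, the Eyring--Kramers lower bounds of \cite{LS-22} ensure that the process does not exit $\mathcal{E}(\boldsymbol{m})$ before $\varrho_\epsilon$, with probability tending to $1$. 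Continuity of $u_0$ then yields $u_\epsilon(\varrho_\epsilon, \boldsymbol{x}) \to u_0(\boldsymbol{m})$ via Feynman--Kac.

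The decisive obstacle is the resolvent convergence \eqref{fx3} itself, together with its extension from the well $\mathcal{E}(\boldsymbol{m})$ to the full basin $\mathcal{D}(\boldsymbol{m})$. Asymptotic constancy on each well (Section \ref{sec2}) hinges on the diffusion mixing-time estimate of Section \ref{sec-ap3}; the extension to $\mathcal{D}(\boldsymbol{m})$ (Section \ref{sec4}) requires the escape-time estimates near unstable equilibria of Section \ref{sec5}, the hitting-time estimates of Section \ref{sec6}, and the carefully chosen equilibrium-potential test functions of Section \ref{sec10}. Once these ingredients are in place, the reduction of Theorem \ref{t00} to the resolvent statement sketched above is essentially a soft Laplace-transform argument.
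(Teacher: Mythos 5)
Your reduction of \eqref{43} to the resolvent statement via the Laplace transform $V^\lambda_\epsilon(\bs x)=\lambda\int_0^\infty e^{-\lambda t}u_\epsilon(t\theta^{(1)}_\epsilon,\bs x)\,dt$ is a genuinely different organization from the paper, which instead proves convergence of the finite-dimensional distributions of the rescaled diffusion (Theorem \ref{t_fdd}) through the trace process of \cite{LMS} and the scheme of \cite{LLM}. The forward direction of your argument is fine: $V^\lambda_\epsilon$ does solve $(\lambda-\theta^{(1)}_\epsilon\mathscr L_\epsilon)V^\lambda_\epsilon=\lambda u_0$, the replacement of $\lambda u_0$ by the step source $\lambda G$ costs $o_\epsilon(1)$ once one knows the process spends a negligible fraction of time outside $\mc E(\mc M_0)$, and Theorem \ref{t01} together with Proposition \ref{prop:DoC} then yields $V^\lambda_\epsilon(\bs x)\to\lambda\,\bs f(\bs m)$ for every $\lambda>0$ and $\bs x\in\mc D(\bs m)$.

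The genuine gap is the inversion step. Convergence of the Laplace transforms for all $\lambda>0$ of a uniformly bounded family $t\mapsto u_\epsilon(t\theta^{(1)}_\epsilon,\bs x)$ gives only weak-$*$ convergence in $L^\infty(\bb R_+)$; it does \emph{not} force pointwise convergence at a fixed $t$, even when the limit is continuous (take $f_\epsilon(t)=\sin(t/\epsilon)$: all Laplace transforms tend to $0$, yet there is no pointwise limit). Continuity of the limit is a property of the limit, not an equicontinuity property of the family, and the time derivative $\theta^{(1)}_\epsilon\mathscr L_\epsilon u_\epsilon$ is not uniformly controlled, so no Arzel\`a--Ascoli argument is available for free. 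Closing this gap is exactly what the probabilistic machinery of Section \ref{sec3} does: Corollary \ref{l15} (local ergodicity at times $t\theta^{(1)}_\epsilon$, $t\in[2b,4b]$), Proposition \ref{p06} (the process sits in a well at any fixed macroscopic time), Lemma \ref{l17}, and Lemmata \ref{l: lem0}--\ref{l: lem1} comparing the trace process at time $t-3b$ with the original process at time $t_\epsilon$. These supply the missing ``regularity in $t$'' at the level of distributions and cannot be dispensed with by a soft Laplace-transform argument. A similar, smaller imprecision affects your proof of \eqref{51}: a lower bound on the \emph{mean} exit time does not bound $\bb P[\tau<\varrho_\epsilon]$ from above (Markov's inequality goes the wrong way); one needs Lemma \ref{l17}, and in addition one must show the process is actually \emph{inside} $\mc E(\bs m)$ at time $\varrho_\epsilon$ (not merely that it has not reached another well), which is why the paper's argument for \eqref{51} inserts the localization events via Proposition \ref{p_FW} and Theorem \ref{t_hitting2} and splits into the cases $\varrho_\epsilon\ge\epsilon^{-2}$ and $\varrho_\epsilon\le\epsilon^{-2}$.
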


\subsection*{Resolvent equation}


The proof of Theorem \ref{t00} is based on properties of the resolvent
equation presented in this subsection. Denote by
$\color{blue} B(\bs x, r)$, $\bs x\in \bb R^d$, $r>0$, the open ball
of radius $r$ centered at $\bs x$.  Let
$\color{blue} \mathcal{W}^{r}(\bm{m})$, $\bm{m}\in\mathcal{M}_{0}$,
$r>0$, be the connected component of the set
$\{\boldsymbol{x}\in\mathbb{R}^{d}:U(\boldsymbol{x})\le U(\bm{m})+ r
\}$ containing $\bm{m}$.

Fix $\bs m\in\mc M_0$.  All constants $r_i$ below depend on $\bs m$
and $\bs b (\cdot)$, though this does not appear in the notation.
Equation \eqref{eq:cond1} introduces a positive constant $r_5>0$.
Choose $r_4$ small enough for \eqref{eq:condr_4} to hold with
$r_3=r_5$. By Proposition \ref{pap4} and conditions (1), (2) in
Section \ref{sec-ap3}, $B(\bs m, r_5)$ does not contain critical
points of $U$ besides $\bs m$.

Choose $r_{0}>0$ small enough so that for all $\bs m\in \mc M_0$,
\begin{itemize}
\item[(a)] $\overline{\mathcal{W}^{2r_{0}}(\bm{m})}
\setminus\{\boldsymbol{m}\}$
does not contain critical points of $U$;

\item[(b)] $\mc W^{2r_0}(\bs m)$ is contained in the domain
of attraction of $\bs m$ for the ODE \eqref{31};

\item[(c)] $\bs b(\bs x) \cdot \bs n(\bs x) <0$ for all
$\bs x\in \partial \mc W^{2r_0}(\bs m)$, where $\bs n(\cdot)$ is the
exterior normal of the boundary of $\mc W^{2r_0}(\bs m)$.

\item[(d)] $\mc W^{3r_0}(\bs m) \subset B(\bs m, r_5)$.

\item[(e)] $\mc W^{2r_0}(\bs m) \subset \mc D_{r_4} (\bs m)$
\end{itemize}
Since
$\mc W^{r}(\bs m) =
\{\boldsymbol{x}\in\mathbb{R}^{d}:U(\boldsymbol{x})\le U(\bm{m})+ r
\}$, for $r$ small enough $\bs n = \nabla U$ at the boundary of
$\mc W^{r}(\bs m)$. In particular, as $\bs \ell \cdot \nabla U =0$,
$\bs b(\bs x) \cdot \bs n(\bs x) = - |\nabla U(\bs x)|^2 <0$ for all
$\bs x\in \partial \mc W^{2r_0}(\bs m)$ and $r_0$ small enough.

Set
\begin{equation}
\label{30}
{\color{blue} \mathcal{E}(\boldsymbol{m}) } :=\mathcal{W}^{r_{0}}(\bm{m})\;,
\quad \boldsymbol{m}\in\mathcal{M}_{0}\;.
\end{equation}

For $\lambda>0$, $\bs{g}\colon \mc M_0 \rightarrow\mathbb{R}$,
denote by $\phi_{\epsilon} = \phi_{\epsilon}^{\lambda, \bs{g}}$
the unique solution of the resolvent equation
\begin{equation}
\label{e_res}
(\lambda-\theta_{\epsilon}^{(1)} \mathscr{L}_{\epsilon})\,
\phi_{\epsilon} \;=\; G \;:=\;
\sum_{\bs m\in \mc M_0} \mb g (\bs m)\,
\chi_{_{\mathcal{E}(\bs m)}} \;,
\end{equation}
where $\color{blue} \chi_{\mc A}$, $\mc A\subset \bb R^d$, represents
the indicator function of the set $\mc A$. The function on the
right-hand side vanishes at $(\cup_{\bs m\in\mc M_0} \mc E(\bs m))^c$
and is constant on each well $\mc E(\bs m')$.  The second main result
of this article reads as follows.

\begin{thm}
\label{t01}
For all $\lambda>0$ and
$\bs{g}\colon \mc M_0 \rightarrow\mathbb{R}$,
\begin{equation*}
\lim_{\epsilon\rightarrow0}\, \max_{\bs m\in \mc M_0} \,
\sup_{x\in \mc E(\mb m)}
\vert\, \phi_{\epsilon} (x) - \bs{f}(\mb m) \,
\vert \;=\; 0\;,
\end{equation*}
where $\mb f$ is the solution of the reduced resolvent equation
\begin{equation*}
(\lambda - \mf L_1)\, \bs{f} \;=\; \mb g \;,
\end{equation*}
and $\mf L_1$ is the generator introduced in \eqref{28}.
\end{thm}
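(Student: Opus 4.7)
The natural strategy is to exploit the Feynman--Kac representation of $\phi_\epsilon$ together with a two-scale analysis of the diffusion $\bs x_\epsilon(\cdot)$ in \eqref{sde0}: on the fast scale, the process equilibrates inside each well $\mc E(\bs m)$; on the slow scale $\theta^{(1)}_\epsilon$, it performs jumps between local minima that are asymptotically governed by the Markov generator $\mf L_1$. Rewriting $(\lambda - \theta^{(1)}_\epsilon \ms L_\epsilon)\phi_\epsilon = G$ as a Poisson-type equation and using that $\theta^{(1)}_\epsilon \ms L_\epsilon$ generates the time-changed process $\bs x_\epsilon(\theta^{(1)}_\epsilon \, s)$, one obtains
\begin{equation*}
\phi_\epsilon(\bs x) \;=\; \bb E_{\bs x}^\epsilon \Big[\int_0^\infty e^{-\lambda s}\, G\big(\bs x_\epsilon(\theta^{(1)}_\epsilon\, s)\big)\, ds\Big]\;.
\end{equation*}
The theorem will follow if we show that (i) $\phi_\epsilon$ is asymptotically constant in each $\mc E(\bs m)$, say equal to some $\bs f_\epsilon(\bs m)$, and (ii) the vector $(\bs f_\epsilon(\bs m))_{\bs m\in\mc M_0}$ converges as $\epsilon\to 0$ to the solution $\bs f$ of the reduced equation $(\lambda - \mf L_1)\bs f = \bs g$.

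\textbf{Local constancy (Section \ref{sec2}).} To establish (i), I would fix $\bs m$ and two points $\bs x, \bs y \in \mc E(\bs m) = \mc W^{r_0}(\bs m)$, and compare $\phi_\epsilon(\bs x)$ and $\phi_\epsilon(\bs y)$. Because $\bs y$ lies in the domain of attraction of $\bs m$ and $\bs b \cdot \bs n < 0$ on $\partial \mc W^{2r_0}(\bs m)$, the mixing-time estimate announced in Section \ref{sec-ap3} should yield a time $t_\epsilon$ with $t_\epsilon \prec \theta^{(1)}_\epsilon$ for which the laws of $\bs x_\epsilon(t_\epsilon)$ starting from $\bs x$ and from $\bs y$ can be coupled with probability tending to $1$, conditional on the process not leaving $\mc W^{2r_0}(\bs m)$. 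Combining this coupling with a large-deviation bound showing that the diffusion does not exit $\mc W^{2r_0}(\bs m)$ before time $t_\epsilon$ (since $d^{(1)}$ is the shallowest escape depth and $r_0$ may be chosen smaller than $d^{(1)}$), and using $\|\phi_\epsilon\|_\infty \le \|\bs g\|_\infty/\lambda$, one deduces $|\phi_\epsilon(\bs x) - \phi_\epsilon(\bs y)| \to 0$. Section \ref{sec4} will then extend this from $\mc E(\bs m)$ to the whole basin $\mc D(\bs m)$ using the deterministic flow to reach $\mc E(\bs m)$ in bounded time.

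\textbf{Reduced equation (Section \ref{sec9}).} For (ii), I would test the resolvent equation against cleverly chosen functions that approximate equilibrium potentials between wells. Concretely, multiplying \eqref{e_res} by a test function $h$ constant on each $\mc E(\bs m')$ and integrating against the Gibbs measure $\mu_\epsilon$ on suitable domains, one obtains, after using the invariance of $\mu_\epsilon$ and Eyring--Kramers-type asymptotics, that $(\bs f_\epsilon(\bs m))_{\bs m}$ satisfies, up to an $o(1)$ error,
\begin{equation*}
\lambda\, \bs f_\epsilon(\bs m)\, \nu(\bs m) \;-\; \sum_{\bs m'\in \mc V(\bs m)} \omega_1(\bs m, \bs m')\,[\bs f_\epsilon(\bs m') - \bs f_\epsilon(\bs m)] \;=\; \bs g(\bs m)\,\nu(\bs m) \,+\, o(1)\;.
\end{equation*}
The capacity-type weights $\omega_1(\bs m, \bs m')$ appear through the contribution of the saddles in $\mc S(\bs m, \bs m')$: at each such saddle $\bs \sigma$ the Hessian analysis in \cite{LS-22} together with Laplace's method yields the factor $\mu_{\bs \sigma}/[2\pi\sqrt{-\det\nabla^2 U(\bs \sigma)}]$. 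The indicator $\bs 1\{\Gamma(\bs m)=d^{(1)}\}$ in \eqref{40} arises because, when $\Gamma(\bs m) > d^{(1)}$, the mean time to escape $\mc E(\bs m)$ is much larger than $\theta^{(1)}_\epsilon$, so the corresponding off-diagonal contributions are exponentially small in $\epsilon$ at the rescaled time. This step requires the exit-time estimates near unstable equilibria from Section \ref{sec5}, the hitting-time estimates of Section \ref{sec6}, and the equilibrium-potential test functions of Section \ref{sec10}.

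\textbf{The main obstacle.} The most delicate point is the routing after a saddle crossing: once the diffusion passes through $\bs \sigma \in \Upsilon(\bs m)$, one must identify correctly which local minimum $\bs m' \in \mc V(\bs m)$ is visited next, with the asymptotic probability proportional to $\omega(\bs \sigma)$ and not to some other saddle-dependent weight. This is precisely where the heteroclinic hypothesis \eqref{48} enters: it ensures that from $\bs \sigma$ the deterministic flow \eqref{31} drops into a well-defined pair $\{\bs m, \bs m'\}$, so that the probabilistic perturbation of this flow (quantified in Sections \ref{sec5}--\ref{sec6}) concentrates on a neighborhood of one of these two minima in time much smaller than $\theta^{(1)}_\epsilon$. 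Only after this routing is controlled uniformly in the starting point does the discrete chain with generator $\mf L_1$ emerge as the correct reduced dynamics, and can the reduced resolvent equation be identified as the $\epsilon\to 0$ limit of the computation above.
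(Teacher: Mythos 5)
Your proposal follows essentially the same route as the paper: part (i) is the paper's Theorem \ref{p_flat}, proved via the mixing estimate of Section \ref{sec-ap3} combined with the Freidlin--Wentzell no-escape bound, and part (ii) is the paper's Theorem \ref{p01}, obtained by multiplying the resolvent equation by the saddle-point test function $Q_\epsilon$ of Section \ref{sec10}, integrating against $\mu_\epsilon$, and extracting the Eyring--Kramers weights, with the routing after a saddle crossing handled exactly as you indicate through hypothesis \eqref{48}. The concluding step (uniform boundedness of $\bs f_\epsilon$ plus uniqueness of the solution of the reduced resolvent equation) is also how the paper passes from the approximate equation to the limit.
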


\subsection*{Comments and Remarks}

The proofs of Theorems \ref{t00} and \ref{t01} are entirely based on
the metastable behavior of the stochastic differential equation
\begin{equation}
\label{sde}
d\boldsymbol{x}_{\epsilon}(t) \,=\,
\bs b(\bs x_\epsilon (t)) \, dt
\,+\,
\sqrt{2\epsilon}\,dW_{t}\;,
\end{equation}
where $\epsilon>0$ denotes a small parameter corresponding to the
temperature of the system, and $W_t$ a $d$-dimension Brownian motion.

The proof of Theorem \ref{t01} is divided in two parts. We first show
in Section \ref{sec2} that $\phi_{\epsilon}$ is asymptotically
constant on each well $\mc E(\bs m)$. Then, we prove that the average
of the solution $\phi_{\epsilon}$ on a well $\mc E(\bs m)$ converges
to $\mb f$.

In Section \ref{sec3}, we deduce from Theorem \ref{t01} and with ideas
introduced in \cite{LLM}, the convergence of the finite-dimensional
distributions of the process $\boldsymbol{x}_{\epsilon}(\cdot)$. A
similar result has been obtained by Sugiura in \cite{su} with
different ideas in the case $\bs \ell =0$.

\section{Mixing time of diffusions}
\label{sec-ap3}

The main result of this section, Theorem \ref{t_main2}, provides an
estimate on the mixing time of a diffusion on $\bb R^d$. The proof of
this result can be skipped in a first reading as the ideas and
techniques used to derive the bound on the mixing time will not be
used in the next sections.

Fix a function $U_0\colon \bb R^d \to \bb R$ of class $ C^{3}$ and a
vector field $\boldsymbol{\ell}_0 \colon \bb R^d \to \bb R^d$ of class
$C^{2}$ such that
\begin{equation}
\label{eq:orth}
(\nabla U_0) (\bs x) \cdot\boldsymbol{\ell}_0  (\bs x) \,=\,
(\nabla\cdot\boldsymbol{\ell}_0) (\bs x) \, = \, 0 \quad
\text{for all} \; \bs x \in \bb R^d\;.
\end{equation}
Suppose that $U_0$ has a local minimum at $\bs x = \boldsymbol{0}$ and
that it has no other critical point in a neighborhood of the origin.
Furthermore, we assume, for convenience, that $U_0(\boldsymbol{0})=0$
.

Consider a vector field $\bs b_0: \bb R^d \to \bb R^d$ of class $C^1$
such that

\begin{enumerate}
\item $\bs b_0$ vanishes only at the origin, which is a stable
equilibrium point for the dynamical system
\begin{equation}
\label{eq:x_0}
\dot {\boldsymbol{y}} (t) \,=\, \boldsymbol{b}_0(\boldsymbol{y}(t)) \;.
\end{equation}

\item There exists $r_3>0$ such that
\begin{equation*}
\boldsymbol{b}_0(\boldsymbol{x}) \,=\, -\,
(\nabla U_0) (\boldsymbol{x}) \,-\, \boldsymbol{\ell}_0(\boldsymbol{x})
\;, \quad \boldsymbol{x}\in B(0,r_3)\;.
\end{equation*}

\item There exist $R>0$ and a finite constant $C_{1}$ such that
\begin{equation}
\label{growth}
|\boldsymbol{b}_0 (\boldsymbol{x})|\le
C_{1}|\, \boldsymbol{x}|\;\;\;\;\text{and}\;\;\;\;
\left\Vert D\boldsymbol{b}_0(\boldsymbol{x})\right\Vert
\le C_{1}\, |\boldsymbol{x}|
\end{equation}
for all $|\boldsymbol{x}|>R$, where the matrix norm
is defined as
\begin{equation*}
\Vert\mathbb{M}\Vert=\sup_{|\boldsymbol{y}|=1}
|\mathbb{M}\boldsymbol{y}|\;.
\end{equation*}

\item Let $\mathbb{H}_0=(\nabla^{2}U_0)(\boldsymbol{0})$ and
$\mathbb{L}_0= (D\boldsymbol{\ell}_0) (\boldsymbol{0})$.  Assume that
\begin{equation}
-\, \left\langle \boldsymbol{b}_0(\boldsymbol{x}),\,
\mathbb{H}_0 \boldsymbol{x}\right\rangle
\,\ge\, \frac{1}{2}\, |\mathbb{H}_0\boldsymbol{x}|^{2}\;\;\;\;
\text{for all }\boldsymbol{x}\in\mathbb{R}\;.
\label{eq:contraction}
\end{equation}
where $\langle\,\cdot\,,\,\cdot\,\rangle$ represents the scalar
product in $\bb R^d$.
\end{enumerate}

The main result of this section requires some notation. Let
\begin{equation}
\mathbb{A}(\boldsymbol{x}) \,:=\,
( D\boldsymbol{b}_0) (\boldsymbol{x})\;,
\quad
\mathbb{A} \,:=\, \mathbb{A}(\boldsymbol{0}) \;, \quad
\text{so that}\quad
\mathbb{A} \,=\, -\, (\mathbb{H}_0 +\mathbb{L}_0)\;.
\label{eq:A(x)}
\end{equation}
By \cite[Lemmata 4.5 and 4.1]{LS-22}, all the eigenvalues of the
matrix $\bb A$ have negative real parts. Therefore, by \cite[Theorems
2 and 3, p.414]{LT85}, there exists a positive definite matrix
$\mathbb{K}$ such that
\begin{equation}
\mathbb{A}^{\dagger}\mathbb{K}+\mathbb{K}\mathbb{A}
\,= \, -\, \mathbb{I}\;,
\label{eq:ricatti1}
\end{equation}
where $\bb I$ is the identity.

Let $\mathcal{D}_{r}\subset\mathbb{R}^{d}$, $r>0$, be the set given by
\begin{equation}
{\color{blue} \mathcal{D}_{r}} \;:=\; \{\boldsymbol{x}\in\mathbb{R}^{d}:
\left\langle
\boldsymbol{x},\,\mathbb{H}_0\, \boldsymbol{x}\right\rangle \le r^{2}\}\;.
\label{eq:D_r}
\end{equation}
By \eqref{eq:ricatti1}, there exists $r'_4>0$ such that
\begin{equation}
\left\Vert \, (\mathbb{A}(\boldsymbol{x})-\mathbb{A})^{\dagger}
\mathbb{K}+\mathbb{K}(\mathbb{A}(\boldsymbol{x})
-\mathbb{A})\right\Vert \leq\frac{1}{2}
\label{eq:A(x)-A}
\end{equation}
for all $\boldsymbol{x}\in B(0, r'_4)$. By \eqref{eq:D_r},
$\mathcal{D}_{2r_4} \subset B(0, r'_4)$ for some $r_4>0$.
Take $r_4$ small enough so that
\begin{equation}
\mathcal{D}_{2r_4} \,\subset\, B(0, r_3) \;,
\label{eq:condr_4}
\end{equation}
where $r_3$ has been introduced in condition (2) above.

The main result of this section reads as follows.  Denote by
$\color{blue} d_{\textup{TV}}(\mu, \nu)$ the total variation distance
between probability measures $\mu$ and $\nu$.  Let
$\bs y_\epsilon (\cdot)$ be the diffusion given by
\begin{equation}
d\boldsymbol{y}_{\epsilon}(t)
\,=\, \boldsymbol{b}_0(\boldsymbol{y}_{\epsilon}(t))\, dt
\,+\, \sqrt{2\epsilon}\,dW_{t}\;.
\label{eq:x_eps}
\end{equation}
The process $\bs y_\epsilon (\cdot)$ starting at
$\boldsymbol{x}\in\mathbb{R}^{d}$, is represented by
$\boldsymbol{y}_{\epsilon}(t;\boldsymbol{x})$.  Let
$t_{\epsilon}=\epsilon^{- \theta}$ for some $\theta\in (0,\,1/3)$.

\begin{thm}
\label{t_main2}
Denote by $\pi_\epsilon$ the stationary state of the diffusion
$\bs y_\epsilon (\cdot)$.  Then,
\begin{equation*}
\lim_{\epsilon\rightarrow0}
\sup_{\boldsymbol{x}\in\mathcal{D}_{r_4}}
d_{\textup{TV}} \big(\,
\boldsymbol{y}_{\epsilon}(t_{\epsilon};\boldsymbol{x}),\,
\pi_\epsilon\, \big) \,= \, 0\;.
\end{equation*}
\end{thm}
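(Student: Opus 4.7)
The plan is to prove the theorem by a coupling argument combined with two Lyapunov functions: one built from $\mathbb{H}_0$ to confine the diffusion to a neighbourhood of the origin for times up to $t_\epsilon$, and a second built from the matrix $\mathbb{K}$ of \eqref{eq:ricatti1} to control the distance between two coupled copies of the SDE. The starting point is the standard stationarity bound
\[
d_{\textup{TV}}\big(\boldsymbol{y}_\epsilon(t_\epsilon;\boldsymbol{x}),\, \pi_\epsilon\big) \;\leq\; \int d_{\textup{TV}}\big(\boldsymbol{y}_\epsilon(t_\epsilon;\boldsymbol{x}),\, \boldsymbol{y}_\epsilon(t_\epsilon;\boldsymbol{z})\big)\, \pi_\epsilon(d\boldsymbol{z})\;,
\]
which reduces the problem to comparing two trajectories of the SDE from different initial points. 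A preliminary step, which I would carry out via a Foster--Lyapunov argument based on \eqref{eq:contraction} and the growth bound \eqref{growth}, is to show that $\pi_\epsilon(\mathcal{D}_{r_4}) \to 1$ as $\epsilon \to 0$, so the integral is effectively restricted to $\boldsymbol{z} \in \mathcal{D}_{r_4}$.

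For the confinement step, I would apply It\^o's formula to $V(\boldsymbol{x}) := \langle \boldsymbol{x},\, \mathbb{H}_0 \boldsymbol{x}\rangle$ and use \eqref{eq:contraction} to obtain
\[
dV(\boldsymbol{y}_\epsilon(t)) \;\leq\; -\,|\mathbb{H}_0 \boldsymbol{y}_\epsilon(t)|^2\, dt \,+\, 2\sqrt{2\epsilon}\,\langle \mathbb{H}_0 \boldsymbol{y}_\epsilon(t),\, dW_t\rangle \,+\, 2\epsilon\,\mathrm{tr}(\mathbb{H}_0)\, dt\;,
\]
so that $V(\boldsymbol{y}_\epsilon(t))$ is a supermartingale modulo the martingale piece and an $O(\epsilon)$ drift. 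Combining this with an exponential (Bernstein-type) inequality for the stochastic integral gives, uniformly in $\boldsymbol{x} \in \mathcal{D}_{r_4}$,
\[
\mathbb{P}\big(\boldsymbol{y}_\epsilon(t;\boldsymbol{x}) \in \mathcal{D}_{2r_4}\ \text{for all } t \in [0,\, t_\epsilon]\big) \;=\; 1 - o(1)\;,
\]
and the same bound starting from any $\boldsymbol{z} \in \mathcal{D}_{r_4}$.

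Next I would set up the synchronous coupling $(\boldsymbol{y}_\epsilon^1,\, \boldsymbol{y}_\epsilon^2)$ of two copies of the SDE driven by the same Brownian motion with initial conditions $\boldsymbol{x}$ and $\boldsymbol{z} \in \mathcal{D}_{r_4}$. On the confinement event both trajectories lie in $\mathcal{D}_{2r_4} \subset B(0, r'_4)$, and the difference $\boldsymbol{z}_\epsilon(t) := \boldsymbol{y}_\epsilon^1(t) - \boldsymbol{y}_\epsilon^2(t)$ solves the deterministic ODE $\dot{\boldsymbol{z}}_\epsilon = \boldsymbol{b}_0(\boldsymbol{y}_\epsilon^1) - \boldsymbol{b}_0(\boldsymbol{y}_\epsilon^2)$. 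Applying the Lyapunov function $W(\boldsymbol{z}) := \langle \boldsymbol{z},\, \mathbb{K} \boldsymbol{z}\rangle$ and using \eqref{eq:ricatti1} together with the perturbation bound \eqref{eq:A(x)-A} I would derive
\[
\frac{d}{dt} W(\boldsymbol{z}_\epsilon(t)) \;\leq\; -\, \tfrac{1}{2}\, |\boldsymbol{z}_\epsilon(t)|^2 \;\leq\; -\, c\, W(\boldsymbol{z}_\epsilon(t))\;,
\]
whence $|\boldsymbol{z}_\epsilon(t_\epsilon)| \leq e^{-c\, t_\epsilon}|\boldsymbol{z}_\epsilon(0)|$, which is super-polynomially small in $\epsilon$ since $t_\epsilon = \epsilon^{-\theta}$.

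The main obstacle is converting this pointwise closeness from the synchronous coupling into closeness in total variation, since the two processes do not actually coincide at time $t_\epsilon$. I would handle this by writing $t_\epsilon = t'_\epsilon + s_\epsilon$ with a short tail $s_\epsilon$ for which $\epsilon s_\epsilon$ is still polynomial in $\epsilon$, and by exploiting the smoothing of the non-degenerate diffusion: conditionally on the nearby positions $\boldsymbol{y}_\epsilon^1(t'_\epsilon)$ and $\boldsymbol{y}_\epsilon^2(t'_\epsilon)$, the two laws at time $t_\epsilon$ are approximately Gaussian with common covariance of order $\epsilon s_\epsilon$ centred near these positions, yielding a TV bound of order $e^{-c\, t'_\epsilon}/\sqrt{\epsilon s_\epsilon}$, still $o(1)$. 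The careful balancing of $s_\epsilon$ against the quadratic linearisation error $\boldsymbol{b}_0(\boldsymbol{x}) - \mathbb{A}\boldsymbol{x} = O(|\boldsymbol{x}|^2)$ integrated over time $t_\epsilon$ and against the martingale fluctuations in the confinement step is where I expect the constraint $\theta < 1/3$ to enter.
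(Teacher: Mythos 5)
Your argument is correct in outline but follows a genuinely different route from the paper. The paper keeps a single trajectory and approximates it, over the whole horizon $[0,t_\epsilon]$, by the explicit Gaussian process $\boldsymbol{z}_\epsilon(t)=\boldsymbol{y}(t)+\sqrt{2\epsilon}\,\boldsymbol{\xi}(t)$: Lemma \ref{prop:tvm1} (Girsanov plus Pinsker, fed by the fourth-moment estimate of Proposition \ref{prop:main_apprx}) shows that the diffusion and the Gaussian process are close in total variation at time $t_\epsilon$, and Lemma \ref{prop:tvm2} shows that the Gaussian process forgets its initial condition by converging to $\mathcal{N}(0,2\epsilon\mathbb{H}^{-1})$. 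You instead compare two trajectories directly through the synchronous coupling, obtaining a deterministic, stretched-exponentially small bound of order $e^{-c\,t_\epsilon}$ on their distance via the Lyapunov function $\langle\cdot,\mathbb{K}\,\cdot\rangle$; your differential inequality is in effect Lemma \ref{lem:drift} applied to the averaged Jacobian along the segment joining the two trajectories, which is legitimate because $\mathcal{D}_{2r_4}$ is convex and \eqref{eq:A(x)-A} holds on all of $B(0,r_4')$. You then upgrade pointwise closeness to total-variation closeness by smoothing over a short terminal window. This trades the hardest part of the paper's proof --- the long-time Gaussian approximation, whose accumulating linearisation error is the true source of the restriction $\theta<1/3$ --- for a contraction estimate that in fact works for any $\theta\in(0,1)$. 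The price is twofold: first, your terminal smoothing step still requires a quantitative argument, either a Girsanov--Pinsker bound over the window $s_\epsilon$ (essentially the first term of \eqref{eq:dtvdec} with $\delta_\epsilon$ replaced by $s_\epsilon$) or a one-shot coupling with a drift shift of size $|w-w'|/s_\epsilon$ whose Girsanov cost is $O(|w-w'|^2/(\epsilon s_\epsilon))$; you should make this explicit rather than appeal to an ``approximately Gaussian'' conditional law, although the balance $e^{-c\epsilon^{-\theta}}/\sqrt{\epsilon s_\epsilon}\to 0$ is so favourable that any polynomial choice of $s_\epsilon$ works. Second, your route only yields mixing, whereas the paper's identifies the explicit Gaussian profile of $\pi_\epsilon$ near the origin; for the statement of Theorem \ref{t_main2} only mixing is needed, so this is not a loss here.
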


\begin{remark}
\label{rem:posev}
The proof of this result is largely based on
\cite{BJ,LeeRamSeo}.  Theorem \ref{t_main2} follows from
\cite[Theorem 2.2]{BJ} when $\bb A$ is negative definite.  As
mentioned above, all the eigenvalues of matrix $\bb A$ have negative
real parts, but $\bb A$ might not be negative definite. The purpose of
this section is to extend \cite[Theorem 2.2]{BJ} to this
situation.
\end{remark}

\subsection*{Proof of Theorem  \ref{t_main2}}

The main idea of proof is to approximate the difference
$\boldsymbol{y}_{\epsilon}(t)-\boldsymbol{y}(t)$ by a Gaussian
process. Let
\begin{equation*}
\widehat{\boldsymbol{\xi}}(t) \,=\,
\frac{1}{\sqrt{2\epsilon}}\left(\boldsymbol{y}_{\epsilon}(t)
-\boldsymbol{y}(t)\right)\;.
\end{equation*}
By (\ref{eq:x_eps}) and (\ref{eq:x_0}),
\begin{equation*}
d\widehat{\boldsymbol{\xi}}(t) \,=\,
\frac{1}{\sqrt{2\epsilon}}
\big\{\, \boldsymbol{b}_0(\boldsymbol{y}_{\epsilon}(t))
-\boldsymbol{b}_0(\boldsymbol{y}(t))\, \big\}\, dt \,+\, dW_{t}
\simeq (D\boldsymbol{b}_0) (\boldsymbol{y}(t)) \,
\widehat{\boldsymbol{\xi}}(t)dt+dW_{t}\;.
\end{equation*}
Hence, it is natural to conjecture that
$\widehat{\boldsymbol{\xi}}(t)\simeq\boldsymbol{\xi}(t)$ where
$\boldsymbol{\xi}(t)$ is the Gaussian process defined by the SDE
\begin{equation}
d\boldsymbol{\xi}(t)=\mathbb{A}(\boldsymbol{y}(t))
\, \boldsymbol{\xi}(t) \, dt \,+\, dW_{t}
\;, \quad \boldsymbol{\xi}(0)=\boldsymbol{0} \;.
\label{eq:xi}
\end{equation}

Let
\begin{equation}
\boldsymbol{z}_{\epsilon}(t)\, :=\,
\boldsymbol{y}(t) \,+\, \sqrt{2\epsilon}\, \boldsymbol{\xi}(t)\;.
\label{eq:z(t)}
\end{equation}
By the previous discussion, we expect that
$\boldsymbol{y}_{\epsilon}(t)\simeq\boldsymbol{z}_{\epsilon}(t)$.

\begin{lem}
\label{prop:tvm1}
There exists $r_4>0$ such that
\begin{equation}
\lim_{\epsilon\rightarrow0}\sup_{x\in\mathcal{D}_{r_4}}
d_{\mathrm{TV}}\left(\boldsymbol{y}_{\epsilon}
(t_{\epsilon};\boldsymbol{x}),\,\boldsymbol{z}_{\epsilon}
(t_{\epsilon};\boldsymbol{x})\right)=0\;.
\label{eq:tvm1-0}
\end{equation}
\end{lem}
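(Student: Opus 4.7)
The plan is to couple $\bs y_\epsilon$ and $\bs z_\epsilon$ on the same probability space by driving both with the same Brownian motion $W$, so that $\bs r_\epsilon(t) := \bs y_\epsilon(t) - \bs z_\epsilon(t) = \sqrt{2\epsilon}\,(\widehat{\bs\xi}(t) - \bs\xi(t))$ has no martingale part. Taylor-expanding $\bs b_0(\bs y_\epsilon) - \bs b_0(\bs y)$ to first order around $\bs y(t)$ produces
\[
\dot{\bs r}_\epsilon(t) \;=\; \mathbb{A}(\bs y(t))\,\bs r_\epsilon(t) \;+\; \bs R_\epsilon(t),\qquad |\bs R_\epsilon(t)| \;\le\; C\,\epsilon\,|\widehat{\bs\xi}(t)|^2,
\]
valid as long as $\bs y_\epsilon(t) \in B(0, r_3)$, so that the $C^2$-gradient representation of $\bs b_0$ from condition~(2) applies. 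The goal is to prove $|\bs r_\epsilon(t_\epsilon)| = o_\mathbb{P}(\sqrt{\epsilon})$ and then turn this pointwise smallness into total-variation smallness by exploiting that $\bs z_\epsilon(t_\epsilon)$ has a smooth Gaussian density on the scale $\sqrt{\epsilon}$.

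First I would pin down the deterministic trajectory: by \eqref{eq:contraction} the map $t\mapsto \langle \bs y(t), \mathbb{H}_0 \bs y(t)\rangle$ is non-increasing, so $\bs y(t; \bs x) \in \mathcal{D}_{r_4}$ for every $t\ge 0$ when $\bs x \in \mathcal{D}_{r_4}$, and by \eqref{eq:condr_4}, \eqref{eq:A(x)-A} this yields the time-uniform dissipation $\mathbb{A}(\bs y(t))^{\dagger}\mathbb{K} + \mathbb{K}\mathbb{A}(\bs y(t)) \le -\tfrac{1}{2}\mathbb{I}$ along the curve. Differentiating the Lyapunov function $V(\bs r) := \langle \bs r, \mathbb{K}\bs r\rangle$ along $\bs r_\epsilon$ and applying Young's inequality then produces the Gronwall-type estimate $V(\bs r_\epsilon(t)) \le C\,\epsilon^{2}\int_0^t e^{-c(t-s)}\,|\widehat{\bs\xi}(s)|^{4}\,ds$; reducing the problem to uniform moment bounds on $\widehat{\bs\xi}$.

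Next I would bootstrap a high-probability bound on $\sup_{t\le t_\epsilon}|\widehat{\bs\xi}(t)|$. The Gaussian comparison process $\bs\xi$ solves an Ornstein--Uhlenbeck-type SDE whose drift $\mathbb{A}(\bs y(t))$ is $\mathbb{K}$-dissipative, so its covariance $\Sigma(t;\bs x)$ converges, uniformly in $\bs x\in\mathcal{D}_{r_4}$, to the unique positive-definite solution $\Sigma_\infty$ of $\mathbb{A}\Sigma_\infty + \Sigma_\infty\mathbb{A}^{\dagger} + \mathbb{I} = 0$. This yields $\sup_{t\le t_\epsilon}|\bs\xi(t)| = O_\mathbb{P}((\log\epsilon^{-1})^{1/2})$. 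A stopping-time argument (stop at the first $t$ when $|\widehat{\bs\xi}(t)|$ exceeds $M_\epsilon = C(\log\epsilon^{-1})^{1/2}$) transfers this bound to $\widehat{\bs\xi}$: up to this stopping time the quadratic residual of size $\sqrt{\epsilon}\,M_\epsilon^{2}$ in the SDE for $\widehat{\bs\xi}$ is absorbed by the dissipation, and the constraint $\theta<1/3$ guarantees $\epsilon\,M_\epsilon^{4}\,t_\epsilon\to 0$, so the stopping time exceeds $t_\epsilon$ with probability tending to $1$. Plugging into the previous step gives $|\bs r_\epsilon(t_\epsilon)| = O_\mathbb{P}(\epsilon\log\epsilon^{-1}) = o_\mathbb{P}(\sqrt{\epsilon})$.

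Finally, for the total-variation conversion I would use that $\bs z_\epsilon(t_\epsilon;\bs x)$ is Gaussian with mean $\bs y(t_\epsilon;\bs x)$ and covariance $2\epsilon\,\Sigma(t_\epsilon;\bs x)$ uniformly comparable to $2\epsilon\,\Sigma_\infty \succ 0$; hence its density varies on scale $\sqrt{\epsilon}$, and a random shift of size $o_\mathbb{P}(\sqrt{\epsilon})$ perturbs the law by an amount that vanishes in total variation, once one truncates on the negligible bad event $\{\sup_{t\le t_\epsilon}|\widehat{\bs\xi}(t)| > M_\epsilon\}$. The hard part, and the main obstacle flagged in Remark~\ref{rem:posev}, is that $\mathbb{A}$ is not negative definite: one cannot use the Euclidean norm $|\cdot|^{2}$ as a Lyapunov function, and the weight $\mathbb{K}$ from \eqref{eq:ricatti1} must be carried consistently through the Gronwall bound on $\bs r_\epsilon$, the bootstrap on $\widehat{\bs\xi}$, and the argument for non-degeneracy of $\Sigma_\infty$. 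This is precisely what lifts \cite[Theorem 2.2]{BJ} from the negative-definite case to the present setting.
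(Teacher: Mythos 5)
Your moment estimates reproduce the paper's machinery: the Lyapunov function $\langle \bs r, \mathbb{K}\bs r\rangle$ with $\mathbb{K}$ from \eqref{eq:ricatti1}, the dissipativity of $\mathbb{A}(\bs y(t))$ along the deterministic flow (Lemma \ref{lem:drift}), the supremum bound on the Gaussian process (Lemma \ref{lem:mom_Yt_sup}), and a Gronwall/Perov bound on the remainder restricted to a high-probability good event — this is exactly Proposition \ref{prop:main_apprx}, which gives $\mb E_{\bs x}\big[\,|\bs y_\epsilon(t)-\bs z_\epsilon(t)|^{4}\,\big]\le \alpha_1\epsilon^{2+\alpha_2}$ uniformly over $\bs x\in\mathcal{D}_{r_4}$ and $t\le t_\epsilon$. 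The gap is in your final step. From $|\bs y_\epsilon(t_\epsilon)-\bs z_\epsilon(t_\epsilon)| = o_{\mathbb{P}}(\sqrt{\epsilon})$ you cannot conclude total-variation closeness of the laws by arguing that the Gaussian density of $\bs z_\epsilon(t_\epsilon)$ varies on scale $\sqrt{\epsilon}$: the shift $\bs r_\epsilon(t_\epsilon)$ is a functional of the same Brownian path that drives $\bs z_\epsilon(t_\epsilon)$, and for a shift correlated with the variable being shifted the implication is false. For instance, if $R$ rounds $Z$ to the nearest point of a lattice of spacing $\epsilon\ll\sqrt{\epsilon}$, then $|R|\le\epsilon$ almost surely, yet $Z+R$ is supported on a Lebesgue-null set and $d_{\mathrm{TV}}(\mathcal{L}(Z+R),\mathcal{L}(Z))=1$. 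Pathwise proximity under a given coupling bounds the TV distance only in the direction you do not need.

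The paper avoids this with the decomposition \eqref{eq:dtvdec}: introduce an intermediate time $\delta_\epsilon=\epsilon^{c}$ and write $\bs y_\epsilon(t_\epsilon;\bs x)=\bs y_\epsilon(\delta_\epsilon;\bs y_\epsilon(t_\epsilon-\delta_\epsilon;\bs x))$ and similarly for $\bs z_\epsilon$. The first resulting term compares the two dynamics run for the short time $\delta_\epsilon$ from the \emph{same} starting point; this is a genuine comparison of path measures, carried out via Girsanov and Pinsker \eqref{eq:pinsker}, and it is here that the fourth-moment bound of Proposition \ref{prop:main_apprx} is consumed. The second term compares the Gaussian transition kernel of $\bs z_\epsilon$ at time $\delta_\epsilon$ from two \emph{different} starting points which Proposition \ref{prop:main_apprx} shows are $O_{\mathbb{P}}(\epsilon^{(2+\alpha_2)/4})$ apart; the fresh noise accumulated over $[t_\epsilon-\delta_\epsilon,\,t_\epsilon]$, of variance of order $\epsilon\delta_\epsilon$, provides the regularization, and after conditioning only the law of the pair of starting points matters, so the correlation problem disappears. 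Choosing $c$ small enough makes both terms vanish. Your argument needs this extra smoothing window (or an equivalent device such as a Girsanov comparison over the full horizon); as written, the passage from $o_{\mathbb{P}}(\sqrt{\epsilon})$ pathwise closeness to total-variation closeness does not go through.
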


Denote by $\mathcal{N}(\boldsymbol{\mu},\,\Sigma)$ the normal
distribution with mean $\boldsymbol{\mu}$ and covariance $\Sigma$.

\begin{lem}
\label{prop:tvm2}
There exists $r_4>0$ such that
\begin{equation*}
\lim_{\epsilon\rightarrow0}\sup_{x\in\mathcal{D}_{r_4}}
d_{\mathrm{TV}}\left(\boldsymbol{z}_{\epsilon}
(t_{\epsilon};\boldsymbol{x}),\,
\mathcal{N}(0,2\epsilon\mathbb{H}^{-1})\right)=0\;.
\end{equation*}
\end{lem}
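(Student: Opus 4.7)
The plan is to exploit the explicit Gaussian structure of $\bs z_\epsilon(t;\bs x)$. Since $\bs\xi(t)$ solves the linear SDE \eqref{eq:xi} driven by Brownian motion with deterministic initial data, it is a centered Gaussian process, so
\begin{equation*}
\bs z_\epsilon(t_\epsilon;\bs x)\,\sim\,\mc N\big(\bs y(t_\epsilon;\bs x),\, 2\epsilon\,\Sigma(t_\epsilon;\bs x)\big)\;,
\end{equation*}
where $\Sigma(t;\bs x):=\mathrm{Cov}(\bs\xi(t))$ solves the time-varying Lyapunov ODE $\dot\Sigma=\bb A(\bs y(t))\Sigma+\Sigma\,\bb A(\bs y(t))^\dagger+\bb I$ with $\Sigma(0)=0$. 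Denote by $\Sigma_\infty$ the unique positive solution of the algebraic equation $\bb A\Sigma_\infty+\Sigma_\infty\bb A^\dagger+\bb I=0$, whose existence is guaranteed by the fact that $\bb A$ is Hurwitz. Differentiating the orthogonality identity $\nabla U_0\cdot\bs\ell_0=0$ twice at the origin shows that $\bb H_0\bb L_0$ is antisymmetric, and a direct substitution then yields $\Sigma_\infty=\tfrac12\bb H_0^{-1}$, which matches (up to the prefactor $2\epsilon$) the covariance of the target Gaussian. Via the Pinsker inequality together with the closed form of the Kullback--Leibler divergence between multivariate Gaussians, the lemma reduces to establishing, uniformly in $\bs x\in\mc D_{r_4}$, that (i) $|\bs y(t_\epsilon;\bs x)|=o(\sqrt\epsilon)$ and (ii) $\|\Sigma(t_\epsilon;\bs x)-\Sigma_\infty\|\to 0$.

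Claim (i) follows immediately from the contraction hypothesis. Setting $V(\bs y):=\langle\bs y,\bb H_0\bs y\rangle$ and $\lambda:=\lambda_{\min}(\bb H_0)$, the bound \eqref{eq:contraction} gives $\tfrac{d}{dt}V(\bs y(t))\le -|\bb H_0\bs y(t)|^2\le -\lambda V(\bs y(t))$; in particular $\mc D_{r_4}$ is forward-invariant under \eqref{eq:x_0} and $V(\bs y(t;\bs x))\le r_4^2 e^{-\lambda t}$, so that $|\bs y(t_\epsilon;\bs x)|\le C e^{-\lambda\epsilon^{-\theta}/2}$ is super-exponentially small in $\epsilon$. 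For claim (ii), set $E(t):=\Sigma(t)-\Sigma_\infty$; subtracting the algebraic identity from the Lyapunov ODE yields
\begin{equation*}
\dot E(t)\,=\,\bb A(\bs y(t))\,E(t)\,+\,E(t)\,\bb A(\bs y(t))^\dagger\,+\,\mc R(\bs y(t))\;,\qquad E(0)\,=\,-\,\Sigma_\infty\;,
\end{equation*}
with $\mc R(\bs y):=(\bb A(\bs y)-\bb A)\Sigma_\infty+\Sigma_\infty(\bb A(\bs y)-\bb A)^\dagger$. I propose the weighted energy $\Phi(E):=\mathrm{tr}(\bb K E\,\bb K E)$, built from the positive-definite $\bb K$ of \eqref{eq:ricatti1}. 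A direct computation using $\bb A^\dagger\bb K+\bb K\bb A=-\bb I$ extracts a leading dissipative contribution of order $-c\,\Phi(E)$ for some $c>0$; the frozen-coefficient error is absorbed by \eqref{eq:A(x)-A} (applicable since $\bs y(t)\in\mc D_{r_4}\subset B(0,r'_4)$ by the invariance above); and the inhomogeneous source $\mc R(\bs y(t))$ is $O(|\bs y(t)|)$, hence exponentially small by (i). Gronwall's inequality then gives $\Phi(E(t_\epsilon))\to 0$ uniformly.

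The main obstacle lies in the covariance control. Although $\bb A$ is Hurwitz, it is typically not symmetric negative-definite, so a naive identity-metric energy does not dissipate; the role of the $\bb K$-weighted functional $\Phi$, together with the smallness condition \eqref{eq:A(x)-A}, is precisely to supply uniform contraction under the time-varying perturbation $\bb A(\bs y(t))$. The initial mismatch $E(0)=-\Sigma_\infty$ is of order one, but the exponential contraction at rate $c>0$, combined with the exponentially small forcing, already drives $\Phi(E(t))$ to zero for $t\gg 1$, hence well before $t_\epsilon=\epsilon^{-\theta}\to\infty$. Combining (i), (ii) and the explicit total-variation bound between two Gaussians then closes the proof.
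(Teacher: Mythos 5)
Your proposal is correct in substance, but it takes a genuinely different route from the paper, which simply invokes \cite[Proposition 3.6]{BJ} and notes that the two external ingredients it needs -- the exponential decay of $\bs y(t)$ and the convergence of the covariance of $\bs\xi(t)$ (\cite[Lemma B.2]{BJ}) -- survive in the present setting, the first being replaced by Lemma \ref{lem:stability} and the second because it only uses that $\bb A$ is Hurwitz. You instead make the covariance part self-contained: you write the Lyapunov ODE $\dot\Sigma=\bb A(\bs y(t))\Sigma+\Sigma\,\bb A(\bs y(t))^{\dagger}+\bb I$, identify the limit $\Sigma_\infty=\tfrac12\bb H_0^{-1}$ from the skew-symmetry of $\bb H_0\bb L_0$ (which the paper keeps implicit inside the citation), and prove $\Sigma(t_\epsilon)\to\Sigma_\infty$ by a $\bb K$-weighted energy estimate that is the exact matrix analogue of the paper's Lemma \ref{lem:drift}: the Lyapunov identity \eqref{eq:ricatti1} supplies the dissipation $-\,\mathrm{tr}(E\bb K E)$, the frozen-coefficient error is absorbed by \eqref{eq:A(x)-A} on the forward-invariant set $\mc D_{r_4}$, and the source term is exponentially small. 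This buys a proof that does not rely on conditions (C)/(H) of \cite{BJ} at all, at the cost of redoing the Gaussian total-variation computation; both the mean estimate (your claim (i), identical to Lemma \ref{lem:stability}) and the uniformity over $\bs x\in\mc D_{r_4}$ are handled correctly.

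One point you gloss over: your (correct) computation gives $\mathrm{Cov}(\bs z_\epsilon(t_\epsilon))\to 2\epsilon\Sigma_\infty=\epsilon\,\bb H_0^{-1}$, whereas the statement reads $\mc N(0,2\epsilon\,\bb H^{-1})$ with $\bb H$ undefined in this section; you assert a "match" without reconciling the factor of $2$. The value $\epsilon\,\bb H_0^{-1}$ is the one consistent with the Gibbs measure $e^{-U_0/\epsilon}$, so the discrepancy is a notational slip in the statement rather than an error in your argument, but you should say so explicitly rather than claim agreement.
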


\begin{proof}
The proof is presented in \cite[Proposition 3.6]{BJ}, and relies
on the fact that $\boldsymbol{z}_{\epsilon}(\cdot;\boldsymbol{x})$ is
a Gaussian process. In particular,
$\boldsymbol{z}_{\epsilon}(t;\boldsymbol{x})$ is a normal random
variable whose mean and variance can be expressed explicitly. The
assertion is thus reduced to a computation of the total variation
distance between two normal random variables.

Denote by $\lambda>0$ the smallest eigenvalue of $\bb H_0$.  The proof
starts at \cite[display (3.22)]{BJ}, and requires the bound
\begin{equation*}
\left|\boldsymbol{y}(t)\right|^{2}
\, \le\, |\boldsymbol{y}(0)|^{2}\, e^{-\lambda t} \;,
\end{equation*}
and \cite[Lemma B.2]{BJ}.  In the present context, Lemma
\ref{lem:stability} replaces the first estimate, and \cite[Lemma
B.2]{BJ} holds because it only needs all the eigenvalues of
$(D\boldsymbol{b}_0)(\boldsymbol{0})$ to have a positive real part, a
property satisfied by our model as mentioned in Remark
\ref{rem:posev}.
\end{proof}

\begin{proof}[Proof of Theorem \ref{t_main2}]
Denote by $p_{t}^{\epsilon}(\cdot,\,\cdot)$ the transition kernel of
the process $\boldsymbol{y}_{\epsilon}(\cdot)$ and by
$\pi_{\epsilon}(\cdot)$ the density of the measure
$\pi_{\epsilon}(d\bs x)$:
$\pi_{\epsilon}(d\boldsymbol{x}) = \pi_{\epsilon}(\boldsymbol{x})
d\boldsymbol{x}$.  By definition, and since $\pi_\epsilon$ is the
stationary state of the process $\bs y_\epsilon(\cdot)$,
\begin{align*}
\mathrm{d}_{\mathrm{TV}}\left(\boldsymbol{y}_{\epsilon}
(t_{\epsilon};\boldsymbol{x}),\,\pi_{\epsilon}\right)
& =\frac{1}{2}\int_{\mathbb{R}^{d}}\left|p_{t^{\epsilon}}^{\epsilon}
(\boldsymbol{x},\,\boldsymbol{y})
-\pi_{\epsilon}(\boldsymbol{y})\right|d\boldsymbol{y}
\\
& =\frac{1}{2}\int_{\mathbb{R}^{d}}\Big|\,
\int_{\mathbb{R}^{d}} \big[ \, p_{t^{\epsilon}}^{\epsilon}
(\boldsymbol{x},\,\boldsymbol{y})
- p_{t^{\epsilon}}^{\epsilon}(\boldsymbol{x}',\,\boldsymbol{y})\,\big]
\, \pi_{\epsilon}(\boldsymbol{x}')\, d\boldsymbol{x}'\, \Big|
\, d\boldsymbol{y} \;.
\end{align*}
The previous expression is bounded by
\begin{equation*}
\frac{1}{2}\int_{\mathbb{R}^{d}}
\int_{\mathbb{R}^{d}} \big| \, p_{t^{\epsilon}}^{\epsilon}
(\boldsymbol{x},\,\boldsymbol{y})
- p_{t^{\epsilon}}^{\epsilon}(\boldsymbol{x}',\,\boldsymbol{y})\,\big|
\, \pi_{\epsilon}(\boldsymbol{x}')\, d\boldsymbol{x}'
\, d\boldsymbol{y}
\;=\;
\int_{\mathbb{R}^{d}}
\mathrm{d}_{\mathrm{TV}} (\, \boldsymbol{y}_{\epsilon}
(t_{\epsilon};\boldsymbol{x}),\,
\boldsymbol{y}_{\epsilon}(t_{\epsilon};\boldsymbol{x}') \,)
\, \pi_{\epsilon}(\boldsymbol{x}')\, d\boldsymbol{x}'\;.
\end{equation*}
By \eqref{apf1}, the right-hand side is less than or equal to
\begin{equation*}
\int_{\mathcal{D}_{r_4}}\mathrm{d}_{\mathrm{TV}}
\left(\, \boldsymbol{y}_{\epsilon}(t_{\epsilon};
\boldsymbol{x}),\,\boldsymbol{y}_{\epsilon}(t_{\epsilon};\boldsymbol{x}')
\, \right) \, \pi_{\epsilon}(\boldsymbol{x}')
\,d\boldsymbol{x}' \,+\, C_0 \, \epsilon
\end{equation*}
for some finite constant $C_0$.  By Lemma \ref{prop:tvm1}, and a
triangular inequality,
\begin{equation*}
\limsup_{\epsilon\rightarrow0}
\sup_{\boldsymbol{x}\in\mathcal{D}_{r_4}}
\int_{\mathcal{D}_{r_4}}\left|\, \mathrm{d}_{\mathrm{TV}}
\left(\boldsymbol{y}_{\epsilon}(t_{\epsilon};\boldsymbol{x}),\,
\boldsymbol{y}_{\epsilon}(t_{\epsilon};\boldsymbol{x}')\right)
\,-\, \mathrm{d}_{\mathrm{TV}}\left(\boldsymbol{z}_{\epsilon}
(t_{\epsilon};\boldsymbol{x}),\,\boldsymbol{z}_{\epsilon}
(t_{\epsilon};\boldsymbol{x}')\right)\, \right|\,
\pi_{\epsilon}(\boldsymbol{x}')\,d\boldsymbol{x}' \;=\; 0\;.
\end{equation*}

It remains to show that
\begin{equation}
\limsup_{\epsilon\rightarrow0}
\sup_{\boldsymbol{x}\in\mathcal{D}_{r_4}}
\int_{\mathcal{D}_{r_4}}
\mathrm{d}_{\mathrm{TV}}\left(\boldsymbol{z}_{\epsilon}
(t_{\epsilon};\boldsymbol{x}),\,\boldsymbol{z}_{\epsilon}
(t_{\epsilon};\boldsymbol{x}')\right)\,
\pi_{\epsilon}(\boldsymbol{x}')\,d\boldsymbol{x}' \;=\; 0\;.
\label{eq:dtt3}
\end{equation}
Since the integrand is bounded  by
\begin{equation*}
\mathrm{d}_{\mathrm{TV}}\left(
\boldsymbol{z}_{\epsilon}(t_{\epsilon};\boldsymbol{x}),\,
\mathcal{N}(0,2\epsilon\mathbb{H}^{-1})\right)
\,+\, \mathrm{d}_{\mathrm{TV}}\left(
\mathcal{N}(0,2\epsilon\mathbb{H}^{-1}),\,
\boldsymbol{z}_{\epsilon}(t_{\epsilon};\boldsymbol{x}')\right)\;,
\end{equation*}
assertion (\ref{eq:dtt3}) follows from Lemma \ref{prop:tvm2}.
\end{proof}

\subsection*{Proof of Lemma  \ref{prop:tvm1}}
\label{sec52}

The proof is similar to the one presented in \cite[Section
3.3]{BJ}, which is based on conditions (C) or (H) of that
article. These conditions, however, are only used in the proof of
Lemma \ref{prop:tvm1} to derive the estimates presented in Lemmata
\ref{lem:stability}, \ref{lem:mom_est}, \ref{lem:mom_Yt},
\ref{lem:mom_Yt_sup}, and Proposition \ref{prop:main_apprx}.

Fix $\delta_{\epsilon}=\epsilon^{c}$ for some $c>0$. As
\begin{equation*}
\boldsymbol{y}_{\epsilon}(t_\epsilon;\boldsymbol{x})
\;=\; \boldsymbol{y}_{\epsilon}(\delta_{\epsilon};
\boldsymbol{y}_{\epsilon}(t_{\epsilon}-\delta_{\epsilon};
\boldsymbol{x}))\;, \quad
\boldsymbol{z}_{\epsilon}(t_\epsilon;\boldsymbol{x})
\;=\;\boldsymbol{z}_{\epsilon}(\delta_{\epsilon};
\boldsymbol{z}_{\epsilon}(t_{\epsilon}-
\delta_{\epsilon};\boldsymbol{x})) \;,
\end{equation*}
we have that
\begin{equation}
\label{eq:dtvdec}
\begin{aligned}
d_{\mathrm{TV}}\left(\,
\boldsymbol{y}_{\epsilon}(t_{\epsilon};
\boldsymbol{x}),\,\boldsymbol{z}_{\epsilon}(t_{\epsilon};
\boldsymbol{x}) \, \right)
\; & \le \;
d_{\mathrm{TV}}\left(\,
\boldsymbol{y}_{\epsilon}(\delta_{\epsilon};
\boldsymbol{y}_{\epsilon}(t_{\epsilon}-\delta_{\epsilon};
\boldsymbol{x})) \, ,\,\boldsymbol{z}_{\epsilon}(\delta_{\epsilon};
\boldsymbol{y}_{\epsilon}(t_{\epsilon}-\delta_{\epsilon};
\boldsymbol{x})) \, \right) \\
&  + \; d_{\mathrm{TV}}\left(\,
\boldsymbol{z}_{\epsilon}(\delta_{\epsilon};
\boldsymbol{y}_{\epsilon}(t_{\epsilon}-\delta_{\epsilon};
\boldsymbol{x})) \, ,\,\boldsymbol{z}_{\epsilon}(\delta_{\epsilon};
\boldsymbol{z}_{\epsilon}(t_{\epsilon}-\delta_{\epsilon};
\boldsymbol{x})) \, \right)\;.
\end{aligned}
\end{equation}
The first term on the right-hand side is bounded in \cite[Proposition
3.3]{BJ} and the second one in \cite[Proposition 3.4]{BJ}. The proof
relies on the estimate presented in Proposition \ref{prop:main_apprx}
below.

We sketch the proof of these bounds.  For the first one, fix
$\boldsymbol{x}\in\mathcal{D}_{r_4}$ and denote by $\mathbb{P}_{Y}$
and $\mathbb{P}_{Z}$ the law of process
$(\boldsymbol{y}_{\epsilon}(s;\boldsymbol{x}))_{s\in[0,\,\delta_{\epsilon}]}$
and
$(\boldsymbol{z}_{\epsilon}(s;\boldsymbol{x}))_{s\in[0,\,\delta_{\epsilon}]}$,
respectively. By Pinsker inequality,
\begin{equation}
d_{\mathrm{TV}}\left(\boldsymbol{y}_{\epsilon}
(\delta_{\epsilon};\boldsymbol{x}),\,
\boldsymbol{z}_{\epsilon}(\delta_{\epsilon};
\boldsymbol{x})\right)^{2} \;\le\;
-\,2\, \mathbb{E}_{\mathbb{P}_{Y}}
\Big[\log\frac{\textup{d}\mathbb{P}_{Z}}
{\textup{d}\mathbb{P}_{Y}}\Big]\;.
\label{eq:pinsker}
\end{equation}
The SDE describing the process $\boldsymbol{z}_{\epsilon}(\cdot)$ can
be written as
\begin{equation}
d\boldsymbol{z}_{\epsilon}(t) \;=\;
\Big\{\, \boldsymbol{b}_0(\boldsymbol{y}(t))
+D\boldsymbol{b}_0(\boldsymbol{y}(t))\,
[\, \boldsymbol{z}_{\epsilon}(t)-\boldsymbol{y}(t)\,]\, \Big\}\,dt
\,+\, \sqrt{2\epsilon}\,dW_{t}\;.
\label{eq:sdeZ}
\end{equation}
Hence, by Girsanov theorem, (\ref{eq:x_eps}), and (\ref{eq:sdeZ}),
\begin{align*}
\log\frac{\textup{d}\mathbb{P}_{Z}}{\textup{d}\mathbb{P}_{Y}}
= & \frac{1}{\sqrt{2\epsilon}}\int_{0}^{\delta_{\epsilon}}
\big \langle \, \boldsymbol{b}_0(\boldsymbol{y}_{\epsilon}(t))
-\boldsymbol{b}_0(\boldsymbol{y}(t))
+D\boldsymbol{b}_0(\boldsymbol{y}(t))\,
[\boldsymbol{y}_{\epsilon}(t)
-\boldsymbol{y}(t)] \, ,\,dW_{s}\, \big \rangle \\
- & \frac{1}{4\epsilon}\, \int_{0}^{\delta_{\epsilon}}
\big|\, \boldsymbol{b}_0(\boldsymbol{y}_{\epsilon}(t))
-\boldsymbol{b}_0 (\boldsymbol{y}(t))
+D\boldsymbol{b}_0(\boldsymbol{y}(t))
\, [ \boldsymbol{y}_{\epsilon}(t)
-\boldsymbol{y}(t)\,] \,\big |^{2}\, \textup{d}s\;.
\end{align*}
Thus, the left-hand side of (\ref{eq:pinsker}) is bounded by
\begin{equation}
\frac{1}{2\epsilon}\int_{0}^{\delta_{\epsilon}}
\mathbb{E}_{\boldsymbol{x}}\left[ \, \big| \,
\boldsymbol{b}_0 (\boldsymbol{y}_{\epsilon}(t))
-\boldsymbol{b}_0 (\boldsymbol{y}(t))
+D\boldsymbol{b}_0 (\boldsymbol{y}(t))
\, [\, \boldsymbol{y}_{\epsilon}(t)
-\boldsymbol{y}(t) \,] \, \big|^{2}\, \right]\textup{d}s\;.
\label{eq:pnbd1}
\end{equation}
By condition (\ref{growth}) on $D\boldsymbol{b}_0$ (which is milder
than that of \cite{BJ}), and the argument presented in
\cite[Proposition 3.3]{BJ}, we can conclude that
$d_{\mathrm{TV}}\left(\boldsymbol{y}_{\epsilon}
(\delta_{\epsilon};\boldsymbol{x}) \, ,\,
\boldsymbol{z}_{\epsilon}(\delta_{\epsilon}; \boldsymbol{x})\right)
\le \delta_{\epsilon}^{1/2}$.  We emphasize that in order to control
the term
$\boldsymbol{b}_0(\boldsymbol{y}_{\epsilon}(t)) -
\boldsymbol{b}_0(\boldsymbol{y}(t))$, we need the estimate of the
fourth moment stated in Proposition \ref{prop:main_apprx}. In all
other places, a bound of the second moment suffices.

By Lemma \ref{lem:mom_est}, the probability that the starting point
$\boldsymbol{y}_{\epsilon}(t_{\epsilon}-\delta_{\epsilon};\boldsymbol{x})$
does not belong to $\mathcal{D}_{r_4}$ vanishes as $\epsilon\to 0$.
This fact together with the bound obtained in the previous paragraph
yields that
\begin{equation*}
\lim_{\epsilon\rightarrow0}\sup_{\boldsymbol{x}\in\mathcal{D}}
d_{\mathrm{TV}}\left(\boldsymbol{y}_{\epsilon}(\delta_{\epsilon};
\boldsymbol{y}_{\epsilon}(t_{\epsilon}
-\delta_{\epsilon};\boldsymbol{x})) \,,\,
\boldsymbol{z}_{\epsilon}(\delta_{\epsilon};
\boldsymbol{y}_{\epsilon}(t_{\epsilon}
-\delta_{\epsilon};\boldsymbol{x}))\right)=0\;.
\end{equation*}
This completes the estimate of the first term on the right-hand side
of (\ref{eq:dtvdec}).

We turn to the second term. By Proposition \ref{prop:main_apprx} the
starting points
$\boldsymbol{y}_{\epsilon}(t_{\epsilon}-\delta_{\epsilon};\boldsymbol{x})$
and
$\boldsymbol{z}_{\epsilon}(t_{\epsilon}-\delta_{\epsilon};\boldsymbol{x})$
are close. Since the process $\bs z_\epsilon (\cdot)$ is Gaussian, the
distance
\begin{equation*}
d_{\mathrm{TV}}\left(\boldsymbol{z}_{\epsilon}(\delta_{\epsilon};
\boldsymbol{w}),\,\boldsymbol{z}_{\epsilon}(\delta_{\epsilon};
\boldsymbol{w}')\right)
\end{equation*}
is complete determined by $\boldsymbol{w}$ and $\boldsymbol{w}'$, and
one can follow the arguments presented in \cite[Proposition
3.3]{BJ}. All error term appearing in the proof are uniform on
the starting point $\boldsymbol{x}\in\mathcal{D}_{r_4}$ because all
estimates obtained in the next subsections are uniform. Thus,
\begin{equation*}
\lim_{\epsilon\rightarrow0}\sup_{\boldsymbol{x}\in\mathcal{D}_{r_4}}
d_{\mathrm{TV}}\left(\boldsymbol{z}_{\epsilon}(\delta_{\epsilon};
\boldsymbol{y}_{\epsilon}(t_{\epsilon}-\delta_{\epsilon};
\boldsymbol{x})),\,\boldsymbol{z}_{\epsilon}(\delta_{\epsilon};
\boldsymbol{z}_{\epsilon}(t_{\epsilon}
-\delta_{\epsilon};\boldsymbol{x}))\right) \;=\; 0\;.
\end{equation*}
This completes the proof of Lemma \ref{prop:tvm1}.

\subsection*{Exponential Stability}

In this subsection and in the next we provide the estimates used in
the proofs of Lemmata \ref{prop:tvm1} and \ref{prop:tvm2}.  Recall
that we denote by $\lambda>0$ the smallest eigenvalue of
$\mathbb{H}_0$. The following lemma substitutes \cite[display
(2.2)]{BJ}.

\begin{lem}
\label{lem:stability}
For all $t\ge0$,
\begin{equation}
\left\langle
\boldsymbol{y}(t),\,\mathbb{H}_0 \, \boldsymbol{y}(t)\right\rangle
\, \leq\, \mathrm{e}^{-\lambda t}
\left\langle
\boldsymbol{y}(0),\,\mathbb{H}_0\, \boldsymbol{y}(0)\right\rangle
\;.
\label{eq:conv_uld0}
\end{equation}
\end{lem}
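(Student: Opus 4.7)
The plan is to use $V(t) := \langle \boldsymbol{y}(t), \mathbb{H}_0 \boldsymbol{y}(t) \rangle$ as a Lyapunov function and prove the bound by Grönwall's inequality. Differentiating along the flow \eqref{eq:x_0},
\[
\frac{d}{dt} V(t) \;=\; 2\, \langle \dot{\boldsymbol{y}}(t),\, \mathbb{H}_0 \boldsymbol{y}(t) \rangle \;=\; 2\, \langle \boldsymbol{b}_0(\boldsymbol{y}(t)),\, \mathbb{H}_0 \boldsymbol{y}(t) \rangle \;,
\]
where we used that $\mathbb{H}_0$ is symmetric. By the contraction assumption \eqref{eq:contraction} applied at $\boldsymbol{x} = \boldsymbol{y}(t)$, the right-hand side is bounded above by $-\, |\mathbb{H}_0 \boldsymbol{y}(t)|^2$.

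The next step is to compare $|\mathbb{H}_0 \boldsymbol{y}|^2$ with $\langle \boldsymbol{y}, \mathbb{H}_0 \boldsymbol{y} \rangle$. Since $\boldsymbol{0}$ is a non-degenerate local minimum of $U_0$ (as $U_0$ is of class $C^3$ with $\boldsymbol{0}$ an isolated critical point corresponding to a minimum, and $\mathbb{H}_0$ arises as the Hessian of a Morse function in the underlying model), $\mathbb{H}_0$ is symmetric positive definite with smallest eigenvalue $\lambda > 0$. Diagonalizing $\mathbb{H}_0$ in an orthonormal basis yields the operator inequality $\mathbb{H}_0^2 \geq \lambda \, \mathbb{H}_0$, so
\[
|\mathbb{H}_0 \boldsymbol{y}|^2 \;=\; \langle \boldsymbol{y},\, \mathbb{H}_0^2\, \boldsymbol{y}\rangle \;\geq\; \lambda\, \langle \boldsymbol{y},\, \mathbb{H}_0\, \boldsymbol{y}\rangle\;.
\]

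Combining the two inequalities gives the differential inequality $\frac{d}{dt} V(t) \leq -\lambda\, V(t)$. Since $V(t) \geq 0$ (because $\mathbb{H}_0$ is positive definite), an application of Grönwall's lemma yields $V(t) \leq e^{-\lambda t}\, V(0)$, which is exactly \eqref{eq:conv_uld0}. There is no real obstacle in this argument; the only subtle point is the justification that $\mathbb{H}_0$ is strictly positive definite (rather than merely semi-definite), which follows from the standing assumption that $\boldsymbol{0}$ is a non-degenerate local minimum of $U_0$ in the setting where $\mathbb{H}_0$ is used.
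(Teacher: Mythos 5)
Your argument is correct and coincides with the paper's own proof: both differentiate $\langle \boldsymbol{y}(t),\mathbb{H}_0\boldsymbol{y}(t)\rangle$, apply the contraction hypothesis \eqref{eq:contraction} to get the bound $-|\mathbb{H}_0\boldsymbol{y}(t)|^2$, use that $\lambda$ is the smallest eigenvalue of the positive definite matrix $\mathbb{H}_0$ to obtain $|\mathbb{H}_0\boldsymbol{y}|^2\ge\lambda\langle\boldsymbol{y},\mathbb{H}_0\boldsymbol{y}\rangle$, and conclude by Gr\"onwall. No issues.
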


\begin{proof}
By (\ref{eq:x_0}) and (\ref{eq:contraction}),
\begin{equation}
\frac{d}{dt}\left\langle \boldsymbol{y}(t),\,
\mathbb{H}_0 \, \boldsymbol{y}(t)\right\rangle
\,=\, 2\, \left\langle \boldsymbol{b}_0(\boldsymbol{y}(t)),\,
\mathbb{H}_0 \, \boldsymbol{y}(t)\right\rangle
\, \le\, -\, \left|\, \mathbb{H}_0 \, \boldsymbol{y}(t) \, \right|^{2}
\, \le\, -\, \lambda\, \left\langle \boldsymbol{y}(t),\,
\mathbb{H}_0 \, \boldsymbol{y}(t)\right\rangle
\label{eq:conv1}
\end{equation}
since $\lambda$ is the smallest eigenvalue of $\mathbb{H}_0$.
\end{proof}

\begin{remark}
\label{rem:stability}
Fix $r>0$.  By the previous lemma,
$\boldsymbol{y}(t)\in\mathcal{D}_{r}$ for all $t\ge0$
provided $\boldsymbol{y}(0)\in\mathcal{D}_{r}$.
\end{remark}

A similar computation for $\boldsymbol{y}_{\epsilon}(t)$ instead of
$\boldsymbol{y}(t)$ yields the moment estimate stated in the next
lemma. This bound plays the role of \cite[condition (H)]{BJ}.  Denote
by $\mathbb{E}_{\boldsymbol{x}}$ the expectation with respect to
$\boldsymbol{y}_{\epsilon}(\cdot)$ starting at $\boldsymbol{x}$.
Moreover, from now on, all estimates presented hold only for
sufficiently small $\epsilon$.

\begin{lem}
\label{lem:mom_est}
Fix $r>0$. For all $n\ge1$, there exists a constant $C(n)>0$ such that
\begin{equation*}
\sup_{t\ge0}\sup_{\boldsymbol{x}\in\mathcal{D}_{r}}
\mathbb{E}_{\bs x} \left\langle \boldsymbol{y}_{\epsilon}(t),\,
\mathbb{H}_0 \, \boldsymbol{y}_{\epsilon}(t)\right\rangle ^{n}
\, \leq \, e^{-(n \lambda/ 4)\, t}
\left\langle \boldsymbol{x},\,\mathbb{H}_0\,
\boldsymbol{x}\right\rangle
\,+ \, C(n)\, \epsilon\;.
\end{equation*}
\end{lem}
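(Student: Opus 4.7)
The plan is to apply Itô's formula to the function $V(\bs y) := \langle \bs y, \mathbb{H}_0 \bs y\rangle^n$, imitating the deterministic computation carried out in the proof of Lemma \ref{lem:stability}, and then convert the resulting differential inequality into the claimed exponential bound via Gronwall's inequality.

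The first step is to compute the generator action
\[
\mathcal{L} V(\bs y) \;=\; 2n\,\langle \bs y, \mathbb{H}_0 \bs y\rangle^{n-1}\,\langle \bs b_0(\bs y), \mathbb{H}_0 \bs y\rangle \;+\; \epsilon\,\mathrm{Tr}\bigl(\nabla^2 V(\bs y)\bigr),
\]
where $\mathcal{L} = \bs b_0\cdot \nabla + \epsilon \Delta$. By the contraction assumption \eqref{eq:contraction} combined with the bound $|\mathbb{H}_0 \bs y|^2 \ge \lambda\,\langle \bs y,\mathbb{H}_0 \bs y\rangle$ (which holds because $\lambda$ is the smallest eigenvalue of $\mathbb{H}_0$), the drift contribution is at most $-n\lambda\,V(\bs y)$; this is the stochastic analogue of the estimate \eqref{eq:conv1}. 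A direct calculation also yields $\mathrm{Tr}(\nabla^2 V(\bs y)) \le C_1(n)\,\langle \bs y, \mathbb{H}_0 \bs y\rangle^{n-1}$, so Young's inequality applied with conjugate exponents $n/(n-1)$ and $n$ converts the Laplacian term into $\tfrac{3n\lambda}{4}\,V(\bs y) + C_2(n)\,\epsilon^n$. Combining the two estimates produces the pointwise bound $\mathcal{L}V(\bs y) \le -\tfrac{n\lambda}{4}\,V(\bs y) + C_2(n)\,\epsilon^n$. Dynkin's formula then yields $\tfrac{d}{dt}\,\mathbb{E}_{\bs x}[V(\bs y_\epsilon(t))] \le -\tfrac{n\lambda}{4}\,\mathbb{E}_{\bs x}[V(\bs y_\epsilon(t))] + C_2(n)\,\epsilon^n$, and Gronwall's lemma produces $\mathbb{E}_{\bs x}[V(\bs y_\epsilon(t))] \le e^{-n\lambda t/4}\,V(\bs x) + \tfrac{4 C_2(n)}{n\lambda}\,\epsilon^n$. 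For $\bs x \in \mathcal{D}_r$, $V(\bs x) \le r^{2n}$, so after absorbing constants and using $\epsilon^n \le \epsilon$ for small $\epsilon$, we arrive at the asserted bound.

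The main obstacle is the justification of Dynkin's formula for the unbounded function $V$, which grows polynomially of order $2n$. \emph{A priori} it is not clear that $\mathbb{E}_{\bs x}[V(\bs y_\epsilon(t\wedge\tau_N))]$ converges to $\mathbb{E}_{\bs x}[V(\bs y_\epsilon(t))]$ as $N\to\infty$, where $\tau_N := \inf\{t\ge 0 : |\bs y_\epsilon(t)|\ge N\}$. I would address this by first running the argument for the truncations $V_M := V\wedge M$, for which Itô calculus is unambiguous, obtaining a uniform-in-$M$ bound on $\mathbb{E}_{\bs x}[V_M(\bs y_\epsilon(t))]$ and then applying monotone convergence to recover the bound for $V$. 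The linear growth assumption \eqref{growth} on $\bs b_0$ (and hence on $\nabla V \cdot \bs b_0$) is exactly what is needed to control the stopped expectation and close the localization.
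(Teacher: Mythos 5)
Your argument is correct and follows essentially the same route as the paper: an It\^o/generator computation for $\langle\bs y,\mathbb{H}_0\,\bs y\rangle^n$, the contraction hypothesis \eqref{eq:contraction} together with $|\mathbb{H}_0\bs y|^2\ge\lambda\langle\bs y,\mathbb{H}_0\,\bs y\rangle$ for the drift, a Young-type inequality to absorb the $\epsilon\,\langle\bs y,\mathbb{H}_0\,\bs y\rangle^{n-1}$ term, and Gronwall. The only differences are cosmetic --- the paper iterates It\^o in two steps and uses $x^{n-1}\le\frac{n-1}{n}x^{n}+\frac{1}{n}$ in place of your weighted Young inequality --- and your explicit localization step (better done with stopping times and Fatou than with the non-smooth truncation $V\wedge M$) addresses a point the paper leaves implicit.
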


\begin{proof}
By Ito's formula, (\ref{eq:contraction}), and similar computation with
(\ref{eq:conv1}), we get
\begin{equation}
d\left\langle \boldsymbol{y}_{\epsilon}(t),\,
\mathbb{H}_0\, \boldsymbol{y}_{\epsilon}(t)\right\rangle
\, \le\, \left[-\lambda\left\langle \boldsymbol{y}_{\epsilon}(t),\,
\mathbb{H}_0 \, \boldsymbol{y}_{\epsilon}(t)\right\rangle
+2\mathfrak{h\epsilon}\right]dt+\sqrt{2\epsilon}\left\langle
2\mathbb{H}_0 \, \boldsymbol{y}_{\epsilon}(t),\,dW_{t}\right\rangle
\;,
\label{eq:itoy}
\end{equation}
where $\mathfrak{h}=\text{tr}(\mathbb{H}_0)$. Thus, by Ito's formula
and (\ref{eq:itoy}),
\begin{align*}
& d\left\langle \boldsymbol{y}_{\epsilon}(t),\,\mathbb{H}_0\,
\boldsymbol{y}_{\epsilon}(t)\right\rangle ^{n} \; \le\;
n\left\langle \boldsymbol{y}_{\epsilon}(t),\,\mathbb{H}_0\,
\boldsymbol{y}_{\epsilon}(t)\right\rangle
^{n-1}\left[\, -\lambda\left\langle
\boldsymbol{y}_{\epsilon}(t),\,\mathbb{H}_0\,
\boldsymbol{y}_{\epsilon}(t)\right\rangle
+2\mathfrak{h\epsilon} \, \right]dt \\
& \qquad\qquad + \; \sqrt{2\epsilon}\left\langle
2\mathbb{H}_0\,  \boldsymbol{y}_{\epsilon}(t),\,dW_{t}\right\rangle
+\frac{n(n-1)}{2}\left\langle
\boldsymbol{y}_{\epsilon}(t),\,\mathbb{H}_0\,
\boldsymbol{y}_{\epsilon}(t)\right\rangle^{n-2}
\times8\epsilon\left|\mathbb{H}_0\,
\boldsymbol{y}_{\epsilon}(t)\right|^{2}dt\;.
\end{align*}
For $\epsilon$ sufficiently small, this expression is bounded by
\begin{equation*}
\left\langle \boldsymbol{y}_{\epsilon}(t),\,
\mathbb{H}_0\, \boldsymbol{y}_{\epsilon}(t)\right\rangle ^{n-1}
\Big[\, -\frac{n\lambda}{2}
\left\langle \boldsymbol{y}_{\epsilon}(t),\,\mathbb{H}_0\,
\boldsymbol{y}_{\epsilon}(t)\right\rangle
+2\mathfrak{h}n\epsilon\, \Big] \, dt
\;+\; \sqrt{2\epsilon}\left\langle 2\mathbb{H}_0\,
\boldsymbol{y}_{\epsilon}(t),\,dW_{t}\right\rangle \;.
\end{equation*}
Since
\begin{equation*}
\left\langle \boldsymbol{y}_{\epsilon}(t),\,\mathbb{H}_0\,
\boldsymbol{y}_{\epsilon}(t)\right\rangle ^{n-1}
\; \le\; \frac{n-1}{n}\left\langle \boldsymbol{y}_{\epsilon}(t),\,
\mathbb{H}_0\, \boldsymbol{y}_{\epsilon}(t)\right\rangle ^{n}+\frac{1}{n}\;,
\end{equation*}
for small enough $\epsilon>0$,
\begin{equation*}
d\left\langle \boldsymbol{y}_{\epsilon}(t),\,
\mathbb{H}_0\, \boldsymbol{y}_{\epsilon}(t)\right\rangle ^{n}
\;\le\; \Big [\, -\frac{n\lambda}{4}\left\langle
\boldsymbol{y}_{\epsilon}(t),\,
\mathbb{H}_0\, \boldsymbol{y}_{\epsilon}(t)\right\rangle
^{n}+c(n)\epsilon\, \Big ]\, dt
\;+\; \sqrt{2\epsilon}\left\langle
2\mathbb{H}_0\,  \boldsymbol{y}_{\epsilon}(t),\,dW_{t}\right\rangle
\end{equation*}
for some finite constant $c(n)$.
Hence, by Gronwall's inequality,
\begin{equation*}
\mathbb{E}_{\boldsymbol{x}}\left\langle
\boldsymbol{y}_{\epsilon}(t),\,\mathbb{H}_0\,
\boldsymbol{y}_{\epsilon}(t)\right\rangle ^{n}\le
e^{- (n\lambda/4) \, t}\left\langle \boldsymbol{x},\,\mathbb{H}_0\,
\boldsymbol{x}\right\rangle +\frac{4c(n)\epsilon}{n\lambda}\;,
\end{equation*}
as claimed
\end{proof}

It follows from the estimates derived in the previous lemma, the
argument presented in \cite[page 1192]{BJ} (cf. the last line of the
proof of \cite[Proposition 3.7]{BJ}) and the dominated and monotone
convergence theorems that there exists a finite constant $C_0$ such
that
\begin{equation}
\label{apf1}
\pi_{\epsilon}((\mathcal{D}_{r_4})^{c}) \,\leq\, C_0 \, \epsilon
\end{equation}
for all $\epsilon$ sufficiently small.

\subsection*{Gaussian Approximation}

Hereafter, we couple the processes $\boldsymbol{y}_{\epsilon}(\cdot)$,
$\xi(\cdot)$, and $\boldsymbol{z}_{\epsilon}(\cdot)$ by using the same
driving Brownian motion $W_{t}$. This coupled probability law and
associated expectation will be denoted by $\mb{P}_{\boldsymbol{x}}$
and $\mb{E}_{\boldsymbol{x}}$.

\begin{prop}[Gaussian approximation]
\label{prop:main_apprx}
There exist constants $\alpha_{1},\,\alpha_{2}>0$ such that
\begin{equation*}
\sup_{t\le t_{\epsilon}}
\sup_{\boldsymbol{x}\in\mathcal{D}_{r_4}}
\mb{E}_{\boldsymbol{x}}\left[\,
|\, \boldsymbol{y}_{\epsilon}(t; \bs x)-
\boldsymbol{z}_{\epsilon}(t; \bs x)\, |^{4}
\, \right] \, \leq\, \alpha_{1}\, \epsilon^{2+\alpha_{2}}\;.
\end{equation*}
\end{prop}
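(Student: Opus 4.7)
The approach is to couple $\bs y_\epsilon(\cdot)$, $\bs y(\cdot)$, and $\bs z_\epsilon(\cdot)$ through the same Brownian motion $W_t$ and analyze the difference $\Delta_\epsilon(t) := \bs y_\epsilon(t) - \bs z_\epsilon(t)$. Subtracting \eqref{eq:sdeZ} from \eqref{eq:x_eps}, the stochastic integrals cancel, leaving the $\omega$-wise linear ODE
\begin{equation*}
\dot \Delta_\epsilon(t) \,=\, \mathbb{A}(\bs y(t))\, \Delta_\epsilon(t) \,+\, R_\epsilon(t)\,, \qquad \Delta_\epsilon(0) \,=\, 0\,,
\end{equation*}
where $R_\epsilon(t) := \bs b_0(\bs y_\epsilon(t)) - \bs b_0(\bs y(t)) - \mathbb{A}(\bs y(t))(\bs y_\epsilon(t) - \bs y(t))$ is the first-order Taylor remainder. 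The heuristic is that $|\bs y_\epsilon - \bs y| = O(\sqrt\epsilon)$, hence $|R_\epsilon| = O(\epsilon)$, and since $\Delta_\epsilon$ solves a stable linear system driven by an $O(\epsilon)$ term with zero initial data, one expects $|\Delta_\epsilon| = O(\epsilon)$ and thus $\mb E|\Delta_\epsilon|^4 = O(\epsilon^4)$, much stronger than the required $\epsilon^{2+\alpha_2}$.

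To make this rigorous, I first establish contraction in the $\mathbb{K}$-inner product. By Remark \ref{rem:stability}, \eqref{eq:condr_4}, and $\mc D_{r_4} \subset B(0, r_4')$, one has $\bs y(t) \in B(0, r_4')$ for every $t \ge 0$, so \eqref{eq:ricatti1} and \eqref{eq:A(x)-A} give the uniform bound $\mathbb{A}(\bs y(t))^\dagger \mathbb{K} + \mathbb{K}\, \mathbb{A}(\bs y(t)) \le -\tfrac{1}{2} \mathbb{I}$. Differentiating $\langle \Delta_\epsilon, \mathbb{K} \Delta_\epsilon\rangle$ along the ODE and controlling $2\langle \Delta_\epsilon, \mathbb{K} R_\epsilon\rangle$ via Young's inequality yields
\begin{equation*}
\frac{d}{dt} \langle \Delta_\epsilon, \mathbb{K}\, \Delta_\epsilon\rangle \,\le\, -\, c_1\, \langle \Delta_\epsilon, \mathbb{K}\, \Delta_\epsilon\rangle \,+\, c_2\, |R_\epsilon|^2
\end{equation*}
for some positive constants $c_1, c_2$. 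Combined with $\Delta_\epsilon(0) = 0$, Gronwall yields $|\Delta_\epsilon(t)|^2 \le C \int_0^t e^{-c_1(t-s)}\, |R_\epsilon(s)|^2\, ds$. Squaring and applying Jensen against the probability measure $c_1 e^{-c_1(t-s)}\, ds$ on $[0, t]$ upgrades this to
\begin{equation*}
|\Delta_\epsilon(t)|^4 \,\le\, C'\, \int_0^t e^{-c_1(t-s)}\, |R_\epsilon(s)|^4\, ds\,.
\end{equation*}

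It then suffices to bound $\mb E_{\bs x}|R_\epsilon(s)|^4$ uniformly in $s \le t_\epsilon$ and $\bs x \in \mc D_{r_4}$ by a multiple of $\epsilon^{2+\alpha_2}$. I split on whether $\bs y_\epsilon(s) \in B(0, r_3)$. On this event, condition (2) together with $U_0 \in C^3$ and $\bs \ell_0 \in C^2$ ensures $\bs b_0 \in C^2$ in a neighborhood of the path, and the integral form of the Taylor remainder yields $|R_\epsilon(s)| \le C_R\, |\bs y_\epsilon(s) - \bs y(s)|^2$. A moment bound for $\bs D(s) := \bs y_\epsilon(s) - \bs y(s)$ is obtained by imitating the argument of Lemma \ref{lem:mom_est} applied to $\langle \bs D, \mathbb{K} \bs D\rangle^4$: the drift of $\bs D$ is $\big(\int_0^1 \mathbb{A}(\bs y + s \bs D)\, ds\big)\, \bs D$, which is dissipative whenever the chord $\bs y + s\bs D$ remains in $B(0, r_4')$, and this gives $\mb E_{\bs x}|\bs D(s)|^8 \le C \epsilon^4$. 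On the complementary event $\{\bs y_\epsilon(s) \notin B(0, r_3)\}$, the linear growth \eqref{growth} yields $|R_\epsilon(s)| \le C(1 + |\bs y_\epsilon(s)|^2)$; Cauchy--Schwarz combined with high-moment bounds from Lemma \ref{lem:mom_est} and a Markov-type estimate for $\mb P(|\bs y_\epsilon(s)| > r_3)$ makes this contribution negligible (bounded by an arbitrary power of $\epsilon$).

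The main obstacle I anticipate is the moment propagation for $\bs D(s)$ on the long interval $[0, t_\epsilon]$ with $t_\epsilon = \epsilon^{-\theta}$: the contraction for $\bs D$ only holds while the entire chord $\bs y(s) + u\bs D(s)$, $u \in [0,1]$, lies in the neighborhood $B(0, r_4')$ where \eqref{eq:A(x)-A} applies, and one must control the small-probability exit from this set without losing the $\epsilon^4$ scaling. This should be handled by a stopping-time localization to the first exit of $\bs y_\epsilon$ from a slightly larger superlevel set of $\langle \cdot, \mathbb{H}_0\, \cdot\rangle$, combined with the tail estimates contained in (or extractable from) Lemma \ref{lem:mom_est}, following the structure of \cite[Proposition 3.7]{BJ} but adapted to the merely eigenvalue-negative (rather than negative-definite) matrix $\mathbb{A}$.
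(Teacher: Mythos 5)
Your proposal is correct and reaches the stated bound, but it reorganizes the argument in a way that genuinely differs from the paper's proof. Both arguments exploit the cancellation of the Brownian increments, the Lyapunov function $\langle\,\cdot\,,\mathbb{K}\,\cdot\,\rangle$ with the dissipativity coming from \eqref{eq:ricatti1} and \eqref{eq:A(x)-A}, and a splitting on the event $\mathcal{A}_\epsilon$ that $\boldsymbol{y}_\epsilon$ stays in $\mathcal{D}_{2r_4}$, handled by Cauchy--Schwarz and the exit-probability estimate. The divergence is in the treatment of the Taylor remainder. The paper bounds the normalized remainder $\boldsymbol{q}_\epsilon$ by $C\sqrt{\epsilon}\,(|\boldsymbol{r}_\epsilon|^2+|\boldsymbol{\xi}|^2)$, which reinjects the unknown $\boldsymbol{r}_\epsilon$ quadratically into the differential inequality; this Riccati structure is resolved with Perov's inequality on the auxiliary event $\mathcal{B}_\epsilon$ controlling $\sup_{s\le t_\epsilon}\langle\boldsymbol{\xi}(s),\mathbb{K}\boldsymbol{\xi}(s)\rangle$, and yields the exponent $2-6\theta$. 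You instead bound $|R_\epsilon|\le C\,|\boldsymbol{y}_\epsilon-\boldsymbol{y}|^2$ and treat the forcing as exogenous, so that Gronwall plus Jensen reduces the whole proposition to a uniform-in-time eighth-moment bound of order $\epsilon^{4}$ for $\boldsymbol{y}_\epsilon-\boldsymbol{y}$. That bound is not among the paper's stated lemmata, but, as you indicate, it follows from the same It\^o--Lyapunov computation as Lemmata \ref{lem:mom_est} and \ref{lem:mom_Yt} applied to $(\boldsymbol{y}_\epsilon-\boldsymbol{y})/\sqrt{2\epsilon}$, using that the chord-averaged Jacobian is dissipative for $\mathbb{K}$ while both endpoints lie in the convex set $\mathcal{D}_{2r_4}$; the leakage from the complementary event costs only $\mathbb{P}_{\bs x}(\mathcal{A}_\epsilon^c)^{1/2}=O(\epsilon^{4(1-\theta)})$ after Cauchy--Schwarz, which still gives $\alpha_2=2-4\theta>0$ (and higher moments in the martingale estimate would improve this at will). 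What your route buys is the avoidance of Perov's inequality and of the event $\mathcal{B}_\epsilon$; what it costs is one additional moment lemma for the difference process, subject to exactly the localization issue the paper already resolves for $\mathcal{A}_\epsilon$, so your plan for closing it is sound.
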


This proposition corresponds to \cite[display (3.12)]{BJ} which plays
a crucial role in the proof of the main result. Since the proof of
\cite[display (3.12)]{BJ} requires conditions (C) and (H) of
\cite{BJ}, and these conditions are not assumed here, we develop an
alternative approach below, based on \cite{LeeRamSeo}.

\begin{lem}
\label{lem:drift}
There exists $c>0$ such that
\begin{equation*}
\langle\mathbb{K}\boldsymbol{x},\,
\mathbb{A}(\boldsymbol{y}(t))\boldsymbol{x}\rangle
\, \le\, -\, c\,
\langle\boldsymbol{x},\,\mathbb{K}\boldsymbol{x}\rangle\;.
\end{equation*}
for all $t\ge0$ and $\boldsymbol{x}\in\mathcal{D}_{2r_4}$.
\end{lem}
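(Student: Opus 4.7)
The plan is to decompose $\mathbb{A}(\boldsymbol{y}(t)) = \mathbb{A} + [\mathbb{A}(\boldsymbol{y}(t)) - \mathbb{A}]$ and handle the constant-coefficient piece via the Lyapunov identity \eqref{eq:ricatti1}, while treating the second piece as a small perturbation controlled by \eqref{eq:A(x)-A}. For the perturbation bound to be applicable, the deterministic trajectory $\boldsymbol{y}(t)$ must remain in $B(0,r'_4)$ for every $t\ge 0$; this is immediate from Remark \ref{rem:stability} (which gives forward-invariance of $\mathcal{D}_{2r_4}$ under \eqref{eq:x_0}) combined with the inclusion $\mathcal{D}_{2r_4}\subset B(0,r'_4)$ built into the choice of $r_4$ just above \eqref{eq:condr_4}.

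The key computation is then the symmetrization
\begin{equation*}
2\,\langle \mathbb{K}\boldsymbol{x},\, \mathbb{A}(\boldsymbol{y}(t))\boldsymbol{x}\rangle \;=\; \langle \boldsymbol{x},\, [\mathbb{A}^{\dagger}\mathbb{K}+\mathbb{K}\mathbb{A}]\,\boldsymbol{x}\rangle \;+\; \langle \boldsymbol{x},\, [\mathbb{K}(\mathbb{A}(\boldsymbol{y}(t))-\mathbb{A}) + (\mathbb{A}(\boldsymbol{y}(t))-\mathbb{A})^{\dagger}\mathbb{K}]\,\boldsymbol{x}\rangle\;.
\end{equation*}
By \eqref{eq:ricatti1} the first term on the right equals $-|\boldsymbol{x}|^{2}$, and by \eqref{eq:A(x)-A} (applied at $\boldsymbol{y}(t)\in B(0,r'_4)$) the absolute value of the second term is at most $\tfrac{1}{2}|\boldsymbol{x}|^{2}$. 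Combining these yields $\langle \mathbb{K}\boldsymbol{x},\,\mathbb{A}(\boldsymbol{y}(t))\boldsymbol{x}\rangle \le -\tfrac{1}{4}\,|\boldsymbol{x}|^{2}$.

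To close the argument I would then convert $|\boldsymbol{x}|^{2}$ into $\langle \boldsymbol{x},\mathbb{K}\boldsymbol{x}\rangle$ using the fact that $\mathbb{K}$ is symmetric and positive definite, so that $\langle \boldsymbol{x},\mathbb{K}\boldsymbol{x}\rangle \le \|\mathbb{K}\|\,|\boldsymbol{x}|^{2}$; setting $c := 1/(4\|\mathbb{K}\|)$ gives the claimed inequality (which in fact holds for every $\boldsymbol{x}\in\mathbb{R}^{d}$, not just $\boldsymbol{x}\in\mathcal{D}_{2r_4}$). I do not anticipate any genuine obstacle here: the Lyapunov identity \eqref{eq:ricatti1} and the perturbation bound \eqref{eq:A(x)-A} were designed for precisely this purpose, and the only delicate point is the bookkeeping of the constants $r_4$ and $r'_4$, which has already been arranged in the setup preceding \eqref{eq:condr_4}.
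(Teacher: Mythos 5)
Your proof is correct and follows essentially the same route as the paper's: symmetrize the quadratic form, use the Lyapunov identity \eqref{eq:ricatti1} for the constant part $\mathbb{A}$, bound the perturbation via \eqref{eq:A(x)-A}, and absorb $|\boldsymbol{x}|^{2}$ into $\langle\boldsymbol{x},\mathbb{K}\boldsymbol{x}\rangle$ using the boundedness of $\mathbb{K}$. You are in fact slightly more careful than the paper in noting that the hypothesis of \eqref{eq:A(x)-A} concerns the trajectory point $\boldsymbol{y}(t)$ (handled by the forward-invariance of $\mathcal{D}_{r_4}$) and that the resulting inequality holds for all $\boldsymbol{x}\in\mathbb{R}^{d}$, which is what is actually used later when the lemma is applied to $\boldsymbol{\xi}(t)$ and $\boldsymbol{r}_{\epsilon}(t)$.
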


\begin{proof}
By (\ref{eq:A(x)-A}) and (\ref{eq:ricatti1}),
\begin{align*}
2\langle\mathbb{K}\boldsymbol{x},\,
\mathbb{A}(\boldsymbol{y}(t))\boldsymbol{x}\rangle &
=\left\langle \boldsymbol{x},\,
\left[\mathbb{A}(\boldsymbol{y}(t))^{\dagger}\mathbb{K}
+\mathbb{K}\mathbb{A}(\boldsymbol{y}(t))\right]
\boldsymbol{x}\right\rangle \\
& \le\left\langle \boldsymbol{x},\,
\left[\mathbb{A}^{\dagger}\mathbb{K}+\mathbb{K}\mathbb{A}\right]
\boldsymbol{x}\right\rangle +\frac{1}{2}
|\boldsymbol{x}|^{2}=-\frac{1}{2}|\boldsymbol{x}|^{2} \;.
\end{align*}
As $\mathbb{K}$ is bounded, the previous term is less than or equal to
$-\, c\, \langle\boldsymbol{x},\,\mathbb{K}\boldsymbol{x}\rangle$ for
some positive contant $c$, as claimed.
\end{proof}

\begin{lem}
\label{lem:mom_Yt}
For all $n\ge1$, there exists a finite constant $C(n)>0$
such that
\begin{equation*}
\sup_{t\ge0}\sup_{\boldsymbol{x}\in\mathcal{D}_{r_4}}
\mb{E}_{\boldsymbol{x}}\left[\left\langle
\boldsymbol{\xi}(t),\,
\mathbb{K}\boldsymbol{\xi}(t)\right\rangle^{n}\right]\leq C(n)\;.
\end{equation*}
\end{lem}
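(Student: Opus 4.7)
The plan is to prove the moment bound by a Lyapunov-Gronwall argument, mimicking the proof of Lemma \ref{lem:mom_est} but with $\mathbb{K}$ in place of $\mathbb{H}_0$. The key ingredient will be Lemma \ref{lem:drift}, which plays the role of the drift contraction, and the observation that for $x\in\mathcal{D}_{r_4}$ the deterministic trajectory $\boldsymbol{y}(t)$ remains in $\mathcal{D}_{r_4}\subset\mathcal{D}_{2r_4}$ for all $t\ge 0$ by Remark \ref{rem:stability}, so that Lemma \ref{lem:drift} applies uniformly in time.

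First I would apply Ito's formula to the process $\langle \boldsymbol{\xi}(t),\mathbb{K}\boldsymbol{\xi}(t)\rangle$. Using the SDE (\ref{eq:xi}) and the identity $d[\xi_i,\xi_j]_t=\delta_{ij}\,dt$,
\begin{equation*}
d\langle \boldsymbol{\xi}(t),\mathbb{K}\boldsymbol{\xi}(t)\rangle
\;=\; \big\{\, 2\langle \mathbb{K}\boldsymbol{\xi}(t),\mathbb{A}(\boldsymbol{y}(t))\boldsymbol{\xi}(t)\rangle + \mathrm{tr}(\mathbb{K})\, \big\}\, dt
\;+\; 2\langle \mathbb{K}\boldsymbol{\xi}(t),\, dW_t\rangle \;.
\end{equation*}
Lemma \ref{lem:drift} bounds the drift by $-2c\langle \boldsymbol{\xi}(t),\mathbb{K}\boldsymbol{\xi}(t)\rangle$, and since $\boldsymbol{\xi}(0)=\boldsymbol{0}$, taking expectation (after a standard localisation with $\tau_R=\inf\{t:|\boldsymbol{\xi}(t)|>R\}$ to ensure the stochastic integral is a true martingale) and applying Gronwall yields the bound $\mathbf{E}_{\boldsymbol{x}}[\langle\boldsymbol{\xi}(t),\mathbb{K}\boldsymbol{\xi}(t)\rangle]\le \mathrm{tr}(\mathbb{K})/(2c)$, uniformly in $t\ge 0$ and $\boldsymbol{x}\in\mathcal{D}_{r_4}$. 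This establishes the case $n=1$.

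For general $n$ I would proceed by induction. Applying Ito's formula to $x\mapsto x^n$ composed with $\langle\boldsymbol{\xi}(t),\mathbb{K}\boldsymbol{\xi}(t)\rangle$ gives
\begin{equation*}
d\langle\boldsymbol{\xi},\mathbb{K}\boldsymbol{\xi}\rangle^{n}
\;=\; n\,\langle\boldsymbol{\xi},\mathbb{K}\boldsymbol{\xi}\rangle^{n-1}\big\{2\langle\mathbb{K}\boldsymbol{\xi},\mathbb{A}(\boldsymbol{y}(t))\boldsymbol{\xi}\rangle + \mathrm{tr}(\mathbb{K})\big\}\, dt
\;+\; 2n(n-1)\,\langle\boldsymbol{\xi},\mathbb{K}\boldsymbol{\xi}\rangle^{n-2}|\mathbb{K}\boldsymbol{\xi}|^{2}\, dt
\;+\; dM_t,
\end{equation*}
where $M_t$ is a local martingale. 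Using Lemma \ref{lem:drift} on the first bracket and the positive-definiteness of $\mathbb{K}$, which gives $|\mathbb{K}\boldsymbol{\xi}|^{2}\le \|\mathbb{K}\|\,\langle\boldsymbol{\xi},\mathbb{K}\boldsymbol{\xi}\rangle$, the drift is bounded by $-2nc\,\langle\boldsymbol{\xi},\mathbb{K}\boldsymbol{\xi}\rangle^{n}+c_1(n)\,\langle\boldsymbol{\xi},\mathbb{K}\boldsymbol{\xi}\rangle^{n-1}$. Taking expectation and using the induction hypothesis to control the $(n-1)$-th moment by $C(n-1)$ yields
\begin{equation*}
\frac{d}{dt}\,\mathbf{E}_{\boldsymbol{x}}\big[\langle\boldsymbol{\xi}(t),\mathbb{K}\boldsymbol{\xi}(t)\rangle^{n}\big]
\;\le\; -2nc\,\mathbf{E}_{\boldsymbol{x}}\big[\langle\boldsymbol{\xi}(t),\mathbb{K}\boldsymbol{\xi}(t)\rangle^{n}\big]
\;+\; c_1(n)\,C(n-1)\;,
\end{equation*}
and since $\boldsymbol{\xi}(0)=\boldsymbol{0}$, Gronwall gives the desired uniform bound with $C(n):=c_1(n)C(n-1)/(2nc)$.

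The main obstacles are purely technical: (i) legitimizing the interchange of expectation with the Ito formula, which requires the standard localisation argument noted above together with Fatou/monotone convergence once the bound is in hand; and (ii) making sure the constants in the drift contraction are uniform in $\boldsymbol{x}\in\mathcal{D}_{r_4}$, which is guaranteed because the constant $c$ in Lemma \ref{lem:drift} depends only on $\mathbb{K}$ and $r_4$ and not on the starting point. No delicate probabilistic ingredients are needed beyond these; the whole proof is a clean replica of Lemma \ref{lem:mom_est} with the Lyapunov function $\langle\,\cdot\,,\mathbb{H}_0\,\cdot\,\rangle$ replaced by $\langle\,\cdot\,,\mathbb{K}\,\cdot\,\rangle$, a replacement made possible precisely because Lemma \ref{lem:drift} provides the required contraction property of $\mathbb{K}$ against the full linearisation $\mathbb{A}(\boldsymbol{y}(t))$, which is needed since $\mathbb{A}$ may fail to be negative definite (cf. Remark \ref{rem:posev}).
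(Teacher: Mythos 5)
Your proposal is correct and follows essentially the same route as the paper: Ito's formula applied to $\langle\boldsymbol{\xi}(t),\mathbb{K}\boldsymbol{\xi}(t)\rangle$, the drift contraction from Lemma \ref{lem:drift} (valid along the whole trajectory by Remark \ref{rem:stability}), and then the same Lyapunov--Gronwall machinery as in Lemma \ref{lem:mom_est} for the $n$-th powers. The only cosmetic difference is that you close the argument by induction on $n$, whereas the paper absorbs the lower-order term directly via $a^{n-1}\le\frac{n-1}{n}a^{n}+\frac{1}{n}$; both are standard and equivalent.
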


\begin{proof}
By Ito's formula and Lemma \ref{lem:drift},
\begin{align}
d\langle\boldsymbol{\xi}(t),\,\mathbb{K}\boldsymbol{\xi}(t)\rangle
& =\; \left[\, 2\, \langle\, \mathbb{K}\boldsymbol{\xi}(t),\,
\mathbb{A}(\bs y (t) )\boldsymbol{\xi}(t)\, \rangle
+\mathfrak{k}\, \right]dt
+2\, \langle\mathbb{K}\boldsymbol{\xi}(t),\,\mathrm{d}W_{t}\rangle
\nonumber \\
& \le\left[-2c\langle\boldsymbol{\xi}(t),\,
\mathbb{K}\boldsymbol{\xi}(t)\rangle+\mathfrak{k}\right]dt
+2\langle\mathbb{K}\boldsymbol{\xi}(t),\,\mathrm{d}W_{t}\rangle \;,
\label{eq:dxi}
\end{align}
where $\mathfrak{k}=\text{tr}(\mathbb{K}).$ The remainder of the
proof is identical to the proof of Lemma \ref{lem:mom_est}.
\end{proof}

\begin{lem}
\label{lem:mom_Yt_sup}
For all $n\ge1$, there exists
a finite constant $C(n)>0$ such that
\begin{equation*}
\sup_{x\in\mathcal{D}_{r_4}}\mb{E}_{\boldsymbol{x}}
\big[\, \sup_{t\in[0,\,t_{\epsilon}]}
\left\langle \boldsymbol{\xi}(t),\,
\mathbb{K}\boldsymbol{\xi}(t)\right\rangle ^{n}\,\big]
\;\leq\; C(n)\, t_\epsilon^{n}\;.
\end{equation*}
\end{lem}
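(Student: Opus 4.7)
The plan is to start from the Itô inequality \eqref{eq:dxi} derived in the previous lemma. Dropping the nonpositive drift term and using $\boldsymbol{\xi}(0) = \boldsymbol{0}$ (so the initial term vanishes), integration yields
\begin{equation*}
\langle \boldsymbol{\xi}(t),\, \mathbb{K}\boldsymbol{\xi}(t)\rangle \;\le\; \mathfrak{k}\,t \;+\; M_{t}\;, \qquad M_{t} \,:=\, 2\int_{0}^{t}\langle \mathbb{K}\boldsymbol{\xi}(s),\, dW_{s}\rangle\;,
\end{equation*}
where $M_{t}$ is a continuous local martingale. Taking the supremum over $t\in [0,t_{\epsilon}]$ and then $n$-th powers, by the elementary inequality $(a+b)^{n}\le 2^{n-1}(a^{n}+b^{n})$ it suffices to bound $(\mathfrak{k}\,t_{\epsilon})^{n}$ (which is already of order $t_{\epsilon}^{n}$) and $\mb{E}_{\boldsymbol{x}}[\sup_{t\le t_{\epsilon}} |M_{t}|^{n}]$.

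For the martingale term, I would apply the Burkholder--Davis--Gundy inequality to obtain
\begin{equation*}
\mb{E}_{\boldsymbol{x}}\Big[\sup_{t\le t_{\epsilon}}|M_{t}|^{n}\Big] \;\le\; C_{n}\,\mb{E}_{\boldsymbol{x}}\big[\langle M\rangle_{t_{\epsilon}}^{n/2}\big]\;, \qquad \langle M\rangle_{t_{\epsilon}} \;=\; 4\int_{0}^{t_{\epsilon}} |\mathbb{K}\boldsymbol{\xi}(s)|^{2}\, ds\;.
\end{equation*}
Since $\mathbb{K}$ is symmetric and positive definite, there is a finite constant $\kappa$ with $|\mathbb{K}\boldsymbol{y}|^{2}\le \kappa \,\langle \boldsymbol{y},\mathbb{K}\boldsymbol{y}\rangle$ for all $\boldsymbol{y}\in\mathbb{R}^{d}$; hence Jensen's (or Hölder's) inequality applied to the probability measure $ds/t_{\epsilon}$ on $[0,t_{\epsilon}]$ gives, for $n\ge 2$,
\begin{equation*}
\mb{E}_{\boldsymbol{x}}\big[\langle M\rangle_{t_{\epsilon}}^{n/2}\big] \;\le\; C_{n}\,t_{\epsilon}^{n/2-1}\!\int_{0}^{t_{\epsilon}}\!\mb{E}_{\boldsymbol{x}}\big[\langle \boldsymbol{\xi}(s),\mathbb{K}\boldsymbol{\xi}(s)\rangle^{n/2}\big]\, ds\;.
\end{equation*}
By Lemma \ref{lem:mom_Yt}, the integrand is uniformly bounded in $s\ge 0$ and in $\boldsymbol{x}\in\mathcal{D}_{r_{4}}$, so the right-hand side is at most $C(n)\,t_{\epsilon}^{n/2}$. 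The case $n=1$ follows from Jensen and the $n=2$ case of Lemma \ref{lem:mom_Yt}, noting that $t_{\epsilon}^{1/2}\le t_{\epsilon}$ since $t_{\epsilon}\to\infty$.

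Combining these estimates,
\begin{equation*}
\mb{E}_{\boldsymbol{x}}\Big[\sup_{t\le t_{\epsilon}}\langle \boldsymbol{\xi}(t),\mathbb{K}\boldsymbol{\xi}(t)\rangle^{n}\Big] \;\le\; 2^{n-1}\,\mathfrak{k}^{n}\,t_{\epsilon}^{n}\;+\;C(n)\,t_{\epsilon}^{n/2} \;\le\; C'(n)\,t_{\epsilon}^{n}\;,
\end{equation*}
uniformly in $\boldsymbol{x}\in\mathcal{D}_{r_{4}}$, as $t_{\epsilon}\to\infty$. There is no genuine obstacle here; the only point deserving some care is checking that $M_{t}$ is a true martingale (not merely local) so that BDG applies without truncation, which follows from the moment bound of Lemma \ref{lem:mom_Yt} that guarantees $\mb{E}\int_{0}^{t_{\epsilon}}|\mathbb{K}\boldsymbol{\xi}(s)|^{2}\,ds<\infty$.
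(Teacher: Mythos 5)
Your argument is correct and follows essentially the same route as the paper: integrate \eqref{eq:dxi}, drop the negative drift term, apply the Burkholder--Davis--Gundy and H\"older inequalities to the stochastic integral, and invoke Lemma \ref{lem:mom_Yt}. The only cosmetic difference is that the paper first reduces to even $n$ via H\"older so that the exponent $n/2$ appearing in Lemma \ref{lem:mom_Yt} is an integer, whereas you treat $n=1$ separately; for odd $n\ge 3$ the same one-line Jensen step you use for $n=1$ closes that (trivial) gap.
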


\begin{proof}
By H\"older's inequality, it is enough to prove the lemma for $n$
even. Assume that this is the case.  Integrating (\ref{eq:dxi}), as
the first term on the right-hand side is negative,
\begin{equation*}
\langle\boldsymbol{\xi}(t),\, \mathbb{K}\boldsymbol{\xi}(t)\rangle
\leq\langle\boldsymbol{x},\,\mathbb{K}\boldsymbol{x}\rangle
+2\int_{0}^{t}\langle\mathbb{K}\boldsymbol{\xi}(t),\,
\mathrm{d}W_{t}\rangle ds+\mathfrak{k}t\;.
\end{equation*}
Therefore,
\begin{equation}
\mb{E}_{\boldsymbol{x}}\Big[\, \sup_{t\in[0, t_\epsilon]}
\left\langle \boldsymbol{\xi}(t),\,\mathbb{K}
\boldsymbol{\xi}(t)\right\rangle ^{n}\,\Big]
\;\leq\;  C(n)\, \Big(\, \mb{E}_{\boldsymbol{x}}
\Big[\, \sup_{t\in[0, t_\epsilon ]}
\left|\int_{0}^{t}\langle\mathbb{K}\boldsymbol{\xi}(t),\,
\mathrm{d}W_{t}\rangle\right|^{n}\,\Big]
\,+\, t_\epsilon^{n} \, \Big)
\label{eq:bdx}
\end{equation}
for some finite constant $C(n)$.  By the Burkholder-Davis-Gundy
inequality and the H\"older inequality, the expectation on the
right-hand side is bounded by
\begin{equation*}
C(n)\, \mb{E}_{\boldsymbol{x}}\Big[\,
\Big(\, \int_{0}^{t_\epsilon}
|\mathbb{K}\boldsymbol{\xi}(t)|^{2}dt\, \Big)^{n/2}\, \Big]
\, \le\, C(n)\, t_\epsilon^{(n/2)-1}\,
\, \mb{E}_{\boldsymbol{x}}\Big[\,
\int_{0}^{t_\epsilon} \left\langle \boldsymbol{\xi}(t),\,
\mathbb{K}\boldsymbol{\xi}(t)\right\rangle ^{n/2}dt \,\Big]\;.
\end{equation*}
By Fubini's theorem and by Lemma \ref{lem:mom_Yt} [since $n$ is even],
this expression is less than or equal to $C(n)\,
t_\epsilon^{n/2}$. Inserting this bound is (\ref{eq:bdx}) completes
the proof of the lemma.
\end{proof}

The proof below is developed in \cite{LeeRamSeo} based on ideas of
\cite{BJ}.

\begin{proof}[Proof of Proposition \ref{prop:main_apprx}]
Fix $\boldsymbol{x}\in\mathcal{D}_{r_4}$, and remember from
\eqref{eq:xi} that $\bs\xi (t)$ depends on $\bs x$ through the
dynamical system $\bs y(\cdot)$ which starts from $\bs x$.  Let
\begin{equation}
\boldsymbol{r}_{\epsilon}(t) \,:=\,
\frac{\boldsymbol{y}_{\epsilon}(t)-\boldsymbol{z}_{\epsilon}(t)}
{\sqrt{2\epsilon}}
\,=\, \frac{\boldsymbol{y}_{\epsilon}(t)-\boldsymbol{y}(t)}
{\sqrt{2\epsilon}}-\boldsymbol{\xi}(t)\;.
\label{eq:wte}
\end{equation}
We need to prove that there exist positive finite constants
$\alpha_{1}$, $\alpha_{2}$ such that
\begin{equation}
\sup_{t\le t_\epsilon} \, \sup_{\bs x\in \mc D_{r_4}}\,
\mb E_{\boldsymbol{x}}\left[\langle\boldsymbol{r}_{\epsilon}(t),\,
\mathbb{K}\boldsymbol{r}_{\epsilon}(t)\rangle^{2}\right]
\, \leq\, \alpha_{1}\epsilon^{\alpha_{2}}\;.
\label{eq:obj0}
\end{equation}

Since $\boldsymbol{y}_{\epsilon}(t)$ and $\boldsymbol{\xi}(t)$ share
the same driving Brownian motion, by (\ref{eq:x_eps}), (\ref{eq:x_0}),
and (\ref{eq:xi}) that
\begin{equation}
\label{apf02}
\frac{\mathrm{d}}{\mathrm{d}t}\langle\boldsymbol{r}_{\epsilon}(t),\,
\mathbb{K}\boldsymbol{r}_{\epsilon}(t)\rangle
\, =\, 2\, \langle\mathbb{K}\boldsymbol{r}_{\epsilon}(t),\,
\mathbb{A}(\boldsymbol{y}(t)) \, \boldsymbol{r}_{\epsilon}(t)\rangle \,
-\, 2\, \langle\mathbb{K}\boldsymbol{r}_{\epsilon}(t),\,
\boldsymbol{q}_{\epsilon}(t)\rangle\;,
\end{equation}
where
\begin{equation*}
\boldsymbol{q}_{\epsilon}(t) \,=\,
\frac{1}{\sqrt{2\epsilon}} \, \big\{\,
\boldsymbol{b}_0(\boldsymbol{y}_{\epsilon}(t))
\,-\, \boldsymbol{b}_0(\boldsymbol{y}(t))
\,-\, (D\boldsymbol{b}_0) (\boldsymbol{y}(t))
\, (\boldsymbol{y}_{\epsilon}(t)-\boldsymbol{y}(t)) \,\big\} \;.
\end{equation*}

Let
\begin{equation*}
\mathcal{A}_{\epsilon}=\mathcal{A}_{\epsilon}(\boldsymbol{x})
:=\big\{\, \boldsymbol{y}_{\epsilon}(t)\in\mathcal{D}_{2r_4}\;
\text{for all }t\in[0,\, t_\epsilon]\,\big\} \;.
\end{equation*}
By Lemma \ref{lem:drift} and since $\bb K$ is positive-definite and
bounded, on the event $\mc A_\epsilon$ the right-hand side of
\eqref{apf02} is bounded by
\begin{align}
-\, c\, \langle\boldsymbol{r}_{\epsilon}(t),\,
\mathbb{K}\boldsymbol{r}_{\epsilon}(t)\rangle
\,-\, 2\, \langle\mathbb{K}\boldsymbol{r}_{\epsilon}(t),\,
\boldsymbol{q}_{\epsilon}(t)\rangle
\,\le\, -\, c_{1}\, \langle\boldsymbol{r}_{\epsilon}(t),\,
\mathbb{K}\boldsymbol{r}_{\epsilon}(t)\rangle
\,+\, C_{2}\, |\boldsymbol{q}_{\epsilon}(t)|^{2}
\label{eq:dW1}
\end{align}
for some finite positive constants $c_{1}$, $C_{2}$.

Fix $t\in[0,\, t_\epsilon]$. Since
$\boldsymbol{b}_0\in C^{2}(B(\bs 0, r_3),\,\mathbb{R}^{d})$, by
(\ref{eq:condr_4}) on the event $\mathcal{A}_{\epsilon}$,
\begin{equation*}
|\boldsymbol{q}_{\epsilon}(t)| \,\leq\,
\frac{C_0}{\sqrt{\epsilon}}\,
|\boldsymbol{y}_{\epsilon}(t)-\boldsymbol{y}(t)|^{2}
\,\le\, C_0\, \sqrt{\epsilon}\,
\big\{ \,
|\boldsymbol{r}_{\epsilon}(t)|^{2}+|\boldsymbol{\xi}(t)|^{2}\,\big\}\;,
\end{equation*}
for some finite constant $C_0$, whose value may change from line to
line. The second inequality follows from (\ref{eq:wte}). Therefore, by
(\ref{eq:dW1}),
\begin{equation*}
\frac{\mathrm{d}}{\mathrm{d}t}\langle\boldsymbol{r}_{\epsilon}(t),\,
\mathbb{K}\boldsymbol{r}_{\epsilon}(t)\rangle
\,\leq\, -\, c_{1}\, \langle\boldsymbol{r}_{\epsilon}(t),\,
\mathbb{K}\boldsymbol{r}_{\epsilon}(t)\rangle
\,+\, C_3\, \epsilon
\left[\langle\boldsymbol{r}_{\epsilon}(t),\,
\mathbb{K}\boldsymbol{r}_{\epsilon}(t)\rangle^{2}+
\left\langle \boldsymbol{\xi}(t),\,
\mathbb{K}\boldsymbol{\xi}(t)\right\rangle ^{2}\right]\;.
\end{equation*}

Let $\mathcal{B}_{\epsilon}=\mathcal{B}_{\epsilon}(\bs x)$ be the
event defined by
\begin{equation*}
\mathcal{B}_{\epsilon} \,:=\,
\Big\{\, \frac{C_3\, \epsilon}{c_{1}}\,
\Big(\, C_3\, \epsilon \, t_\epsilon\,
\sup_{s\in[0,\,t_\epsilon]} \left\langle \boldsymbol{\xi}(s),\,
\mathbb{K}\boldsymbol{\xi}(s)\right\rangle ^{2}\,\Big)
\,\leq\, \frac{1}{2}\, \Big\} \;.
\end{equation*}
By Perov's inequality \cite[Theorem 3.1]{Webb}, as
$\boldsymbol{r}_{\epsilon}(0)=\boldsymbol{0}$, it follows from the
previous inequality that on the event $\mc B_\epsilon$,
\begin{align*}
\langle\boldsymbol{r}_{\epsilon}(t),\,
\mathbb{K}\boldsymbol{r}_{\epsilon}(t)\rangle
\,\leq\, 2\, C_{3}\, \epsilon\, t_\epsilon\, e^{-c_{1}t}\,
\sup_{s\in[0,\,t_\epsilon]} \left\langle
\boldsymbol{\xi}(s),\,
\mathbb{K}\boldsymbol{\xi}(s)\right\rangle ^{2}\;.
\end{align*}
for all $t\in[0,\,t_\epsilon]$.  Hence, by Lemma \ref{lem:mom_Yt_sup},
\begin{equation*}
\sup_{t\in[0,\, t_\epsilon]} \, \sup_{\bs x\in \mc D_{r_4}}\,
\mb{E}_{\boldsymbol{x}}\left[\langle\boldsymbol{r}_{\epsilon}(t),\,
\mathbb{K}\boldsymbol{r}_{\epsilon}(t)\rangle^{2}\,
\bs{1}_{\mathcal{A}_{\epsilon}\cap\mathcal{B}_{\epsilon}}\right]
\,\leq\, C_0\, \epsilon^2\, t_\epsilon^6 \;=\;
C_0\, \epsilon^{2-6\theta}\;.
\end{equation*}
Since $\theta<1/3$, this proves (\ref{eq:obj0}) on the event
$\mathcal{A}_{\epsilon}\cap\mathcal{B}_{\epsilon}$.

We turn to the event
$(\mathcal{A}_{\epsilon}\cap\mathcal{B}_{\epsilon})^{c}$.  By the
Cauchy-Schwarz inequality,
\begin{equation*}
\mb E_{\boldsymbol{x}}\left[\langle\boldsymbol{r}_{\epsilon}(t),\,
\mathbb{K}\boldsymbol{r}_{\epsilon}(t)\rangle^{2} \,
\bs{1}_{(\mathcal{A}_{\epsilon}\cap\mathcal{B}_{\epsilon})^{c}}\right]^{2}
\, \leq\, \mb E_{\boldsymbol{x}}
\left[\langle\boldsymbol{r}_{\epsilon}(t),\,
\mathbb{K}\boldsymbol{r}_{\epsilon}(t)\rangle^{4}\right]
\, \left\{\, \mb P_{\boldsymbol{x}}(\mathcal{A}_{\epsilon}^{c})
+\mb P_{\boldsymbol{x}}(\mathcal{B}_{\epsilon}^{c}) \, \right\}\;.
\end{equation*}
By (\ref{eq:wte}) and the Cauchy-Schwarz inequality,
\begin{equation*}
\mb E_{\boldsymbol{x}}\left[\langle\boldsymbol{r}_{\epsilon}(t),\,
\mathbb{K}\boldsymbol{r}_{\epsilon}(t)\rangle^{4}\right]
\,\leq\, \frac{C_0}{\epsilon^{4}}
\, \left(\mb E_{\bs x} \left[\langle\boldsymbol{y}_{\epsilon}(t),\,
\mathbb{K}\boldsymbol{y}_{\epsilon}(t)\rangle^{4}
\,+\,
\langle\boldsymbol{y}(t),\,\mathbb{K}\boldsymbol{y}(t)\rangle^{4}
\, +\, \epsilon^{4}\, \langle\boldsymbol{\xi}(t),\,
\mathbb{K}\boldsymbol{\xi}(t)\rangle^{4}\right]\right)\;.
\end{equation*}
Hence, by Lemmata \ref{lem:stability}, \ref{lem:mom_est} and \ref{lem:mom_Yt},
\begin{equation*}
\mb E_{\boldsymbol{x}}\left[\langle\boldsymbol{r}_{\epsilon}(t),\,
\mathbb{K}\boldsymbol{r}_{\epsilon}(t)\rangle^{4}\right]
\,\le\, \frac{C_0}{\epsilon^{4}}
\end{equation*}
for all $t\ge0$, $\bs x\in\mc D_{r_4}$.

It remains to show that there exist $c_0>0$ and $C_0<\infty$ such
that
\begin{equation}
\sup_{\bs x\in \mc D_{r_4}}\,
\mb P_{\boldsymbol{x}}(\mathcal{A}_{\epsilon}^{c})
\,\le\, C_0\, \epsilon^{4+c_0}\;\;\;\text{and}\;\;\;
\sup_{\bs x\in \mc D_{r_4}}\,
\mb P_{\boldsymbol{x}}(\mathcal{B}_{\epsilon}^{c}) \,\le\,
C_0 \, \epsilon^{4+c_0}\;.
\label{eq:obj1}
\end{equation}
Consider the event $\mc A_\epsilon$.
On the complement of this set,
\begin{equation*}
\sup_{t\le t_\epsilon} \left\langle \boldsymbol{y}_{\epsilon}(t),\,
\mathbb{H}_0 \boldsymbol{y}_{\epsilon}(t)\right\rangle \,\ge\,
(2r_4)^2\;.
\end{equation*}
By (\ref{eq:itoy}),
\begin{equation*}
\left\langle \boldsymbol{y}_{\epsilon}(t),\,
\mathbb{H}_0 \boldsymbol{y}_{\epsilon}(t)\right\rangle
\,\le\, 2\, \mathfrak{h}\, \epsilon\, t \,+\, 2\, \sqrt{2\epsilon}
\int_{0}^{t}\left\langle
\mathbb{H}_0 \boldsymbol{y}_{\epsilon}(s),\,dW_{s}\right\rangle\;.
\end{equation*}
Thus, as $\epsilon\, t_\epsilon \to 0$,
for $\epsilon$ small enough, by Markov inequality,
\begin{equation*}
\mb P_{\boldsymbol{x}}(\mathcal{A}_{\epsilon}^{c})\,
\leq\, \mb P_{\boldsymbol{x}}
\Big[\, \sup_{t\le t_\epsilon} \int_{0}^{t}
\left\langle
\mathbb{H}_0 \, \boldsymbol{y}_{\epsilon}(s),\,dW_{s}\right\rangle
> \frac{r^2_1}{\sqrt{\epsilon}}\, \Big]
\,\le\, C_0 \, \epsilon^{8}\, \mb E_{\boldsymbol{x}}\Big[\,
\sup_{t\le t_\epsilon} \Big|\, \int_{0}^{t} \left\langle
\mathbb{H}_0\, \boldsymbol{y}_{\epsilon}(s),\,dW_{s}\right\rangle
\, \Big|^{16} \,\Big]\;.
\end{equation*}
By the Burkholder-Davis-Gundy and H\"older inequalities, the
right-hand side is bounded by
\begin{equation*}
C_0\, \epsilon^{8}\,
\mb E_{\boldsymbol{x}}\Big[\, \Big(\,
\int_{0}^{t_\epsilon} \left|\, \mathbb{H}_0 \, \boldsymbol{y}_{\epsilon}(s)
\, \right|^{2} \, \mathrm{d}s\, \Big)^{8} \Big]
\,\le\,
C_0\, \epsilon^{8}\, t^7_\epsilon\,
\mb E_{\boldsymbol{x}}\Big[\,
\int_{0}^{t_\epsilon} \left|\, \mathbb{H}_0\, \boldsymbol{y}_{\epsilon}(s)
\, \right|^{16} \, \mathrm{d}s\,  \Big] \;.
\end{equation*}
Hence, by Lemma \ref{lem:mom_est},
\begin{equation*}
\sup_{\bs x\in \mc D_{r_4}}\,
\mb P_{\boldsymbol{x}}(\mathcal{A}_{\epsilon}^{c})
\,\le\, C_0\,  \epsilon^{8}\, t^8_\epsilon
\,=\, C_0\, \epsilon^{8(1-\theta)}\;.
\end{equation*}
As $\theta<1/3$, the first assertion of (\ref{eq:obj1}) holds.

We turn to the second assertion. By definition, there exists a
positive constant $c_0$ such that
\begin{equation*}
\mathcal{B}_{\epsilon}^{c} \,=\,
\Big\{ \, \sup_{s\in[0, t_\epsilon]}
\left\langle \boldsymbol{\xi}(s),\,
\mathbb{K}\boldsymbol{\xi}(s)\right\rangle
\,\ge\, \frac{c_0}{\epsilon\, \sqrt{t_\epsilon}}\, \Big\} \;.
\end{equation*}
By the Markov inequality and Lemma \ref{lem:mom_Yt_sup}
\begin{equation*}
\sup_{\bs x\in \mc D_{r_4}}\, \mb P_{\boldsymbol{x}}(\mathcal{B}_{\epsilon}^{c})
\,\le\, C_0 \, \epsilon^8 \, t^4_\epsilon \,
\sup_{\bs x\in \mc D_{r_4}}\,  \mb E_{\boldsymbol{x}}\Big[\,
\sup_{s\in[0,t_\epsilon]} \left\langle \boldsymbol{\xi}(s),\,
\mathbb{K}\boldsymbol{\xi}(s)\right\rangle ^{8}\, \Big]
\,\le\,  C_0\, \epsilon^{8-12\theta}\;.
\end{equation*}
This proves the second assertion in (\ref{eq:obj1}) since $\theta<1/3$.
\end{proof}

\section{Local ergodicity}
\label{sec2}

Fix $\lambda>0$, $\bs{g}\colon \mc M_0 \rightarrow\mathbb{R}$, and
recall that we denote by
$\phi_{\epsilon} = \phi_{\epsilon}^{\lambda, \bs{g}}$ the unique
solution of the resolvent equation \eqref{e_res}.  The main result of
this section states that the solution $\phi_{\epsilon}$ is
asymptotically constant on each well $\mc E(\bs m)$.

\begin{thm}
\label{p_flat}
Fix $\lambda>0$ and $\bs{g}\colon \mc M_0 \rightarrow\mathbb{R}$.
For all $\mb m \in \mc M_0$,
\begin{equation*}
\lim_{\epsilon\rightarrow0}\, \sup_{x, y\in \mc E(\mb m)}
\vert\, \phi_{\epsilon} (x) -  \phi_{\epsilon} (y) \,
\vert \;=\; 0\;.
\end{equation*}
\end{thm}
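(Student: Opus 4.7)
The plan is to exploit the probabilistic representation
\[
\phi_\epsilon(\bs x) \;=\; \int_0^\infty e^{-\lambda t} \, \mathbb{E}_{\bs x}^\epsilon\!\big[G(\bs x_\epsilon(t \theta_\epsilon^{(1)}))\big] \, dt
\]
together with the mixing-time estimate of Theorem \ref{t_main2}. The central observation is that $t_\epsilon = \epsilon^{-\theta}$ lies far below the metastable scale $\theta^{(1)}_\epsilon = e^{d^{(1)}/\epsilon}$, so the diffusion equilibrates inside a fixed well much faster than it can escape.

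Fix $\bs m\in \mc M_0$. First I construct an auxiliary vector field $\bs b_0\colon \bb R^d \to \bb R^d$ coinciding with $\bs b = -(\nabla U + \bs \ell)$ on $B(\bs m, r_5)$, having $\bs m$ as its unique critical point and sole stable equilibrium, and satisfying assumptions (1)--(4) of Section \ref{sec-ap3} (with $\bs m$ in place of the origin). Let $\bs y_\epsilon$ denote the associated diffusion, driven by the same Brownian motion as $\bs x_\epsilon$, and let $\pi_\epsilon^{\bs m}$ be its invariant measure.

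The next step is a three-way comparison at time $t_\epsilon$. Since $t_\epsilon \prec \theta^{(1)}_\epsilon$ and the exit time of $\bs x_\epsilon$ from $B(\bs m, r_5)$ is of order $e^{c/\epsilon}$ for some $c>0$ by Freidlin--Wentzell estimates, the probability that $\bs x_\epsilon$ leaves $B(\bs m, r_5)$ before time $t_\epsilon$, starting from any $\bs x \in \mc E(\bs m)$, tends to $0$. Under the synchronous coupling, $\bs x_\epsilon$ and $\bs y_\epsilon$ agree until this exit time, so
\[
\sup_{\bs x\in \mc E(\bs m)} d_{\mathrm{TV}}\!\big(\bs x_\epsilon(t_\epsilon;\bs x),\, \bs y_\epsilon(t_\epsilon;\bs x)\big)\;\longrightarrow\; 0.
\]
By choice (e) of $r_0$, $\mc E(\bs m) \subset \mc D_{r_4}(\bs m)$, so Theorem \ref{t_main2} gives $\sup_{\bs x\in \mc E(\bs m)} d_{\mathrm{TV}}(\bs y_\epsilon(t_\epsilon;\bs x),\, \pi_\epsilon^{\bs m}) \to 0$. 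Combining these via a triangle inequality,
\[
\sup_{\bs x, \bs y\in \mc E(\bs m)} d_{\mathrm{TV}}\!\big(\bs x_\epsilon(t_\epsilon;\bs x),\, \bs x_\epsilon(t_\epsilon;\bs y)\big) \;\longrightarrow\; 0.
\]

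To conclude, set $s_\epsilon := t_\epsilon/\theta^{(1)}_\epsilon \to 0$ and split the difference of resolvent representations at $t = s_\epsilon$. The integral over $[0, s_\epsilon]$ is bounded in absolute value by $2\|G\|_\infty s_\epsilon$, which vanishes. For $t \ge s_\epsilon$ one has $t\theta_\epsilon^{(1)} \ge t_\epsilon$, and the Markov property at real time $t_\epsilon$ yields
\[
\mathbb{E}_{\bs x}^\epsilon\!\big[G(\bs x_\epsilon(t\theta_\epsilon^{(1)}))\big] \;=\; \mathbb{E}^\epsilon\!\big[\psi_t(\bs x_\epsilon(t_\epsilon;\bs x))\big], \qquad \|\psi_t\|_\infty \le \|G\|_\infty,
\]
so the integrand in the representation of $\phi_\epsilon(\bs x)-\phi_\epsilon(\bs y)$ is controlled by $2\|G\|_\infty\, d_{\mathrm{TV}}(\bs x_\epsilon(t_\epsilon;\bs x),\, \bs x_\epsilon(t_\epsilon;\bs y))$, which vanishes uniformly in $\bs x,\bs y \in \mc E(\bs m)$. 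Integration against $e^{-\lambda t}$ completes the proof. The main obstacle is the comparison step: constructing $\bs b_0$ with the required global growth and contraction properties while preserving $\bs b$ on $B(\bs m, r_5)$, and quantifying non-exit from $B(\bs m, r_5)$ on the polynomial time scale $t_\epsilon$; the remaining manipulations with the resolvent formula are routine.
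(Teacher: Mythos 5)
Your proposal is correct and follows essentially the same route as the paper: the stochastic representation of the resolvent, truncation of the time integral below the mixing scale $t_\epsilon\prec\theta^{(1)}_\epsilon$, the Freidlin--Wentzell non-exit estimate combined with a coupling to the modified diffusion $\bs b_0$, and Theorem \ref{t_main2}. The only cosmetic difference is that you compare $\phi_\epsilon(\bs x)$ and $\phi_\epsilon(\bs y)$ directly via the total variation distance of the two laws at time $t_\epsilon$, whereas the paper compares each $\phi_\epsilon(\bs x)$ to the reference value $\int\phi_\epsilon\,d\mu^F_\epsilon$ (which it then reuses later); the two manipulations are equivalent.
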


Recall from \eqref{sde} that we represent by $\bs x_\epsilon (\cdot)$
the diffusion process induced by the generator $\mc L_\epsilon$.  The
proof of Theorem \ref{p_flat} is based on mixing properties of
$\bs x_\epsilon (\cdot)$ obtained in \cite{fw98, BJ,
LeeRamSeo}. Denote by
$\color{blue} \mathbb{P}_{\boldsymbol{z}}^{\epsilon}$,
$\bs z\in \bb R^d$, the law of $\bm{x}_{\epsilon}(\cdot)$ starting
from $\boldsymbol{z}$. Expectation with respect to
$\mathbb{P}_{\boldsymbol{z}}^{\epsilon}$, is represented by
$\color{blue} \mathbb{E}_{\boldsymbol{z}}^{\epsilon}$.

We start with elementary facts.  By equation (1.3) in \cite{BEGK},
conditions \eqref{26} guarantees that the partition function
$Z_\epsilon$, defined by
\begin{equation}
\label{e: def_Zeps}
{\color{blue} Z_{\epsilon}} \;:=\;
\int_{\mathbb{R}^{d}}\,e^{-U(\bm{x})/\epsilon}\,
d\bm{x}
\end{equation}
is finite. In particular, the Gibbs measure
\begin{equation*}
\mu_{\epsilon}(d\boldsymbol{x}) \;:=\; Z_{\epsilon}^{-1}
\,e^{-U(\bm{x})/\epsilon}\,d\boldsymbol{x}
\;:=\;  \mu_{\epsilon}(\bm{x}) \,d\boldsymbol{x}
\end{equation*}
is well defined.  Moreover, by Theorem 2.2 and 2.3 in \cite{LS-22},
the diffusion $\bs x_\epsilon (\cdot)$ is positive recurrent and
$\mu_\epsilon$ is its unique invariant measure. On the other hand, as
we assumed that $\min_{\bs x\in\bb R^d} U(\bs x)=0$, by
\cite[Proposition 3.2]{LS-22b} or a straightforward computation, if
$\color{blue} \mc M_\star$ representes the set of absolute minima of
$U$,
\begin{equation}
\label{32}
Z_{\epsilon}  \,=\,[\,1+o_{\epsilon}(1)\,]
\,(2\pi\epsilon)^{d/2}\,\nu_{\star}\;, \quad
\text{where}\quad
{\color{blue} \nu_{\star}} \;=\; \sum_{\bs m\in \mc M_\star}
\frac{1}
{\sqrt{\det\nabla^{2}U(\boldsymbol{\bs m})}}  \;,
\end{equation}
and, for a local minimum $\bs m\in \mc M_0$,
\begin{equation}
\label{55}
\mu_\epsilon (\mc E(\bs m)) \, e^{U(\bs m)/\epsilon}
\;=\; [\,1+o_{\epsilon}(1)\,] \, \frac{\nu(\bs m)}{\nu_{\star}}
\;\cdot
\end{equation}
In this formula, and throughout the article,
$\color{blue} o_{\epsilon}(1)$ represents a remainder which vanishes
as $\epsilon\to0$, and $\nu(\bs m)$ has been introduced in
\eqref{eq:nu}.

Denote
by $\tau_{\mathcal{A}}$, $\mathcal{A} \subset \mathbb{R}^{d}$, the
hitting time of the set $\mathcal{A}$:
\begin{equation}
\label{41}
{\color{blue} \tau_{\mathcal{A}}}
\;:=\; \inf\{\, t\ge0 : \bs x_\epsilon (t) \in \mc
A\,\} \;.
\end{equation}
Recall from \eqref{30} the definition of $\mc W^r(\bs m)$.
Conditions (b) and (c) in the definition of $\mc E(\bs m)$ guarantee
that the hypotheses of Theorem 6.2 in Chapter 6 of \cite{fw98} are
fulfilled. This results asserts that

\begin{prop}
\label{p_FW}
Fix $h<H$, and denote by $\mc A$, $\mc B$ a connected component of the
set $\{\bs x: U(\bs x) < h\}$, $\{\bs x: U(\bs x) < H\}$,
respectively. Assume that $\mc A \subset \mc B$. Suppose that all
critical points $\bs c$ of $U$ in $\mc A$ are such that
$U(\bs c) \le h_0$ for some $h_0 <h$. Then, for all $\eta>0$,
\begin{equation}
\label{60}
\limsup_{\epsilon\to 0} \sup_{\boldsymbol{x}\in \mc A}
\,\mathbb{P}_{\boldsymbol{x}}^{\epsilon}
\left[\,\tau_{\partial \mc B}
<e^{(H-h_0-\eta)/\epsilon}\,\right] \;=\; 0\;.
\end{equation}
In particular, for all $\bs m\in\mc M_0$, $\eta>0$,
\begin{equation*}
\limsup_{\epsilon\to 0} \sup_{\boldsymbol{x}\in \mathcal{E} (\bs m)}
\,\mathbb{P}_{\boldsymbol{x}}^{\epsilon}
\left[\,\tau_{\partial\mathcal{W}^{2r_0}(\bs m)}
<e^{(r_0-\eta)/\epsilon}\,\right] \;=\; 0\;.
\end{equation*}
\end{prop}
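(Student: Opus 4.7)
The plan is to reduce the proposition to the classical Freidlin--Wentzell large deviation lower bound on exit times from a neighborhood of a stable equilibrium, splitting the trajectory into a short deterministic descent phase and a subsequent exit phase, and exploiting the orthogonality $\boldsymbol{\ell}\cdot\nabla U=0$ to compute the relevant quasi-potential explicitly. Fix $\eta>0$ and set $\delta=\eta/4$.

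First I would carry out a \emph{descent step}. Because $\boldsymbol{\ell}\cdot\nabla U=0$, along any trajectory of the deterministic ODE \eqref{31} one has $\frac{d}{dt}U(\boldsymbol{y}(t))=-|\nabla U(\boldsymbol{y}(t))|^{2}\le 0$, so $\mathcal{A}$ is positively invariant under the flow, and its closure is compact by the coercivity hypothesis \eqref{26}. Every forward trajectory from $\overline{\mathcal{A}}$ has its $\omega$-limit set contained in the set of critical points of $U$ in $\mathcal{A}$, each of which satisfies $U(\boldsymbol{c})\le h_{0}$. A standard compactness/continuity argument then yields a single finite time $T_{0}=T_{0}(\delta)$ such that $U(\boldsymbol{y}(T_{0};\boldsymbol{x}))\le h_{0}+\delta$ uniformly for $\boldsymbol{x}\in\overline{\mathcal{A}}$. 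The Freidlin--Wentzell sample-path LDP (Theorem 1.1, Ch.~4 of \cite{fw98}) then shows that, uniformly in $\boldsymbol{x}\in\mathcal{A}$, with probability $1-o_{\epsilon}(1)$ the stochastic trajectory $\boldsymbol{x}_{\epsilon}(\cdot)$ stays within a fixed small neighborhood of $\boldsymbol{y}(\cdot;\boldsymbol{x})$ on $[0,T_{0}]$; in particular it remains inside $\mathcal{B}$ and ends at a point $\boldsymbol{y}_{\epsilon}:=\boldsymbol{x}_{\epsilon}(T_{0})$ with $U(\boldsymbol{y}_{\epsilon})\le h_{0}+2\delta$.

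Next comes the \emph{exit-time lower bound}. Expanding squares and using $\nabla U\cdot\boldsymbol{\ell}=0$, the action functional for \eqref{sde} satisfies the identity
\begin{equation*}
I_{T}(\phi)\;=\;\tfrac{1}{4}\!\int_{0}^{T}|\dot\phi+\nabla U(\phi)+\boldsymbol{\ell}(\phi)|^{2}\,dt\;=\;\tfrac{1}{4}\!\int_{0}^{T}|\dot\phi-\nabla U(\phi)+\boldsymbol{\ell}(\phi)|^{2}\,dt\;+\;\bigl[U(\phi(T))-U(\phi(0))\bigr].
\end{equation*}
Hence $I_{T}(\phi)\ge U(\phi(T))-U(\phi(0))$, and any continuous path from a point $\boldsymbol{y}$ with $U(\boldsymbol{y})\le h_{0}+2\delta$ to $\partial\mathcal{B}$ (where $U=H$) has action at least $H-h_{0}-2\delta$. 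The classical Freidlin--Wentzell lower bound on exit probabilities (Theorem 2.1 of Ch.~4 and Theorem 4.2 of Ch.~4 in \cite{fw98}), applied on a slight enlargement of $\mathcal{B}$ so as to accommodate critical points without excluding any exit path, then gives
\begin{equation*}
\lim_{\epsilon\to 0}\;\sup_{\boldsymbol{y}\in\overline{\mathcal{B}}\cap\{U\le h_{0}+2\delta\}}\mathbb{P}^{\epsilon}_{\boldsymbol{y}}\bigl[\,\tau_{\partial\mathcal{B}}<e^{(H-h_{0}-3\delta)/\epsilon}\,\bigr]\;=\;0\,.
\end{equation*}

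Combining the two steps via the strong Markov property at time $T_{0}$ bounds $\mathbb{P}^{\epsilon}_{\boldsymbol{x}}[\tau_{\partial\mathcal{B}}<e^{(H-h_{0}-\eta)/\epsilon}]$ by the probability that the descent step fails (which is $o_{\epsilon}(1)$) plus the supremum in the previous display (also $o_{\epsilon}(1)$), proving \eqref{60}. For the particular case one takes $h=U(\boldsymbol{m})+r_{0}$, $H=U(\boldsymbol{m})+2r_{0}$, and $h_{0}=U(\boldsymbol{m})$: condition (a) in the definition of $\mathcal{E}(\boldsymbol{m})$ ensures that $\boldsymbol{m}$ is the unique critical point in $\overline{\mathcal{W}^{2r_{0}}(\boldsymbol{m})}\setminus\{\boldsymbol{m}\}$, so the general case yields the bound with exponent $2r_{0}-\eta$, from which the weaker stated exponent $r_{0}-\eta$ is immediate. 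The main conceptual obstacle is isolating the correct cost $H-h_{0}$ when $\mathcal{A}$ contains several critical points of different indices, which is precisely what the hypothesis $U(\boldsymbol{c})\le h_{0}$ and the identity for $I_{T}$ above encode; the remaining difficulties (uniformity of $T_{0}$, uniformity of the LDP small-probability estimates over $\overline{\mathcal{A}}$, enlargement of $\mathcal{B}$) are routine compactness arguments.
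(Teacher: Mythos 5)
Your two-step architecture (deterministic descent into the sublevel set $\{U\le h_0+2\delta\}$, then an exit-time lower bound, glued by the strong Markov property) mirrors the paper's proof, which combines uniform convergence to the deterministic flow (\cite[Theorem 2.1.2]{fw98}) with the strong Markov property. Your descent step, and the action identity $I_T(\phi)\ge U(\phi(T))-U(\phi(0))$ obtained from $\nabla U\cdot\boldsymbol{\ell}=0$, which correctly identifies the exponent $H-h_0$, are both sound.

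The gap is in the exit step. The Freidlin--Wentzell results you invoke (Chapter 4, Theorems 2.1 and 4.2) are proved for a domain attracted to a \emph{single} asymptotically stable equilibrium, whereas here $\mc B$ may contain several minima and saddle points: the hypothesis allows several critical points below level $h_0$ inside $\mc A$, and says nothing at all about critical points in $\mc B\setminus\overline{\mc A}$. Your per-path bound $I_T(\phi)\ge H-h_0-2\delta$ yields, via the LDP upper bound, only that the probability of exiting within a \emph{fixed} window $[0,T]$ starting from the low sublevel set is at most $e^{-(H-h_0-2\delta-\gamma)/\epsilon}$. To cover the horizon $e^{(H-h_0-3\delta)/\epsilon}$ one must iterate over exponentially many such windows, and at the start of a generic window the process need not be in the low sublevel set, so the same estimate does not apply; controlling the excursions among the various wells of $\mc B$ is exactly the renewal/hierarchy-of-cycles analysis of \cite[Chapter 6, Theorem 6.2]{fw98}, which is the result the paper cites directly (supplemented by the uniformity-over-$\mc A$ argument that corresponds to your descent step). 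Your remark about applying the Chapter 4 theorems "on a slight enlargement of $\mc B$" does not address this multi-equilibrium issue. Replacing the Chapter 4 citations by Chapter 6, Theorem 6.2 (stated for starting points near the critical points, hence the need for your descent step) closes the gap and makes your argument essentially the paper's.
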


The estimate in \cite[Theorem 6.6.2]{fw98} is uniform over initial
points $\bs z$ belonging to neighborhoods of a critical points. We
claim that it holds uniformly over initial points $\bs x\in\mc
A$. Indeed, by \cite[Theorem 2.1.2]{fw98}, since the set $\mc A$ is
bounded, if we denote by $\mc N$ the union of neighborhoods of all
critical points of $U$ in $\mc A$, there exist $T_0<\infty$, such that
\begin{equation}
\label{61}
\liminf_{\epsilon\to 0} \inf_{\boldsymbol{x}\in \mc A}
\,\mathbb{P}_{\boldsymbol{x}}^{\epsilon}
\left[\,\tau_{\partial \mc N} < T_0 \,,\,
\tau_{\partial \mc N} < \tau_{\partial \mc B}
\,\right] \;=\; 1\;.
\end{equation}
Assertion \eqref{60} follows from \eqref{61}, the strong Markov
property and \cite[Theorem 6.6.2]{fw98}. Moreover, we could replace
$h_0$ by the minimal value of $U$ on $\mc A$, but that will not be
needed below.

\subsection*{Mixing times}
Fix $\bs m\in \mc M_0$. All constants, functions, processes which
appear in this subsection depend on $\bs m$, but this dependence is
omitted in the notation.

Let $\bs b_0 \colon \bb R^d \to \bb R^d$ be the field of class $C^1$
defined in Appendix \ref{sec-ap4}. By \eqref{eq:vecb} and condition
(d) in the definition of $r_0$, $\bs b_0(\bs x) = \bs b (\bs x) $ for
$\bs x\in \mc W^{3r_0}(\bs m)$.  By Proposition \ref{pap4}, the vector
field $\bs b_0$ satisfies the hypotheses of Section \ref{sec-ap3}.

Denote by $\color{blue} \boldsymbol{x}^{F}_{\epsilon}(\cdot)$ the
diffusion process \eqref{sde} with the vector field $\bs b_0$
replacing $\bs b$. Let
$\color{blue} \mathbb{P}_{\boldsymbol{z}}^{\epsilon,\,F}$,
$\bs z\in \bb R^d$, be the law of $\bm{x}^{F}_{\epsilon}(\cdot)$
starting from $\boldsymbol{z}$, and
$\color{blue} p_{\epsilon}^{F}(\bm{z},\,\cdot \, ;t)$ its transition
kernel:
\begin{equation*}
p_{\epsilon}^{F}(\bm{z}, B ;t)
\;=\; \mathbb{P}_{\boldsymbol{z}}^{\epsilon,\,F}
\left[\,\bm{x}^{F}_{\epsilon}(t)\in\mathcal{B}\,\right]\;,
\quad \boldsymbol{z}\in \bb R^d  \;,\;\;
\mathcal{B}\subseteq \bb R^d \;.
\end{equation*}
Denote by $\color{blue} \mu_{\epsilon}^{F}$ the stationary state of
the process $\bm{x}^{F}_{\epsilon}(\cdot)$.

\begin{proof}[Proof of Theorem \ref{p_flat}]
Fix $\bm{m}\in\mathcal{M}_{0}$. Let
\begin{equation}
\label{35}
{\color{blue} {\bs f}_{\epsilon}(\bm{m})}
\,:=\,
\int_{\bb R^d}\phi_{\epsilon} (\bm{x})\,
\mu_{\epsilon}^{F} (d\boldsymbol{x})\ .
\end{equation}
It is enough to prove that for all $\bm{m}\in\mathcal{M}_{0}$,
\begin{equation*}
\lim_{\epsilon\to0}\,\sup_{\bm{x}\in\mathcal{E}(\bm{m})}\,
|\, \phi_{\epsilon}(\bm{x})-\bs{f}_{\epsilon}(\bm{m})\,|
\,=\, 0 \;.
\end{equation*}

Recall from \eqref{e_res} the definition of the function
$G\colon \bb R^d \to \bb R$.  By the stochastic representation of the
resolvent equation,
\begin{equation}
\label{exp_phi-1}
\phi_{\epsilon}(\bm{x})
\;=\; \mathbb{E}_{\bm{x}}^{\epsilon}\Big[\,
\int_{0}^{\infty}e^{-\lambda s}\,
G(\bm{x}_{\epsilon}(\theta_{\epsilon}^{(1)}s))\,ds\, \Big]\ .
\end{equation}

Fix $0< a <1/3$, $0<\eta < r_0/2$, and let
$\varrho_\epsilon = \epsilon^{-a}$. By definition $\theta_{\epsilon}^{(1)}$,
\begin{equation}
\label{58}
\varrho_{\epsilon} \prec  e^{(r_0-\eta)/\epsilon}
\prec \theta_{\epsilon}^{(1)} \;.
\end{equation}
Since $\varrho_{\epsilon}\prec\theta_{\epsilon}^{(1)}$ and $G$ is
bounded,
\begin{equation*}
\phi_{\epsilon}(\bm{x}) \;=\;
\mathbb{E}_{\bm{x}}^{\epsilon}
\Big[\, \int_{\varrho_{\epsilon}/\theta_{\epsilon}^{(1)}}^{\infty}
e^{-\lambda s}\,G(\bm{x}_{\epsilon}(\theta_{\epsilon}^{(1)}s))\,ds
\,\Big] \;+\; R_{\epsilon} (\bs x) \;,
\end{equation*}
where, here and below, $R_{\epsilon} (\bs x)$ represents an error whose
value may change from line to line and such that
\begin{equation*}
\limsup_{\epsilon\to 0} \sup_{\bs y \in \mc E(\bs m)}
|\, R_{\epsilon} (\bs y)\,| \;=\; 0\;.
\end{equation*}
By the Markov property,
\begin{equation*}
\begin{aligned}
\phi_{\epsilon}(\bm{x})
\;&=\; [ 1 + R_{\epsilon} (\bs x)] \, \mathbb{E}_{\bm{x}}^{\epsilon}
\Big[\, \mathbb{E}_{\bm{x}_{\epsilon}(\varrho_{\epsilon})}
\Big[\, \int_{0}^{\infty}e^{-\lambda s}\,
G(\bm{x}_{\epsilon}(\theta_{\epsilon}^{(1)}s))\,ds\, \Big]\,\Big]
\;+\; R_{\epsilon} (\bs x)
\\
& =\;
\mathbb{E}_{\bm{x}}^{\epsilon}
\big[\, \phi_{\epsilon} (\bm{x}_{\epsilon}(\varrho_{\epsilon})) \,\big]
\;+\; R_{\epsilon} (\bs x)
\end{aligned}
\end{equation*}
because $G$ is bounded.  As
$\varrho_{\epsilon}\prec e^{(r_0-\eta)/\epsilon}$, by Proposition
\ref{p_FW} and since $\phi_{\epsilon}$ is uniformly bounded by
$(1/\lambda)\, \Vert \bs g\Vert_\infty$,
\begin{equation*}
\mathbb{E}_{\bm{x}}^{\epsilon}
\left[\phi_{\epsilon} (\bm{x}_{\epsilon}(\varrho_{\epsilon}))\right]
\;=\; \mathbb{E}_{\bm{x}}^{\epsilon}\left[
\phi_{\epsilon}(\bm{x}_{\epsilon}(\varrho_{\epsilon})) \,
{\bf 1}\{\varrho_{\epsilon}<\tau_{\left(\mathcal{W}^{2r_0}(\bm{m})\right)^{c}}\}
\right] \,+\, R_{\epsilon} (\bs x)
\end{equation*}
Recall that $\bs b$ and $\bs b_0$ coincide on $\mc W^{3r_0} (\bs m)$.
By coupling the diffusions $\bs x_\epsilon (\cdot)$,
$\bs x^F_\epsilon (\cdot)$, and in view of Proposition \ref{p_FW}, the
previous expectation is equal to
\begin{align*}
\mathbb{E}_{\bm{x}}^{\epsilon, F }\left[\phi_{\epsilon}
(\bm{x}^{F}_{\epsilon} (\varrho_{\epsilon}))\,
{\bf 1}\{\varrho_{\epsilon}<\tau_{\left(\mathcal{W}^{2r_0}(\bm{m})\right)^{c}}\}
\right]  \;
=\, \mathbb{E}_{\bm{x}}^{\epsilon , F}
\left[\phi_{\epsilon}(\bm{x}^F_{\epsilon}
(\varrho_{\epsilon}))\right]  \,+\, R_{\epsilon} (\bs x) \ .
\end{align*}
Mind that we changed the measure. By condition (e) in the definition
of $r_0$,
$\mc E(\bs m) \subset \mc W^{2r_0} (\bs m) \subset \mc D_{r_4} (\bs
m)$. Hence, by Theorem \ref{t_main2} and since $\phi_{\epsilon}$ is
uniformly bounded,
\begin{equation*}
\mathbb{E}_{\bm{x}}^{\epsilon, F}\left[\phi_{\epsilon}
(\bm{x}^F_{\epsilon} (\varrho_{\epsilon}))\right]
\;=\; \int_{\bb R^d}\phi_{\epsilon} (\bm{y})\,
p_{\epsilon}^{F}(\boldsymbol{x},\, d \boldsymbol{y};
\varrho_{\epsilon})
\;= \; \int_{\bb R^d}\phi_{\epsilon}(\bm{y})\,
\mu_{\epsilon}^{F} (d\bm{y}) \,+\, R_{\epsilon} (\bs x)    \;.
\end{equation*}
As the right-hand side is equal to
${\bs f}_{\epsilon}(\bm{m}) \,+\, R_{\epsilon} (\bs x)$, the theorem
is proved.
\end{proof}

Recall the definition of the sequence $\varrho_\epsilon$ introduced in
\eqref{58}.  The proof of Theorem \ref{p_flat} yields the following
result.

\begin{lem}
\label{l14}
Fix $\bm{m}\in\mathcal{M}_{0}$, $b>0$. Then, for all
$\mathcal{A}\subset \bb R^d$.
\begin{equation*}
\limsup_{\epsilon\rightarrow0}\sup_{t\in[2b,\,4b]}
\sup_{\boldsymbol{x}\in\mathcal E(\bm{m})}
\left|\, \bb P_{\boldsymbol{x}}^{\epsilon}
\big[\,\bm{x}_{\epsilon}(t\theta_{\epsilon}^{(1)})\in\mathcal{A}\,\big] \,-\,
\bb P_{\mu_{\epsilon}^{F}}^{\epsilon}
\big[\,\bm{x}_{\epsilon}(t\theta_{\epsilon}^{(1)}
- \varrho_\epsilon)\in\mathcal{A}\,\big]\, \right|=0 \;.
\end{equation*}
\end{lem}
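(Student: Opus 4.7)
The plan is to adapt the proof of Theorem~\ref{p_flat}, replacing the resolvent representation by the Markov property at time $\varrho_\epsilon$. For $t \in [2b, 4b]$, introduce the test function
\begin{equation*}
\psi_{\epsilon, t}(\boldsymbol{y}) \,:=\, \bb P_{\boldsymbol{y}}^{\epsilon}\big[\,\boldsymbol{x}_\epsilon(t\,\theta_\epsilon^{(1)} - \varrho_\epsilon) \in \mathcal{A}\,\big]\;,
\end{equation*}
which satisfies $0 \le \psi_{\epsilon, t} \le 1$. By the Markov property applied at time $\varrho_\epsilon$,
\begin{equation*}
\bb P_{\boldsymbol{x}}^{\epsilon}\big[\,\boldsymbol{x}_\epsilon(t\,\theta_\epsilon^{(1)}) \in \mathcal{A}\,\big] \,=\, \mathbb{E}_{\boldsymbol{x}}^{\epsilon}\big[\,\psi_{\epsilon, t}(\boldsymbol{x}_\epsilon(\varrho_\epsilon))\,\big]\;,
\end{equation*}
while Fubini gives $\bb P_{\mu_\epsilon^F}^{\epsilon}[\,\boldsymbol{x}_\epsilon(t\,\theta_\epsilon^{(1)} - \varrho_\epsilon) \in \mathcal{A}\,] = \int \psi_{\epsilon, t}\, d\mu_\epsilon^F$. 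Thus it suffices to bound $|\mathbb{E}_{\boldsymbol{x}}^{\epsilon}[\psi_{\epsilon, t}(\boldsymbol{x}_\epsilon(\varrho_\epsilon))] - \int \psi_{\epsilon, t}\, d\mu_\epsilon^F|$ by a quantity tending to $0$ uniformly in $\boldsymbol{x} \in \mathcal{E}(\boldsymbol{m})$ and $t \in [2b, 4b]$.

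The remainder of the argument is a verbatim repetition of the second half of the proof of Theorem~\ref{p_flat}. Since $\psi_{\epsilon, t}$ is bounded by $1$ and $\varrho_\epsilon \prec e^{(r_0 - \eta)/\epsilon}$, Proposition~\ref{p_FW} permits restricting to the event $\{\varrho_\epsilon < \tau_{(\mathcal{W}^{2r_0}(\boldsymbol{m}))^c}\}$, and then coupling $\boldsymbol{x}_\epsilon$ with $\boldsymbol{x}^F_\epsilon$ (whose drifts coincide on $\mathcal{W}^{3r_0}(\boldsymbol{m}) \supset \mathcal{W}^{2r_0}(\boldsymbol{m})$) yields
\begin{equation*}
\mathbb{E}_{\boldsymbol{x}}^{\epsilon}\big[\,\psi_{\epsilon, t}(\boldsymbol{x}_\epsilon(\varrho_\epsilon))\,\big] \,=\, \mathbb{E}_{\boldsymbol{x}}^{\epsilon, F}\big[\,\psi_{\epsilon, t}(\boldsymbol{x}^F_\epsilon(\varrho_\epsilon))\,\big] \,+\, R_\epsilon(\boldsymbol{x})\;,
\end{equation*}
with $\sup_{\boldsymbol{x} \in \mathcal{E}(\boldsymbol{m})} |R_\epsilon(\boldsymbol{x})| \to 0$. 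By condition (e) in the definition of $r_0$, $\mathcal{E}(\boldsymbol{m}) \subset \mathcal{D}_{r_4}(\boldsymbol{m})$, so Theorem~\ref{t_main2} combined with $|\psi_{\epsilon, t}| \le 1$ gives
\begin{equation*}
\sup_{\boldsymbol{x} \in \mathcal{E}(\boldsymbol{m})} \Big|\, \mathbb{E}_{\boldsymbol{x}}^{\epsilon, F}\big[\psi_{\epsilon, t}(\boldsymbol{x}^F_\epsilon(\varrho_\epsilon))\big] \,-\, \int \psi_{\epsilon, t}\, d\mu_\epsilon^F \,\Big| \,\le\, 2\, \sup_{\boldsymbol{x} \in \mathcal{D}_{r_4}} d_{\mathrm{TV}}\big(\boldsymbol{x}^F_\epsilon(\varrho_\epsilon; \boldsymbol{x}),\, \mu_\epsilon^F\big) \,\to\, 0\;.
\end{equation*}
Concatenating the three displays yields the claim.

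The crucial observation is that the entire chain of estimates depends only on the trivial bound $\|\psi_{\epsilon, t}\|_\infty \le 1$, which holds uniformly in $t$ and in $\mathcal{A}$. Consequently the error terms inherited from Proposition~\ref{p_FW} and the total-variation bound from Theorem~\ref{t_main2} are independent of these parameters, and uniformity over $t \in [2b, 4b]$ follows at no additional cost. There is no genuine obstacle: the lemma is essentially the mixing-in-a-well principle already extracted in the proof of Theorem~\ref{p_flat}, applied to the propagated indicator of $\mathcal{A}$ instead of to the resolvent kernel.
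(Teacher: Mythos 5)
Your proof is correct and follows exactly the route the paper intends: the paper states Lemma \ref{l14} with the remark that "the proof of Theorem \ref{p_flat} yields" it, and your argument is precisely that adaptation, replacing the resolvent representation by the propagated indicator $\psi_{\epsilon,t}$ and reusing Proposition \ref{p_FW}, the coupling on $\mathcal{W}^{2r_0}(\boldsymbol{m})$, and Theorem \ref{t_main2}, with the correct observation that all error terms depend only on $\lVert\psi_{\epsilon,t}\rVert_\infty\le 1$ and are therefore uniform in $t$ and $\mathcal{A}$.
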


Denote by $\color{blue} \bs x^{\rm R}_\epsilon (\cdot)$ the diffusion
$\bs x_\epsilon (\cdot)$ reflected at the boundary of
$\mc W^{2r_{0}}(\bm{m})$. Note that we omitted the dependence of
$\bs x^{\rm R}_\epsilon (\cdot)$ on $\bs m$. Denote by
$\color{blue} \mu^{\rm R}_\epsilon$ the measure $\mu_\epsilon$
conditioned to $\mc W^{2r_{0}}(\bm{m})$, which is the invariant
measure of the diffusion $\bs x^{\rm R}_\epsilon (\cdot)$. Let finally
$\color{blue} \mathbb{P}^{\epsilon,\rm R}_{\boldsymbol{z}}$,
$\bs z\in \mc W^{2r_{0}}(\bm{m})$, be the law of
$\bm{x}^{\rm R}_{\epsilon}(\cdot)$ starting from $\boldsymbol{z}$.

Recall that we denote by $d_{\rm TV}(\mu, \nu)$ the total
variation distance between two probability measures $\nu$, $\mu$
defined on $\bb R^d$. Let $\mu^{\mc E(\bs m)}_\epsilon$ be the measure
$\mu_\epsilon$ conditioned to $\mc E(\bs m)$.  We claim that
\begin{equation}
\label{59}
\limsup_{\epsilon\rightarrow0} d_{\rm TV}(\mu^{\rm R}_\epsilon ,
\mu^{F}_\epsilon) \,=\, 0\;.
\end{equation}

Indeed, fix $\mc A \subset \bb R^d$, $\bs x\in \mc E(\bs m)$, and the
sequence $\varrho_\epsilon$ introduced in \eqref{58}. By stationarity
and Theorem \ref{t_main2},
\begin{equation*}
\mu^{F}_\epsilon (\mc A) \,=\,
\bb P_{\mu_{\epsilon}^{F}}^{\epsilon, F}
\big[\,\bm{x}^F_{\epsilon}(\varrho_\epsilon)\in\mathcal{A}\,\big]
\,=\,
\bb P_{\bs x}^{\epsilon, F}
\big[\,\bm{x}^F_{\epsilon}(\varrho_\epsilon)\in\mathcal{A}\,\big]
\,+\, R_\epsilon (\bs x)  \;,
\end{equation*}
where adopted the convention established in the proof of Theorem
\ref{p_flat} for the remainder $R_\epsilon (\bs x) $.

As in the proof of Theorem \ref{p_flat}, introduce the event
$\{\tau_{\partial \mathcal{W}^{2r_0}(\bm{m}) } \le \varrho_{\epsilon}\}$
and its complement. On the event
$\{\tau_{\partial \mathcal{W}^{2r_0}(\bm{m})} > \varrho_{\epsilon}\}$
we may replace the set $\mc A$ by $\mc A \cap \mc W^{2r_0}(\bm{m})$,
and couple the process $\bm{x}^F_{\epsilon}(\cdot)$, $\bm{x}^{\rm
R}_{\epsilon} (\cdot)$ up to time $\varrho_{\epsilon}$. Therefore, the
probability on the right-hand side of the previous displayed equation
is equal to
\begin{equation*}
\bb P_{\bs x}^{\epsilon, \rm R}
\big[\,\bm{x}^R_{\epsilon}(\varrho_\epsilon)\in\mathcal{A}
\cap \mc W^{2r_0}(\bm{m})\,\big]
\,+\, R^{(2)}_\epsilon \;=\;
\bb P_{\bs x}^{\epsilon, \rm R}
\big[\,\bm{x}^R_{\epsilon}(\varrho_\epsilon)\in\mathcal{A} \,\big]
\,+\, R^{(2)}_\epsilon \;,
\end{equation*}
where
$|R^{(2)}_\epsilon| \le 2 \sup_{\bs z\in \mc E(\bs m)} \bb
P_{\bs z}^{\epsilon} [\, \tau_{\partial \mathcal{W}^{2r_0}(\bm{m}) } \le
\varrho_{\epsilon} \,]$. Here, we removed the set
$\mc W^{2r_0}(\bm{m})$ because $\bs x^{\rm R}$ takes value on this
set. By Proposition \ref{p_FW}, $R^{(2)}_\epsilon \to 0$.

Since the previous estimates are uniform over $\bs x\in \mc E(\bs m)$,
we may average the probability appearing on the right-hand side of the
previous displayed equation with respect to the measure $\mu^{\mc
E(\bs m)}_\epsilon$ to get that
\begin{equation*}
\mu^{F}_\epsilon (\mc A) \,=\,
\bb P_{\mu^{\mc E(\bs m)}_\epsilon}^{\epsilon, \rm R}
\big[\,\bm{x}^R_{\epsilon}(\varrho_\epsilon)\in\mathcal{A} \,\big]
\,+\, o_\epsilon (1) \;.
\end{equation*}
Rewrite the previous probability as
\begin{equation*}
\bb P_{\mu^{\mc E(\bs m)}_\epsilon}^{\epsilon, \rm R}
\big[\,\bm{x}^R_{\epsilon}(\varrho_\epsilon)\in\mathcal{A} \,\big]
\;=\; \frac{1}{\mu_\epsilon (\mc E(\bs m))}  \,
\int_{\mc E(\bs m)}
\bb P_{\bs y}^{\epsilon, \rm R}
\big[\,\bm{x}^R_{\epsilon}(\varrho_\epsilon)\in\mathcal{A} \,\big]
\mu_\epsilon (d\bs y) \;.
\end{equation*}
The measure $\mu^{\mc E(\bs m)}_\epsilon$ is also the measure
$\mu^{\rm R}_\epsilon$ conditioned to $\mc E(\bm{m})$. Since
$\mu_\epsilon (\mc W^{2r_0}(\bm{m}) \setminus \mc E(\bm{m})) /
\mu_\epsilon (\mc E(\bm{m})) \to 0$, the previous expression is equal
to
\begin{equation*}
\bb P_{\mu^{\rm R}_\epsilon}^{\epsilon, \rm R}
\big[\,\bm{x}^R_{\epsilon}(\varrho_\epsilon)\in\mathcal{A} \,\big]
\,+\, R^{(3)}_\epsilon\;,
\end{equation*}
where $R^{(3)}_\epsilon \to 0$. Since $\mu^{\rm R}_\epsilon$ is the
stationary state, the previous probability is equal to
$\mu^{\rm R}_\epsilon (\mc A)$.

Putting together the previous estimates yields that
\begin{equation*}
\limsup_{\epsilon \to 0} \sup_{\mc A \subset \bb R^d} \,
\big|\, \mu^{F}_\epsilon (\mc A) \,-\, \mu^{\rm R}_\epsilon (\mc
A)\,\big|\; = \; 0  \;.
\end{equation*}
as claimed in \eqref{59}.

Next result follows from Lemma \ref{l14} and \eqref{59}, Note that the
measure $\mu_{\epsilon}^{F}$ has been replaced by
$\mu_{\epsilon}^{\rm R}$.

\begin{corollary}
\label{l15}
Fix $\bm{m}\in\mathcal{M}_{0}$, $b>0$,
$\mathcal{A}\subset \bb R^d$. Then,
\begin{equation*}
\limsup_{\epsilon\rightarrow0}\sup_{t\in[2b,\,4b]}
\sup_{\boldsymbol{x}\in\mathcal E(\bm{m})}
\left|\, \bb P_{\boldsymbol{x}}^{\epsilon}
\big[\,\bm{x}_{\epsilon}(t\theta_{\epsilon}^{(1)})\in\mathcal{A}\,\big] \,-\,
\bb P_{\mu_{\epsilon}^{\rm R}}^{\epsilon}
\big[\,\bm{x}_{\epsilon}(t\theta_{\epsilon}^{(1)}
- \varrho_\epsilon)\in\mathcal{A}\,\big]\, \right|=0 \;.
\end{equation*}
\end{corollary}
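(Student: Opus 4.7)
The plan is to deduce the corollary from Lemma \ref{l14} by a direct triangle inequality, using \eqref{59} to swap the reference measure from $\mu_\epsilon^{F}$ to $\mu_\epsilon^{\rm R}$. Fix $\bs m \in \mc M_0$, $b>0$, $\mc A \subset \bb R^d$, $t\in[2b,4b]$, and $\bs x \in \mc E(\bs m)$. Write
\begin{equation*}
\big|\, \bb P_{\boldsymbol{x}}^{\epsilon}[\bs x_\epsilon(t\theta^{(1)}_\epsilon) \in \mc A]
\,-\, \bb P_{\mu^{\rm R}_\epsilon}^{\epsilon}[\bs x_\epsilon(t\theta^{(1)}_\epsilon - \varrho_\epsilon) \in \mc A]\,\big|
\,\le\, \Delta^{(1)}_\epsilon(\bs x, t, \mc A) \,+\, \Delta^{(2)}_\epsilon(t, \mc A),
\end{equation*}
where $\Delta^{(1)}_\epsilon$ is the quantity appearing in Lemma \ref{l14} (with $\mu^F_\epsilon$) and
\begin{equation*}
\Delta^{(2)}_\epsilon(t, \mc A) \,:=\, \big|\, \bb P_{\mu^{F}_\epsilon}^{\epsilon}[\bs x_\epsilon(t\theta^{(1)}_\epsilon - \varrho_\epsilon) \in \mc A]
\,-\, \bb P_{\mu^{\rm R}_\epsilon}^{\epsilon}[\bs x_\epsilon(t\theta^{(1)}_\epsilon - \varrho_\epsilon) \in \mc A]\,\big|.
\end{equation*}

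First, I would invoke Lemma \ref{l14} directly to conclude that $\sup_{\bs x \in \mc E(\bs m)} \sup_{t \in [2b,4b]} \Delta^{(1)}_\epsilon(\bs x,t,\mc A) \to 0$ as $\epsilon \to 0$, uniformly in $\mc A$. Next, since for any function $H\colon \bb R^d \to [0,1]$ and any probability measures $\nu_1, \nu_2$ one has $|\int H\, d\nu_1 - \int H\, d\nu_2| \le d_{\rm TV}(\nu_1, \nu_2)$, applying this with $H(\bs y) = \bb P_{\bs y}^{\epsilon}[\bs x_\epsilon(t\theta^{(1)}_\epsilon - \varrho_\epsilon) \in \mc A]$ yields
\begin{equation*}
\Delta^{(2)}_\epsilon(t, \mc A) \,\le\, d_{\rm TV}(\mu^{F}_\epsilon, \mu^{\rm R}_\epsilon),
\end{equation*}
a bound uniform in $t$ and $\mc A$. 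By \eqref{59}, the right-hand side vanishes as $\epsilon \to 0$.

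Combining the two displays and taking the triple supremum before passing to the limit completes the proof. There is no real obstacle here; the only subtle point is verifying that the measure swap is legitimate despite $\mu^F_\epsilon$ being the invariant measure of the \emph{modified} diffusion $\bs x^F_\epsilon(\cdot)$ while the probability in the statement refers to the \emph{original} diffusion $\bs x_\epsilon(\cdot)$. This is fine because in $\Delta^{(2)}_\epsilon$ the dynamics are those of $\bs x_\epsilon$ in both terms and only the initial distribution is changed, so the total-variation bound on the initial laws suffices.
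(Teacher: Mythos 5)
Your proof is correct and is exactly the argument the paper intends: the authors state that Corollary \ref{l15} "follows from Lemma \ref{l14} and \eqref{59}," i.e., the triangle inequality plus the total-variation bound $|\int H\,d\nu_1-\int H\,d\nu_2|\le d_{\rm TV}(\nu_1,\nu_2)$ for $H$ with values in $[0,1]$, which is precisely your $\Delta^{(1)}+\Delta^{(2)}$ decomposition. Your closing remark that only the initial law (not the dynamics) is being swapped in $\Delta^{(2)}$ is the right observation; the only cosmetic overstatement is claiming uniformity in $\mathcal{A}$ for Lemma \ref{l14}, which is neither asserted there nor needed since the corollary fixes $\mathcal{A}$.
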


\section{Exiting neighborhoods of unstable critical points}
\label{sec5}

The main result of this section, Proposition \ref{p:Kifer}, asserts
that the time necessary for the diffusion $\bs x_\epsilon(\cdot)$ to
leave neighborhoods of unstable critical points is bounded by
$\epsilon^{-1}$. It also characterizes the exiting  sets.

Recall that $\mathcal{C}_{0}$ denotes the set of critical points
of $U$ and set
\begin{equation*}
{\color{blue} \mathcal{Y}_{0}} \,:=\,
\mathcal{C}_{0}\setminus\mathcal{M}_{0}\;,
\end{equation*}
so that $\mathcal{Y}_{0}$ stands for the collection of critical points
of $U$ with index larger than $0$.

By \cite[Theorem 2.1]{LS-22}, $\mathcal{M}_{0}$ and
$\mathcal{Y}_{0}$ are the set of stable and unstable equilibria of the
dynamical system \eqref{31}, respectively. Let
$\color{blue}
\mathbb{H}^{\boldsymbol{c}}=(\nabla^{2}U)(\boldsymbol{c})$,
$\color{blue}
\mathbb{L}^{\boldsymbol{c}}=(\nabla\cdot\boldsymbol{\ell})(\boldsymbol{c})$,
$\boldsymbol{c}\in\mathcal{C}_{0}$, so that
$\mathbb{H}^{\boldsymbol{c}}+\mathbb{L}^{\boldsymbol{c}}$ denotes the
Jacobian of the drift $\bs b$ at the critical point $\bs c$.  Next
result asserts that critical points in $\mathcal{Y}_{0}$ are
hyperbolic.

\begin{lem}
\label{lem:hyper}
Fix $\bm{c}\in\mathcal{Y}_{0}$. Then, the matrix
$\mathbb{H}^{\bm{c}}+\mathbb{L}^{\bm{c}}$ is invertible and does not
have a pure imaginary eigenvalue.
\end{lem}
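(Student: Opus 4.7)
The plan is to rule out both conclusions in one stroke by showing that $\mathbb{H}^{\bs c}+\mathbb{L}^{\bs c}$ has no eigenvalue of the form $i\omega$ with $\omega\in\mathbb{R}$; the case $\omega=0$ yields invertibility and the other cases yield the absence of purely imaginary eigenvalues. (Alternatively, one could simply invoke \cite[Lemmata 3.3, 4.1, 4.5]{LS-22}, which underlie the analogous statements already quoted in the excerpt, but the direct argument below is short enough to present.)

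The key input is an algebraic identity between $\mathbb{H}^{\bs c}$ and $\mathbb{L}^{\bs c}=(D\bs\ell)(\bs c)$ forced by the orthogonality hypothesis $\bs\ell\cdot\nabla U\equiv 0$. Differentiating this identity twice and evaluating at $\bs c$, using $(\nabla U)(\bs c)=0$ together with $\bs\ell(\bs c)=0$ (the latter from \cite[Theorem 2.1]{LS-22}), the terms containing $\bs\ell$ or $\nabla U$ undifferentiated drop out, leaving the bilinear relation
\begin{equation*}
(\mathbb{L}^{\bs c})^{\dagger}\,\mathbb{H}^{\bs c} \,+\, \mathbb{H}^{\bs c}\,\mathbb{L}^{\bs c} \;=\; 0\;.
\end{equation*}
Equivalently, the real matrix $\mathbb{H}^{\bs c}\mathbb{L}^{\bs c}$ is antisymmetric.

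With this identity in hand, suppose for contradiction that $(\mathbb{H}^{\bs c}+\mathbb{L}^{\bs c})\,v = i\omega\, v$ for some $\omega\in\mathbb{R}$ and some nonzero $v\in\mathbb{C}^d$. Pair both sides with $\mathbb{H}^{\bs c}v$ in the Hermitian inner product to obtain
\begin{equation*}
\|\mathbb{H}^{\bs c}v\|^{2} \,+\, v^{*}\,\mathbb{H}^{\bs c}\mathbb{L}^{\bs c}\,v
\;=\; i\omega\,\bigl(v^{*}\mathbb{H}^{\bs c}v\bigr)\;.
\end{equation*}
Since $\mathbb{H}^{\bs c}$ is real symmetric, $v^{*}\mathbb{H}^{\bs c}v\in\mathbb{R}$, so the right-hand side is purely imaginary; since $\mathbb{H}^{\bs c}\mathbb{L}^{\bs c}$ is real antisymmetric, $v^{*}\mathbb{H}^{\bs c}\mathbb{L}^{\bs c}v$ is purely imaginary as well. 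Taking real parts forces $\|\mathbb{H}^{\bs c}v\|^{2}=0$, whence $\mathbb{H}^{\bs c}v=0$; but $U$ is Morse so $\mathbb{H}^{\bs c}$ is invertible, giving $v=0$, the desired contradiction.

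I do not anticipate a real obstacle here: both the derivation of the antisymmetry of $\mathbb{H}^{\bs c}\mathbb{L}^{\bs c}$ and the spectral argument are short linear algebra. The only point deserving care is interpreting the notation $\mathbb{L}^{\bs c}$ as the Jacobian $(D\bs\ell)(\bs c)$, in line with the convention $\mathbb{L}_{0}=(D\bs\ell_{0})(\bs 0)$ used in Section \ref{sec-ap3}, rather than the (scalar, vanishing) divergence. The divergence-free part of \eqref{27} is not needed for this lemma; what is used is solely the orthogonality $\nabla U\cdot\bs\ell\equiv 0$ together with the Morse property of $U$ and the vanishing $\bs\ell(\bs c)=0$ at critical points.
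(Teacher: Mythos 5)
Your proof is correct and rests on exactly the same two ingredients as the paper's: the skew-symmetry of $\mathbb{H}^{\bs c}\mathbb{L}^{\bs c}$ and the invertibility of $\mathbb{H}^{\bs c}$ coming from the Morse property. The differences are in execution. The paper takes the skew-symmetry as a black box from \cite[Lemma 4.5]{LS-22}, whereas you rederive it by differentiating $\bs\ell\cdot\nabla U\equiv 0$ twice at $\bs c$; that derivation is sound (the terms carrying an undifferentiated $\nabla U$ or $\bs\ell$ drop because $\bs c$ is critical and $\bs\ell(\bs c)=0$), and your reading of $\mathbb{L}^{\bs c}$ as the Jacobian $(D\bs\ell)(\bs c)$ is the intended one, consistent with the conventions of Sections \ref{sec10} and \ref{sec-ap3}. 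In the spectral step the paper pairs $(\mathbb{H}^{\bs c}+\mathbb{L}^{\bs c})\bs v$ with itself to get $-a^{2}\Vert\bs v\Vert^{2}=\Vert\mathbb{H}^{\bs c}\bs v\Vert^{2}+\Vert\mathbb{L}^{\bs c}\bs v\Vert^{2}$, which requires the unconjugated bilinear product and is delicate when the eigenvector is genuinely complex; you instead pair with $\mathbb{H}^{\bs c}v$ in the Hermitian product and take real parts, which isolates $\Vert\mathbb{H}^{\bs c}v\Vert^{2}$ unambiguously (since $v^{*}\mathbb{H}^{\bs c}v$ is real and $v^{*}\mathbb{H}^{\bs c}\mathbb{L}^{\bs c}v$ is purely imaginary) and treats $\omega=0$ and $\omega\neq0$ uniformly. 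This is, if anything, the more careful version of the argument. No gap.
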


\begin{proof}
Suppose, by contradiction, that $ai$, $a\in\mathbb{R}$, is an
eigenvalue of $\mathbb{H}^{\bm{c}}+\mathbb{L}^{\bm{c}}$. Denote by
$\bm{v}$ the unit eigenvector corresponding to $ai$ so that
$(\mathbb{H}^{\bm{c}}+\mathbb{L}^{\bm{c}})\bm{v}=ai\bm{v}$.
Thus, if $\color{blue} \bb A^\dagger$ represents the transpose of the
matrix $\bb A$,
\begin{align*}
ai\bm{v}\cdot ai\bm{v} &
=\bm{v}\cdot(\mathbb{H}^{\bm{c}}
+\mathbb{L}^{\bm{c}})^{\dagger}(\mathbb{H}^{\bm{c}}
+\mathbb{L}^{\bm{c}})\bm{v}
\\
& =\bm{v}\cdot\left\{ (\mathbb{H}^{\bm{c}})^{\dagger}
\mathbb{H}^{\bm{c}}+(\mathbb{H}^{\bm{c}})^{\dagger}
\mathbb{L}^{\bm{c}}+(\mathbb{L}^{\bm{c}})^{\dagger}
\mathbb{H}^{\bm{c}}+(\mathbb{L}^{\bm{c}})^{\dagger}
\mathbb{L}^{\bm{c}})\right\} \bm{v}\;.
\end{align*}
By \cite[Lemma 4.5]{LS-22}, the matrix $\bb H^{\bm{c}} \bb L^{\bm{c}}$
is skew-symmetric, so that
\begin{align*}
-\, a^{2}\|\bm{v}\|^{2} & =\|\mathbb{H}^{\bm{c}}\bm{v}\|^{2}
+\|\mathbb{L}^{\bm{c}}\bm{v}\|^{2}\;,
\end{align*}
which is a contradiction if $a\neq0$. If $a=0$,
$\mathbb{H}^{\bm{c}}\bm{v}=0$ which implies that $\boldsymbol{v}=0$
since $\mathbb{H}^{\boldsymbol{c}}$ is invertible. This is also a
contradiction to the fact that $\boldsymbol{v}$ is a unit vector.
\end{proof}

\subsection*{The Hartman-Grobman theorem}

Fix from now on a critical point $\bs c\in \mc Y_0$ of index $k\ge1$.
In this subsection, we use Hartman-Grobman theorem \cite[Theorem
1.47]{Chicone}, \cite[Section 2.8]{Perko}, to define a neighborhood of
$\bs c$.

Denote by $\color{blue} \upsilon_{\bs x} (t)$, $\bm{x} \in\bb R^d$,
$t\ge0$, the solution of the ODE \eqref{31} starting from $\bs x$, and
by
$\color{blue} \upsilon_{L, \bs x} (t) = \upsilon^{\bs c}_{L, \bs x}
(t)$ the solution of the linear ODE
\begin{equation}
\label{34}
\dot {\bs x} (t) \,=\, -\,
( \mathbb{H}^{\bm{c}}+ \mathbb{L}^{\bm{c}}  )\,
(\boldsymbol{x} (t) -\boldsymbol{c} )
\end{equation}
starting from $\bs x$. The Hartman-Grobman theorem, which can be
applied in view of Lemma \ref{lem:hyper}, reads as follows.

\begin{thm}
\label{thm:H-G}
Fix $\boldsymbol{c}\in\mathcal{Y}_{0}$. There exist open neighborhoods
$\mathcal{U}_{\boldsymbol{c}},\,\mathcal{U}^L_{\boldsymbol{c}}$ of
$\boldsymbol{c}$ and a homeomorphism
$\Xi \colon
\mathcal{U}_{\boldsymbol{c}}\rightarrow\mathcal{U}^L_{\boldsymbol{c}}$
such that $\Xi(\boldsymbol{c})=\boldsymbol{c}$ and
$\Xi ( \upsilon_{\bs x} (t) ) = \upsilon_{L,\Xi(\bs x)} (t)$ for all
$(\bs x, t)$ such that $ \upsilon_{\bs x} (t) \in \mc U_{\bs c}$. In
particular, $\boldsymbol{c}$ is the unique critical point of $U$ in
$\mathcal{U}_{\boldsymbol{c}}$.
\end{thm}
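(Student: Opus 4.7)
The plan is to apply the classical Hartman--Grobman theorem \cite[Theorem 1.47]{Chicone}, \cite[Section 2.8]{Perko} as a black box, once its hypotheses have been verified for the drift $\bs b = -(\nabla U + \bs \ell)$ at $\bs c$. The regularity hypothesis is immediate: since $U\in C^3(\mathbb{R}^d)$ and $\bs \ell\in C^2(\mathbb{R}^d)$, the field $\bs b$ is of class $C^2$, which comfortably exceeds the $C^1$ regularity required by that theorem.

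For the hyperbolicity hypothesis, I would observe that the Jacobian of $\bs b$ at $\bs c$ equals $-(\mathbb{H}^{\bs c}+\mathbb{L}^{\bs c})$, which is precisely the matrix driving the linear flow \eqref{34}. By Lemma \ref{lem:hyper}, $\mathbb{H}^{\bs c}+\mathbb{L}^{\bs c}$ is invertible and has no purely imaginary eigenvalue; the same property is inherited by its negative. Hence $\bs c$ is a hyperbolic equilibrium of the nonlinear ODE \eqref{31}, and the Hartman--Grobman theorem applies.

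Invoking that theorem then yields open neighborhoods $\mathcal U_{\bs c}, \mathcal U^L_{\bs c}$ of $\bs c$ and a homeomorphism $\Xi\colon \mathcal U_{\bs c}\to \mathcal U^L_{\bs c}$ with $\Xi(\bs c)=\bs c$, conjugating the local flow of \eqref{31} to that of \eqref{34}; this is precisely the relation $\Xi(\upsilon_{\bs x}(t)) = \upsilon_{L, \Xi(\bs x)}(t)$ claimed, valid for every $(\bs x, t)$ for which the trajectory stays within $\mathcal U_{\bs c}$. For the last clause, I would invoke that $U$ is a Morse function with only finitely many critical points, so $\mathcal C_0$ is discrete; shrinking $\mathcal U_{\bs c}$ (and correspondingly replacing $\mathcal U^L_{\bs c}$ by $\Xi(\mathcal U_{\bs c})$) one achieves $\mathcal U_{\bs c}\cap \mathcal C_0=\{\bs c\}$.

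There is no substantial obstacle here: the key hyperbolicity check has already been carried out in Lemma \ref{lem:hyper}, and the remainder is a direct citation of a classical theorem together with a trivial shrinking argument. The only conceptual subtlety is that Hartman--Grobman produces merely a homeomorphism (not a diffeomorphism), but this is exactly the regularity demanded by the statement.
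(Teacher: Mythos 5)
Your proposal is correct and follows the same route as the paper, which likewise treats the statement as a direct application of the classical Hartman--Grobman theorem once hyperbolicity of the linearization $-(\mathbb{H}^{\bs c}+\mathbb{L}^{\bs c})$ is supplied by Lemma \ref{lem:hyper}. The only cosmetic difference is the last clause: you obtain uniqueness of the critical point by shrinking $\mathcal U_{\bs c}$ using discreteness of $\mathcal C_0$, whereas the paper's ``in particular'' reads it off from the conjugacy (equilibria of \eqref{31} in $\mathcal U_{\bs c}$ correspond to equilibria of the linear flow, of which $\bs c$ is the only one since the matrix is invertible); both are valid.
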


Denote by $\color{blue} \mc M_s = \mathcal{M}_s (\boldsymbol{c})$,
$\color{blue} \mc M_u = \mathcal{M}_u (\boldsymbol{c})$ the stable,
unstable manifold of $\boldsymbol{c}$ for the dynamical system
\eqref{31}, respectively. Hence, for all $\bs x \in \mc M_s$,
$\lim_{t\rightarrow \infty} \upsilon_{\bs x} (t) = \boldsymbol{c}$. In
contrast, for all $\bs y \in \mc M_u$ there exists a solution $\bs x(t)$,
$t\le 0$ of \eqref{31} such that
\begin{equation*}
\bs x(0) = \bs y \;, \quad \lim_{t\to - \infty} \bs x(t) = \bs c \;.
\end{equation*}
Let $\color{blue} \mathcal{M}_{L,s}$, $\color{blue} \mathcal{M}_{L,u}$
be the stable, unstable manifold of $\bs c$ for the linear ODE
\eqref{34}. By Theorem \ref{thm:H-G}, on the set $\mc U^L_{\bs c}$,
$\mathcal{M}_{L,s} = \Xi (\mc M_s)$,
$\mathcal{M}_{L,u} = \Xi (\mc M_u)$.

Choose $r_{1}>0$ so that
$\color{blue} B(\bs c,\,r_{1})\subset\mc U^L_{\bs c}$.  Let
$\color{blue} \widehat {\mc N} = \widehat {\mc N} (\bm{c}) :=
\Xi^{-1}(B(\bs c ,\,r_{1}))$. For each
$\boldsymbol{y}\in\widehat{\mc N} \setminus\mc M_s$, let
$t(\boldsymbol{y})=t_{\boldsymbol{c}}(\boldsymbol{y})$ be the exit
time from $\widehat{\mc N}$:
\begin{equation}
\label{eq:overlinet}
{\color{blue}   t(\boldsymbol{y})} \, :=\, \inf \{t\ge0:
\upsilon_{\bs y} (t)  \not\in \widehat{\mc N} \} \;.
\end{equation}
Clearly, $t(\boldsymbol{y}) = t_L (\Xi(\boldsymbol{y}))$ if
$t_L (\bs z)$ represents the exit time from $B(\bs c ,\,r_{1})$ for
the linear ODE \eqref{34} starting from $\bs z$.  Denote by
$\boldsymbol{e}(\boldsymbol{y}) =
\boldsymbol{e}_{\boldsymbol{c}}(\boldsymbol{y})$ the exit location of
the dynamical systems \eqref{31} from the set $\widehat{\mc N}$:
${\color{blue} \boldsymbol{e}(\boldsymbol{y})} \,:=\; \upsilon_{\bs y}
(t(\boldsymbol{y})) $.  Here again,
\begin{equation}
\label{eq:relp}
\Xi(\bm{e}(\bm{y}))
\;=\;  \bs e_L (\Xi(\boldsymbol{y}))
\end{equation}
provided $\color{blue} \bs e_L (\bs z)$ stands for the exit location
from the set $B(\bs c,\,r_{1})$ of the linear dynamical systems
\eqref{34} starting from $\bs z$.

Let
$\mc J^{a}_L = \mc J^{a}_L(\boldsymbol{c})$ be the elements of
$\partial B (\boldsymbol{c},\,r_{1})$ at distance less than $a$ from
$\mc M_{L,u} \cap\partial\mathcal{B}(\boldsymbol{c},\,r_{1})$:
\begin{equation*}
{\color{blue} \mc J^a_L } \,:=\, \big\{
\boldsymbol{x}\in\partial B (\boldsymbol{c},\,r_{1}):
\exists\, \boldsymbol{y}\in \mc M_{L,u}
\cap\partial\mathcal{B}(\boldsymbol{c},\,r_{1})
\text{ such that }\Vert\boldsymbol{x}-\boldsymbol{y}\Vert<a\big\} \;.
\end{equation*}
Next result is an assertion about the linear ODE \eqref{34}.  Its
proof is presented in Appendix \ref{sec:ODE}.

\begin{lem}
\label{lem_esclin}
Fix $\boldsymbol{c}\in\mathcal{Y}_{0}$ and $a>0$. Then, there exists
$0< r(a)<r_1$ such that $e_L(\bs z) \in \mc J^a_L$ for all
$\bs z\in \mathcal{B}(\bm{c},\,r(a)) \setminus \mc M_{L,s} $.
\end{lem}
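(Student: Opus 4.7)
After translating so that $\bm{c}=0$, the ODE \eqref{34} becomes $\dot{\bm{x}}(t)=\mathbb{A}\,\bm{x}(t)$ with $\mathbb{A}:=-(\mathbb{H}^{\bm{c}}+\mathbb{L}^{\bm{c}})$, and $\upsilon_{L,\bm{z}}(t)=e^{t\mathbb{A}}\bm{z}$. By Lemma~\ref{lem:hyper}, $\mathbb{A}$ is hyperbolic, so we have a direct sum decomposition $\mathbb{R}^d=E^s\oplus E^u$ into invariant spectral subspaces; since the flow is linear, $\mc M_{L,s}=E^s$ and $\mc M_{L,u}=E^u$. Let $P_s,P_u$ denote the associated spectral projections. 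Standard estimates from spectral theory yield constants $C>0$ and $0<\alpha\le\beta<\infty$ such that, for all $t\ge 0$,
\begin{equation*}
\|e^{t\mathbb{A}}v\|\le C\,e^{-\alpha t}\|v\|\quad (v\in E^s),\qquad
\|e^{t\mathbb{A}}w\|\le C\,e^{\beta t}\|w\|\quad (w\in E^u)\;.
\end{equation*}

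Fix $\bm{z}\in B(\bm{0},r_1)\setminus E^s$ and write $\bm{z}=\bm{z}_s+\bm{z}_u$ with $\bm{z}_s=P_s\bm{z}$ and $\bm{z}_u=P_u\bm{z}\ne\bm{0}$. Let $t^*=t_L(\bm{z})$, so $\|e^{t^*\mathbb{A}}\bm{z}\|=r_1$. The first step is a \emph{lower bound} on $t^*$: by the triangle inequality together with the two bounds above,
\begin{equation*}
r_1 \;=\; \|e^{t^*\mathbb{A}}\bm{z}\|\;\le\; C\,e^{-\alpha t^*}\|\bm{z}_s\| \;+\; C\,e^{\beta t^*}\|\bm{z}_u\| \;\le\; C_0\,\|\bm{z}\|\,(1+e^{\beta t^*})\;,
\end{equation*}
for a constant $C_0$ absorbing $\|P_s\|$ and $\|P_u\|$. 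Once $\|\bm{z}\|<r_1/(4C_0)$, this forces $t^*\ge\beta^{-1}\log\bigl(r_1/(2C_0\|\bm{z}\|)\bigr)$, which diverges as $\|\bm{z}\|\to 0$.

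For the second step, plug this lower bound into the decay estimate for the stable part:
\begin{equation*}
\|e^{t^*\mathbb{A}}\bm{z}_s\| \;\le\; C\,e^{-\alpha t^*}\|\bm{z}_s\| \;\le\; C_1\,\|\bm{z}\|^{\,1+\alpha/\beta}\;,
\end{equation*}
which vanishes as $\|\bm{z}\|\to 0$. Finally, the exit point $\bm{e}_L(\bm{z})=e^{t^*\mathbb{A}}\bm{z}_u+e^{t^*\mathbb{A}}\bm{z}_s$ lies within distance $\|e^{t^*\mathbb{A}}\bm{z}_s\|$ of the point $w_u:=e^{t^*\mathbb{A}}\bm{z}_u\in E^u\setminus\{0\}$; since $\bigl|\|w_u\|-r_1\bigr|\le\|e^{t^*\mathbb{A}}\bm{z}_s\|$, the radial retraction $p:=(r_1/\|w_u\|)w_u\in E^u\cap\partial B(\bm{0},r_1)=\mc M_{L,u}\cap\partial B(\bm{0},r_1)$ satisfies $\|\bm{e}_L(\bm{z})-p\|\le 2\,\|e^{t^*\mathbb{A}}\bm{z}_s\|\le 2C_1\|\bm{z}\|^{1+\alpha/\beta}$. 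Choosing $r(a)>0$ small enough that this final bound is $<a$ and $r(a)<\min(r_1,\,r_1/(4C_0))$ completes the proof.

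\textbf{Main obstacle.} The only delicate point is Step~1: one has to exploit hyperbolicity to rule out the degenerate possibility that the trajectory exits while the unstable component is still negligible. The linear estimate above does this directly because the stable contribution to $\|e^{t^*\mathbb{A}}\bm{z}\|$ is bounded by $C\|\bm{z}\|$, hence cannot by itself account for reaching the sphere of radius $r_1$ when $\|\bm{z}\|$ is small. Everything else is routine bookkeeping with spectral estimates; no adapted (Lyapunov) norm is needed since all constants from non-orthogonality of $P_s,P_u$ are absorbed into $C_0,C_1$.
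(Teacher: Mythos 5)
Your proof is correct and follows essentially the same route as the paper's: decompose $\bm z$ into its stable and unstable components, bound the stable component at the exit time by a constant times $\|\bm z\|$, and compare the exit point with the radial retraction of the unstable component onto $\mc M_{L,u}\cap\partial B(\bm 0,r_1)$. The only substantive difference is that your Step 1 (the lower bound on $t^*$) — which you single out as the main obstacle — is in fact superfluous: the uniform-in-time bound $\|e^{t\mathbb{A}}\bm z_s\|\le C\|\bm z_s\|\le Cc_0\|\bm z\|$ for all $t\ge 0$ (the content of Lemmata \ref{lem_bdbarx} and \ref{l: v_s<x}) already makes the stable contribution at the exit time smaller than $a/3$ once $\|\bm z\|$ is small, so neither any information on when the trajectory exits nor the sharper rate $\|\bm z\|^{1+\alpha/\beta}$ is needed.
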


We turn to the construction of a second neighborhood
$\mc N \subset \widehat {\mc N}$.  Since
$\nabla U\cdot\boldsymbol{\ell}\equiv0$,
$(d/dt) U(\upsilon_{\bs x}(t)) = -\,| \nabla U (\upsilon_{\bs x}(t))
|^{2} < 0$ for all $\bs x \notin \mathcal{C}_{0}$, $t>0$. Therefore,
if $\bs x$ is not a critical point, $U (\upsilon_{\bs x}(t))$ is
strictly decreasing in $t$, and there exists
$\eta_{0}=\eta_{0}(r_1)>0$ such that
\begin{equation}
\label{eq:Ubdr}
\max_{\boldsymbol{x}\in \mc M_{u}
\cap \partial\widehat{\mc N} }
U(\boldsymbol{x})<U(\boldsymbol{c})-3\eta_{0}\;.
\end{equation}
Take $\eta_{0}$ small enough so that there is no critical point
$\boldsymbol{c}' \in \mc C_0$ such that
\begin{equation}
\label{eq:nocr}
U(\boldsymbol{c}')\in[U(\boldsymbol{c})-\eta_{0},\,
U(\boldsymbol{c})) \;.
\end{equation}

\begin{lem}
\label{lem_esc}
For all $\boldsymbol{c}\in\mathcal{Y}_{0}$, there exists
$r_{2}=r_{2}(\boldsymbol{c})>0$ such that,
\begin{equation*}
\sup_{\boldsymbol{y}\in\Xi^{-1}(B(\bm{c},\,r_{2}))
\setminus\mc M_s}
U(\boldsymbol{e}(\boldsymbol{y}))\le U(\boldsymbol{c})-2\eta_{0}\;.
\end{equation*}
\end{lem}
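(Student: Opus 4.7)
\smallskip
\noindent\textbf{Plan of proof.} The idea is to transport the information about the exit location of the linearized ODE, provided by Lemma \ref{lem_esclin}, back to the original dynamics via the Hartman--Grobman conjugacy $\Xi$, and then exploit the strict inequality \eqref{eq:Ubdr} together with the uniform continuity of $U\circ \Xi^{-1}$ on the compact boundary $\partial B(\bs c,r_1)$.

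\smallskip
\noindent\textbf{Step 1 (reduction to the linear exit).} For $\bs y\in\Xi^{-1}(B(\bs c,r_2))\setminus\mc M_s$ with $r_2<r_1$, the point $\bs z:=\Xi(\bs y)$ lies in $B(\bs c,r_2)\setminus\mc M_{L,s}$ since $\Xi$ is a homeomorphism mapping $\mc M_s$ onto $\mc M_{L,s}$ locally. By \eqref{eq:relp}, $\Xi(\bs e(\bs y))=\bs e_L(\bs z)$, so it suffices to control the value of $U\circ\Xi^{-1}$ at $\bs e_L(\bs z)\in\partial B(\bs c,r_1)$.

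\smallskip
\noindent\textbf{Step 2 (uniform continuity on the sphere).} The set $\mc M_{L,u}\cap\partial B(\bs c,r_1)$ is a compact subset of $\partial B(\bs c,r_1)$, and under $\Xi^{-1}$ it is mapped to $\mc M_u\cap\partial\widehat{\mc N}$. By \eqref{eq:Ubdr}, $U\circ\Xi^{-1}<U(\bs c)-3\eta_0$ on this image. The map $U\circ\Xi^{-1}$ is continuous on the compact set $\partial B(\bs c,r_1)$, hence uniformly continuous there, so there exists $a>0$ such that
\[
\bigl|\,U(\Xi^{-1}(\bs x))-U(\Xi^{-1}(\bs x'))\,\bigr|<\eta_0
\]
whenever $\bs x,\bs x'\in\partial B(\bs c,r_1)$ satisfy $\|\bs x-\bs x'\|<a$. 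Consequently, if $\bs x\in\mc J^{a}_L$, i.e.\ $\bs x$ lies within distance $a$ of some $\bs x'\in\mc M_{L,u}\cap\partial B(\bs c,r_1)$, then
\[
U(\Xi^{-1}(\bs x))\;<\;U(\Xi^{-1}(\bs x'))+\eta_0\;<\;U(\bs c)-2\eta_0 \;.
\]

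\smallskip
\noindent\textbf{Step 3 (choice of $r_2$ and conclusion).} For the $a>0$ chosen in Step 2, Lemma \ref{lem_esclin} produces $r(a)\in(0,r_1)$ such that $\bs e_L(\bs z)\in\mc J^{a}_L$ for every $\bs z\in B(\bs c,r(a))\setminus\mc M_{L,s}$. Setting $r_2:=r(a)$ and combining Steps 1--2 yields, for every $\bs y\in\Xi^{-1}(B(\bs c,r_2))\setminus\mc M_s$,
\[
U(\bs e(\bs y))\;=\;U\bigl(\Xi^{-1}(\bs e_L(\Xi(\bs y)))\bigr)\;\le\;U(\bs c)-2\eta_0 \;,
\]
which is the desired bound after taking the supremum.

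\smallskip
\noindent\textbf{Main obstacle.} The nontrivial analytic input has already been isolated in Lemma \ref{lem_esclin}, whose proof requires a quantitative analysis of the linear flow \eqref{34} near a hyperbolic point with possibly non-real eigenvalues. Given that lemma, the only delicate point here is that $\Xi$ is merely a homeomorphism (not a diffeomorphism), so one cannot propagate derivative estimates. The argument therefore must be purely topological/uniform-continuity-based, as above; the compactness of $\partial B(\bs c,r_1)$ is what makes this go through.
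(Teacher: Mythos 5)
Your proposal is correct and follows essentially the same route as the paper: both reduce to the linear exit location via the conjugacy identity \eqref{eq:relp}, invoke Lemma \ref{lem_esclin} to place $\bs e_L(\Xi(\bs y))$ in a small neighborhood $\mc J^a_L$ of $\mc M_{L,u}\cap\partial B(\bs c,r_1)$, and then use a compactness/uniform-continuity argument together with the margin in \eqref{eq:Ubdr} to convert the strict bound $U<U(\bs c)-3\eta_0$ on the unstable manifold into the bound $U\le U(\bs c)-2\eta_0$ on the neighborhood. The only (immaterial) difference is that you apply uniform continuity to the composite $U\circ\Xi^{-1}$ in one step, whereas the paper first uses uniform continuity of $\Xi^{-1}$ to get $\Xi^{-1}(\mc J_L^{b_0})\subset\mc J^{a_0}$ and then the boundedness of $|\nabla U|$ on compacts.
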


\begin{proof}
For $a>0$, let
\begin{equation*}
\mathcal{J}^{a} \,=\,
\big\{ \, \boldsymbol{x}\in\partial\widehat{\mathcal{N}} :
\exists\, \boldsymbol{y}\in\mc M_u \cap \partial\widehat{\mc N}
\text{ such that }\Vert\boldsymbol{x}-\boldsymbol{y}\Vert<a\, \big\} \;.
\end{equation*}
By \eqref{eq:Ubdr} and the fact that $|\nabla U|$ is bounded on
compact sets, there exists $a_{0}>0$ such that
\begin{equation}
\label{eq:esc1}
\sup_{\boldsymbol{x}\in\mathcal{J}^{a_{0}} }
U (\boldsymbol{x})\le U(\boldsymbol{c})-2\eta_{0}\;.
\end{equation}

Since $\Xi^{-1}\colon \mathcal{U}^L_{\bm{c}}\to\mathcal{U}_{\bm{c}}$ is
continuous, it is uniformly continuous on the compact set
$\overline {B(\bm{c},\,r_{1})}$.  Therefore, there exists $b_{0}>0$
such that $\|\Xi^{-1}(\bm{x})-\Xi^{-1}(\bm{y})\|\le a_{0}$ for all
$\boldsymbol{x},\,\boldsymbol{y}\in B(\bm{c},\,r_{1})$ satisfying
$\|\bm{x}-\bm{y}\|\le b_{0}$, Therefore,
\begin{equation}
\label{esc2}
\Xi^{-1}(\mathcal{J}_L^{b_{0}}) \subset \mathcal{J}^{a_{0}}\ .
\end{equation}

Let $r(b_0)>0$ be the positive constant whose existence is asserted in
Lemma \ref{lem_esclin}. Set $r_2 = r(b_0) \wedge r_1$. By Lemma
\ref{lem_esclin},
$\bs e_L (\Xi(\boldsymbol{y})) \in \mathcal{J}_L^{b_{0}}$ for
$\Xi(\bm{y})\in B(\bm{c},\,r_{2})\setminus\Xi(\mc M_s)$.  Therefore,
by \eqref{eq:relp} and \eqref{esc2}, for
$\bm{y}\in\Xi^{-1}(B(\bm{c},\,r_{2}))\setminus\mc M_s$,
\begin{equation*}
\bm{e}(\bm{y}) =
\Xi^{-1}(\bs e_L (\Xi(\boldsymbol{y}))) \in \mathcal{J}^{a_{0}} \;.
\end{equation*}
This along with \eqref{eq:esc1} imply that
\begin{equation*}
\sup_{\boldsymbol{y}\in\Xi^{-1}(B(\bm{c},\,r_{2}))
\setminus \mc M_s}
U(\boldsymbol{e}(\boldsymbol{y}))\le U(\boldsymbol{c})-2\eta_{0}\;,
\end{equation*}
which completes the proof of the lemma.
\end{proof}

\subsection*{Exit problem from $\widehat{\mc N}$}

Denote by $\color{blue} \mc N = \mc N(\bs c)$ the closure of the set
$\Xi^{-1}(B(\boldsymbol{c},\,r_{2}))$, where $r_{2}$ has been
introduced in Lemma \ref{lem_esc}.  As the set $\widehat{\mc N}$
contains an unstable equilibrium $\boldsymbol{c}$, the exit problem
from $\widehat{\mc N}$ does not follow from the Friedlin-Wentzell
theory, but has been investigated in \cite{Kif}.

\begin{prop}
\label{p:Kifer}
Fix $\bm{c}\in\mathcal{Y}_{0}$.  Then,
\begin{equation*}
\limsup_{\epsilon\rightarrow0}
\sup_{\bm{z}\in\mc N }
\mathbb{P}_{\bm{z}}^{\epsilon} \big[\, U(\bm{x}_{\epsilon}
(\tau_{\partial\widehat{\mc N}}))>U(\bm{c})-\eta_{0}\, \big]=0\;.
\end{equation*}
Moreover, for all $C>0$,
\begin{equation*}
\limsup_{\epsilon\rightarrow0}
\sup_{\bm{z}\in\mc N } \mathbb{P}_{\bm{z}}^{\epsilon}
\Big[\, \tau_{\partial\widehat{\mc N}}
>\frac{C}{\epsilon}\, \Big]=0\;.
\end{equation*}
\end{prop}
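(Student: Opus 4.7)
The strategy is to invoke Kifer's results \cite{Kif} on exits from neighborhoods of hyperbolic equilibria, combined with the deterministic exit analysis of Lemma \ref{lem_esc}. Hyperbolicity is ensured by Lemma \ref{lem:hyper}, which shows that $\bb H^{\bs c} + \bb L^{\bs c}$ has no eigenvalue on the imaginary axis, and the Hartman--Grobman conjugation $\Xi$ of Theorem \ref{thm:H-G} allows one to transfer exit statements between the nonlinear flow \eqref{31} near $\bs c$ and the linear flow \eqref{34}, since $\Xi$ is a homeomorphism and hence preserves exit events and is uniformly continuous on $\overline{B(\bs c,r_1)}$.

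For the time bound, I would analyze the linear flow in coordinates adapted to the stable/unstable decomposition of $-(\bb H^{\bs c} + \bb L^{\bs c})$. Along the unstable eigenspace the flow expands at some fixed rate $\lambda_u > 0$, so a point whose unstable component has size $\delta$ leaves $B(\bs c,r_1)$ in linear time $O(\log(\delta^{-1}))$. Because the Brownian perturbation contributes a displacement of order $\sqrt{\epsilon t}$ in every direction over $[0,t]$, standard Gaussian estimates yield that after a time of order one the unstable component of $\bs x_\epsilon(\cdot)-\bs c$ has norm at least $\epsilon^{1/2+\kappa}$ with probability $1-o(1)$, uniformly in $\bs z\in\mc N$ and for any fixed small $\kappa>0$. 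Combining these two facts and transferring through $\Xi$ gives $\tau_{\partial \widehat{\mc N}} = O(\log(\epsilon^{-1}))$ with probability $1-o(1)$, which is far smaller than $C\epsilon^{-1}$ for any $C>0$.

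For the location estimate, Lemma \ref{lem_esc} already ensures that, for any deterministic starting point $\bs y\in\mc N\setminus\mc M_s$, the exit point $\bs e(\bs y)$ satisfies $U(\bs e(\bs y)) \le U(\bs c) - 2\eta_0$. A small-noise shadowing argument of Freidlin--Wentzell type, applied on the time window of length $O(\log(\epsilon^{-1}))$ obtained in the previous paragraph, shows that with probability $1-o(1)$ the stochastic exit location $\bs x_\epsilon(\tau_{\partial \widehat{\mc N}})$ is within $o(1)$ of $\bs e(\bs y')$ for some perturbed starting point $\bs y'\in\mc N\setminus\mc M_s$. Continuity of $U$ together with the choice \eqref{eq:Ubdr} of $\eta_0$ then upgrades $U(\bs e(\bs y')) \le U(\bs c) - 2\eta_0$ to $U(\bs x_\epsilon(\tau_{\partial \widehat{\mc N}})) \le U(\bs c) - \eta_0$ with probability $1-o(1)$.

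The main obstacle is uniformity over $\bs z\in\mc N$, in particular for starting points on or very close to the stable manifold $\mc M_s(\bs c)$, along which the deterministic flow stays in $\widehat{\mc N}$ for all positive times. This is precisely the difficulty handled by Kifer's argument: one shows that the Brownian noise creates, on a time scale of order one, a projection onto the unstable eigenspace of order $\sqrt{\epsilon}$ that subsequently grows exponentially; the uniformity is preserved under $\Xi$ because the compactness of $\overline{B(\bs c,r_2)}$ turns pointwise convergence of the exit map into uniform convergence. Once uniformity is secured on $\mc N$, both conclusions of the proposition follow by taking $\epsilon\to 0$.
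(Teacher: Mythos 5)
Your proposal follows essentially the same route as the paper: both conclusions are obtained by invoking Kifer's exit theorems for a hyperbolic fixed point (made applicable by Lemma \ref{lem:hyper}), with the potential bound at the exit set supplied by Lemma \ref{lem_esc} together with \eqref{eq:Ubdr}, and with the uniformity over $\bm z\in\mc N$ traced back to Kifer's estimates holding uniformly on compact subsets of the interior of $\widehat{\mc N}$. Your ``shadowing'' paragraph is an informal restatement of \cite[Theorem 2.3]{Kif}, which concentrates the exit distribution on neighborhoods of $\{\bm e(\bm y):\bm y\in\mc N\setminus\mc M_s\}\cup(\mc M_u\cap\partial\widehat{\mc N})$ --- note only that for starting points on $\mc M_s$ the relevant exit set is $\mc M_u\cap\partial\widehat{\mc N}$, controlled directly by \eqref{eq:Ubdr} rather than by Lemma \ref{lem_esc}.
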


\begin{proof}
Since the set $\widehat{\mc N}$ contains only one unstable
equilibrium, the second assertion of the proposition follow from
\cite[Theorem 2.1]{Kif}, which presents an estimate for a fixed
starting point in the interior of $\widehat{\mc N}$. However, a
careful reading of the proof reveals that all estimates hold uniformly
on compact subsets of the interior of $\widehat{\mc N}$, such as
$\mc N$.

We turn to the first assertion of the proposition.
Let
$\mathcal{Q} \subset\partial\widehat{\mc N}$ be given by
\begin{equation*}
\mathcal{Q} = \{ \boldsymbol{e}(\boldsymbol{y}):
\boldsymbol{y}\in\mc N \setminus\mathcal{M}_s \}
\cup ( \mathcal{M}_u \cap \partial\widehat{\mc N} ) \;.
\end{equation*}
By \cite[Theorem 2.3]{Kif}, for any open neighborhood
$\mathcal{U}\subset\partial\widehat{\mc N}$ of $\mathcal{Q}$ in
$\partial\widehat{\mc N}$,
\begin{equation*}
\limsup_{\epsilon\rightarrow0}
\sup_{\bm{z}\in\mc N}
\mathbb{P}_{\bm{z}}^{\epsilon}
\big[\, \bm{x}_{\epsilon}(\tau_{\partial\widehat{\mc N}})
\notin\mathcal{U} \, \big] = 0\;.
\end{equation*}
Note that \cite[Theorem 2.3]{Kif} is stated for a fixed starting point
in the interior of $\widehat{\mc N}$, but as in the first part of the
proof, all estimates in the proof of this result hold unifomly on
compact subsets of the interior of $\widehat{\mc N}$.

By \eqref{eq:Ubdr} and Lemma \ref{lem_esc},
\begin{equation*}
\sup_{\boldsymbol{x}\in\mathcal{\mathcal{Q}} }
U(\boldsymbol{x})\le U(\boldsymbol{c})-2\eta_{0}\;.
\end{equation*}
To complete the proof, it remains to choose a neighborhood
$\mathcal{U}$ small enough so that
\begin{equation*}
\sup_{\boldsymbol{x}\in\mathcal{U}}
U(\boldsymbol{x})\le U(\boldsymbol{c})-\eta_{0}\;.
\end{equation*}

\end{proof}

\section{Hitting wells}
\label{sec6}

The main result of this section, Theorem \ref{t_hitting2} below,
asserts that starting from a compact set, the diffusion
$\bs x_\epsilon (\cdot)$ hits some well $\mc E(\bs m)$ in a time
bounded by $\epsilon^{-1}$.

Denote by $\mc E(\mc A)$, $\mathcal{A}\subset\mathbb{R}^{d}$, the
union of the wells in $\mc A$:
\begin{equation*}
\mathcal{E}(\mathcal{A})=
\bigcup_{\bm{m}\in\mathcal{M}_{0}\cap\mathcal{A}}\mathcal{E}(\bm{m})\;.
\end{equation*}
Let
\begin{equation*}
\Lambda_H = \{\boldsymbol{x}\in\mathbb{R}^{d}:U(\boldsymbol{x})\le H\}
\;, \quad H \in \bb R \;.
\end{equation*}

\begin{thm}
\label{t_hitting2}
Fix $H>\min_{\bm{x}\in\mathbb{R}^{d}}U(\bm{x})$.  Suppose that there
is no critical point $\boldsymbol{c}\in\mathcal{C}_{0}$ such that
$U(\boldsymbol{c})=H$. Then, for all $C>0$,
\begin{equation*}
\limsup_{\epsilon\rightarrow0}
\sup_{\bm{z}\in \Lambda_H}
\mathbb{P}_{\bm{z}}^{\epsilon}
\Big[\, \tau_{\mathcal{E}(\Lambda_H)}>
\frac{C}{\epsilon}\, \Big]=0\;.
\end{equation*}
Fix $h_0<h_1$, and denote by $\mc A$, $\mc B$ a connected component of
the set $\{\bs x\in\bb R^d : U(\bs x) < h_0\}$,
$\{\bs x \in\bb R^d : U(\bs x) < h_1\}$, respectively. Assume that
$\mc A \subset \mc B$, and that there are no critical points $\bs c$
of $U$ in $\mc B \setminus \mc A$. Then, for all $C>0$,
\begin{equation*}
\limsup_{\epsilon\rightarrow0}
\sup_{\bm{z}\in \mc A}
\mathbb{P}_{\bm{z}}^{\epsilon}
\Big[\, \tau_{\mathcal{E}(\mc  B)}>
\frac{C}{\epsilon}\, \Big]=0\;.
\end{equation*}
\end{thm}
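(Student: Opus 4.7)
The plan is to iterate a two-step cycle combining Freidlin--Wentzell large deviations on bounded time intervals with the Kifer-type exit estimate of Proposition \ref{p:Kifer} on time scales of order $1/\epsilon$. First, enumerate the critical points in $\Lambda_H$ as $\{\bs c_1,\ldots,\bs c_K\}\subset\mc Y_0$ and $\{\bs m_1,\ldots,\bs m_J\}\subset\mc M_0$, and associate to each $\bs c_i$ the neighborhoods $\mc N(\bs c_i)\subset\widehat{\mc N}(\bs c_i)$ from Section \ref{sec5}, shrunk so that the sets $\widehat{\mc N}(\bs c_i)$ are mutually disjoint and disjoint from every well $\mc E(\bs m_j)$. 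Define $\mc K:=\Lambda_H\setminus\big(\bigcup_i\textup{int}\,\mc N(\bs c_i)\cup\bigcup_j\textup{int}\,\mc E(\bs m_j)\big)$; since $\Lambda_H$ is compact by \eqref{26} and $\mc K$ contains no critical point of $U$, $\inf_{\mc K}|\nabla U|^2\geq\delta>0$. Because $\bs\ell\cdot\nabla U\equiv0$, one has $(d/dt)U(\upsilon_{\bs x}(t))\leq-\delta$ whenever the ODE \eqref{31} sits in $\mc K$, and $\Lambda_H$ is positively invariant for \eqref{31} (indeed $\bs b\cdot\nabla U=-|\nabla U|^2<0$ on $\partial\Lambda_H=\{U=H\}$, using the hypothesis that no critical point lies on this level set). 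Consequently, the ODE trajectory from any $\bs x\in\mc K$ enters $\bigcup_i\mc N(\bs c_i)\cup\bigcup_j\mc E(\bs m_j)$ within a uniform time $T_0<\infty$.

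Now fix $\bs z\in\Lambda_H$. If $\bs z$ already lies in some $\mc E(\bs m_j)$, the result is immediate; if $\bs z\in\mc N(\bs c_i)$, apply Proposition \ref{p:Kifer} directly. Otherwise $\bs z\in\mc K$, and by the Freidlin--Wentzell ODE-approximation [Theorem 2.1.2 of \cite{fw98}], for any $\delta_0>0$ the diffusion $\bs x_\epsilon(\cdot)$ stays within $\delta_0$ of $\upsilon_{\bs z}(\cdot)$ on $[0,T_0+1]$ with probability tending to $1$, uniformly in $\bs z$; taking $\delta_0$ small enough, the diffusion enters $\bigcup_i\mc N(\bs c_i)\cup\bigcup_j\mc E(\bs m_j)$ by time $T_0+1$. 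If it enters a well, we are done; otherwise, the strong Markov property at the hitting time of some $\mc N(\bs c_i)$ combined with Proposition \ref{p:Kifer} gives that the diffusion exits $\widehat{\mc N}(\bs c_i)$ within additional time $C/\epsilon$ at a point $\bs y$ satisfying $U(\bs y)\leq U(\bs c_i)-\eta_0$, and we restart the argument from $\bs y\in\Lambda_{U(\bs c_i)-\eta_0}\subset\Lambda_H$. By the choice of $\eta_0$ in \eqref{eq:nocr}, no critical value of $U$ lies in $(U(\bs c_i)-\eta_0,\,U(\bs c_i))$, so any unstable critical point subsequently visited has $U$-value strictly less than $U(\bs c_i)$; the sequence of visited unstable critical points therefore has strictly decreasing $U$-values drawn from the finite set $\{U(\bs c):\bs c\in\mc Y_0\cap\Lambda_H\}$, and the procedure terminates after at most $K$ iterations with total hitting time $K(T_0+1)+KC/\epsilon=O(1/\epsilon)$.

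The second statement will be proved by the same iteration with $\overline{\mc A}$ playing the role of $\Lambda_H$. By hypothesis, every critical point of $U$ in $\mc B$ lies in $\mc A$, so $\mc E(\mc B)=\mc E(\mc A)$ and only the unstable critical points of $\mc A$ appear in the iteration; one chooses the neighborhoods $\widehat{\mc N}(\bs c)$ small enough to sit inside $\mc A$, and the buffer region $\mc B\setminus\overline{\mc A}$ (on which $|\nabla U|$ is bounded below since it contains no critical point of $U$) guarantees that the Freidlin--Wentzell approximation keeps the diffusion inside $\mc B$ throughout the iteration. The main obstacle I anticipate is the iterative bookkeeping: between iterations the diffusion restarts from a random endpoint on $\partial\widehat{\mc N}(\bs c_i)$, so one must combine the uniform Freidlin--Wentzell and Kifer estimates across the $K$ cycles via the strong Markov property and a union bound, verifying that all error probabilities are uniform over the relevant compact sets before summing over the fixed finite number $K$ of iterations.
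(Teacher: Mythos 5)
Your proposal is correct and follows essentially the same route as the paper: the paper also combines the Freidlin--Wentzell deterministic-tracking estimate (its Lemma \ref{p: FW2}, based on \cite[Theorem 2.1.2]{fw98}) with the Kifer exit bound of Proposition \ref{p:Kifer}, and organizes your explicit iteration as an induction on the number of unstable critical points in the sublevel set, each Kifer step dropping the level to $U(\bs c)-\eta_0$ so that the induction hypothesis applies to $\mc H(\bs c)=\Lambda_{U(\bs c)-\eta_0}$ with $\tau_{\mc E(\Lambda_H)}\le\tau_{\mc E(\mc H(\bs c))}$. The bookkeeping you flag at the end is exactly what the strong Markov property plus the uniformity of both estimates over compact sets delivers, as in the paper.
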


\begin{corollary}
\label{t_hitting}
Fix $R>0$ large enough for $\Lambda_R$ to contain all the local minima
of $U$.  For all constant $C>0$,
\begin{equation*}
\limsup_{\epsilon\rightarrow0}
\sup_{\bm{x}\in \Lambda_R}
\mathbb{P}_{\bm{x}}^{\epsilon}
\Big[ \, \tau_{\mathcal{E}(\mathcal{M}_{0})}>
\frac{C}{\epsilon}\,\Big]=0\ .
\end{equation*}
\end{corollary}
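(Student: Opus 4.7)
The corollary is a direct consequence of the first part of Theorem \ref{t_hitting2}; the only subtlety is to guarantee that we may apply the theorem to a sublevel set that contains $\Lambda_R$ and whose boundary value avoids the set of critical values of $U$. I would proceed as follows.

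Since $U$ is assumed in Section \ref{sec1} to have only a finite number of critical points, the set of critical values $\{U(\bs c) : \bs c \in \mathcal{C}_0\}$ is finite. Therefore, I can pick $R' \in (R,\infty)$ such that $R' > R$, $R'$ is not a critical value of $U$, and $\Lambda_{R'} \supset \Lambda_R$. By assumption, $\Lambda_R$ already contains every local minimum of $U$, and the same is then true a fortiori for $\Lambda_{R'}$.

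With this choice, $\mathcal{M}_0 \cap \Lambda_{R'} = \mathcal{M}_0$, so that
\begin{equation*}
\mathcal{E}(\Lambda_{R'}) \;=\; \bigcup_{\bs m \in \mathcal{M}_0 \cap \Lambda_{R'}} \mathcal{E}(\bs m) \;=\; \mathcal{E}(\mathcal{M}_0)\;.
\end{equation*}
Hence $\tau_{\mathcal{E}(\Lambda_{R'})} = \tau_{\mathcal{E}(\mathcal{M}_0)}$. Since $R'$ is not a critical value, the first assertion of Theorem \ref{t_hitting2} applies with $H = R'$ and yields, for every $C>0$,
\begin{equation*}
\limsup_{\epsilon\to 0}\, \sup_{\bs z \in \Lambda_{R'}} \mathbb{P}^{\epsilon}_{\bs z}\Big[\, \tau_{\mathcal{E}(\mathcal{M}_0)} > \frac{C}{\epsilon}\, \Big] \;=\; 0\;.
\end{equation*}

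The claim now follows by bounding the supremum over $\Lambda_R$ by the one over the larger set $\Lambda_{R'}$. There is no real obstacle here beyond the observation that the Morse-function hypothesis on $U$ forces the set of critical values to be finite, so that an admissible level $R'$ always exists; everything else is a matter of comparing the hitting sets.
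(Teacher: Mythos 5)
Your proof is correct and is precisely the argument the paper intends (the corollary is stated without proof as an immediate consequence of the first assertion of Theorem \ref{t_hitting2}): enlarge $R$ to a non-critical value $R'$, observe that $\mathcal{E}(\Lambda_{R'})=\mathcal{E}(\mathcal{M}_0)$ because $\Lambda_{R'}$ contains all local minima, and bound the supremum over $\Lambda_R$ by that over $\Lambda_{R'}$. The only point requiring care — that the finiteness of the set of critical values guarantees an admissible $R'$ — is handled correctly.
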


Next result follows from Proposition \ref{p_FW} and Theorem
\ref{t_hitting2}.

\begin{corollary}
\label{l16}
Fix $h_0<h_1$, and denote by $\mc A$, $\mc B$ a connected component of
the set $\{\bs x \in\bb R^d: U(\bs x) < h_0\}$,
$\{\bs x \in\bb R^d : U(\bs x) < h_1\}$, respectively. Assume that
$\mc A \subset \mc B$, and that there are no critical points $\bs c$
of $U$ in $\mc B \setminus \mc A$. Then,
\begin{equation*}
\limsup_{\epsilon\rightarrow0}
\sup_{\bm{x}\in \mc A}
\mathbb{P}_{\bm{x}}^{\epsilon}
\big[ \, \tau_{\partial \mc B} < \tau_{\mathcal{E}(\mc B)}
\,\big] \;=\; 0\ .
\end{equation*}
\end{corollary}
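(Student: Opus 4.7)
The plan is to combine two results already established in the excerpt: Proposition \ref{p_FW}, which gives a super-polynomial lower bound on the exit time $\tau_{\partial \mc B}$, and the second assertion of Theorem \ref{t_hitting2}, which gives a $C/\epsilon$ upper bound on the hitting time $\tau_{\mc E(\mc B)}$ of some well. Since any $e^{\kappa/\epsilon}$ with $\kappa>0$ dominates $C/\epsilon$ as $\epsilon \to 0$, the diffusion hits a well before leaving $\mc B$ with probability tending to one.

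To set up Proposition \ref{p_FW}, I first pick $\bar h < h_0$ such that $U(\bs c) \le \bar h$ for every critical point $\bs c \in \mc C_0 \cap \mc A$. Such a $\bar h$ exists because $\mc A$ is bounded by the coercivity \eqref{26} and $U$ is Morse, so $\mc C_0 \cap \mc A$ is a finite subset of $\{U < h_0\}$; any value in $(\max U(\mc C_0 \cap \mc A),\, h_0)$ works, and if this set is empty any $\bar h < h_0$ does. By the hypothesis that $\mc B \setminus \mc A$ contains no critical points, the same bound $U(\bs c) \le \bar h$ holds for every $\bs c \in \mc C_0 \cap \mc B$. Applying Proposition \ref{p_FW} with $h = h_0$, $H = h_1$, and $\bar h$ in the role of $h_0$ there, I obtain for every $\eta > 0$
\begin{equation*}
\limsup_{\epsilon \to 0} \sup_{\bs x \in \mc A} \bb P_{\bs x}^\epsilon\big[\, \tau_{\partial \mc B} < e^{(h_1 - \bar h - \eta)/\epsilon} \,\big] \;=\; 0.
\end{equation*}
The second assertion of Theorem \ref{t_hitting2} is already stated for the pair $(\mc A, \mc B)$, so for every fixed $C > 0$,
\begin{equation*}
\limsup_{\epsilon \to 0} \sup_{\bs x \in \mc A} \bb P_{\bs x}^\epsilon\big[\, \tau_{\mc E(\mc B)} > C/\epsilon \,\big] \;=\; 0.
\end{equation*}

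To conclude, I would fix $\eta > 0$ small enough that $h_1 - \bar h - \eta > 0$; then $C/\epsilon < e^{(h_1 - \bar h - \eta)/\epsilon}$ for all sufficiently small $\epsilon$, which yields the inclusion of events
\begin{equation*}
\{\tau_{\partial \mc B} < \tau_{\mc E(\mc B)}\} \;\subseteq\; \{\tau_{\partial \mc B} < e^{(h_1 - \bar h - \eta)/\epsilon}\} \,\cup\, \{\tau_{\mc E(\mc B)} > C/\epsilon\},
\end{equation*}
since on the complement of the right-hand side one has $\tau_{\mc E(\mc B)} \le C/\epsilon < e^{(h_1 - \bar h - \eta)/\epsilon} \le \tau_{\partial \mc B}$. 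Taking $\sup_{\bs x \in \mc A}$ and $\limsup_{\epsilon \to 0}$ and combining the two estimates above concludes the argument. I do not expect a serious obstacle: the corollary is essentially a clean comparison between the Friedlin--Wentzell exponential exit-time scale and the polynomial $O(1/\epsilon)$ hitting-time scale of the set of wells, and the only small observation needed is the strict separation $\bar h < h_0$, which is automatic from the Morse property and the coercivity in \eqref{26}.
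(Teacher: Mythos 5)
Your proposal is correct and is exactly the argument the paper intends: the paper states that Corollary \ref{l16} "follows from Proposition \ref{p_FW} and Theorem \ref{t_hitting2}" without further detail, and you supply precisely the missing steps (the choice of $\bar h<h_0$ dominating the finitely many critical values in $\mc A$, the comparison $C/\epsilon \prec e^{(h_1-\bar h-\eta)/\epsilon}$, and the event inclusion). No gaps.
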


\begin{remark}
We expect  the optimal time scale to be $O(\log \epsilon^{-1})$
instead of $O(\epsilon^{-1})$.
\end{remark}

Denote by $\mc N(\mc A)$, $\mathcal{A}\subset\mathbb{R}^{d}$, the
union, carried over all critical points $\bs c$ in
$\mc Y_0 \cap \mc A$, of the neighborhoods $\mc N (\bs c)$ introduced
in the previous section:
\begin{equation*}
\mathcal{N}(\mathcal{A})=
\bigcup_{\bm{c}\in\mathcal{Y}_{0}\cap\mathcal{A}}\mathcal{N}(\bm{c})\;.
\end{equation*}

\begin{lem}
\label{p: FW2}
Under the hypotheses of Theorem \ref{t_hitting2},
for all $C>0$,
\begin{equation*}
\begin{gathered}
\limsup_{\epsilon\rightarrow0} \sup_{\bm{z}\in \Lambda_H}
\mathbb{P}_{\bm{z}}^{\epsilon} \Big[\,
\tau_{\mc N(\Lambda_H) \cup  \mc E(\Lambda_H)}
>\frac{C}{\epsilon} \, \Big] \,=\, 0\;,
\\
\limsup_{\epsilon\rightarrow0} \sup_{\bm{z}\in \mc A}
\mathbb{P}_{\bm{z}}^{\epsilon} \Big[\,
\tau_{\mc N (\mc B) \cup  \mc E(\mc B)}
>\frac{C}{\epsilon} \, \Big] \,=\, 0\;.
\end{gathered}
\end{equation*}
\end{lem}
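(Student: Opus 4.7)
The plan is to reduce the lemma to a uniform statement about the deterministic flow \eqref{31} and then transfer it to the diffusion via the uniform Freidlin--Wentzell approximation on bounded time intervals; the bound $C/\epsilon$ is very loose, since the actual hitting time will be $O(1)$.

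Set $O:=\mc N(\Lambda_H)\cup\mc E(\Lambda_H)$, which is open and contains a neighbourhood of every critical point of $U$ lying in $\Lambda_H$. The first step is to show that every trajectory $\upsilon_{\bs x}(\cdot)$ of \eqref{31} starting at $\bs x\in\Lambda_H$ enters $O$ in finite deterministic time. Because $\nabla U\cdot\bs\ell\equiv 0$,
\[
\frac{d}{dt}U(\upsilon_{\bs x}(t))\,=\,-\,|\nabla U(\upsilon_{\bs x}(t))|^{2}\,\le\,0\;,
\]
so $\upsilon_{\bs x}(t)\in\Lambda_H$ for all $t\ge 0$. By LaSalle's invariance principle, and since $\mc C_0$ is finite, $\upsilon_{\bs x}(t)$ converges to a single critical point $\bs c^{\star}(\bs x)\in\Lambda_H$; the hypothesis that no critical point has $U(\bs c)=H$ forces $\bs c^{\star}(\bs x)$ to lie in the interior of $\Lambda_H$, hence in $O$, so $\upsilon_{\bs x}(t)\in O$ for all sufficiently large $t$.

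To obtain a time bound uniform in $\bs x$, for each $\bs x\in\Lambda_H$ pick $t(\bs x)<\infty$ and $r(\bs x)>0$ with $B(\upsilon_{\bs x}(t(\bs x)),r(\bs x))\subset O$, and then $\delta(\bs x)>0$ such that $|\upsilon_{\bs y}(t(\bs x))-\upsilon_{\bs x}(t(\bs x))|<r(\bs x)/2$ for every $\bs y\in B(\bs x,\delta(\bs x))$, using continuous dependence of \eqref{31} on the initial datum. Extract a finite subcover of the compact set $\Lambda_H$ by the balls $\{B(\bs x_i,\delta(\bs x_i)/2)\}_{i\le N}$, and set $T_0:=\max_i t(\bs x_i)$, $r_0:=\min_i r(\bs x_i)/2$. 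Then, for every $\bs x\in\Lambda_H$ there exist an index $i$ and a time $t_i:=t(\bs x_i)\le T_0$ such that any curve lying within $r_0$ of $\upsilon_{\bs x}$ at time $t_i$ belongs to $O$. Combined with the uniform Freidlin--Wentzell estimate \cite[Chapter 2, Theorem 1.2]{fw98} already invoked to justify \eqref{61},
\[
\lim_{\epsilon\to 0}\sup_{\bs x\in\Lambda_H}\bb P_{\bs x}^{\epsilon}\Big[\,\sup_{t\le T_0}|\bs x_\epsilon(t)-\upsilon_{\bs x}(t)|>r_0\,\Big]\,=\,0\;,
\]
this yields $\sup_{\bs x\in\Lambda_H}\bb P_{\bs x}^{\epsilon}[\tau_{O}>T_0]\to 0$, and the first assertion follows because $T_0\ll C/\epsilon$.

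The second assertion is proved in exactly the same way. The set $\mc A$ is bounded by the coercivity \eqref{26}; a trajectory $\upsilon_{\bs x}$ starting in $\mc A$ remains in $\mc A$ because $U$ is non-increasing along it and, by continuity, it cannot leave the connected component of $\{U<h_0\}$ containing $\bs x$; and the assumption that $\mc B\setminus\mc A$ contains no critical point forces $\bs c^{\star}(\bs x)\in\mc A\subset\mc B$. Hence $\upsilon_{\bs x}(\cdot)$ enters $\mc N(\mc B)\cup\mc E(\mc B)$ in finite time and the subcover--LDP argument applies with $\Lambda_H$ replaced by the compact set $\overline{\mc A}$. The only genuine difficulty is the finite-subcover step: the deterministic hitting time of the open set $O$ is only upper semi-continuous in $\bs x$, which is why I must work with joint pairs $(t(\bs x_i),r(\bs x_i))$ rather than appeal to a single uniform continuity bound. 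Everything else is a routine combination of LaSalle/Morse theory for the gradient-with-solenoidal-drift flow and the textbook Freidlin--Wentzell small-noise approximation on bounded time intervals.
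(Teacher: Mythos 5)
Your argument is correct and follows the same route as the paper: the paper's proof consists precisely of the observation that the deterministic flow $\upsilon_{\bs z}(\cdot)$ reaches $\mc N(\Lambda_H)\cup\mc E(\Lambda_H)$ in finite time, followed by an appeal to \cite[Theorem 2.1.2]{fw98}. You have merely filled in the details the paper leaves implicit (the Lyapunov/LaSalle argument for convergence to a critical point, and the finite-subcover step making the deterministic entrance time uniform over the compact set of starting points before invoking the uniform small-noise approximation).
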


\begin{proof}
For each $\boldsymbol{z}\in\Lambda_H$, $\upsilon_{\bs z}(t)$ reaches
the set $\mc N(\Lambda_H)\cup\mathcal{E}(\Lambda_H)$ in finite time.
Therefore, the assertion of the lemma follows from
\cite[Theorem 2.1.2]{fw98}.
\end{proof}

\subsection*{Proof of Theorem \ref{t_hitting2}}

We prove the first assertion of the theorem. The arguments for the
second one are similar.  Recall the definition of
$\eta_{0}=\eta_{0}(\boldsymbol{c})$, $\bm{c}\in\mathcal{Y}_{0}$,
introduced at \eqref{eq:nocr}, and let
\begin{equation*}
\mathcal{H}(\bm{c})=
\left\{ \boldsymbol{x}:U(\boldsymbol{x})\le
U(\bm{c})-\eta_{0}\right\}\;.
\end{equation*}
By definition of $\eta$, there is no critical point $\boldsymbol{c}'$
of $U$ such that $U(\boldsymbol{c}')=U(\bm{c})-\eta_{0}$.

The proof is carried out by induction on
$| \mathcal{Y}_{0}\cap\Lambda_H |$, the number of critical point
$\bs c \in \mc Y_0$ which belong to $\Lambda_H$. If there are no such
critical points in $\Lambda_H$ the assertion of the theorem follows
from Lemma \ref{p: FW2}.

Consider the general case.  Decompose the probability
$\mathbb{P}_{\bm{z}}^{\epsilon} [\, \tau_{\mathcal{E}(\Lambda_H)}> C
{\epsilon}^{-1} \, ]$ into
\begin{equation*}
\mathbb{P}_{\bm{z}}^{\epsilon}\Big[\,
\tau_{\mathcal{E}(\Lambda_H)}>\frac{C}{\epsilon},\,
\tau_{\mathcal{E}(\Lambda_H)}=
\tau_{\mathcal{E}(\Lambda_H)\cup\mc N(\Lambda_H)}\, \Big]
+\mathbb{P}_{\bm{z}}^{\epsilon}\Big[\, \tau_{\mathcal{E}(\Lambda_H)}
>\frac{C}{\epsilon},\,\tau_{\mathcal{E}(\Lambda_H)}
>\tau_{\mathcal{E}(\Lambda_H)\cup\mc N(\Lambda_H)}\, \Big]\;.
\end{equation*}
The first probability is bounded above by
$\mathbb{P}_{\bm{z}}^{\epsilon} [\,
\tau_{\mathcal{E}(\Lambda_H)\cup\mc N(\Lambda_H)}> C {\epsilon}^{-1}
\, ]$. By Lemma \ref{p: FW2} this expression vanishes as
$\epsilon \to 0$. In view of this result, it remains to show that
\begin{equation*}
\limsup_{\epsilon\rightarrow0}\sup_{\bm{z}\in\Lambda_H}
\mathbb{P}_{\bm{z}}^{\epsilon} \Big[\, \tau_{\mathcal{E}(\Lambda_H)}
>\frac{C}{\epsilon},\;\tau_{\mathcal{E}(\Lambda_H)}
>\tau_{\mathcal{E}(\Lambda_H)\cup\mc N(\Lambda_H)},\;
\tau_{\mathcal{E}(\Lambda_H)\cup\mc N(\Lambda_H)}
\le\frac{C}{2\epsilon}\, \Big]=0\;.
\end{equation*}
By the strong Markov property, the last display is bounded by
\begin{equation*}
\limsup_{\epsilon\rightarrow0}
\sup_{\bm{z}\in\mathcal{E}(\Lambda_H)\cup\mc N(\Lambda_H)}
\mathbb{P}_{\bm{z}}^{\epsilon} \Big[\, \tau_{\mathcal{E}(\Lambda_H)}
>\frac{C}{2\epsilon}\,\Big]\;.
\end{equation*}
Since $\mathbb{P}_{\bm{z}}^{\epsilon}
[\, \tau_{\mathcal{E}(\Lambda_H)}>\frac{C}{2\epsilon}\,]=0$
if $\boldsymbol{z}\in\mathcal{E}(\Lambda_H)$, it suffices show
that, for each $\boldsymbol{c}\in\mathcal{Y}_{0} \cap \Lambda_H$,
\begin{equation*}
\limsup_{\epsilon\rightarrow0}
\sup_{\bm{z}\in\mc N(\boldsymbol{c})}
\mathbb{P}_{\bm{z}}^{\epsilon}\Big[\,
\tau_{\mathcal{E}(\Lambda_H)}>\frac{C}{2\epsilon}\,\Big]=0\;.
\end{equation*}
By Proposition \ref{p:Kifer}, it is enough  to prove that
\begin{equation*}
\limsup_{\epsilon\rightarrow0}
\sup_{\bm{z}\in\mc N(\boldsymbol{c})}
\mathbb{P}_{\bm{z}}^{\epsilon} \Big[\,
\tau_{\mathcal{E}(\Lambda_H)}>\frac{C}{2\epsilon} \;, \;\;
\tau_{\partial\widehat{\mc N}(\bm{c})}
\le\frac{C}{4\epsilon} \; ,\;\;
\bm{x}_{\epsilon}(\tau_{\partial\widehat{\mc N}(\bm{c})})
\in \mc H (\bm{c})\, \Big]=0\;.
\end{equation*}
By the strong Markov property, the left-hand side is bounded from
above by
\begin{equation*}
\limsup_{\epsilon\rightarrow0}
\sup_{\bm{z}\in\mc H(\bm{c})}\mathbb{P}_{\bm{z}}^{\epsilon}
\Big[\, \tau_{\mathcal{E}(\Lambda_H)}>\frac{C}{4\epsilon}\,\Big]=0\;.
\end{equation*}
As $\bs c$ belongs to $\Lambda_H$, $U(\bs c) \le H$ and
$\mc H(\bs c) \subset \Lambda_H$. Thus,
$\tau_{\mathcal{E}(\Lambda_H)} \le \tau_{\mathcal{E}(\mc H(\bm{c}))}$, and
it is enough to prove that
\begin{equation*}
\limsup_{\epsilon\rightarrow0}\sup_{\bm{z}\in\mc H(\bm{c})}
\mathbb{P}_{\bm{z}}^{\epsilon}\Big [ \,
\tau_{\mathcal{E}(\mc H(\bm{c}))}>\frac{C}{4\epsilon} \, \Big]=0 \;.
\end{equation*}
This identity follows from the induction hypothesis. Indeed, as
the critical point $\bs c$ belongs to $\Lambda_H$ and not to
$\mc H(\boldsymbol{c})$, the number of critical points in
$\mc Y_0 \cap \Lambda_H$ is strictly greater than the one in
$\mc Y_0 \cap \mc H(\boldsymbol{c})$. \qed \smallskip

We conclude this section with two results on hitting times of wells.
The first one follows from Theorem \ref{t01} and \cite[Lemma
4.2]{LMS}. It will be used in Section \ref{sec3} in the proof of
Theorem \ref{t00}. We state it here, before the proof of Theorem
\ref{t01}, to have all hitting time estimates of wells in the same
section.

\begin{lem}
\label{l17}
For all $\bs m \in \mathcal{M}_0$,
\begin{equation*}
\limsup_{a\rightarrow0}\,
\limsup_{\epsilon\rightarrow0}\,
\sup_{\boldsymbol{x}\in\mathcal{E}(\bs m)}\,
\mathbb{P}_{\boldsymbol{x}}^{\epsilon}
\big[\,\tau_{\mathcal{E} (\mc M_0) \setminus\mathcal{E}(\bs m)}
\le a\,\theta^{(1)}_\epsilon \, \big]=0\;.
\end{equation*}
\end{lem}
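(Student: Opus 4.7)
The strategy is to extract the hitting-probability bound from Theorem \ref{t01} via the stochastic representation of the resolvent equation. If $\mc M_0 = \{\bs m\}$ the hitting set is empty and the claim is trivial, so I assume $\mc M_0 \setminus \{\bs m\} \neq \emptyset$. Fix $\lambda > 0$ and choose $\bs g\colon \mc M_0 \to \bb R$ with $\bs g(\bs m) = 0$ and $\bs g(\bs m') = 1$ for $\bs m' \neq \bs m$. Let $\phi_\epsilon$ denote the solution of \eqref{e_res} and $\bs f$ the solution of $(\lambda - \mf L_1)\bs f = \bs g$; Theorem \ref{t01} yields uniform convergence $\phi_\epsilon \to \bs f(\bs m'')$ on each well $\mc E(\bs m'')$.

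Writing $\tau := \tau_{\mc E(\mc M_0)\setminus \mc E(\bs m)}$ and $G := \chi_{\mc E(\mc M_0)\setminus \mc E(\bs m)}$, I would use the stochastic representation
$$\phi_\epsilon(\bs x) \,=\, \mb E_{\bs x}^\epsilon\Big[\int_0^\infty e^{-\lambda s}\, G(\bs x_\epsilon(\theta^{(1)}_\epsilon s))\, ds\Big]$$
combined with the strong Markov property at time $\tau$. For $\bs x \in \mc E(\bs m)$ the integrand vanishes on $[0, \tau/\theta^{(1)}_\epsilon)$ by the definition of $G$, which leads to
$$\phi_\epsilon(\bs x) \,=\, \mb E_{\bs x}^\epsilon\bigl[\, e^{-\lambda \tau/\theta^{(1)}_\epsilon}\, \phi_\epsilon(\bs x_\epsilon(\tau))\, \bigr]\;.$$
Restricting to the event $\{\tau \leq a\theta^{(1)}_\epsilon\}$, bounding $e^{-\lambda\tau/\theta^{(1)}_\epsilon} \geq e^{-\lambda a}$, and invoking Theorem \ref{t01} to replace $\phi_\epsilon(\bs x_\epsilon(\tau))$ (which lies in some $\mc E(\bs m')$ with $\bs m' \neq \bs m$) by $(1/2)\min_{\bs m' \neq \bs m} \bs f(\bs m')$ for $\epsilon$ small, yields
$$\sup_{\bs x \in \mc E(\bs m)} \mb P_{\bs x}^\epsilon[\tau \leq a\theta^{(1)}_\epsilon] \,\leq\, \frac{2\, e^{\lambda a}}{\min_{\bs m' \neq \bs m} \bs f(\bs m')}\, \sup_{\bs x \in \mc E(\bs m)} \phi_\epsilon(\bs x)\;.$$

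It then remains to evaluate the right-hand side in the limit $\epsilon \to 0$. Using the Markov-chain representation of $\bs f$, with first-jump time $T\sim\mathrm{Exp}(q(\bs m''))$ from $\bs m''$ and $q^* := \max_{\bs m''} q(\bs m'')$: for $\bs m' \neq \bs m$ one has $\bs f(\bs m') \geq \mb E[(1-e^{-\lambda T})/\lambda] = 1/(\lambda + q(\bs m')) \geq 1/(\lambda + q^*)$, while $\bs g(\bs m) = 0$ gives $\bs f(\bs m) \leq (1/\lambda)\mb E[e^{-\lambda T}] = q(\bs m)/[\lambda(\lambda + q(\bs m))] \leq q^*/\lambda^2$. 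Passing to $\epsilon \to 0$ and using $\phi_\epsilon \to \bs f(\bs m)$ uniformly on $\mc E(\bs m)$ produces a bound of the form $C\,e^{\lambda a}/\lambda$ for all $\lambda \geq q^*$, with $C$ depending only on the reduced chain. The only (mild) subtlety is that $\lambda$ must be chosen as a function of $a$; this is legitimate because the $\epsilon$-limit is taken first, and the resulting $a$-dependent bound is only afterwards optimized in $\lambda$. Taking $\lambda = 1/a$ gives the final bound $Ce\,a$, which vanishes as $a \to 0$, completing the argument.
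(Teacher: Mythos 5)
Your argument is correct and is essentially the route the paper takes: the paper proves this lemma by invoking Theorem \ref{t01} together with \cite[Lemma 4.2]{LMS}, and that cited lemma is precisely the deduction you carry out — plugging a $\bs g$ supported off $\bs m$ into the resolvent representation, applying the strong Markov property at $\tau_{\mc E(\mc M_0)\setminus\mc E(\bs m)}$, and using the leveling of $\phi_\epsilon$ on the wells to bound the hitting probability by $\phi_\epsilon$ on $\mc E(\bs m)$. Your choice of $\lambda=1/a$ after the $\epsilon$-limit is legitimate for exactly the reason you state.
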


The last result asserts that starting from the domain of attraction of
a local minima the well associated to this local minimum is attained
before the other ones.  Recall that we denote by
$\upsilon_{\bs x} (t)$, $\bm{x} \in\bb R^d$, $t\ge0$, the solution of
the ODE \eqref{31} starting from $\bs x$. Denote by
$\color{blue} \mathcal{D}(\boldsymbol{m})$,
$\boldsymbol{m}\in\mathcal{M}_{0}$, the domain of attraction of
$\boldsymbol{m}$:
\begin{equation*}
\mathcal{D}(\boldsymbol{m})=
\big\{ \boldsymbol{x}\in\mathbb{R}^{d}:
\lim_{t\rightarrow\infty} \upsilon_{\bs x} (t) =\boldsymbol{m}
\big\} \;.
\end{equation*}

\begin{lem}
\label{lem_exit}
Let $\boldsymbol{m}\in\mathcal{M}_{0}$ and $\mathcal{K}$
be a compact subset of $\mathcal{D}(\boldsymbol{m})$. Then,
\begin{equation*}
\liminf_{\epsilon\rightarrow0}
\inf_{\boldsymbol{x}\in\mathcal{K}}
\mathbb{P}_{\boldsymbol{x}}^{\epsilon}
\left[\tau_{\mathcal{E}(\mathcal{M}_{0})}
=\tau_{\mathcal{E}(\boldsymbol{m})}\right]=1\;.
\end{equation*}
\end{lem}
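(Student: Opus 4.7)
\textbf{Proof plan for Lemma \ref{lem_exit}.} The strategy is to reduce the statement to the classical fact that on finite time intervals the diffusion $\bs x_\epsilon(\cdot)$ is a small perturbation of the deterministic flow $\upsilon_{\bs x}(\cdot)$, uniformly in the starting point on compact sets. First I would construct a uniform finite time after which every deterministic trajectory starting in $\mathcal{K}$ has entered the well $\mathcal{E}(\bs m)$ and remains inside. Since $\mathcal{K}\subset\mathcal{D}(\bs m)$, one has $\upsilon_{\bs x}(t)\to\bs m$ as $t\to\infty$ for each $\bs x\in\mathcal{K}$, and condition (c) in the definition of $r_0$ (together with $\bs\ell\cdot\nabla U=0$, which gives $\bs b\cdot\bs n<0$ on the boundary of $\mathcal{W}^r(\bs m)$ for small $r$) makes each sub-well $\mathcal{W}^r(\bs m)$ forward invariant for the flow. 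By continuous dependence on initial conditions and compactness of $\mathcal{K}$, I can then fix a finite time $T=T(\mathcal{K})$ such that $\upsilon_{\bs x}(T)\in\mathcal{W}^{r_0/2}(\bs m)$ for all $\bs x\in\mathcal{K}$, i.e.\ at time $T$ every trajectory sits well inside the interior of $\mathcal{E}(\bs m)=\mathcal{W}^{r_0}(\bs m)$.

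Next I would consider the compact tube $\mathcal{K}^{*}:=\{\upsilon_{\bs x}(t):\bs x\in\mathcal{K},\,t\in[0,T]\}$. Because each basin of attraction is forward invariant under the flow, $\mathcal{K}^{*}\subset\mathcal{D}(\bs m)$. Since the basins $\mathcal{D}(\bs m')$ for $\bs m'\in\mathcal{M}_0$ are pairwise disjoint and $\mathcal{E}(\bs m')\subset\mathcal{D}(\bs m')$, we get $\mathcal{K}^{*}\cap\mathcal{E}(\bs m')=\emptyset$ for every $\bs m'\neq\bs m$. As $\mathcal{M}_0$ is finite and $\mathcal{K}^{*}$ is compact, there exists $\delta_0>0$ such that the closed $\delta_0$-neighbourhood of $\mathcal{K}^{*}$ is disjoint from $\bigcup_{\bs m'\neq\bs m}\mathcal{E}(\bs m')$ and, in addition, $B(\upsilon_{\bs x}(T),\delta_0)\subset\mathcal{E}(\bs m)$ for every $\bs x\in\mathcal{K}$.

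Finally, by the classical Freidlin--Wentzell estimate on the uniform proximity of the diffusion to its deterministic skeleton on bounded time intervals (see e.g.\ \cite[Chapter 2, Theorem 1.2]{fw98}),
\[
\lim_{\epsilon\to 0}\inf_{\bs x\in\mathcal{K}}\mathbb{P}_{\bs x}^{\epsilon}\Big[\,\sup_{t\in[0,T]}|\bs x_\epsilon(t)-\upsilon_{\bs x}(t)|<\delta_0\,\Big]=1.
\]
On the event in this probability, $\bs x_\epsilon(T)\in\mathcal{E}(\bs m)$, so $\tau_{\mathcal{E}(\bs m)}\le T$, while at the same time $\bs x_\epsilon(t)\notin\mathcal{E}(\bs m')$ for all $t\in[0,T]$ and all $\bs m'\neq\bs m$. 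Hence $\tau_{\mathcal{E}(\mathcal{M}_0)}=\tau_{\mathcal{E}(\bs m)}$ on that event, and the lemma follows by letting $\epsilon\to 0$. The only mildly delicate point is the uniform-in-$\bs x$ construction of $T$ and $\delta_0$, which is handled by compactness of $\mathcal{K}$ combined with the forward invariance of the wells; no ingredient beyond standard Freidlin--Wentzell theory is needed.
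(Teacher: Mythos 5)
Your proof is correct and rests on the same ingredient as the paper's: the Freidlin--Wentzell finite-time approximation of the diffusion by the deterministic flow, made uniform over the compact set $\mathcal{K}$. The paper phrases this as an exit problem from $\mathcal{F}(\boldsymbol{m})=\mathcal{D}(\boldsymbol{m})\setminus\mathcal{E}(\boldsymbol{m})$ and cites \cite[Chapter 2, Theorem 1.2]{fw98} directly, whereas you unfold the tube argument behind that theorem; the two are essentially the same proof.
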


\begin{proof}
Let
$\mathcal{F}(\boldsymbol{m})
:=\mathcal{D}(\boldsymbol{m})\setminus\mathcal{E}(\boldsymbol{m})$
so that
$\partial\mathcal{F}(\boldsymbol{m})
=\partial\mathcal{D}(\boldsymbol{m})\cup\partial\mathcal{E}(\boldsymbol{m})$.
Then,
\begin{equation*}
\mathbb{P}_{\boldsymbol{x}}^{\epsilon}
\left[\tau_{\mathcal{E}(\mathcal{M}_{0})}
=\tau_{\mathcal{E}(\boldsymbol{m})}\right]
\ge\mathbb{P}_{\boldsymbol{x}}^{\epsilon}
\left[\tau_{\partial\mathcal{F}(\boldsymbol{m})}
=\tau_{\partial\mathcal{E}(\boldsymbol{m})}\right]\;.
\end{equation*}
Therefore, it suffices to show that
\begin{equation}
\label{P_exit}
\liminf_{\epsilon\rightarrow0}
\inf_{\boldsymbol{x}\in\mathcal{K}}
\mathbb{P}_{\boldsymbol{x}}^{\epsilon}
\left[\tau_{\partial\mathcal{F}(\boldsymbol{m})}
=\tau_{\partial\mathcal{E}(\boldsymbol{m})}\right]=1\;.
\end{equation}
Since $\mathcal{K}$ is contained in the domain of attraction of
$\bs m$, the solution $\upsilon_{\boldsymbol{x}}(t)$ of the ODE
\eqref{31} starting from $\boldsymbol{x}\in\mathcal{K}$ exits the
domain $\mathcal{F}(\boldsymbol{m})$ at
$\partial\mathcal{E}(\boldsymbol{m})$. Thus, by \cite[Chapter 2,
Theorem 1.2]{fw98}, \eqref{P_exit} holds.

The estimate in \cite[Chapter 2, Theorem 1.2]{fw98} is not uniform in
$\bs x$, and just asserts that
\begin{equation*}
\liminf_{\epsilon\rightarrow0}
\mathbb{P}_{\boldsymbol{x}}^{\epsilon}
\left[\tau_{\partial\mathcal{F}(\boldsymbol{m})}
=\tau_{\partial\mathcal{E}(\boldsymbol{m})}\right]=1
\end{equation*}
for all $\boldsymbol{x}\in\mathcal{D}(\boldsymbol{m})$.  However, the
bound \cite[Chapter 2, Theorem 1.2]{fw98} holds uniformly over
$\boldsymbol{x}\in\mathcal{K}$ (The variable $a(t)$ in the proof
depends on $\boldsymbol{x}$ but can be bounded uniformly on any
compact set of $\mc D(\bs m)$, see the displayed equation above (1.6)).
This completes the proof of the lemma.
\end{proof}

\section{Test functions}
\label{sec10}

In this section, we construct the test functions used in Section
\ref{sec9} to estimate the solution of the resolvent equation. This
test function appeared before in \cite{BEGK, LMS2, LS-22}. For this
reason we just present its definition and main properties, and refer
the reader to \cite{LS-22b} for proofs.

Fix a local minimum $\bs m$.  The test function proposed
below and denoted by $Q_\epsilon$ is an approximation of the
equilibrium potential $h\colon \bb R^d \to [0,1]$ defined by
$h(\bs x) = \bb P^\epsilon_{\bs x} [ \tau_{\mc E(\bs m)} < \tau_{\mc
E(\mc M_0) \setminus \mc E(\bs m)} \,]$. In particular, inside the
wells the test function will be either very close to $1$ or very close
to $0$. In contrast, in very small neighborhoods of saddle points it
will change from $0$ to $1$. To capture this behavior, we linearize
the generator at the saddle point and set $Q_\epsilon$ to be close to
the equilibrium potential for the linearized generator.

\subsection*{Around a saddle point}

Fix a saddle point $\boldsymbol{\sigma}$ of $U$ such that
$\bs m \curvearrowleft \bs \sigma \curvearrowright \bs m'$ for
distinct local minima $\bs m$, $\bs m'$ of $U$. Let
$\color{blue} \mathbb{H}^{\boldsymbol{\sigma}} =
\nabla^{2}U(\boldsymbol{\sigma})$,
$\color{blue} \mathbb{L}^{\boldsymbol{\sigma}} = (D \bs \ell)(\bs
\sigma)$. By \eqref{47}, $\mathbb{H}^{\bs \sigma}$ has a unique
negative eigenvalue. Denote by
$\color{blue} -\lambda_{1},\,\lambda_{2},\,\dots,\,\lambda_{d}$ the
eigenvalues of $\mathbb{H}^{\bs \sigma}$, where $-\lambda_{1}$
represents the unique negative eigenvalue. Mind that we omit the
dependence on $\bs \sigma$ which is fixed.  Let
$ \color{blue} \boldsymbol{e}_{1}$, $\color{blue} \boldsymbol{e}_{k}$,
$k\ge2$, be the unit eigenvector associated with the eigenvalue
$-\lambda_{1}$, $\lambda_{k}$, respectively.  Choose $\bs e_1$
pointing towards $\bs m$: for all sufficiently small $a>0$,
$\bm{\sigma}+a\boldsymbol{e}_{1}$ belongs to the domain of attraction
of $\bs m$.  For $\boldsymbol{x}\in\mathbb{R}^{d}$ and
$k=1,\,\dots,\,d$, write
$\color{blue} \hat x_{k}=(\boldsymbol{x}-\boldsymbol{\sigma}) \cdot
\boldsymbol{e}_{k}$, so that
$\boldsymbol{x}=\bm{\sigma}+\sum_{m=1}^{d} \hat x_{m}\bm{e}_{m}$.

Let
\begin{equation*}
{\color{blue} \delta} = \delta(\epsilon) :=
(\epsilon\log\frac{1}{\epsilon})^{1/2}\ .
\end{equation*}
Fix a large constant $J>0$ to be chosen later, and denote by
$\mc A^\pm_\epsilon$, $\mathcal{C}_{\epsilon}$ the $d$-dimensional
rectangles defined by
\begin{gather*}
{\color{blue} \mathcal{A}^-_{\epsilon}}
\,:= \, \Big\{\,\boldsymbol{x}\in\mathbb{R}^{d}\,:\,
\hat x_{1}\in \Big[\,-\frac{J\delta}{\sqrt{\lambda_{1}}} - \epsilon^2,\,
- \frac{J\delta}{\sqrt{\lambda_{1}}}\,\Big] \,,\,
\hat x_{k}\in \Big[\,-\frac{2J\delta}{\sqrt{\lambda_{k}}},\,
\frac{2J\delta}{\sqrt{\lambda_{k}}}\,\Big] \,,\,
2\leq k\leq d\,\Big\} \\
{\color{blue} \mathcal{C}_{\epsilon}}
\,:= \, \Big\{\,\boldsymbol{x}\in\mathbb{R}^{d}\,:\,
\hat x_{1}\in \Big[\,-\frac{J\delta}{\sqrt{\lambda_{1}}},\,
\frac{J\delta}{\sqrt{\lambda_{1}}}\,\Big] \,,\,
\hat x_{k}\in \Big[\,-\frac{2J\delta}{\sqrt{\lambda_{k}}},\,
\frac{2J\delta}{\sqrt{\lambda_{k}}}\,\Big] \,,\,
2\leq k\leq d\,\Big\} \\
{\color{blue} \mathcal{A}^+_{\epsilon}}
\,:= \, \Big\{\,\boldsymbol{x}\in\mathbb{R}^{d}\,:\,
\hat x_{1}\in \Big[\, \frac{J\delta}{\sqrt{\lambda_{1}}} ,\,
 \frac{J\delta}{\sqrt{\lambda_{1}}} + \epsilon^2 \,\Big] \,,\,
\hat x_{k}\in \Big[\,-\frac{2J\delta}{\sqrt{\lambda_{k}}},\,
\frac{2J\delta}{\sqrt{\lambda_{k}}}\,\Big] \,,\,
2\leq k\leq d\,\Big\}\ .
\end{gather*}
Figure \ref{fig1} illustrates these definitions and the next ones.


\begin{figure}
\includegraphics[scale=0.2]{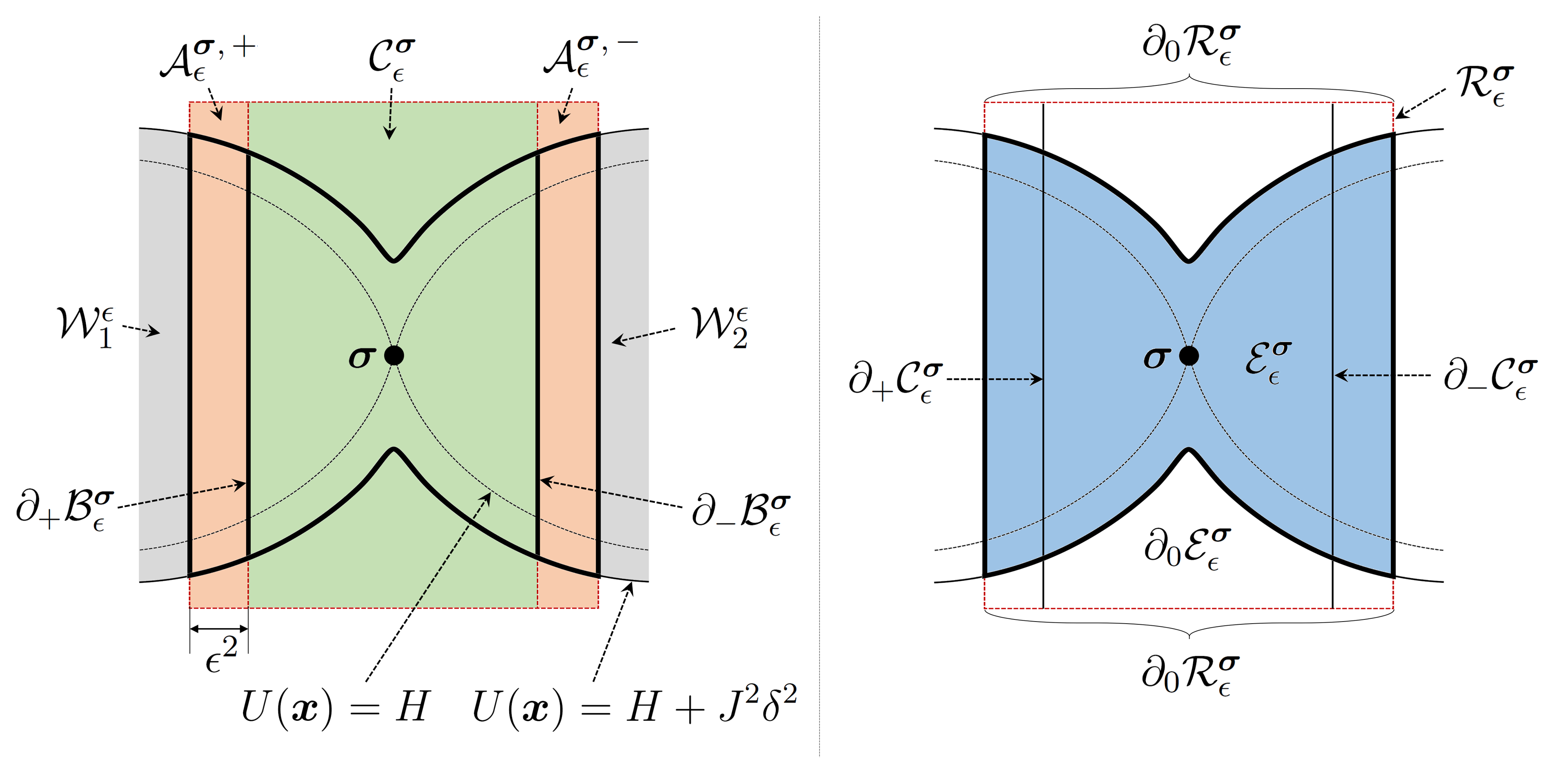}
\caption{The sets around a saddle point $\boldsymbol{\sigma}$
appearing in the definition of the test function.}
\label{fig1}
\end{figure}

Recall from \eqref{eq:nu} that
$\mathbb{H}^{\boldsymbol{\sigma}}+\mathbb{L^{\boldsymbol{\sigma}}}$
has a unique negative eigenvalue, denoted by $-\mu$. Denote by
$\mathbb{A}^{\dagger}$ the transpose of a matrix $\mathbb{A}$.  By
\cite[display (8.1)]{LS-22}, the matrix
$\mathbb{H}^{\boldsymbol{\sigma}}-(\mathbb{L^{\boldsymbol{\sigma}}})^{\dagger}$
also has a unique negative eigenvalue equal to $-\mu$.  Denote by
$\boldsymbol{v}$ the unit eigenvector of
$\mathbb{H}^{\boldsymbol{\sigma}}-(\mathbb{L^{\boldsymbol{\sigma}}})^{\dagger}$
associated with $-\mu$. By \cite[Lemma 8.1]{LS-22},
$\boldsymbol{v}\cdot\boldsymbol{e}_{1}\neq0$. We assume that
$\boldsymbol{v}\cdot\boldsymbol{e}_{1}>0$, as we can take
$-\boldsymbol{v}$ instead of $\boldsymbol{v}$ if this inner product is
negative.

Let
$p_{\epsilon}\colon
\mathcal{C}_{\epsilon} \to \mathbb{R}$ be given
by
\begin{equation}
\label{e_pesB}
p_{\epsilon}(\bm{x})\,:=\,
\frac{1}{M_{\epsilon}}
\int_{-\infty}^{(\bm{x}-\bm{\sigma})\cdot\bm{v}}\,
e^{-\frac{\mu}{2\epsilon}t^{2}}
\,dt \;,
\end{equation}
where the normalizing constant $M_{\epsilon}$ is
given by
\begin{equation}
\label{e_Ces}
M_{\epsilon}\,=\,
\int_{-\infty}^{\infty}\,e^{-\frac{\mu}{2\epsilon}t^{2}}\,dt
\,=\,\sqrt{\frac{2\pi\epsilon}{\mu}}\;.
\end{equation}

We extend continuously the function $p_{\epsilon}$ to the
$d$-dimensional rectangle
$\color{blue} \mc R_\epsilon = \mc A^-_\epsilon \cup \mc C_\epsilon
\cup \mc A^+_\epsilon$ as follows.  For
$\widehat{\bm{x}}=\bm{\sigma}+\sum_{k=1}^{d} \widehat x_{k}\bm{e}_{k}
\in \mc A^+_\epsilon$, let
\begin{equation}
\label{33}
\overline{\boldsymbol{x}}_r \,=\,
\bm{\sigma}\, + \,\frac{J\delta}{ \sqrt{\lambda_{1}}}\,
\bm{e}_{1} \,+\, \sum_{k=2}^{d} \widehat{x}_{k}\,\bm{e}_{k} \;.
\end{equation}
We define $\overline{\boldsymbol{x}}_l$ similarly for
$\bm x\in \mc A^-_\epsilon$, replacing on the right-hand side of the
previous formula the first plus sign by a minus sign. Clearly,
$\overline{\bs x}_r$ and $\overline{\bs x}_l$ belong to
$\mc C_\epsilon$. We extend the definition of $p_{\epsilon}$ to
$\mc R_\epsilon$ by setting
$p_{\epsilon} \colon \mc A^-_\epsilon \cup \mc A^+_\epsilon \to \bb R$
as
\begin{equation}
\label{e_pes}
\begin{gathered}
p_{\epsilon}(\bm{x})\,=\,
1\,+\, \epsilon^{-2} \,\Big[\, \hat x_1 - \frac{J\delta}{ \sqrt{\lambda_{1}}}
-\epsilon^2 \,\Big]\, [\, 1-p_{\epsilon} (\overline{\boldsymbol{x}}_r)\,]
\;, \quad
\bm{x} \in \mathcal{A}^+_{\epsilon}\;,
\\
p_{\epsilon}(\bm{x})\,=\,
\epsilon^{-2} \,
\Big[\, \hat x_1 +\frac{J\delta}{\sqrt{\lambda_{1}}}+ \epsilon^2\,\Big]\,
p_{\epsilon}(\overline{\boldsymbol{x}}_l)
\;, \quad \bm{x}\in \mathcal{A}^-_{\epsilon} \;.
\end{gathered}
\end{equation}

The function $p_{\epsilon}$ is an approximating solution of the
Dirichlet problem $\mathscr{L}_{\epsilon}^\dagger f = 0$ in
$\mathcal{R}_{\epsilon}$ with boundary conditions $f = 1$ on the
points of $\mc R_\epsilon$ where
$\hat x_1 = J\delta/ \sqrt{\lambda_{1}} + \epsilon^2$ and $f = 0$ on
the ones such that
$\hat x_1 = - J\delta/ \sqrt{\lambda_{1}} - \epsilon^2$.  This is the
content of \cite[Proposition 6.2]{LS-22b}, which states that the
integral of
$\theta^{(1)}_\epsilon \,|\mathscr{L}_{\epsilon}^\dagger f|$ on a set
slightly smaller than $\mathcal{R}_{\epsilon}$ vanishes as
$\epsilon\to 0$. This result also justifies the definition of the test
function $p_{\epsilon}$.

The test function $p_{\epsilon}(\cdot)$ constructed above depends on
$\bs \sigma$ and $\bs m$.  To stress this fact, it is sometimes
represented by $\color{blue} p^{\bs \sigma, \bs m}_{\epsilon}(\cdot)$.

\subsection*{Inside a well}

In this subsection we define a test function $Q_\epsilon$ on $\bb R^d$
with the help of the test functions $p^{\bs \sigma, \bs m}_\epsilon$
introduced in the previous subsection. Recall that we denote by
$B(\bs x, r)$ the open ball of radius $r$ centered at $\bs x$.

Fix a height $H$ such that $U(\bs \sigma) = H$ for some saddle point
$\bs \sigma$ of $U$. Denote by $\mc W$ a connected component of the
set $\{\bs x\in \bb R^d: U(\bs x) < H \}$. Assume that there exists a
saddle point $\bs \sigma' \in \partial \mc W$ satisfying condition (a)
below and that condition (b) is fulfilled for all saddle points
$\bs \sigma' \in \partial \mc W$ satisfying (a). Here,

\begin{enumerate}
\item[(a)] There exists $\delta_0>0$ such that
$B(\bs\sigma', \delta) \cap \{\bs x\in \bb R^d: U(\bs x) < H \}$ is not
contained in $\mc W$ for all $0<\delta <\delta_0$;

\item[(b)] There exists $\bs m$, $\bs m'\in \mc M_0$ such that
\begin{equation}
\label{56}
\bs \sigma' \curvearrowright \bs m \;\;\text{and}\;\; \bs \sigma'
\curvearrowright \bs m'\;.
\end{equation}
\end{enumerate}

Condition (b) prevents the existence of a heteroclinic orbit from
$\bs\sigma$ to a critical point which is not a local minimum. Clearly,
if conditions (a) and (b) hold, $\bs m\in \mc W$ and
$\bs m'\not\in \mc W$ or the contrary.

Let
$\color{blue} \mc S_H(\mc W) = \{ \bs \sigma_1, \dots \bs \sigma_p\}$
be the (non-empty) set of saddle points
$\bs \sigma \in \partial \mc W$ satisfying (a) and (b).  Note that
there might be saddle points $\bs \sigma$ in $\partial \mc W$ which do
not belong to $\mc S_H(\mc W)$ because they lead to critical points in
$\mc W$: $\bs \sigma \curvearrowright \bs x$,
$\bs \sigma \curvearrowright \bs y$ for $\bs x$,
$\bs y\in \mc W \cap \mc C_0$ . Figure \ref{fig2} illustrates this
possibility.  On the other hand, as $\bs \sigma \in \partial \mc W$,
$U(\bs \sigma) = H$ for all $\bs\sigma \in \mc S_H(\mc W)$.

Write, whenever needed to clarify, $\mc W$ as $\mc W_1$, and denote by
$\mc W_j$, $2\le j\le m$, the connected components of the set
$\{\bs x\in \bb R^d: U(\bs x) < H \}$ which share with $\mc W$ a
common saddle point in $\mc S_H(\mc W)$. Hence, for each $\mc W_j$,
$2\le j\le m$, there exist
$\bs \sigma \in \mc S_H(\mc W) \cap \overline{\mc W_j}$,
$\bs m\in \mc W \cap \mc M_0$, $\bs m'\in \mc W_j \cap \mc M_0$ such
that $\bs \sigma \curvearrowright \bs m$,
$\bs \sigma \curvearrowright \bs m'$.

\begin{figure}
\includegraphics[scale=0.2]{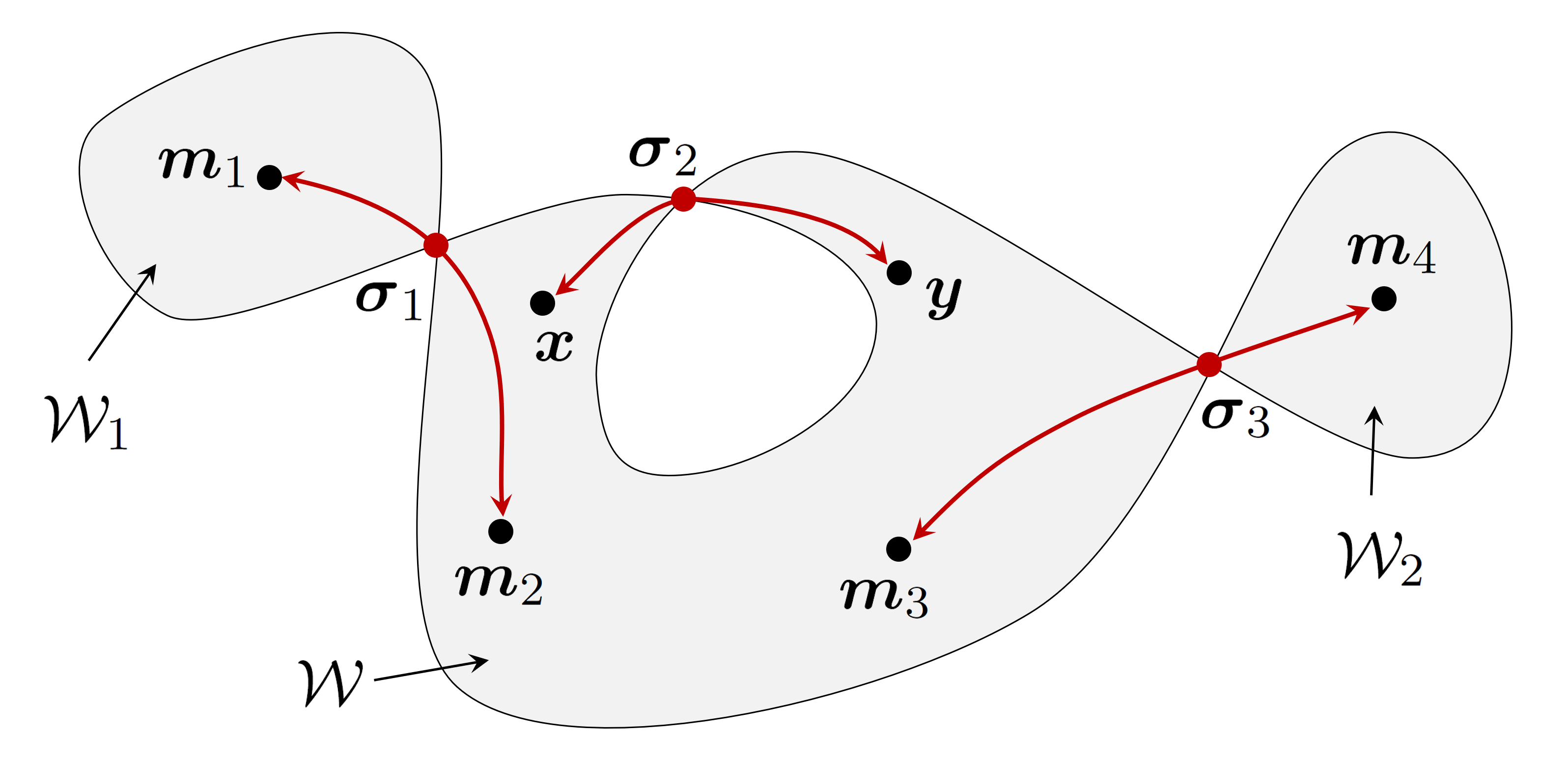}
\caption{The saddle points $\bs \sigma_2$ in $\partial \mc W$ does not
belong to $\mc S_H(\mc W)$ because it leads to critical points in
$\mc W$}
\label{fig2}
\end{figure}

Fix $\eta>0$ small enough so that there is no critical point $\bs x$
with height in the interval $(H,H+2\eta)$.  Let $\color{blue}\Omega$
be the connected component of the set
$\{\bs x\in \bb R^d: U(\bs x) \le H +\eta \}$ which contains $\mc W$
(and thus all connected components $\mc W_j$), and set
\begin{gather*}
{\color{blue} \mathcal{K}_{\epsilon} } \,:=\,
\big \{ \bm{x}\in\mathbb{R}^{d}\,:\,U(\bm{x})\le
H +J^{2}\delta^{2}\big\} \cap \Omega\;.
\end{gather*}

Denote by $\partial_{0}\mathcal{R}^{\bs \sigma}_{\epsilon}$,
$\bs\sigma \in \mc S_H(\mc W)$, the boundary of the set
$\mathcal{R}^{\bs \sigma}_{\epsilon}$, introduced in the previous
subsection, given by
\begin{gather*}
\partial_{0}\mathcal{R}^{\bs \sigma}_{\epsilon}
=\Big\{ \,\boldsymbol{x}\in\mathcal{R}^{\bs \sigma}_{\epsilon}\,:\,
\hat x_{k}=\pm\frac{2J\delta}{\sqrt{\lambda_{k}}}\ \
\text{for some}\ 2\leq k\leq d\,\Big\} \;.
\end{gather*}
By the proof of \cite[Lemma 8.3]{LS-22},
\begin{equation}
\label{23}
U(\bm{x})\geq U(\bm{\sigma})+
\frac{3}{2}\,J^{2}\,\delta^{2}\,[\,1+o_{\epsilon}(1)\,]
\end{equation}
for all $\bm{x}\in\partial_{0}\mathcal{R}_{\epsilon}$. In particular,
$\partial_{0}\mathcal{R}_{\epsilon}$ is contained in the complement of
$\mathcal{K}_{\epsilon}$ provided that $\epsilon$ is sufficiently
small.

Let
$\color{blue} \mathcal{E}_{\epsilon}^{\bm{\sigma}} \,:=\,
\mathcal{R}_{\epsilon}^{\bm{\sigma}}\cap \mathcal{K}_{\epsilon}$,
$\bs \sigma \in \mc S_H(\mc W)$.  Denote by $\mathcal{W}^{\epsilon}_1$
the connected component of
$\mathcal{K}_{\epsilon}\setminus( \bigcup_{\bm{\sigma}\in\mc
S_H(\mc W)}\mathcal{E}_{\epsilon}^{\bm{\sigma}})$ which intersects
$\mathcal{W}_1$, and let
$\mathcal{W}^{\epsilon}_2 = \mathcal{K}_{\epsilon} \setminus(
\mathcal{W}^{\epsilon}_1 \cup \bigcup_{\bm{\sigma}\in\mc S_H(\mc
W)}\mathcal{E}_{\epsilon}^{\bm{\sigma}})$. With this notation,
\begin{equation}
\label{36}
\Omega \;=\; \bigcup_{\bm{\sigma}\in \mc S_H(\mc W)}
\mathcal{E}_{\epsilon}^{\bm{\sigma}} \,\cup\,
\mathcal{W}^{\epsilon}_{1} \,\cup\,
\mathcal{W}^{\epsilon}_{2}  \, \cup\,
\big( \Omega \setminus\mathcal{K}_{\epsilon} \,\big)\;.
\end{equation}

For each $\bs \sigma \in \mc S_H(\mc W)$, denote by
$\color{blue} \bs m_{\bs \sigma}$ the local minimum $\bs m$ in $\mc W$
such that $\bs \sigma \curvearrowright \bs m$. Recall the notation
introduced at the end of the previous subsection, and let
$\color{blue} q^{\bs \sigma} = p^{\bs \sigma, \bs m_{\bs \sigma}}$.
Consider the test function
$Q_{\epsilon} \colon \mathcal{K}_{\epsilon} \to\mathbb{R}$ given by
\begin{gather}
\label{e: def_Q}
Q_{\epsilon} (\boldsymbol{x}) \,=\,
1 \;, \;\; \bm{x}\in\mathcal{W}_1^{\epsilon} \;; \quad
Q_{\epsilon} (\boldsymbol{y}) \,=\,
0 \;, \;\; \bm{y}\in\mathcal{W}_2^{\epsilon} \;;
\\
Q_{\epsilon} (\boldsymbol{x}) \,=\,
q_{\epsilon}^{\bm{\sigma}}(\bm{x})  \;, \;\;
\bm{x}\in\mathcal{E}_{\epsilon}^{\bm{\sigma}},\,
\bm{\sigma}\in \mc S_H(\mc W)  \;.
\nonumber
\end{gather}

By \eqref{e_pes}, the function $Q_{\epsilon}$ is continuous on
$\mathcal{K}_{\epsilon}$.  Moreover, if $\mc G_\epsilon$
represents the open set formed by the union of the interiors of
the set $\mc {E}_{\epsilon}^{\bm{\sigma}}$, $\bm{\sigma} \in \mc
S_H(\mc W)$, and the interior of the sets
$\mathcal{W}_i^{\epsilon}$, $i=1$, $2$,
\begin{equation*}
\lVert\nabla Q_{\epsilon}
\rVert_{L^{\infty}(\mathcal{G}_{\epsilon} )}
\, =\,  O(\epsilon^{-1/2})\;\ \text{and}\;\
\|\Delta Q_{\epsilon}\|_{L^{\infty}(\mathcal{G}_{\epsilon} )}
=O(\epsilon^{-3/2})\ .
\end{equation*}
We can extend $Q_{\epsilon}$ to $\Omega$ keeping these bounds
out of a $(d-1)$ dimensional manifold:
\begin{equation}
\label{18}
\lVert Q_{\epsilon}\rVert_{L^{\infty}(\Omega_0)} \, \le\ 1\;,\ \
\lVert \nabla Q_{\epsilon} \rVert_{L^{\infty}(\Omega_0)}\,=
O(\epsilon^{-1/2})\;,\ \;\text{and\;\;}
\|\Delta Q_{\epsilon} \|_{L^{\infty}(\Omega_0)}=
O(\epsilon^{-3/2})\ ,
\end{equation}
where $\Omega_0 = \Omega \setminus \mf M$, and $\mf M$ is $(d-1)$
dimensional manifold at which the gradient of $Q_{\epsilon}$ is
discontinuous. We further impose the condition that $Q_{\epsilon}$
vanishes away from $\Omega$:
\begin{equation}
\label{e: boundary_Q}
Q_{\epsilon}\equiv0 \,\, \text{ on }\,\,
\{\boldsymbol{x}\in\mathbb{R}^{d}:U(\boldsymbol{x})
>H +\frac{\eta}{2}\}\;,
\end{equation}
respecting the previous bounds. The function $Q_\epsilon$ is the test
function associated to the well $\mc W$ and height $H$.

\subsection*{Main estimate}

Next lemma is a crucial step in the proof of Theorem \ref{t01}. To
stress below the dependence of the set $\mathcal{C}_{\epsilon}$,
$\mathcal{A}^{\pm}_{\epsilon}$ on a saddle point $\bs \sigma$, we add
a superscript $\bs \sigma$ in the notation.  Denote by
$\partial_{\pm}\mathcal{C}^{\bs \sigma}_{\epsilon}$ the boundary of
the set $\mathcal{C}^{\bs \sigma}_{\epsilon}$ given by
\begin{gather*}
\partial_{\pm}\mathcal{C}^{\bs \sigma}_{\epsilon} \,=\, \big\{
\,\boldsymbol{x}\in\mathcal{C}^{\bs \sigma}_{\epsilon}\,:\,
\hat x_{1}
=\pm\frac{J\delta}{\sqrt{\lambda_{1}}}\,\big\} \;, \;\;
\text{and let}\;\;
{\color{blue} \mathcal{B}_{\epsilon}^{\bm{\sigma}}} \, :=\,
\mathcal{C}_{\epsilon}^{\bm{\sigma}}\cap
\mathcal{K}_{\epsilon}\;,\;\;
{\color{blue} \partial_{\pm}\mathcal{B}_{\epsilon}^{\bm{\sigma}}} \,:=\,
\partial_{\pm}\mathcal{C}_{\epsilon}^{\bm{\sigma}}
\cap\mathcal{K}_{\epsilon}\;.
\end{gather*}

Recall from \eqref{32} the definition of $\nu_\star$, and from
\eqref{33} the definition of $\overline{\bs x}_r$,
$\overline{\bs x}_l$.  In the statement of Lemma \ref{p03}, the
vectors $\bs v$ and $\bs e_1$ depend on $\bs\sigma$, but this
dependence is omitted from the notation.  For $c>0$, let
\begin{equation*}
{\color{blue} \Lambda_{c,\epsilon} } \,:=\, \big \{
\bm{x}\in\mathbb{R}^{d}\,:\,U(\bm{x})\le H - c\,
J^{2} \delta^{2} \big\} \;.
\end{equation*}

\begin{lem}
\label{p03}
There exists $c_0>0$, such that for all $0<c<c_0$,
${\bf g}\colon \mc M_0 \to \bb R$,
\begin{equation}
\label{24}
e^{H/\epsilon} \, \int_{\Omega}\,
Q_{\epsilon} \,(-\mathscr{L}_{\epsilon}\,\phi_{\epsilon})
\,d\mu_{\epsilon}
\; =\; \sum_{\bm{\sigma}\in \mc S_H(\mc W) }
J (\bs \sigma) \,+\, o_{\epsilon}(1) \;,
\end{equation}
where
$J (\bs \sigma) = J_+ (\bs \sigma) - J_- (\bs \sigma) $, and
\begin{equation*}
\begin{aligned}
& J_{+} (\bs \sigma) \, =\, -\,
[\,1+   o_{\epsilon}(1) \,]\,
\frac{\epsilon\, \sqrt{\mu^{\bs \sigma}}
\,(\bm{v}\cdot\bm{e}_{1})}{(2\pi\epsilon)^{(d+1)/2}\,
\nu_\star }\,\int_{\partial_{+}\mathcal{B}^{\bs \sigma}_{\epsilon}
\cap \Lambda_{c,\epsilon}}\,
e^{-\frac{1}{2\epsilon}\bm{x}\cdot\,(\mathbb{H}^{\bs \sigma} +
\mu^{\bs \sigma}\,\bm{v}\otimes\bm{v})\,\bm{x}}\,
\phi_{\epsilon} (\bm{x})\, {\rm S} (d\bm{x})
\\
&\quad  -\, [\,1+  o_{\epsilon}(1) \,]\,
\frac{1} {(2\pi)^{(d+1)/2}\ \nu_\star \, \sqrt{\mu^{\bs \sigma}}
\,\epsilon{}^{(d+3)/2}}\,
\int_{\mc A^{\bs \sigma, +}_{\epsilon} \cap \Lambda_{c,\epsilon} }\,
\phi_{\epsilon} (\bm{x}) \,\frac{\mathbb{L}^{\bs \sigma}
\overline{\boldsymbol{x}}_r \cdot\boldsymbol{e}_{1}}
{\overline{\boldsymbol{x}}_r \cdot\bm{v}}\,
e^{-\frac{1}{2\epsilon}\overline{\boldsymbol{x}}_r
\cdot\,(\mathbb{H}^{\bs \sigma} +\mu^{\bs \sigma}\,\bm{v}\otimes\bm{v})\,
\overline{\boldsymbol{x}}_r}\,d\bm{x}\;.
\end{aligned}
\end{equation*}
In this formula, ${\rm S} (d\bm{x})$ represents the surface measure on
the $(d-1)$-dimensional manifold
$\partial_{+}\mathcal{B}^{\bs \sigma}_{\epsilon} \cap
\Lambda_{c,\epsilon}$, and $J_{-}(\bs \sigma)$ is obtained from
$J_{+}(\bs \sigma)$ by removing the minus sign and replacing
$\partial_{+}\mathcal{B}^{\bs \sigma}_{\epsilon}$,
$\mc A^{\bs \sigma, +}_{a,\epsilon}$ by
$\partial_{-}\mathcal{B}^{\bs \sigma}_{\epsilon}$,
$\mc A^{\bs \sigma, -}_{a,\epsilon}$, respectively.
\end{lem}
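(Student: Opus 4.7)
The plan is to evaluate \eqref{24} via the current-form identity
$\mathscr{L}_\epsilon\phi_\epsilon\,d\mu_\epsilon = Z_\epsilon^{-1}\,\nabla\cdot\bigl([\epsilon\nabla\phi_\epsilon-\phi_\epsilon\bs\ell]\,e^{-U/\epsilon}\bigr)\,d\bs x$,
which follows from $\nabla\cdot\bs\ell=0$ and $\bs\ell\cdot\nabla U=0$. The divergence theorem on each piece of the decomposition \eqref{36} gives
\begin{equation*}
\int_V Q_\epsilon(-\mathscr{L}_\epsilon\phi_\epsilon)\,d\mu_\epsilon
\;=\; \int_V \nabla Q_\epsilon\cdot\bigl[\epsilon\nabla\phi_\epsilon-\phi_\epsilon\bs\ell\bigr]\,d\mu_\epsilon
\;-\; \int_{\partial V} Q_\epsilon\,\bigl[\epsilon\nabla\phi_\epsilon-\phi_\epsilon\bs\ell\bigr]\cdot\bs n\,d\mu_\epsilon\;.
\end{equation*}
Summed over the pieces of \eqref{36}, the boundary integrals cancel across common interfaces because $Q_\epsilon$ and the current $\epsilon\nabla\phi_\epsilon-\phi_\epsilon\bs\ell$ are continuous there, the bulk integrals vanish on $\mathcal{W}_1^\epsilon$, $\mathcal{W}_2^\epsilon$, and $\Omega\setminus\mathcal{K}_\epsilon$ where $\nabla Q_\epsilon\equiv 0$, and only the bulk contributions from the cores $\mathcal{B}_\epsilon^{\bs\sigma}$ and the slabs $\mathcal{A}_\epsilon^{\bs\sigma,\pm}\cap\mathcal{K}_\epsilon$, $\bs\sigma\in\mc S_H(\mc W)$, survive.

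On each core, $\nabla Q_\epsilon=\nabla p_\epsilon^{\bs\sigma}=M_\epsilon^{-1}\bs v\exp\!\bigl(-\mu^{\bs\sigma}((\bs x-\bs\sigma)\cdot\bs v)^{2}/(2\epsilon)\bigr)$; integrating the $\epsilon\nabla\phi_\epsilon$ piece by parts recovers $-\int_{\mathcal{B}_\epsilon^{\bs\sigma}}\phi_\epsilon\,\mathscr{L}_\epsilon^\dagger p_\epsilon^{\bs\sigma}\,d\mu_\epsilon$ plus the surface integral $\epsilon\int_{\partial\mathcal{B}_\epsilon^{\bs\sigma}}\phi_\epsilon\,\nabla p_\epsilon^{\bs\sigma}\cdot\bs n\,d\mu_\epsilon$. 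Taylor-expanding $U$ and $\bs\ell$ at $\bs\sigma$ and invoking the spectral identity $(\mathbb{H}^{\bs\sigma}-(\mathbb{L}^{\bs\sigma})^\dagger)\bs v=-\mu^{\bs\sigma}\bs v$ recalled after \eqref{e_pesB} give $\mathscr{L}_\epsilon^\dagger p_\epsilon^{\bs\sigma}\simeq 0$ at leading order; the remainders are controlled exactly as in \cite[Proposition 6.2]{LS-22b}, and the a priori bound $\|\phi_\epsilon\|_\infty\le\lambda^{-1}\|\bs g\|_\infty$ makes the bulk contribution $o_\epsilon(1)$ after multiplication by $e^{H/\epsilon}$. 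The core surface term, combined with $e^{-U/\epsilon}\simeq e^{-H/\epsilon}\exp\!\bigl(-\tfrac{1}{2\epsilon}(\bs x-\bs\sigma)\cdot\mathbb{H}^{\bs\sigma}(\bs x-\bs\sigma)\bigr)$, $M_\epsilon^{-1}=\sqrt{\mu^{\bs\sigma}/(2\pi\epsilon)}$, and the partition-function asymptotics \eqref{32}, produces the quadratic form $\mathbb{H}^{\bs\sigma}+\mu^{\bs\sigma}\bs v\otimes\bs v$ and the prefactor $\epsilon\sqrt{\mu^{\bs\sigma}}(\bs v\cdot\bs e_1)/[(2\pi\epsilon)^{(d+1)/2}\nu_\star]$ of the first line of $J_+(\bs\sigma)$; the opposite face $\partial_-\mathcal{B}_\epsilon^{\bs\sigma}$ gives $-J_-(\bs\sigma)$ by the reversed outward normal.

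On the slab $\mathcal{A}_\epsilon^{\bs\sigma,+}$ the linear interpolation yields $\nabla Q_\epsilon\simeq\epsilon^{-2}[1-p_\epsilon(\overline{\bs x}_r)]\bs e_1$; the $\epsilon\nabla\phi_\epsilon$ piece integrates by parts in $\hat x_1$ to boundary terms which, combined with the local constancy of $\phi_\epsilon$ across the $\epsilon^{2}$-wide slab provided by Theorem \ref{p_flat}, are $o_\epsilon(1)$ after multiplication by $e^{H/\epsilon}$. The $-\phi_\epsilon\bs\ell$ piece, after linearizing $\bs\ell(\bs x)\simeq\mathbb{L}^{\bs\sigma}(\bs x-\bs\sigma)$, evaluating at $\overline{\bs x}_r$ up to $O(\epsilon^{2})$ errors along $\bs e_1$, and reusing the Gaussian expansion of $e^{-U/\epsilon}$, produces exactly the second line of $J_+(\bs\sigma)$; the mirror analysis on $\mathcal{A}_\epsilon^{\bs\sigma,-}$ yields the corresponding contribution to $-J_-(\bs\sigma)$. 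The truncation by $\Lambda_{c,\epsilon}$ cuts the portions of the integrals where the quadratic Taylor approximation of $U$ degrades, contributing $o_\epsilon(1)$ for $c$ small enough by Gaussian decay. \textbf{Main obstacle.} The delicate point is the appearance of the quadratic form $\mathbb{H}^{\bs\sigma}+\mu^{\bs\sigma}\bs v\otimes\bs v$ rather than $\mathbb{H}^{\bs\sigma}$ alone, which hinges on the spectral identity $(\mathbb{H}^{\bs\sigma}-(\mathbb{L}^{\bs\sigma})^\dagger)\bs v=-\mu^{\bs\sigma}\bs v$ and is what permits the skew-symmetric drift $\bs\ell$ to enter the Eyring--Kramers-type surface formula only through its linearization $\mathbb{L}^{\bs\sigma}$ at the saddle; a secondary delicate point is the uniform-in-cross-section control of the $\epsilon\nabla\phi_\epsilon$ piece in the thin slab, for which Theorem \ref{p_flat} is essential.
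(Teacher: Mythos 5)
The paper does not actually prove this lemma; it states that the proof is the content of \cite[Section 7]{LS-22b}, and your sketch is a faithful reconstruction of that argument: the divergence-form rewriting of $\mathscr{L}_\epsilon\phi_\epsilon\,d\mu_\epsilon$ using \eqref{27}, integration by parts over the decomposition \eqref{36} with cancellation at interior interfaces, the approximate harmonicity $\mathscr{L}_\epsilon^\dagger p_\epsilon\simeq 0$ (which the paper itself attributes to \cite[Proposition 6.2]{LS-22b}), and the Gaussian surface/slab computations whose prefactors you check correctly. Two points are stated imprecisely. First, $\nabla Q_\epsilon$ is \emph{not} identically zero on $\Omega\setminus\mathcal{K}_\epsilon$; that region contributes, but negligibly, because \eqref{23} gives $e^{H/\epsilon}e^{-U/\epsilon}\le\epsilon^{J^2(1+o(1))}$ there, which beats the polynomial blow-up in \eqref{18} once the $\nabla\phi_\epsilon$ factor is handled by Cauchy--Schwarz against the Dirichlet form $\epsilon\int|\nabla\phi_\epsilon|^2d\mu_\epsilon\le C\,\theta_\epsilon^{(1)-1}$ coming from the resolvent equation. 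Second, Theorem \ref{p_flat} concerns the wells $\mc E(\bs m)$ and says nothing about $\phi_\epsilon$ near the saddle; the $\epsilon\nabla\phi_\epsilon$ contribution in the thin slabs $\mc A^{\bs\sigma,\pm}_\epsilon$ is again controlled by the same Cauchy--Schwarz/Dirichlet-form bound together with the $\epsilon^2$ width, not by local constancy of $\phi_\epsilon$. Neither issue affects the validity of the overall strategy.
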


The proof of this result is omitted as it is the content of
\cite[Section 7]{LS-22b}.

\section{Domain of attraction}
\label{sec4}

Fix $\bs\sigma \in \mc S_H(\mc W)$.  Denote by
$\color{blue} \bs n_{\bs \sigma}$ the local minimum $\bs m$ of $U$
which does not belong to $\mc W$ and such that
$\bs \sigma \curvearrowright \bs m$.  The main result of this section
asserts that we may replace $\phi_{\epsilon} (\bm{x})$ in the formula
for $J_+(\bs\sigma)$, $J_-(\bs\sigma)$ by
$\phi_{\epsilon} (\bm{m}_{\bs\sigma})$,
$\phi_{\epsilon} (\bm{n}_{\bs\sigma})$, respectively.

\begin{prop}
\label{l10}
There exists $c_0>0$, such that for all $0<c<c_0$,
\begin{equation*}
\lim_{\epsilon\to0}\,
\sup_{\bm{x}\in\partial_{+}\mathcal{B}_{\epsilon}^{\bm{\sigma}}\cap
\Lambda_{c,\epsilon}}
\,|\phi_{\epsilon} (\bm{x}) - \phi_{\epsilon} (\bm{m}_{\bs\sigma})|=0\;, \quad
\lim_{\epsilon\to0}\,
\sup_{\bm{x}\in\partial_{-}\mathcal{B}_{\epsilon}^{\bm{\sigma}}\cap
\Lambda_{c,\epsilon}}
\,|\phi_{\epsilon} (\bm{x})-\phi_{\epsilon} (\bm{n}_{\bs\sigma})|=0
\end{equation*}
for all $\bs\sigma \in \mc S_H(\mc W)$. A similar result holds if we
replace $\partial_{+}\mathcal{B}_{\epsilon}^{\bm{\sigma}}$,
$\partial_{-}\mathcal{B}_{\epsilon}^{\bm{\sigma}}$ by
$\mc A^{\bs \sigma, +}_{\epsilon}$,
$\mc A^{\bs \sigma, -}_{\epsilon}$, respectively.
\end{prop}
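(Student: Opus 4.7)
The approach is probabilistic: combine Dynkin's formula applied to the resolvent equation with a fast hitting-time estimate, then invoke the local ergodicity of $\phi_\epsilon$ on wells (Theorem \ref{p_flat}). Set $T_\epsilon := \tau_{\mc E(\mc M_0)}$, the hitting time of the union of wells. Since $(\lambda - \theta^{(1)}_\epsilon\ms L_\epsilon)\phi_\epsilon = G$, Dynkin's formula yields
\begin{equation*}
\phi_\epsilon(\bs x) \;=\; \bb E^\epsilon_{\bs x}\bigl[\phi_\epsilon(\bs x_\epsilon(T_\epsilon))\bigr] \;+\; \frac{1}{\theta^{(1)}_\epsilon}\,\bb E^\epsilon_{\bs x}\Bigl[\int_0^{T_\epsilon}(G - \lambda\phi_\epsilon)(\bs x_\epsilon(s))\,ds\Bigr]\;.
\end{equation*}
As $\phi_\epsilon$ and $G$ are bounded by $\lambda^{-1}\|\bs g\|_\infty$ and $\|\bs g\|_\infty$ respectively, and $\sup_{\bs x \in \Lambda_R}\bb E^\epsilon_{\bs x}[T_\epsilon] = O(\epsilon^{-1})$ by Corollary \ref{t_hitting}, the error term is $o(1)$. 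Thus, if I can show
\begin{equation*}
\lim_{\epsilon \to 0}\,\sup_{\bs x \in \partial_+\mc B^{\bs\sigma}_\epsilon \cap \Lambda_{c,\epsilon}} \bb P^\epsilon_{\bs x}\bigl[\bs x_\epsilon(T_\epsilon) \notin \mc E(\bs m_{\bs\sigma})\bigr] \;=\; 0\;,
\end{equation*}
then Theorem \ref{p_flat} gives $\phi_\epsilon(\bs x_\epsilon(T_\epsilon)) = \phi_\epsilon(\bs m_{\bs\sigma}) + o(1)$ on the event $\{\bs x_\epsilon(T_\epsilon) \in \mc E(\bs m_{\bs\sigma})\}$, proving the first claim.

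I establish the displayed limit in two stages. First, with probability $1-o(1)$ the diffusion exits the Hartman-Grobman neighborhood $\widehat{\mc N}(\bs\sigma)$ of $\bs\sigma$ through the branch of the unstable manifold leading to $\bs m_{\bs\sigma}$, landing in a compact subset of $\mc D(\bs m_{\bs\sigma})$. The starting set $\partial_+\mc B^{\bs\sigma}_\epsilon \cap \Lambda_{c,\epsilon}$ has $\hat x_1 = J\delta/\sqrt{\lambda_1}$ with $\bs e_1$ by construction pointing toward $\bs m \,(= \bs m_{\bs\sigma})$; moreover, the restriction $U(\bs x) \le U(\bs\sigma) - cJ^2\delta^2$ forces $\sum_{k\ge 2}\lambda_k \hat x_k^2 \le (1-2c)J^2\delta^2$ in the quadratic Taylor approximation of $U$ at $\bs\sigma$, providing an $\Omega(\delta)$ separation from the stable manifold of $\bs\sigma$. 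Since $\delta/\sqrt{\epsilon} = \sqrt{\log(1/\epsilon)} \to \infty$, this deterministic displacement, exponentially amplified along the unstable direction by the eigenvalue $\mu$, strictly dominates the $\sqrt\epsilon$-scale Brownian fluctuations on the $O(\epsilon^{-1})$ exit time furnished by Proposition \ref{p:Kifer}. Thus the stochastic trajectory tracks the deterministic one $\upsilon_{\bs x}(\cdot)$ up to exit from $\widehat{\mc N}(\bs\sigma)$, and by the Hartman-Grobman conjugacy (Theorem \ref{thm:H-G}) together with the choice of $\bs e_1$, this deterministic trajectory exits through the unstable branch leading to $\bs m_{\bs\sigma}$. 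Second, once the diffusion has landed in a compact subset of $\mc D(\bs m_{\bs\sigma})$, Lemma \ref{lem_exit} yields that the first well visited is $\mc E(\bs m_{\bs\sigma})$ with probability $1-o(1)$.

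The symmetric argument for $\partial_-\mc B^{\bs\sigma}_\epsilon$ gives the second claim, with $\bs n_{\bs\sigma}$ replacing $\bs m_{\bs\sigma}$. The extension to $\mc A^{\bs\sigma,\pm}_\epsilon$ is immediate: these sets differ from $\partial_\pm \mc B^{\bs\sigma}_\epsilon$ only by a slab in the $\hat x_1$-direction of thickness $\epsilon^2$, over which the same exit and hitting estimates apply verbatim. The principal obstacle is the tracking estimate in the first stage: since the set of starting points shrinks to $\bs\sigma$ as $\epsilon\to 0$, the drift-versus-noise comparison must be calibrated precisely, exploiting the logarithmic factor in $\delta = \sqrt{\epsilon\log(1/\epsilon)}$ to ensure that Brownian fluctuations do not push the trajectory across the stable manifold before the unstable drift evacuates it from the saddle region. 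This is exactly the role played by the level-set restriction $\Lambda_{c,\epsilon}$ and by the choice of scale $\delta$ in the test function construction of Section \ref{sec10}.
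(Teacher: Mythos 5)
Your opening reduction is sound and coincides with the paper's: the Dynkin/stochastic-representation step, the $O(\epsilon^{-1})$ bound on $\tau_{\mc E(\mc M_0)}$ from Corollary \ref{t_hitting}, and the appeal to Theorem \ref{p_flat} are exactly the content of Proposition \ref{prop:DoC}, so everything hinges on the hitting estimate
$\inf_{\bs x\in \partial_{+}\mc B^{\bs\sigma}_{\epsilon}\cap\Lambda_{c,\epsilon}} \bb P^{\epsilon}_{\bs x}[\tau_{\mc E(\bs m_{\bs\sigma})}=\tau_{\mc E(\mc M_0)}]\to 1$. It is there that you diverge from the paper, which proves this by a potential-theoretic argument: a lower bound on $\mathrm{cap}_\epsilon(B(\bs y,\epsilon),\mc E(\bs m_{\bs\sigma}))$ of order $\epsilon^d Z_\epsilon^{-1}e^{-U(\bs y)/\epsilon}$, an upper bound on $\mathrm{cap}_\epsilon(B(\bs y,\epsilon),\mc P_{\rm ext})$ of order $Z_\epsilon^{-1}e^{-U(\bs\sigma)/\epsilon}$, and the ratio bound of \cite[Proposition 7.2]{LMS2}, giving $C_0\,\epsilon^{cJ^2-d}\to 0$ for $J$ large. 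This uses only the level-set restriction $U(\bs x)\le H-cJ^2\delta^2$ and entirely avoids any pathwise analysis near the saddle.

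Your substitute — a drift-versus-noise tracking argument — is not carried out, and as sketched it has two concrete problems. First, the quantitative comparison is wrong as stated: over the $O(\epsilon^{-1})$ exit time the Brownian contribution is $O(\sqrt{\epsilon\cdot\epsilon^{-1}})=O(1)$, not $O(\sqrt{\epsilon})$, so it does not compare favorably with the $O(\delta)$ initial displacement. The correct comparison must exploit the exponential amplification structure of the linearized flow (the unstable component behaves like $\xi_0 e^{\mu t}+\sqrt{2\epsilon}\int_0^t e^{\mu(t-s)}dW_s$, and one compares $\xi_0\sim J\delta$ with the noise scale $\sqrt{\epsilon/\mu}$ of the amplified martingale), together with control of the nonlinear remainder; none of this is in the proposal. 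Second, in the non-reversible case the relevant unstable direction is the eigenvector of $-(\bb H^{\bs\sigma}+\bb L^{\bs\sigma})$ with eigenvalue $\mu^{\bs\sigma}$, not $\bs e_1$ (an eigenvector of $\bb H^{\bs\sigma}$ alone); the quantity that must be bounded below by a multiple of $\delta$, uniformly over $\partial_{+}\mc B^{\bs\sigma}_{\epsilon}\cap\Lambda_{c,\epsilon}$, is the component of $\bs x-\bs\sigma$ along that eigendirection (equivalently $(\bs x-\bs\sigma)\cdot\bs v$), and since the transverse coordinates $\hat x_k$, $k\ge 2$, may themselves be of order $J\delta$, positivity of this component near the corners of the box is not automatic and is not established. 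Note also that Proposition \ref{p:Kifer} only localizes the exit point near $\mc M_u\cap\partial\widehat{\mc N}$ and below level $U(\bs\sigma)-\eta_0$; it does not distinguish the two branches of the unstable manifold, so it cannot by itself supply the side-selection you need. The capacity route in the paper is precisely what closes this gap.
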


The proof of Proposition \ref{l10} is based on the following general
result.  Recall that we denote by $\mathcal{D}(\boldsymbol{m})$,
$\boldsymbol{m}\in\mathcal{M}_{0}$, the domain of attraction of
$\boldsymbol{m}$.

\begin{prop}
\label{prop:DoC}
Fix $\boldsymbol{m}\in\mathcal{M}_{0}$, and a sequence
$(\mathcal{K}_{\epsilon})_{\epsilon>0}$ of subsets of
$\mathcal{D}(\boldsymbol{m})$. Assume that
$\bigcup_{\epsilon>0}\mathcal{K}_{\epsilon}$ is a bounded set, and
\begin{equation}
\label{P_exit0}
\liminf_{\epsilon\rightarrow0}
\inf_{\boldsymbol{x}\in\mathcal{K}_{\epsilon}}
\mathbb{P}_{\boldsymbol{x}}^{\epsilon}
\left[\tau_{\mathcal{E}(\mathcal{M}_{0})}
=\tau_{\mathcal{E}(\boldsymbol{m})}\right]=1
\end{equation}
Then,
\begin{equation*}
\limsup_{\epsilon\rightarrow0}
\sup_{\boldsymbol{x}\in\mathcal{K}_{\epsilon}}
\big|\, \phi_{\epsilon} (\boldsymbol{x})
-\phi_{\epsilon} (\boldsymbol{m})\, \big|
\,=\, 0\;.
\end{equation*}
\end{prop}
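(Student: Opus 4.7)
The plan is to apply the strong Markov property at $\tau := \tau_{\mathcal{E}(\mathcal{M}_{0})}$ and exploit the fact that, starting from a bounded set, $\tau$ is of order $\epsilon^{-1}$, which is negligible on the scale $\theta_{\epsilon}^{(1)} = e^{d^{(1)}/\epsilon}$. Upon arrival the process enters $\mathcal{E}(\bs m)$ with high probability by hypothesis \eqref{P_exit0}, and on $\mathcal{E}(\bs m)$ Theorem \ref{p_flat} makes $\phi_{\epsilon}$ asymptotically constant, yielding $\phi_{\epsilon}(\bs x) \approx \phi_{\epsilon}(\bs m)$.

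Concretely, I start from the stochastic representation already used in the proof of Theorem \ref{p_flat},
\begin{equation*}
\phi_{\epsilon}(\bs x) \;=\; \mathbb{E}_{\bs x}^{\epsilon}\Big[\int_{0}^{\infty} e^{-\lambda s}\, G(\bs x_{\epsilon}(\theta_{\epsilon}^{(1)} s))\, ds\Big]\;,
\end{equation*}
and split at $\sigma := \tau/\theta_{\epsilon}^{(1)}$ via the strong Markov property to obtain
\begin{equation*}
\phi_{\epsilon}(\bs x) \;=\; \mathbb{E}_{\bs x}^{\epsilon}\Big[\int_{0}^{\sigma} e^{-\lambda s}\, G(\bs x_{\epsilon}(\theta_{\epsilon}^{(1)} s))\, ds\Big] \;+\; \mathbb{E}_{\bs x}^{\epsilon}\big[e^{-\lambda \sigma}\, \phi_{\epsilon}(\bs x_{\epsilon}(\tau))\big]\;.
\end{equation*}
Since $\bigcup_{\epsilon}\mathcal{K}_{\epsilon}$ is bounded, it lies in some $\Lambda_{R}$, so Corollary \ref{t_hitting} yields $\mathbb{P}_{\bs x}^{\epsilon}[\tau > C/\epsilon] = o_{\epsilon}(1)$ uniformly in $\bs x \in \mathcal{K}_{\epsilon}$. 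Combined with the a priori bounds $\lVert \phi_{\epsilon}\rVert_{\infty} \le \lVert \bs g\rVert_{\infty}/\lambda$ and $\lVert G\rVert_{\infty} \le \lVert \bs g\rVert_{\infty}$, this forces $\sigma \to 0$ in probability, so $e^{-\lambda \sigma} = 1 + o_{\epsilon}(1)$ and the first expectation is $o_{\epsilon}(1)$, all uniformly on $\mathcal{K}_{\epsilon}$. Hence $\phi_{\epsilon}(\bs x) = \mathbb{E}_{\bs x}^{\epsilon}[\phi_{\epsilon}(\bs x_{\epsilon}(\tau))] + o_{\epsilon}(1)$.

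To close the argument, I decompose this expectation with the indicator of $\{\tau = \tau_{\mathcal{E}(\bs m)}\}$. On this event, whose probability is $1 - o_{\epsilon}(1)$ uniformly by \eqref{P_exit0}, one has $\bs x_{\epsilon}(\tau) \in \partial\mathcal{E}(\bs m) \subset \mathcal{E}(\bs m)$, and Theorem \ref{p_flat} gives $|\phi_{\epsilon}(\bs x_{\epsilon}(\tau)) - \phi_{\epsilon}(\bs m)| = o_{\epsilon}(1)$ uniformly. The complementary event contributes at most $(\lVert \bs g\rVert_{\infty}/\lambda)\, o_{\epsilon}(1)$. Adding the two pieces yields $\phi_{\epsilon}(\bs x) = \phi_{\epsilon}(\bs m) + o_{\epsilon}(1)$ uniformly in $\bs x \in \mathcal{K}_{\epsilon}$, as required.

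The only delicate point is that every estimate must be uniform on $\mathcal{K}_{\epsilon}$, but uniformity is built in at each step: the boundedness of $\bigcup_{\epsilon}\mathcal{K}_{\epsilon}$ feeds Corollary \ref{t_hitting} uniformly in the starting point, the uniformity in \eqref{P_exit0} is part of the hypothesis, and Theorem \ref{p_flat} is already stated uniformly on $\mathcal{E}(\bs m)$. No analytic input beyond the results already established in this section and Section \ref{sec6} is needed.
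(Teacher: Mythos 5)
Your proposal is correct and follows essentially the same route as the paper: the stochastic representation of the resolvent, a strong Markov decomposition at $\tau_{\mathcal{E}(\mathcal{M}_{0})}$, Corollary \ref{t_hitting} to control this hitting time uniformly on the bounded set $\bigcup_{\epsilon}\mathcal{K}_{\epsilon}$ (negligible against $\theta^{(1)}_{\epsilon}$), hypothesis \eqref{P_exit0} to identify the well that is hit, and Theorem \ref{p_flat} to replace $\phi_{\epsilon}(\bs x_{\epsilon}(\tau))$ by $\phi_{\epsilon}(\bs m)$. The only cosmetic difference is the order in which you insert the indicator $\{\tau\le C/\epsilon\}$ and split the time integral, which does not affect the argument.
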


\begin{proof}
Recall the definition of the function $G\colon\bb R^d\to\bb R$
introduced in \eqref{e_res}. By the stochastic representation of the
solution of the resolvent equation,
\begin{equation}
\label{exp_phi}
\phi_{\epsilon}(\bm{x})\,=\,
\mathbb{E}_{\bm{x}}^{\epsilon} \Big[\,
\int_{0}^{\infty}e^{-\lambda s}\,
G(\bm{x}_{\epsilon}(\theta_{\epsilon}^{(1)}s))\,ds
\, \Big]\;.
\end{equation}
As $G$ is bounded, the absolute value of the time integral is bounded
by $\lambda^{-1} \, \Vert \bs g\Vert_\infty$. Therefore, as
$\bigcup_{\epsilon>0}\mathcal{K_{\epsilon}}$ is a bounded set and $U(\bs
x) \to \infty$ as $|\bs x|\to\infty$, taking $R$ sufficiently large in
Corollary \ref{t_hitting},
\begin{equation}
\label{exp_phi2}
\phi_{\epsilon}(\bm{x}) \,=\,
\mathbb{E}_{\bm{x}}^{\epsilon}\Big[\,
\int_{0}^{\infty} e^{-\lambda s}\,G(\bm{x}_{\epsilon}
(\theta_{\epsilon}^{(1)}s))\,ds\;
{\bf 1}\{\tau_{\mathcal{E}(\mathcal{M}_{0})}
\le\frac{C_0}{\epsilon}\}\, \Big] \;+\;
R_{\epsilon}(\bs x)\;,
\end{equation}
where, here and below, $R_{\epsilon}(\bs x)$ is an error term
such that
\begin{equation*}
\lim_{\epsilon\to 0}
\sup_{\boldsymbol{x}\in\bigcup_{\epsilon>0}\mathcal{K_{\epsilon}}}
|\, R_{\epsilon}(\bs x)\,| \;=\; 0\;.
\end{equation*}
Consider the time integral in the interval
$[0, \tau_{\mathcal{E}(\mathcal{M}_{0})}/\theta_{\epsilon}^{(1)}]$.  As
$G$ is bounded and $\epsilon \, \theta_{\epsilon}^{(1)} \to\infty$,
the expectation of this piece is bounded by $R_{\epsilon}(\bs x)$.
By the strong Markov property, the second piece is equal to
\begin{equation*}
\mathbb{E}_{\bm{x}}^{\epsilon}
\Big[\, \mathbb{E}_{\bm{x}_{\epsilon}
(\tau_{\mathcal{E}(\mathcal{M}_{0})})}^{\epsilon}
\Big[\, \int_{0}^{\infty}e^{-\lambda s}\,
G(\bm{x}_{\epsilon}(\theta_{\epsilon}^{(1)}s))\,ds\,
\Big]\, e^{-\lambda\tau_{\mathcal{E}(\mathcal{M}_{0})}/
\theta_{\epsilon}^{(1)}}\,
{\bf 1}\{\tau_{\mathcal{E}(\mathcal{M}_{0})}
\le\frac{C_0}{\epsilon}\}\, \Big]\;.
\end{equation*}
By the same reasons invoked above, this expression is equal to
\begin{equation*}
\mathbb{E}_{\bm{x}}^{\epsilon}
\Big[\, \mathbb{E}_{\bm{x}_{\epsilon}
(\tau_{\mathcal{E}(\mathcal{M}_{0})})}^{\epsilon}
\Big[\, \int_{0}^{\infty}e^{-\lambda s}\,
G(\bm{x}_{\epsilon}(\theta_{\epsilon}^{(1)}s))\,ds\,
\Big]\,
{\bf 1}\{\tau_{\mathcal{E}(\mathcal{M}_{0})}
\le\frac{C_0}{\epsilon}\}\, \Big]
\;+\; R_{\epsilon}(\bs x) \;.
\end{equation*}
In conclusion,
\begin{equation*}
\phi_{\epsilon} (\bm{x}) \,=\,
\mathbb{E}_{\bm{x}}^{\epsilon}
\Big[\, \phi_{\epsilon}(\bm{x}_{\epsilon}
(\tau_{\mathcal{E}(\mathcal{M}_{0})}))\,
{\bf 1}\{\tau_{\mathcal{E}(\mathcal{M}_{0})}\le
\frac{C_0}{\epsilon}\}\, \Big]
\;+\; R_{\epsilon}(\bs x) \;.
\end{equation*}
Applying Corollary \ref{t_hitting} once more, as $\phi_{\epsilon}$ is
uniformly bounded, the right-hand side is equal to
\begin{equation*}
\mathbb{E}_{\bm{x}}^{\epsilon}
\Big[\, \phi_{\epsilon}(\bm{x}_{\epsilon}
(\tau_{\mathcal{E}(\mathcal{M}_{0})})) \, \Big]
\;+\; R_{\epsilon}(\bs x)
\;=\; \mathbb{E}_{\bm{x}}^{\epsilon}
\Big[\, \phi_{\epsilon}(\bm{x}_{\epsilon}
(\tau_{\mathcal{E}(\boldsymbol{m})}))\, \Big]
\;+\; R_{\epsilon}(\bs x) \;,
\end{equation*}
where we used hypothesis \eqref{P_exit0} and the uniform boundedness
of $\phi_{\epsilon}$ in the last step. To complete the proof, it
remains to recall the assertion of Theorem \ref{p_flat}.
\end{proof}

Recall that $\bs\sigma \in \mc S_H(\mc W)$ is fixed and that
$\bs\sigma \curvearrowright \bs m_{\bs \sigma}$,
$\bs\sigma \curvearrowright \bs n_{\bs \sigma}$. Denote by
$\color{blue} B[\bs x, r]$ the closed ball of radius $r$ centered at
$\bs x$, and by $\mc W'$ the connected component of the set
$\{{\bm x} \in\bb R^d : U({\bm x}) < U(\bs \sigma) \}$ whose closure
contains $\bs\sigma$ and $\bs n_{\bs \sigma}$. Next lemma is a
consequence of Theorem \ref{thm:H-G}.

\begin{lem}
\label{l08}
There exist $\delta>0$ such that
$(B[\bs \sigma, \delta] \cap \overline{\mc W} )  \setminus
\{\bs\sigma\} $ is contained in the domain of attraction
$\mc D({\bs m}_{\bs \sigma})$ of ${\bs m}_{\bs \sigma}$, and
$(B[\bs \sigma, \delta] \cap \overline{\mc W'} ) \setminus
\{\bs\sigma\} $ is contained in the domain of attraction
$\mc D({\bs n}_{\bs \sigma})$ of ${\bs n}_{\bs \sigma}$.
\end{lem}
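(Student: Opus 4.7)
The plan is to combine the Hartman--Grobman conjugacy (Theorem \ref{thm:H-G}) with the strict monotonicity of $U$ along non-constant orbits of \eqref{31}. Indeed, since $\nabla U \cdot \bs\ell \equiv 0$, one has $\frac{d}{dt} U(\upsilon_{\bs x}(t)) = -|\nabla U(\upsilon_{\bs x}(t))|^2$, so $U$ is strictly decreasing along any orbit that does not start at a critical point. By \eqref{47} and Lemma \ref{lem:hyper}, $\bs\sigma$ is a hyperbolic equilibrium with $\dim \mc M_u = 1$ and $\dim \mc M_s = d-1$, and by assumption \eqref{56} (built into the definition of $\mc S_H(\mc W)$) the two branches of $\mc M_u$ are precisely the heteroclinic orbits $\bs\sigma \curvearrowright \bs m_{\bs\sigma}$ and $\bs\sigma \curvearrowright \bs n_{\bs\sigma}$. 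The strict decrease of $U$ along each branch, together with the fact that $\mc W, \mc W'$ are the two connected components of $\{U<U(\bs\sigma)\}$ whose closure contains $\bs\sigma$, forces the first branch minus $\{\bs\sigma\}$ to lie in $\mc W$ and the second in $\mc W'$; in the same way, every point of $\mc M_s \setminus \{\bs\sigma\}$ satisfies $U > U(\bs\sigma)$.

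First I would pick $\delta>0$ small enough that $B[\bs\sigma,\delta] \subset \widehat{\mc N}(\bs\sigma)$, contains no critical point of $U$ besides $\bs\sigma$, and meets no connected component of $\{U<U(\bs\sigma)\}$ other than $\mc W$ and $\mc W'$. Fix $\bs x \in (B[\bs\sigma,\delta] \cap \overline{\mc W}) \setminus \{\bs\sigma\}$. Since $U(\bs x) \le U(\bs\sigma)$, the previous paragraph gives $\bs x \notin \mc M_s$. Moreover, if $\bs x \in \mc W$ then $U(\bs x)<U(\bs\sigma)$ and $\upsilon_{\bs x}(t)$ stays in $\mc W$ for all $t \ge 0$ (it cannot cross $\partial \mc W$, where $U=U(\bs\sigma)$); if $\bs x \in \partial \mc W \setminus \{\bs\sigma\}$, then $\bs x$ is non-critical so $U$ drops immediately and the orbit enters $\mc W$ for small $t>0$, and then stays there by the same argument. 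In either case $\upsilon_{\bs x}(t) \in \overline{\mc W}$ for all $t \ge 0$.

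Next I would invoke Lemmas \ref{lem_esclin} and \ref{lem_esc}: upon shrinking $\delta$ further, the exit point $\bs e(\bs x)$ from $\widehat{\mc N}(\bs\sigma)$ lies in an arbitrarily small prescribed neighborhood of $\mc M_u \cap \partial\widehat{\mc N}(\bs\sigma) = \{\bs p_m^{\sharp},\bs p_n^{\sharp}\}$, where $\bs p_m^{\sharp}, \bs p_n^{\sharp}$ sit on the branches to $\bs m_{\bs\sigma}, \bs n_{\bs\sigma}$ respectively and satisfy $U(\bs p_m^{\sharp}), U(\bs p_n^{\sharp}) < U(\bs\sigma)$. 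Since $\bs p_n^{\sharp} \in \mc W'$, a small enough neighborhood of $\bs p_n^{\sharp}$ lies in the open set $\mc W'$ and is therefore disjoint from $\overline{\mc W}$. Since $\bs e(\bs x) \in \overline{\mc W}$ by the previous step, $\bs e(\bs x)$ must be close to $\bs p_m^{\sharp}$ instead. As $\bs p_m^{\sharp} \in \mc D(\bs m_{\bs\sigma})$ and $\mc D(\bs m_{\bs\sigma})$ is open, this forces $\bs e(\bs x) \in \mc D(\bs m_{\bs\sigma})$, and the semigroup identity $\upsilon_{\bs x}(t+t(\bs x)) = \upsilon_{\bs e(\bs x)}(t)$ yields $\bs x \in \mc D(\bs m_{\bs\sigma})$. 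The second assertion follows by the symmetric argument applied to $\mc W'$ and $\bs n_{\bs\sigma}$.

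The main technical delicacy is the bookkeeping of nested smallness parameters needed to make the separation argument in the last step go through. Concretely, one must first fix neighborhoods of $\bs p_m^{\sharp}$ and $\bs p_n^{\sharp}$ small enough to be contained in $\mc D(\bs m_{\bs\sigma})$ and $\mc W'$ respectively, then translate them through $\Xi$ to extract the threshold $a_0$ used in Lemma \ref{lem_esc}, apply Lemma \ref{lem_esclin} to obtain a corresponding radius in the linearized coordinates, and finally pull this radius back via uniform continuity of $\Xi^{-1}$ on $\overline{B(\bs\sigma,r_1)}$ to obtain the final $\delta$ in the nonlinear coordinates. Beyond this chain of choices, the argument uses no new ingredient, as both Hartman--Grobman and the monotonicity of $U$ have already been established in the paper.
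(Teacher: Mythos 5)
Your proof is correct and follows precisely the route the paper intends: the paper states Lemma \ref{l08} without proof, merely as ``a consequence of Theorem \ref{thm:H-G}'', and your argument is a careful instantiation of that claim, combining the exit-location machinery already established (Lemmata \ref{lem_esclin} and \ref{lem_esc}) with the Lyapunov property $\frac{d}{dt}U(\upsilon_{\bs x}(t))=-|\nabla U(\upsilon_{\bs x}(t))|^{2}$ to rule out membership in $\mc M_s$ and to confine the forward orbit to $\overline{\mc W}$ (resp.\ $\overline{\mc W'}$). The chain of smallness choices you describe at the end is exactly the bookkeeping needed, and I see no gap.
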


\begin{proof}[Proof of Proposition \ref{l10}]
We prove the first assertion, as the second is similar.  By Lemma
\ref{l08}, there exists $\epsilon_{1}>0$ such that
$\partial_+ \mathcal{B}_{\epsilon}^{\bm{\sigma}} \cap
\Lambda_{c,\epsilon} \subset\mathcal{D}(\boldsymbol{m}_{\bs \sigma})$
for all $\epsilon<\epsilon_1$. Therefore, by Proposition
\ref{prop:DoC}, it suffices to show that
\begin{equation}
\label{eq:flat}
\liminf_{\epsilon\rightarrow0}\,
\inf_{\bm{x}\in\partial_{+}\mathcal{B}_{\epsilon}^{\bm{\sigma}}
\cap\Lambda_{c,\epsilon}}
\mathbb{P}_{\bm{x}}^{\epsilon}
[\, \tau_{\mathcal{E}(\bs m_{\bs \sigma})}
= \tau_{\mathcal{E}(\mathcal{M}_{0})} \,] \,=\, 1\;.
\end{equation}

Recall that $\mc W$ represents the well that contains
$\bs m_{\bs \sigma}$. Let
${\color{blue} \mc G_\epsilon} =
\partial_{+}\mathcal{B}_{\epsilon}^{\bm{\sigma}} \cap \overline{\mc
W}$. By Lemma \ref{l08}, there exists $\epsilon_0>0$ such
that $\mc G_\epsilon \subset \mc D(\bs m_{\bm{\sigma}})$ for all
$\epsilon \le \epsilon_0$. We claim that
\begin{equation}
\label{eq_esc1}
\inf_{\bm{x}\in\partial_{+}\mathcal{B}_{\epsilon}^{\bm{\sigma}}
\cap\Lambda_{c, \epsilon}}
\mathbb{P}_{\bm{x}}^{\epsilon}[
\tau_{\partial\mathcal{C}_{\epsilon_{0}}^{\bm{\sigma}}}
= \tau_{\mc G_{\epsilon_0}}]=1-o_{\epsilon}(1)\ ,
\end{equation}
where $\partial \mathcal{C}_{\epsilon}^{\bm{\sigma}}$ represents the
boundary of $\mc C^{\bm{\sigma}}_\epsilon$.

\begin{figure}
\includegraphics[scale=0.2]{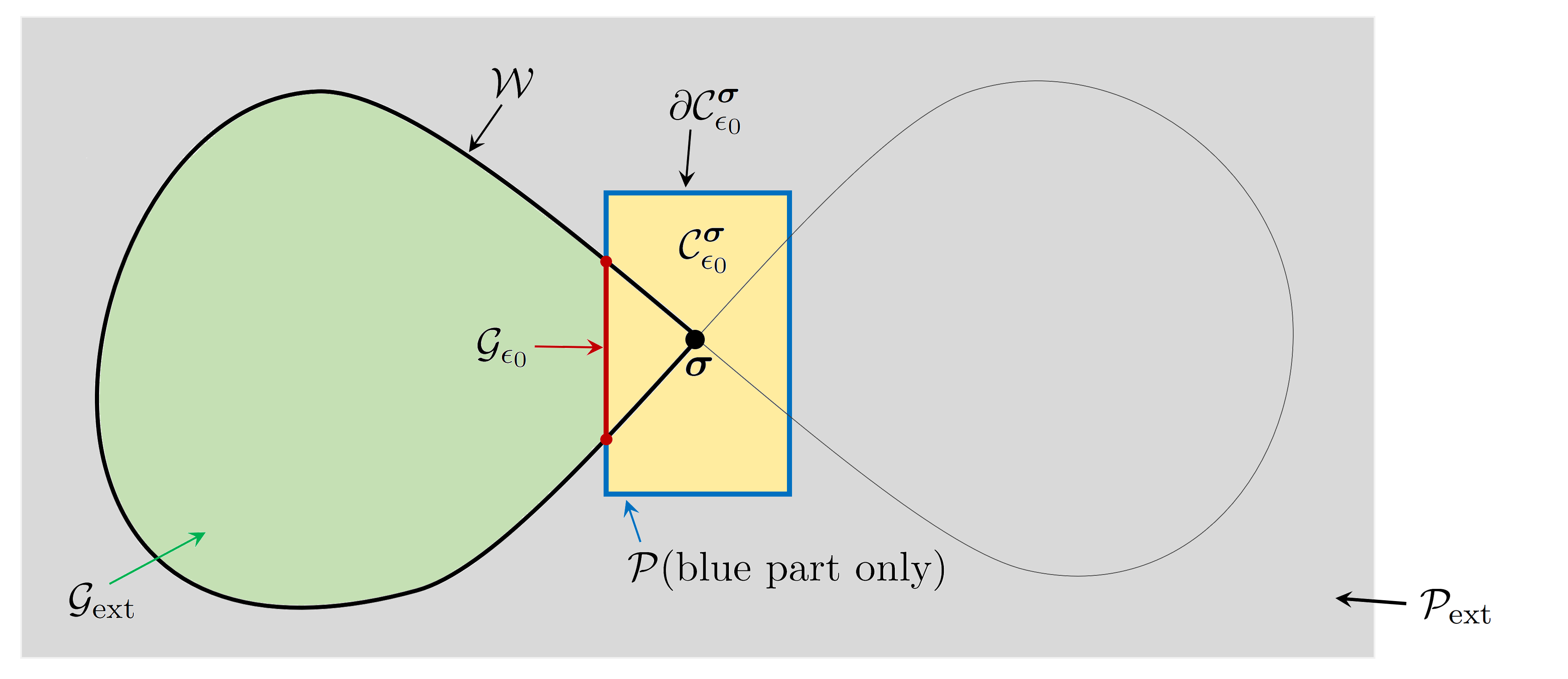}
\caption{The sets $\mathcal{G}_{\rm ext}$ and $\mathcal{P}_{\rm ext}$
introduced in the proof of Proposition \ref{l10}.}
\label{fig3}
\end{figure}

To prove \eqref{eq_esc1}, let
$\color{blue} \mathcal{P} \,:=\, \partial
\mathcal{C}_{\epsilon_0}^{\bm{\sigma}} \setminus \mc G_{\epsilon_0}$,
$\color{blue} \mathcal{G}_{\rm ext} = \overline{\mc W \setminus \mc
C^{\bs \sigma}_{\epsilon_0}}$,
$\color{blue} \mathcal{P}_{\rm ext} = \overline {\bb R^d \setminus
[ \mathcal{G}_{\rm ext} \cup \mc C_{\epsilon_0}^{\bm{\sigma}}]
}$. Figure \ref{fig3} illustrates these sets.
By definition,
\begin{equation*}
\{\,  \tau_{\partial\mathcal{C}_{\epsilon_{0}}^{\bm{\sigma}}}
= \tau_{\mc G_{\epsilon_0}} \,\}  \;=\;
\{\,  \tau_{\mc G_{\epsilon_0}} < \tau_{\mc P} \,\} \;=\;
\{\,
\tau_{\mc G_{\rm ext}} < \tau_{\mc P_{\rm ext}} \,\}
\end{equation*}
for all
$\bm{x}\in\partial_{+}\mathcal{B}_{\epsilon}^{\bm{\sigma}}
\cap\Lambda_{c, \epsilon}$, $\epsilon < \epsilon_0$. Therefore,
by definition of the set $\mc P_{\rm ext}$,
$\{\tau_{\mathcal{E}(\bs m_{\bs \sigma})} < \tau_{\mathcal{P}_{\rm
ext}}\} \subset \{
\tau_{\partial\mathcal{C}_{\epsilon_{0}}^{\bm{\sigma}}} = \tau_{\mc
G_{\epsilon_{0}}}\}$.

Fix $\epsilon < \epsilon_0$. Recall that we denote by $B(\bs x, r)$
the open ball of radius $r$ centered at $\bs x$. By \cite[Lemma
9.2]{LS-22}, there exists a finite constant $C_0$, whose value may
change from line to line, such that
\begin{equation*}
\textup{cap}_{\epsilon}( B(\bs y, {\epsilon})
\,,\, \mathcal{E}(\bs m_{\bs \sigma})) \,\geq\,
C_0\,\epsilon^{d}\,Z_{\epsilon}^{-1}\,e^{-U(\bm{y})/\epsilon}
\end{equation*}
for all
$\bm{y}\in\partial_{+}\mathcal{B}_{\epsilon}^{\bm{\sigma}}\cap\Lambda_{c,
\epsilon}$. In this formula, ${\rm cap}_\epsilon (\mc A, \mc B)$
stands for the capacity between the sets $\ms A$, $\mc B$ for the
diffusion $\bs x_\epsilon (\cdot)$ and is defined in Appendix
\ref{sec-ap2}.  On the other hand, by the proof of \cite[Lemma
9.3]{LS-22}, there exists a finite constant $C_0$ such that
\begin{equation*}
\textup{cap}_{\epsilon}(B(\bm{y}, \epsilon) \,,\,
\mathcal{P}_{\rm ext}) \le C_0 \,Z_{\epsilon}^{-1} \,e^{-U(\bm{\sigma})/\epsilon}
\end{equation*}
for all
$\bm{y}\in\partial_{+}\mathcal{B}_{\epsilon}^{\bm{\sigma}}\cap\Lambda_{c,
\epsilon}$.  By \cite[Proposition 7.2]{LMS2}, there exists a finite
constant $C_0$ such that
\begin{equation*}
\mathbb{P}_{\bm{x}}^{\epsilon}[\,\tau_{\mathcal{P}_{\rm ext}}
< \tau_{\mathcal{E}(\bs m_{\bs \sigma})}\,] \,\le\,
C_0\,\frac{\textup{cap}_{\epsilon}( B(\bs x , \epsilon)
\, ,\, \mathcal{P}_{\rm ext})}
{\textup{cap}_{\epsilon}
(B(\bs x, \epsilon) \, ,\,\mathcal{E}(\bs m_{\bs \sigma}))}
\end{equation*}
for all $\bs x\in \bb R^d$.  Combining the previous estimates yields
that for all
$\bm{x}\in\partial_{+}\mathcal{B}_{\epsilon}^{\bm{\sigma}}\cap\Lambda_{c,
\epsilon}$
\begin{equation*}
\mathbb{P}_{\bm{x}}^{\epsilon}
[\,\tau_{\mathcal{P}_{\rm ext}}<\tau_{\mathcal{E}(\bs m_{\bs \sigma})}\,]
\,\le\,  C_0 \, \epsilon^{-d} \, e^{(U(\bm{x})-U(\bm{\sigma}))/\epsilon}
\,\le\,  C_0 \, \epsilon^{-d}\, e^{-c J^2\delta^{2}/\epsilon} \,= \,
C_0 \, \epsilon^{cJ^2-d}\;.
\end{equation*}
This expression vanishes as $\epsilon\to 0$ for large enough $J$.  To
complete the proof of assertion \eqref{eq_esc1}, it remains to recall
that
$\{\tau_{\mathcal{E}(\bs m_{\bs \sigma})} < \tau_{\mathcal{P}_{\rm
ext}}\} \subset \{
\tau_{\partial\mathcal{C}_{\epsilon_{0}}^{\bm{\sigma}}} = \tau_{\mc
G_{\epsilon_{0}}}\}$.

We turn to the proof of \eqref{eq:flat}. For
$\bm{x}\in\partial_{+}\mathcal{B}_{\epsilon}^{\bm{\sigma}}\cap\Lambda_{c,
\epsilon}$, by the strong Markov property and \eqref{eq_esc1},
\begin{align*}
\mathbb{P}_{\bm{x}}^{\epsilon}[\tau_{\mathcal{E}(\bs m_{\bs \sigma})}
=\tau_{\mathcal{E}(\mathcal{M}_{0})}] &
\, \ge\, \mathbb{P}_{\bm{x}}^{\epsilon}[\tau_{\mathcal{E}(\bs m_{\bs \sigma})}
=\tau_{\mathcal{E}(\mathcal{M}_{0})},\,
\tau_{\partial\mathcal{C}_{\epsilon_{0}}^{\bm{\sigma}}}
=\tau_{\mc G_{\epsilon_0}}] \\
&
\,\ge\, \inf_{\boldsymbol{y}\in \mc G_{\epsilon_0}}
\mathbb{P}_{\bm{y}}^{\epsilon}[\tau_{\mathcal{E}(\bs m_{\bs \sigma})}
=\tau_{\mathcal{E}(\mathcal{M}_{0})}] \,
\mathbb{P}_{\bm{x}}^{\epsilon}[\tau_{\partial\mathcal{C}_{\epsilon_{0}}^{\bm{\sigma}}}
=\tau_{\mc G_{\epsilon_0}}] \\
& \,=\,  (1-o_{\epsilon}(1))
\inf_{\boldsymbol{y}\in \mc G_{\epsilon_0}}
\mathbb{P}_{\bm{y}}^{\epsilon}[\tau_{\mathcal{E}(\bs m_o)}
=\tau_{\mathcal{E}(\mathcal{M}_{0})}]\;.
\end{align*}
The last infimum is $1-o_{\epsilon}(1)$ by Lemma \ref{lem_exit}
because $\mc G_{\epsilon_0} \subset \mc D(\bs m_0)$.
\end{proof}

Proposition \ref{l10} provides a simple formula for the quantities
$J_\pm (\bs\sigma)$ introduced in Lemma \ref{p03}.

\begin{lem}
\label{p05}
For all ${\bf g}\colon \mc M_0 \to \bb R$,
\begin{equation}
\label{37}
e^{H/\epsilon} \, \int_{\Omega}\,
Q_{\epsilon} \,(-\mathscr{L}_{\epsilon}\,\phi_{\epsilon})
\,d\mu_{\epsilon}  \; =\; \frac{1}{2\pi\nu_\star}\,
\sum_{\bm{\sigma}\in \mc S_H(\mc W) }
\, [\phi_\epsilon (\bs m_{\bs\sigma}) - \phi_\epsilon (\bs n_{\bs\sigma})]\,
\frac{\mu^{\bs\sigma}} {\sqrt{  - \det \mathbb{H}^{\bs\sigma}}}
\,+\, o_{\epsilon}(1) \;.
\end{equation}
\end{lem}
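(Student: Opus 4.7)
The strategy combines the two preceding results with an explicit Gaussian computation. First, I apply Lemma \ref{p03} to write the left-hand side of \eqref{37} as $\sum_{\bs\sigma\in\mc S_H(\mc W)}\big(J_+(\bs\sigma)-J_-(\bs\sigma)\big)+o_\epsilon(1)$.

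Second, I use the uniform bound $\|\phi_\epsilon\|_\infty\le \|\bs g\|_\infty/\lambda$, evident from the stochastic representation \eqref{exp_phi-1}, together with Proposition \ref{l10}, which gives uniform convergence of $\phi_\epsilon$ to $\phi_\epsilon(\bs m_{\bs\sigma})$ on $\partial_+\mc B_\epsilon^{\bs\sigma}\cap\Lambda_{c,\epsilon}$ and on $\mc A_\epsilon^{\bs\sigma,+}\cap\Lambda_{c,\epsilon}$, and to $\phi_\epsilon(\bs n_{\bs\sigma})$ on the corresponding ``$-$'' sets. This lets me pull the constant $\phi_\epsilon(\bs m_{\bs\sigma})$ out of the two integrals defining $J_+(\bs\sigma)$ (and similarly $\phi_\epsilon(\bs n_{\bs\sigma})$ out of $J_-(\bs\sigma)$) with an error that is $o_\epsilon(1)$ times the remaining Gaussian integrals, which I shall check to be $O(1)$.

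Third, it remains to evaluate the four Gaussian integrals (two for $J_+$, two for $J_-$) with $\phi_\epsilon\equiv 1$. The matrix $\mathbb{M}:=\mathbb{H}^{\bs\sigma}+\mu^{\bs\sigma}\bs v\otimes\bs v$ is positive definite (cf.\ \cite[Lemma 8.1]{LS-22}), and its determinant is computable via the matrix determinant lemma in terms of $\det\mathbb{H}^{\bs\sigma}$, $\mu^{\bs\sigma}$, $\lambda_1$, and $(\bs v\cdot\bs e_1)$. After the rescaling $\hat x_k=\sqrt{\epsilon}\,y_k$, each integral becomes an asymptotic Gaussian integral over essentially the full hyperplane orthogonal to $\bs e_1$, since the constraint $\hat x_k\in[-2J\delta/\sqrt{\lambda_k},\,2J\delta/\sqrt{\lambda_k}]$ rescales to $|y_k|\le 2J\sqrt{\log\epsilon^{-1}/\lambda_k}\to\infty$, and the cutoff imposed by $\Lambda_{c,\epsilon}$ is asymptotically harmless because the Gaussian weight decays on the scale $O(1)$ while the cutoff lives at scale $\sqrt{\log\epsilon^{-1}}$. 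The surface contribution and the volume contribution---the latter carrying the $\mathbb{L}^{\bs\sigma}$ correction through the factor $(\mathbb{L}^{\bs\sigma}\overline{\bs x}_r\cdot\bs e_1)/(\overline{\bs x}_r\cdot\bs v)$---combine, by virtue of the skew-symmetry of $\mathbb{H}^{\bs\sigma}\mathbb{L}^{\bs\sigma}$ established in \cite[Lemma 4.5]{LS-22}, to produce precisely the Eyring--Kramers coefficient $\omega(\bs\sigma)=\mu^{\bs\sigma}/(2\pi\sqrt{-\det\mathbb{H}^{\bs\sigma}})$ of \eqref{eq:nu} divided by $\nu_\star$, and in a way that yields the \emph{difference} $\phi_\epsilon(\bs m_{\bs\sigma})-\phi_\epsilon(\bs n_{\bs\sigma})$ in the final expression.

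The principal obstacle is this last Gaussian evaluation, and specifically tracking all prefactors and signs to (i) verify the cancellation between the surface and volume terms that produces the single factor $\omega(\bs\sigma)$ rather than two uncancelled pieces, and (ii) ensure that the combination of $J_+(\bs\sigma)$ and $-J_-(\bs\sigma)$ yields the difference $\phi_\epsilon(\bs m_{\bs\sigma})-\phi_\epsilon(\bs n_{\bs\sigma})$ (as demanded by the Markov-chain interpretation) rather than a sum. This delicate algebra is carried out in detail in \cite[Section 7]{LS-22b}---to which the proof of Lemma \ref{p03} itself was already reduced---so in practice Lemma \ref{p05} follows by invoking that computation once Steps~1 and~2 above are in place.
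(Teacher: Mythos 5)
Your proposal matches the paper's proof: both reduce to Lemma \ref{p03}, use Proposition \ref{l10} together with the uniform boundedness of $\phi_\epsilon$ to replace $\phi_\epsilon$ by $\phi_\epsilon(\bs m_{\bs\sigma})$ (resp.\ $\phi_\epsilon(\bs n_{\bs\sigma})$) in $J_+$ (resp.\ $J_-$), and then delegate the remaining Gaussian integrals to the computations of \cite[Section 7]{LS-22b} --- the paper cites Lemmata 7.3, 7.5, 7.7 and Proposition 5.7 there, the last providing exactly the identity that merges the surface and volume contributions into the single coefficient $\mu^{\bs\sigma}/(2\pi\sqrt{-\det\mathbb{H}^{\bs\sigma}})$. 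This is essentially the same argument, so nothing further is needed.
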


\begin{proof}
By Proposition \ref{l10}, in the formula for $J_+(\bs\sigma)$
presented in Lemma \ref{p03}, we may replace $\phi_\epsilon(\bs x)$ by
$\phi_\epsilon (\bs m_{\bs \sigma})$ at a cost $o_\epsilon(1)$, and we
are left with a Gaussian type integral. A straightforward computation,
presented in the proof of \cite[Lemma 7.5]{LS-22b}, together with
\cite[Lemma 7.3]{LS-22b} yields that
\begin{equation*}
\frac{\epsilon\,\sqrt{\mu^{\bs\sigma}}
\,(\bm{v}\cdot\bm{e}_{1})}
{(2\pi\epsilon)^{(d+1)/2}\, \nu_\star} \,
\int_{\partial_{+}\mathcal{B}^{\bs \sigma}_{\epsilon} \cap \Lambda_{c,\epsilon}}\,
e^{-\frac{1}{2\epsilon}\bm{x}\cdot\,
(\mathbb{H}^{\bs \sigma} +\mu^{\bs\sigma} \,\bm{v}\otimes\bm{v})}\,
{\rm S} (d\bm{x})
\;=\; \frac{1}{2\pi \, \nu_\star}\,
\frac{\lambda^{\bs\sigma}_1}
{ \sqrt{  - \det \mathbb{H}^{\bs\sigma}}}  + o_{\epsilon}(1) \;.
\end{equation*}
Similarly, by the proof of \cite[Lemma 7.7]{LS-22b},
\begin{equation*}
\begin{aligned}
& \frac{1} {(2\pi)^{(d+1)/2}\ \nu_\star\,
\sqrt{\mu^{\bs \sigma}}\,\epsilon{}^{(d+3)/2}}\,
\int_{\mc A^{\bs \sigma, +}_{\epsilon} \cap \Lambda_{c,\epsilon} }\,
\frac{\mathbb{L}^{\bs \sigma}
\overline{\boldsymbol{x}}\cdot\boldsymbol{e}_{1}}
{\overline{\boldsymbol{x}}\cdot\bm{v}}\,
e^{-\frac{1}{2\epsilon}\overline{\boldsymbol{x}}
\cdot\,(\mathbb{H}^{\bs\sigma}+\mu^{\bs\sigma}\,\bm{v}\otimes\bm{v})\,
\overline{\boldsymbol{x}}}\,d\bm{x}
\\
& \quad \;=\;
\frac{1}{2\pi\nu_\star}\,
\frac{\lambda^{\bs\sigma}_1}{ \sqrt{  - \det \mathbb{H}^{\bs\sigma}}}
\, \frac{(\mathbb{L}^{\bs \sigma} (\mathbb{H}^{\bs\sigma})^{-1}
\bm{v}) \cdot\bm{e}_{1} }
{\bs v \cdot \bs e_1}\,  + \, o_{\epsilon}(1) \;.
\end{aligned}
\end{equation*}
By the proof of \cite[Proposition 5.7]{LS-22b},
\begin{equation*}
\bs v \cdot \bs e_1 \;+\;
(\mathbb{L}^{\bs \sigma} (\mathbb{H}^{\bs\sigma})^{-1} \bm{v})
\cdot\bm{e}_{1} \;=\; \frac{\mu^{\bs\sigma}}
{\lambda^{\bs\sigma}_1} \, \bs v \cdot \bs e_1\;.
\end{equation*}
Combining the previous estimates yields that
\begin{equation*}
J_+(\bs\sigma) \;=\;
-\, \frac{1}{2\pi \, \nu_\star}\,
\frac{\mu^{\bs\sigma}_1}
{ \sqrt{  - \det \mathbb{H}^{\bs\sigma}}}  \,
\phi_\epsilon(\bs m_{\bs\sigma}) \,+\, o_{\epsilon}(1) \;.
\end{equation*}
The same argument leads to the same formula for $J_-(\bs\sigma) $ with
a plus sign and $\phi_\epsilon(\bs n_{\bs\sigma})$ instead of
$\phi_\epsilon(\bs m_{\bs\sigma})$. This completes the proof of the
lemma.
\end{proof}

\section{Proof of Theorem \ref{t01}}
\label{sec9}

Recall from \eqref{28}, \eqref{35} the definitions of the generator
$\mf L_1$, and the function
$\bs{f}_{\epsilon} \colon \mc M_0 \to \bb R$, respectively. The
main result of this section reads as follows.

\begin{thm}
\label{p01}
For all $\lambda >0$, $\mb g \colon \mc M_0 \to \bb R$,
\begin{equation*}
(\lambda \,-\, \mf L_1)\, \mb f_{\epsilon}
\;=\; \mb g \;+\; o_\epsilon (1)\;.
\end{equation*}
\end{thm}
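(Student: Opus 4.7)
The identity to prove is, for each $\bs m\in\mc M_0$,
\[
\lambda\,\bs f_\epsilon(\bs m)\,-\,(\mf L_1\bs f_\epsilon)(\bs m)\;=\;\mb g(\bs m)\,+\,o_\epsilon(1)\;,
\]
and I would treat separately the cases $\Gamma(\bs m)>d^{(1)}$ and $\Gamma(\bs m)=d^{(1)}$.

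When $\Gamma(\bs m)>d^{(1)}$, definition \eqref{40} gives $\omega_1(\bs m,\cdot)\equiv 0$ and hence $(\mf L_1\bs f_\epsilon)(\bs m)=0$, so it suffices to show that $\lambda\bs f_\epsilon(\bs m)=\mb g(\bs m)+o_\epsilon(1)$. Starting from \eqref{exp_phi-1}, I would pick $r_0<H'<\Gamma(\bs m)$ with $H'$ close to $\Gamma(\bs m)$, and let $\mc B$ be the component of $\{U<U(\bs m)+H'\}$ containing $\bs m$. By Proposition \ref{p_FW} applied with $\mc A=\mc E(\bs m)$ and $h_0=U(\bs m)$, together with $\Gamma(\bs m)>d^{(1)}$, we have $T\,\theta^{(1)}_\epsilon\prec e^{(H'-\eta)/\epsilon}$ for any $T>0$ and $\eta$ small, so the diffusion started in $\mc E(\bs m)$ stays in $\mc B$ throughout $[0,T\theta^{(1)}_\epsilon]$ with probability $1-o_\epsilon(1)$. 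Coupling $\bs x_\epsilon$ with $\bs x^F_\epsilon$ as in the proof of Theorem \ref{p_flat} and applying Theorem \ref{t_main2}, for every $s>0$, $\mb E_{\bs x}^\epsilon[G(\bs x_\epsilon(\theta^{(1)}_\epsilon s))]=\int G\,d\mu^F_\epsilon+o_\epsilon(1)=\mb g(\bs m)+o_\epsilon(1)$ uniformly in $\bs x\in\mc E(\bs m)$, the second equality following from \eqref{55} and \eqref{59} which force $\mu^F_\epsilon(\mc E(\bs m))=1-o_\epsilon(1)$. Dominated convergence against $e^{-\lambda s}\,ds$ and integration against $\mu^F_\epsilon$ conclude this case.

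Assume now $\Gamma(\bs m)=d^{(1)}$. Set $H=U(\bs m)+d^{(1)}$ and let $\mc W$ be the component of $\{U<H\}$ containing $\bs m$. Any other local minimum in $\mc W$ would produce a continuous path from $\bs m$ with $\max U<H$, contradicting $\Gamma(\bs m)=d^{(1)}$; hence $\mc W\cap\mc M_0=\{\bs m\}$. Consequently each $\bs\sigma\in\mc S_H(\mc W)$ satisfies $\bs m_{\bs\sigma}=\bs m$ and $\bs n_{\bs\sigma}\in\mc V(\bs m)$; conversely every gate $\bs\sigma\in\Upsilon(\bs m)$ lies on $\partial\mc W$ at height exactly $H$ and satisfies \eqref{48}, so $\Upsilon(\bs m)\subset\mc S_H(\mc W)$. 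Grouping the saddles by $\bs n_{\bs\sigma}$, using \eqref{39a}, Theorem \ref{p_flat} to replace $\phi_\epsilon(\bs m_{\bs\sigma})$ and $\phi_\epsilon(\bs n_{\bs\sigma})$ by $\bs f_\epsilon(\bs m)$ and $\bs f_\epsilon(\bs n_{\bs\sigma})$, and the definition \eqref{28} of $\mf L_1$, Lemma \ref{p05} reduces to
\begin{equation}
\label{claim-id}
e^{H/\epsilon}\int_\Omega Q_\epsilon\,(-\mathscr L_\epsilon\phi_\epsilon)\,d\mu_\epsilon\;=\;-\,\frac{\nu(\bs m)}{\nu_\star}\,(\mf L_1\bs f_\epsilon)(\bs m)\;+\;o_\epsilon(1)\;.
\end{equation}
On the left-hand side of \eqref{claim-id} I would insert the resolvent equation $-\mathscr L_\epsilon\phi_\epsilon=(G-\lambda\phi_\epsilon)/\theta^{(1)}_\epsilon$ and use $e^{H/\epsilon}/\theta^{(1)}_\epsilon=e^{U(\bs m)/\epsilon}$. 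On $\mc E(\bs m)\subset\mc W_1^\epsilon$, where $Q_\epsilon\equiv 1$, $G\equiv\mb g(\bs m)$ and $\phi_\epsilon=\bs f_\epsilon(\bs m)+o_\epsilon(1)$ by Theorem \ref{p_flat}, the contribution equals $(\mb g(\bs m)-\lambda\bs f_\epsilon(\bs m))\,\nu(\bs m)/\nu_\star+o_\epsilon(1)$ by \eqref{55}. The remaining regions are negligible: $Q_\epsilon\equiv 0$ on $\mc W_2^\epsilon$; $U\ge U(\bs m)+r_0$ on $\mc W_1^\epsilon\setminus\mc E(\bs m)$ and on the saddle zones $\mc E_\epsilon^{\bs\sigma}$; $U>H+J^2\delta^2$ on $\Omega\setminus\mc K_\epsilon$. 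Combined with $\lVert Q_\epsilon\rVert_\infty\le 1$, the uniform boundedness of $\phi_\epsilon$, and $Z_\epsilon\sim(2\pi\epsilon)^{d/2}\nu_\star$, Laplace estimates make each piece $o_\epsilon(1)$ after multiplication by $e^{U(\bs m)/\epsilon}$. Equating with \eqref{claim-id} and dividing by $\nu(\bs m)/\nu_\star$ finishes the proof.

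The main technical obstacle is this Laplace bookkeeping on the left-hand side of \eqref{claim-id}, specifically the contribution from the saddle zones $\mc E_\epsilon^{\bs\sigma}$, where $Q_\epsilon$ transitions from $0$ to $1$ and where $U$ is only slightly above $U(\bs m)+r_0$; controlling it uniformly requires the same careful calibration of $J$, $r_0$ and $\eta$ developed for the test-function estimates of Section \ref{sec10} and \cite{LS-22b}. Once this reduction is in place, the combinatorial identification $\mc S_H(\mc W)=\Upsilon(\bs m)$ together with the local flatness of $\phi_\epsilon$ (Theorem \ref{p_flat}) matches the weights of Lemma \ref{p05} coefficient by coefficient with $\omega_1(\bs m,\bs m')/\nu(\bs m)$, delivering exactly the reduced generator $\mf L_1$.
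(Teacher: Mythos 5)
Your proposal is correct and, for the substantive case $\Gamma(\bs m)=d^{(1)}$, it is essentially the paper's proof: pair the resolvent equation against the test function $Q_\epsilon$, use Lemma \ref{p03}/\ref{p05} for the flux term, Theorem \ref{p_flat} for local flatness, the asymptotics \eqref{55} for the $\lambda$- and $G$-terms on $\mc E(\bs m)$, and the identification $\mc S_H(\mc W)=\Upsilon(\bs m)$ (the content of Lemma \ref{l13}) to match the weights with $\omega_1(\bs m,\cdot)/\nu(\bs m)$. The only genuine divergence is the case $\Gamma(\bs m)>d^{(1)}$: you argue probabilistically, via confinement (Proposition \ref{p_FW}) up to times $\prec e^{(H'-\eta)/\epsilon}\succ\theta^{(1)}_\epsilon s$ and mixing (Theorem \ref{t_main2}) to get $\phi_\epsilon=\lambda^{-1}\mb g(\bs m)+o_\epsilon(1)$ on $\mc E(\bs m)$ directly from the stochastic representation, whereas the paper stays inside the weak formulation and simply observes that the right-hand side of \eqref{37} is bounded, so after multiplying by $\theta^{(1)}_\epsilon e^{-H/\epsilon}=e^{(d^{(1)}-\Gamma(\bs m))/\epsilon}e^{-U(\bs m)/\epsilon}$ the flux term is $o_\epsilon(1)\,\mu_\epsilon(\mc E(\bs m))$; the paper's route is shorter since Lemma \ref{p05} is already in hand, while yours avoids invoking the test-function machinery in that case. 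One small inaccuracy: on the saddle zones $\mc E_\epsilon^{\bs\sigma}$ the potential is within $O(J^2\delta^2)$ of $H=U(\bs m)+d^{(1)}$, not "slightly above $U(\bs m)+r_0$"; this only makes the Laplace estimate easier ($e^{U(\bs m)/\epsilon}\mu_\epsilon(\mc E^{\bs\sigma}_\epsilon)\le C\epsilon^{-d/2-CJ^2}e^{-d^{(1)}/\epsilon}$), and the delicate calibration of $J$ you worry about is already encapsulated in Lemma \ref{p03}, which you may cite rather than redo.
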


\begin{proof}[Proof of Theorem \ref{t01}]
The assertion follows from two observations. The sequence
$\mb f_{\epsilon}$ is uniformly bounded and the equation
$(\lambda \,-\, \mf L_1)\, \mb f \;=\; \mb g$ has a unique solution.
\end{proof}

The remainder of this section is devoted to the proof of Theorem
\ref{p01}.  Fix $\bs m \in \mc M_0$.  Let $\mc W$ be the connected
component of the set
$\{\bs x\in\bb R^d: U(\bs x) < U(\bs m) + \Gamma (\bs m)\}$ which
contains $\bs m$. By definition, $\mc W$ does not contain any other
local minimum of $U$ (in particular, the present situation is
different from the one represented in Figure \ref{fig2}, where
$\mc W$ contains more than one local minimum). Recall from \eqref{56}
the definition of the set $\mc S_H(\mc W)$.

\begin{lem}
\label{l13}
There exists a saddle point $\bs \sigma \in \partial \mc W$ satisfying
condition (a) in \eqref{56}. Condition (b) is fulfilled for all saddle
points $\bs \sigma' \in \partial \mc W$ satisfying (a).  Moreover,
$\mc S_H(\mc W) = \Upsilon (\bs m)$, where
$H= U(\bs m) + \Gamma(\bs m)$.
\end{lem}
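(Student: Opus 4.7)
The plan is to extract two structural facts about~$\mathcal{W}$ and then verify the three claims in turn. First, $\mathcal{W}$ is a bounded open set (boundedness is forced by the growth hypothesis~\eqref{26}) whose boundary lies on the level set $\{U=H\}$. Second, $\mathcal{W}$ contains no local minimum of~$U$ other than~$\bs m$: otherwise another local minimum $\bs m''\in\mathcal{W}$ would be joined to $\bs m$ by a continuous path inside the open set $\mathcal{W}$, on which $U<H$ strictly, contradicting $\Theta(\bs m,\bs m'')\ge H$. For the existence assertion I would then pick $\bs m^{\star}\in\mathcal{M}_0\setminus\{\bs m\}$ achieving $\Theta(\bs m,\bs m^{\star})=H$, which exists by the very definition of $\Gamma(\bs m)$. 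A standard mountain-pass argument for the Morse function~$U$ produces a critical point $\bs\sigma$ at the minimax level $H$ separating $\bs m$ from $\bs m^{\star}$; the topological argument supporting~\eqref{47} forces such a separator to have Hessian with exactly one negative eigenvalue, i.e., to be an index-one saddle. Since nearby points on the minimizing path lie in $\mathcal{W}$ on one side of $\bs\sigma$ and in a different component of $\{U<H\}$ on the other, $\bs\sigma\in\partial\mathcal{W}$ and every ball around $\bs\sigma$ meets $\{U<H\}\setminus\mathcal{W}$, giving~(a).

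Next I would prove that (b) holds at every $\bs\sigma'\in\partial\mathcal{W}$ satisfying~(a), and simultaneously identify $\mathcal{S}_H(\mathcal{W})$ with $\Upsilon(\bs m)$. The Morse lemma for~$U$ at an index-one critical point, combined with the spectral analysis in~\cite{LS-22} showing that $\nabla^2 U+D\bs\ell$ remains hyperbolic of index one, implies that for small enough $\delta>0$ the set $B(\bs\sigma',\delta)\cap\{U<H\}\setminus\{\bs\sigma'\}$ has exactly two connected components, corresponding to the two branches of the unstable manifold of the drift $\bs b=-(\nabla U+\bs\ell)$. Condition~(a) forces these local components to lie in distinct global components of $\{U<H\}$: one inside $\mathcal{W}$, one outside. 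Since $\nabla U\cdot\bs\ell\equiv 0$ makes $U$ strictly decrease along $\bs b$-orbits, following the flow from each branch reaches a local minimum; the $\mathcal{W}$-side must terminate at $\bs m$ and the other side at some $\bs m'\neq\bs m$. Hypothesis~\eqref{48} (which is the operative regime of the lemma, since $\mf L_1$ only sees minima with $\Gamma(\bs m)=d^{(1)}$) then upgrades these descent curves to genuine heteroclinic orbits, yielding~(b). The same analysis establishes both inclusions in $\mathcal{S}_H(\mathcal{W})=\Upsilon(\bs m)$: given a gate $\bs\sigma\in\Upsilon(\bs m)$, the first half of its gate path lies in $\mathcal{W}$ and the second half in a different component of $\{U<H\}$, so $\bs\sigma\in\partial\mathcal{W}$ and satisfies (a)--(b); conversely, given $\bs\sigma\in\mathcal{S}_H(\mathcal{W})$, concatenating the two heteroclinic orbits supplied by~(b) (one reversed) produces a gate path through $\bs\sigma$, so $\bs\sigma\in\Upsilon(\bs m)$.

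The main technical obstacle is the local analysis at an index-one saddle of the non-gradient drift $\bs b$: rigorously matching the two connected components of $\{U<H\}\setminus\{\bs\sigma'\}$ in a small neighborhood with the two branches of the unstable manifold of~$\bs b$, and checking that each branch actually converges to a local minimum. The first point relies on the Morse lemma for~$U$ together with the hyperbolicity results from~\cite{LS-22}; the second uses that $U$ is a strict Lyapunov function for $\bs b$ (since $\nabla U\cdot\bs\ell\equiv0$ gives $\dot U=-|\nabla U|^2$ along the flow~\eqref{31}) and the absence of other recurrent behavior in compact sublevel sets. Hypothesis~\eqref{48} then plays the indispensable role of ensuring that the two branches terminate at \emph{distinct} local minima, which is precisely what is needed to identify $\mathcal{S}_H(\mathcal{W})$ with $\Upsilon(\bs m)$ and not some smaller subset.
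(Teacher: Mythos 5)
Your overall route is the same as the paper's: the ``standard mountain-pass argument'' you invoke for existence is exactly Proposition \ref{l05a} (proved at length in the appendix), the key structural input is that $\mc W$ contains no local minimum other than $\bs m$, and the identification $\mc S_H(\mc W)=\Upsilon(\bs m)$ together with condition (b) ultimately rests on hypothesis \eqref{48}. The forward inclusion $\Upsilon(\bs m)\subseteq \mc S_H(\mc W)$ via the two halves of a gate path, and the converse via concatenation of the two heteroclinic orbits, also match the paper.

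There is, however, one step whose logic is out of order and, as written, unjustified. To establish (b) you argue: the two branches of the unstable manifold of $\bs b$ at $\bs\sigma'$ enter the two local components of $\{U<H\}$, ``following the flow from each branch reaches a local minimum,'' and then \eqref{48} ``upgrades'' these curves. The Lyapunov property $\dot U=-|\nabla U|^2$ only guarantees that each branch converges to \emph{some} critical point; it may well terminate at another saddle, and ruling this out is precisely the content of assumption \eqref{48} --- it cannot be derived from the flow. Moreover \eqref{48} is only available for $\bs\sigma'\in\Upsilon(\bs m)$, and in your write-up membership in $\Upsilon(\bs m)$ for points of $\mc S_H(\mc W)$ is itself deduced from (b), which is circular. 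The repair is the one the paper uses: for a saddle $\bs\sigma'\in\partial\mc W$ satisfying (a), first show $\bs\sigma'\in\Upsilon(\bs m)$ by a purely topological path construction --- enter each of the two local components of $\{U<H\}$ near $\bs\sigma'$, and inside each (bounded, path-connected) global component connect to a local minimum of $U$, which on the $\mc W$-side must be $\bs m$ and on the other side is some $\bs m'\neq\bs m$; this yields a gate path and hence $\bs\sigma'\in\Upsilon(\bs m)$. Only then does \eqref{48} apply and deliver the two heteroclinic orbits required by (b). With that reordering your argument coincides with the paper's proof.
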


\begin{proof}
By Proposition \ref{l05a}, there exist a local minimum $\bs m'$ of $U$
different from $\bs m$ and a continuous path
$\boldsymbol{z} \colon [0,1] \to \bb R^d$ such that
$U(\bs m) + \Gamma (\bs m) = \Theta(\bs m, \bs m')$,
$\boldsymbol{z}(0)=\boldsymbol{m}$,
$\boldsymbol{z}(1)=\boldsymbol{m}'$, and
\begin{equation}
\label{54a}
\max_{t\in[0,\,1]}U(\boldsymbol{z}(t)) \;=\;
U(\boldsymbol{z}(1/2)) \;=\;
\Theta(\boldsymbol{m},\,\boldsymbol{m}')\;,
\quad
U(\bs z(s)) \,<\, U(\bs z(1/2))\;,\;\;
s\in [0,1]\setminus\{1/2\} \;,
\end{equation}
and $\bs\sigma := \bs z(1/2)$ is a saddle point of $U$.  In
particular, $\bs \sigma \in \partial \mc W$. Condition (a) is
satisfied because $\bs m' \neq \bs m$ and $\mc W$ contains only the
local minimum $\bs m$.

We turn to condition (b). Let $\bs\sigma$ be a saddle point in
$\partial \mc W$ satisfying (a).  By definition of
$\mc W$ and with the help of the solution of the ODE \eqref{31}, it is
possible to construct a continuous path
$\boldsymbol{z} \colon [0,1] \to \bb R^d$ such that
$\boldsymbol{z}(0)=\boldsymbol{m}' \in \mc M_0$,
$\boldsymbol{z}(1/2) \;=\; \bs\sigma$,
$\boldsymbol{z}(1)=\boldsymbol{m}'' \in \mc M_0$, and
\begin{equation*}
U(\bs z(s)) \,<\, U(\bs \sigma)\;,\;\; s\, \neq\,  1/2 \;,
\end{equation*}
for some $\boldsymbol{m}'$, $\bs m'' \in \mc M_0$. As $\bs\sigma$
satisfies (a), we may assume without loss of generality that
$\boldsymbol{m}' \in \mc W$,
$\boldsymbol{m}'' \in \overline{\mc W}^c$. Since $\mc W$ contains a
unique local minimum, $\bs m'=\bs m$. Therefore, since
$U(\bs\sigma) = U(\bs m) + \Gamma(\bs m)$, by definition of
$\Upsilon (\bs m)$, $\bs\sigma \in \Upsilon (\bs m)$. Hence, by
condition \eqref{48}, there exists $\bs m''' \in \mc M_0$,
$\bs m'''\not\in \mc W$, such that $\bs \sigma \curvearrowright \bs m$,
$\bs \sigma \curvearrowright \bs m'''$, which is condition (b).

Assume that $\bs \sigma \in \mc S_H(\mc W)$. By definition, it
satisfies (a). Thus, by the previous paragraph,
$\bs\sigma\in \Upsilon (\bs m)$. Conversely, suppose that
$\bs \sigma \in \Upsilon (\bs m)$. By definition, there exists a local
minimum $\bs m'\neq \bs m$ and a continuous path
$\boldsymbol{z} \colon [0,1] \to \bb R^d$ such that
$\boldsymbol{z}(0)=\boldsymbol{m}$,
$\boldsymbol{z}(1)=\boldsymbol{m}'$ for which \eqref{54a} holds. By
Proposition \ref{l05a}, $\bs\sigma := \bs z(1/2)$ is a saddle point of $U$.
Since $\mc W$ has a unique local minimum, $\bs m'\not \in \mc
W$. Thus, condition (a) holds for $\bs\sigma$. By \eqref{48},
condition (b) also holds, so that $\bs\sigma\in \mc S_H(\mc W)$.
\end{proof}

\begin{proof}[Proof of Theorem \ref{p01}]

Fix $\bs m \in \mc M_0$.  Let $\mc W$ be the connected component of
the set $\{\bs x\in\bb R^d: U(\bs x) < U(\bs m) + \Gamma (\bs m)\}$
which contains $\bs m$. By Lemma \ref{l13}, there exists a saddle
point $\bs \sigma \in \partial \mc W$ satisfying condition (a) in
\eqref{56}, and condition (b) is fulfilled for all saddle points
$\bs \sigma' \in \partial \mc W$ satisfying (a). We may therefore
apply Lemma \ref{p03}.

Let $Q_\epsilon$ be the test function constructed in Section
\ref{sec10} associated to the well $\mc W$, and recall that
$H=U(\bs m) + \Gamma (\bs m)$. Multiply both sides of \eqref{e_res} by
the test function $Q_{\epsilon}$ and integrate over $\bb R^d$ to
deduce that
\begin{equation}
\label{14}
\int_{\Omega}\,Q_{\epsilon}\,(\lambda-\theta_{\epsilon}^{(1)}\,
\mathscr{L}_{\epsilon})\,\phi_{\epsilon}\,d\mu_{\epsilon}
\;=\;  \bs{g} (\bs m) \,
\int_{\mathcal{E}(\bs m)} \, Q_{\epsilon} \, \,d\mu_{\epsilon}\;,
\end{equation}
where $\Omega$ is given by \eqref{36}.

By definition of $Q_{\epsilon}$, the right-hand side is equal to
${\bf g}(\bs m) \, \mu_{\epsilon}(\mathcal{E}(\bs m))$.  Similarly, as
$\phi_{\epsilon}$ is uniformly bounded and
\begin{equation*}
\mu_{\epsilon}\Big(\, \bigcup_{\bm{\sigma}\in\mc S_H (\mc W)}
\mathcal{E}_{\epsilon}^{\bm{\sigma}} \,\cup\,
(\Omega \setminus \mathcal{K}_{\epsilon}) \,\Big)
\, =\, o_{\epsilon}(1)\, \mu_{\epsilon}  (\mathcal{E}(\bs m)) \;,
\end{equation*}
as $Q_\epsilon$ vanishes on $\mc W_2$ and is equal to $1$ on
$\mc W_1$, by definition of ${\bf f}_\epsilon$,
\begin{equation}
\label{25}
\lambda \int_{\Omega}\, Q_{\epsilon}
\,\phi_{\epsilon}\,d\mu_{\epsilon}
\;=\; \lambda \, {\bf f}_\epsilon (\bs m) \,
\mu_{\epsilon}(\mathcal{E}(\bs m))
\; + \; o_\epsilon (1) \, \mu_{\epsilon}  (\mathcal{E}(\bs m))
\, \,.
\end{equation}

It remains to consider the term in \eqref{14} involving the generator
$\ms L_\epsilon$. We examine two cases separately.

\smallskip\noindent {\it Case 1:} Assume that
$\Gamma (\bs m) > d^{(1)}$. As $\Gamma (\bs m) > d^{(1)}$ and
$e^{-U(\bs m)/\epsilon} / \mu_{\epsilon} (\mathcal{E}(\bs m)) \le C_0$
for some finite constant independent of $\epsilon$,
$\theta_{\epsilon}^{(1)} = e^{d^{(1)}/\epsilon} \prec e^{\Gamma (\bs
m)/\epsilon} \le C_0\, e^{[\Gamma (\bs m) + U (\bs m)]/\epsilon} \,
\mu_{\epsilon} (\mathcal{E}(\bs m))$. Hence, by Lemma \ref{p05}, as
the right-hand side of \eqref{37} is bounded and
$H=\Gamma(\bs m) + U (\bs m)$,
\begin{equation*}
\theta_{\epsilon}^{(1)}\,
\int_{\Omega}\,Q_\epsilon \, (-\mathscr{L}_{\epsilon})\,
\phi_{\epsilon} \,d\mu_{\epsilon} \;=\;
o_\epsilon (1) \, \mu_{\epsilon}  (\mathcal{E}(\bs m))\;.
\end{equation*}
Combining the previous estimates yields that
\begin{equation*}
{\bf f}_{\epsilon}(\bs m) =
\frac{1}{\lambda} \, {\bf g}(\bs m)  \;+\; o_\epsilon (1) \;.
\end{equation*}

By \eqref{40} and the definition of $\mf L_1$, as
$\Gamma (\bs m) > d^{(1)}$, $(\mf L_1 \mb f_\epsilon)(\bs m) =0$,
which completes the proof of the theorem in Case 1.

\smallskip\noindent {\it Case 2:} Assume that
$\Gamma (\bs m) = d^{(1)}$. Multiply both sides of \eqref{37} by
$e^{- U(\bs m)/\epsilon}$. Since
$\theta_{\epsilon}^{(1)} = e^{d^{(1)}/\epsilon} = e^{\Gamma (\bs
m)/\epsilon} = e^{[H - U(\bs m)]/\epsilon}$, by Lemma \ref{p05},
\begin{equation*}
\theta_{\epsilon}^{(1)}\,
\int_{\Omega}\,Q_\epsilon \, (-\mathscr{L}_{\epsilon})\,
\phi_{\epsilon} \,d\mu_{\epsilon} \;=\;
\Big\{\frac{1}{2\pi\nu_\star}\,
\sum_{\bm{\sigma}\in \mc S_H(\mc W) }
\, [\phi_\epsilon (\bs m) - \phi_\epsilon (\bs n_{\bs\sigma})]\,
\frac{\mu^{\bs\sigma}} {\sqrt{  - \det \mathbb{H}^{\bs\sigma}}}
\,+\, o_{\epsilon}(1) \Big\}  \, e^{-U(\bs m)/\epsilon}
\end{equation*}
because $\bs m_\sigma = \bs m$ for all
$\bs \sigma \in \mc S_H(\mc W)$, as $\bs m$ is the only local minima
of $U$ in $\mc W$.

Since
$e^{-U(\bs m)/\epsilon} / \mu_{\epsilon} (\mathcal{E}(\bs m)) \le C_0$
for some finite constant independent of $\epsilon$, we may replace in
the previous formula, $o_{\epsilon}(1) \, e^{-U(\bs m)/\epsilon} $ by
$o_{\epsilon}(1) \, \mu_{\epsilon} (\mathcal{E}(\bs m))$. On the other
hand, by \eqref{55},
$e^{-U(\bs m)/\epsilon}/\nu_\star = [1+o_\epsilon(1)]\, \mu_{\epsilon}
(\mathcal{E}(\bs m))/\nu(\bs m)$. We may therefore rewrite the
right-hand side of the previous equation as
\begin{equation*}
\Big\{\frac{1}{2\pi\nu(\bs m)}\,
\sum_{\bm{\sigma}\in \mc S_H(\mc W) }
\, [\phi_\epsilon (\bs m) - \phi_\epsilon (\bs n_{\bs\sigma})]\,
\frac{\mu^{\bs\sigma}} {\sqrt{  - \det \mathbb{H}^{\bs\sigma}}}
\,+\, o_{\epsilon}(1) \Big\}  \, \mu_{\epsilon} (\mathcal{E}(\bs m))\;.
\end{equation*}

By Lemma \ref{l13}, $\mc S_H(\mc W) = \Upsilon (\bs m)$.  Thus, by
\eqref{38a} and by definition of $\bs n_{\bs\sigma}$, introduced at
the beginning of Section \ref{sec4},
$\{\bs n_{\bs\sigma} : \bm{\sigma}\in \mc S_H(\mc W) \} = \{\bs
n_{\bs\sigma} : \bm{\sigma}\in \Upsilon (\bs m) \} = \ms V(\bs
m)$. Hence, by Theorem \ref{p_flat}, the previous expression can
rewritten as
\begin{equation*}
\Big\{\frac{1}{2\pi\nu(\bs m)}\,
\sum_{\bs m' \in \ms V(\bs m)}
[\mb f_\epsilon (\bs m) - \mb f_\epsilon (\bs m')]\,
\sum_{\bm{\sigma}\in \mc S(\bs m, \bs m') }
\, \frac{\mu^{\bs\sigma}} {\sqrt{  - \det \mathbb{H}^{\bs\sigma}}}
\,+\, o_{\epsilon}(1) \Big\}  \, \mu_{\epsilon} (\mathcal{E}(\bs m))\;.
\end{equation*}
By \eqref{eq:nu}, \eqref{39a}, \eqref{40} and \eqref{28}, the previous
expression is equal to
\begin{equation*}
\big\{ \, (-\, \mf L_1 \mb f_\epsilon)(\bs m) \,+\,
o_{\epsilon}(1) \,\big\}  \, \mu_{\epsilon} (\mathcal{E}(\bs m))\;.
\end{equation*}
To complete the proof of the theorem, it remains to combine the
estimates obtained at the beginning of the proof with this last one.
\end{proof}

\subsection*{Trace processes}

Let $\bs y_\epsilon (t)$ be the process $\bs x_\epsilon(t)$ speeded-up
by $\theta^{(1)}_\epsilon$:
$\color{blue} \bs y_\epsilon(t) = \bs
x_\epsilon(t\theta^{(1)}_\epsilon)$. Denote by
$\color{blue} \bb Q^{\epsilon}_{\bs x}$ the probability measure on
$C(\bb R_+, \bb R^d)$ induced by the process $\bs y_\epsilon (t)$
starting from $\bs x$. We use the same symbol
$\bb Q^{\epsilon}_{\bs x}$ to represent the expectation with respect to
the measure $\bb Q^{\epsilon}_{\bs x}$.

Denote by $T_{\epsilon}(t)$ the time spent by
$\bs{y}_{\epsilon}(\cdot)$ on $\mathcal{E}(\mc M_0)$ up to time $t>0$:
\begin{equation*}
{\color{blue} T_{\epsilon}(t)} \; :=\;
\int_{0}^{t}\, \chi_{_{\mathcal{E} (\mc M_0)}}(\bs{y}_{\epsilon}(s))\,ds\;.
\end{equation*}
Let $S_{\epsilon}(\cdot)$ be the generalized inverse of the
non-decreasing process $T_{\epsilon}(\cdot)$:
\begin{equation*}
{\color{blue} S_{\epsilon}(t)} \;:=\; \sup
\{s\ge 0 : T_\epsilon (s) \le t\,\}\;, \quad t\ge 0 \;.
\end{equation*}

Define the trace process of $\bs y_\epsilon(\cdot)$ on
$\mc E(\mc M_0)$ by
\begin{equation}
\label{eq:trace}
{\color{blue} \bs y^{\rm T}_\epsilon(t)} \, :=\,
\bs{y}_{\epsilon} (\, S_{\epsilon}(t)\,)\;, \quad t\ge0 \;,
\end{equation}
which is an $\mc E(\mc M_0)$-valued Markov process. Let
$\Phi: \mc E(\mc M_0) \to \mc M_0$ be the projection given by
$\Phi = \sum_{\bs m\in \mc M_0} \bs m \, \chi_{\mc E(\bs m)}$. Next
result is a consequence of Theorem \ref{t01} and \cite[Theorem
2.3]{LMS}.

\begin{thm}
\label{t02}
Fix $\bs m \in \mc M_0$, and a sequence
$\bs x_\epsilon \in \mc E(\bs m)$.  Starting from $\bs x_\epsilon$,
the process $\Phi(\bs y^{\rm T}_\epsilon(t))$ converges in the
Skorohod topology to the $\mc M_0$-valued continuous-time Markov chain
induced by the generator $\mf L_1$ starting from $\bs m$. Moreover,
for all $T>0$,
\begin{equation}
\label{57}
\lim_{\epsilon\rightarrow0} \sup_{\boldsymbol{x}\in \mc E(\mc M_0)}
\mathbb{Q}_{\bm{x}}^{\epsilon}
\Big[\, \int_{0}^{T} \chi_{_{\bb R^d \setminus \mc E(\mc M_0)}}
(\boldsymbol{y}_{\epsilon}(t)) \; dt \, \Big] \,=\, 0\;.
\end{equation}
\end{thm}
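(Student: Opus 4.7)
The plan is to reduce the statement to the abstract trace-process convergence result \cite[Theorem 2.3]{LMS}. That theorem takes as input convergence of the resolvent equation for the underlying process to a reduced resolvent on the set of wells, and, together with the time-outside-wells estimate \eqref{57}, produces the Skorohod convergence of the trace process to the Markov chain generated by the reduced operator. Theorem \ref{t01} supplies exactly the resolvent input with reduced generator $\mf L_1$; hence the proof amounts to establishing \eqref{57} independently (so that the abstract theorem applies), after which Skorohod convergence follows formally.

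First I would establish \eqref{57}. Introduce the alternating hitting times $\tau_0 = \sigma_0 = 0$ and
\begin{equation*}
\sigma_{k+1} \;=\; \inf\{t \ge \tau_k : \bs y_\epsilon(t) \notin \mc E(\mc M_0)\}\;, \qquad
\tau_{k+1} \;=\; \inf\{t \ge \sigma_{k+1} : \bs y_\epsilon(t) \in \mc E(\mc M_0)\}\;,
\end{equation*}
so that the integral in \eqref{57} is bounded by $\sum_{k\ge 1}(\tau_k \wedge T - \sigma_k \wedge T)$. Two uniform estimates control this sum. By the strong Markov property applied at $\sigma_k$ and Corollary \ref{t_hitting} read on the original $\bs x_\epsilon$-time scale, each completed excursion has $\bs y_\epsilon$-length $O(1/(\epsilon\theta^{(1)}_\epsilon)) = o_\epsilon(1)$ with probability $1-o_\epsilon(1)$, uniformly in its starting point (the exit point of a well, which remains in a fixed compact sub-level set of $U$ by the coercivity assumption \eqref{26}). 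Conversely, by Lemma \ref{l17}, once in a well at $\tau_k$, the process stays inside $\mc E(\mc M_0)$ for at least $\bs y_\epsilon$-time $a$ with probability $1-o_\epsilon(1)$; hence the number $N_T$ of excursions completed before time $T$ satisfies $N_T \le \lfloor T/a\rfloor + 1$ on an event of the same probability. On the intersection of these events the sum is at most $(T/a + 1)\cdot o_\epsilon(1)$; on the complement, the integrand is bounded by $T$, and the probability vanishes. Taking expectations, sending $\epsilon \to 0$ and then $a \to 0$, yields \eqref{57}.

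With \eqref{57} in hand, \cite[Theorem 2.3]{LMS} applies: finite-dimensional convergence is extracted from the resolvent convergence Theorem \ref{t01} by the Laplace-transform identification of the semigroup, while tightness in the Skorohod topology is immediate since $\Phi(\bs y^{\rm T}_\epsilon(\cdot))$ takes values in the finite set $\mc M_0$ and the candidate limit has no instantaneous jumps. The identification of the starting point $\bs m$ comes from the hypothesis $\bs x_\epsilon \in \mc E(\bs m)$ together with Theorem \ref{p_flat}, which makes the limiting resolvent constant in the starting well.

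The main obstacle is the uniform-in-starting-point control of the excursion length. Corollary \ref{t_hitting} is uniform over $\Lambda_R$, so one needs the a priori bound that, after leaving $\mc E(\mc M_0)$, the process $\bs y_\epsilon$ does not escape $\Lambda_R$ before returning. This reduces to showing that starting from $\partial \mc E(\bs m)$ the diffusion $\bs x_\epsilon$ stays in $\Lambda_R$ during time $O(\epsilon^{-1})$, which follows from the Lyapunov/coercivity conditions \eqref{26} (essentially the same argument giving positive recurrence and the finiteness of $Z_\epsilon$ in \eqref{e: def_Zeps}), applied through a standard stopping-time/Gronwall estimate. Once this uniform compactness is in place, the excursion bookkeeping above closes and the abstract theorem yields the conclusion.
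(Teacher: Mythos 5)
Your proof of \eqref{57} has a genuine gap in the excursion-counting step. You bound the number $N_T$ of excursions outside $\mc E(\mc M_0)$ completed before time $T$ by $\lfloor T/a\rfloor+1$, invoking Lemma \ref{l17} to claim that "once in a well at $\tau_k$, the process stays inside $\mc E(\mc M_0)$ for at least $\bs y_\epsilon$-time $a$." Lemma \ref{l17} does not say this: it bounds the time needed to reach a \emph{different} well $\mc E(\mc M_0)\setminus\mc E(\bs m)$, not the time to exit $\mc E(\bs m)$ itself. After each return time $\tau_k$ the process sits on $\partial\mc E(\bs m')$ and typically leaves $\mc E(\bs m')$ again essentially immediately, so on the interval $[0,T]$ (that is, $[0,T\theta^{(1)}_\epsilon]$ in the original time scale) the process performs an enormous, $\epsilon$-dependent number of short excursions in and out of the same well. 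Since $N_T$ is unbounded, the bound "$(T/a+1)\cdot o_\epsilon(1)$" for the total excursion time does not close, and the whole decomposition collapses. A correct hands-on argument would instead pass to (conditioned) stationarity via the local ergodicity estimates (Lemma \ref{l14}, Corollary \ref{l15}) and use $\mathbb E_{\mu_\epsilon}\big[\int_0^T\chi_{\Delta}(\bs y_\epsilon(t))\,dt\big]=T\,\mu_\epsilon(\Delta)$ together with $\mu_\epsilon(\bb R^d\setminus\mc E(\mc M_0))\ll\mu_\epsilon(\mc E(\mc M_0))$ — this is the mechanism behind Proposition \ref{p06} — rather than counting excursions.

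There is also a structural misreading: you treat \eqref{57} as an \emph{input} to \cite[Theorem 2.3]{LMS} that must be verified separately, whereas in the resolvent framework it is an \emph{output}. The paper's proof of Theorem \ref{t02} is precisely the citation: Theorem \ref{t01} establishes the resolvent condition, and \cite[Theorem 2.3]{LMS} then yields both the Skorohod convergence of the trace process and \eqref{57}. (Heuristically, taking $\bs g\equiv 1$ in the resolvent equation gives $\phi_\epsilon\to 1/\lambda$ on the wells, which forces the Abel average of $\mathbb P_{\bs x}[\bs y_\epsilon(s)\notin\mc E(\mc M_0)]$ to vanish; this is where the time-outside-wells estimate comes from.) So the independent proof you attempt is both unnecessary and, as written, incorrect. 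The remainder of your outline — identification of the limit via the reduced resolvent and the trivial tightness on the finite state space — is consistent with the paper.
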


Next assertion is a consequence of \eqref{57}. We refer to
\cite[display (3.2)]{LLM} for a proof.

\begin{lem}
\label{l: lem1}
For all $t\ge0$ and $\delta>0$,
\begin{equation*}
\limsup_{\epsilon\rightarrow0}\,
\sup_{\boldsymbol{x}\in\mathcal{E} (\mc M_0) }\,
\mathbb{Q}_{\boldsymbol{x}}^{\epsilon}
[\,S_{\epsilon}(t)>t+\delta\,]=0\;.
\end{equation*}
\end{lem}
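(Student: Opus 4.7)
The plan is to reduce the event $\{S_\epsilon(t) > t+\delta\}$ to a statement about the occupation time of the complement $\mathbb{R}^d \setminus \mathcal{E}(\mc M_0)$, and then apply Markov's inequality together with Theorem \ref{t02}, specifically the estimate \eqref{57}.

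First I would observe that by definition of the generalized inverse $S_\epsilon(\cdot)$ and the monotonicity of $T_\epsilon(\cdot)$, the event $\{S_\epsilon(t) > t+\delta\}$ is equivalent to $\{T_\epsilon(t+\delta) \le t\}$. Writing
\begin{equation*}
(t+\delta) \,-\, T_\epsilon(t+\delta) \;=\; \int_0^{t+\delta} \chi_{_{\mathbb{R}^d \setminus \mathcal{E}(\mc M_0)}}(\boldsymbol{y}_\epsilon(s))\, ds\;,
\end{equation*}
the condition $T_\epsilon(t+\delta) \le t$ becomes the assertion that the occupation time outside $\mathcal{E}(\mc M_0)$ on $[0, t+\delta]$ is at least $\delta$. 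Hence
\begin{equation*}
\mathbb{Q}_{\boldsymbol{x}}^\epsilon\big[\, S_\epsilon(t) > t+\delta\, \big] \;=\; \mathbb{Q}_{\boldsymbol{x}}^\epsilon\Big[\, \int_0^{t+\delta} \chi_{_{\mathbb{R}^d \setminus \mathcal{E}(\mc M_0)}}(\boldsymbol{y}_\epsilon(s))\, ds \,\ge\, \delta \, \Big]\;.
\end{equation*}

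Next I would apply Markov's inequality to obtain
\begin{equation*}
\mathbb{Q}_{\boldsymbol{x}}^\epsilon\big[\, S_\epsilon(t) > t+\delta\, \big] \;\le\; \frac{1}{\delta}\, \mathbb{Q}_{\boldsymbol{x}}^\epsilon\Big[\, \int_0^{t+\delta} \chi_{_{\mathbb{R}^d \setminus \mathcal{E}(\mc M_0)}}(\boldsymbol{y}_\epsilon(s))\, ds \, \Big]\;.
\end{equation*}
Since the right-hand side is uniform in $\boldsymbol{x} \in \mathcal{E}(\mc M_0)$ once we take the supremum, assertion \eqref{57} of Theorem \ref{t02} applied with $T = t+\delta$ yields that the right-hand side vanishes as $\epsilon \to 0$, uniformly over starting points in $\mathcal{E}(\mc M_0)$. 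This completes the argument.

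There is no real obstacle here: the lemma is a direct corollary of the ``no time spent outside the wells'' estimate \eqref{57}, combined with the elementary set-theoretic identity that inverts $T_\epsilon$. The only step that required any thought is the equivalence $\{S_\epsilon(t) > t+\delta\} = \{T_\epsilon(t+\delta) \le t\}$, which follows immediately from the definition of $S_\epsilon$ as a generalized inverse and the continuity (hence non-strict monotonicity) of $T_\epsilon$.
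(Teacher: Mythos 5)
Your argument is correct and is essentially the standard one the paper invokes by pointing to \cite[display (3.2)]{LLM}: the inclusion $\{S_{\epsilon}(t)>t+\delta\}\subset\{\int_{0}^{t+\delta}\chi_{_{\bb R^{d}\setminus\mathcal{E}(\mc M_{0})}}(\boldsymbol{y}_{\epsilon}(s))\,ds\ge\delta\}$ followed by Markov's inequality and \eqref{57}. (Only cosmetic note: the two events are not exactly equal --- the reverse implication only yields $S_{\epsilon}(t)\ge t+\delta$ --- but the one inclusion you actually need is correct and is all the upper bound requires.)
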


\section{Proof of Theorem \ref{t00}: finite-dimensional distributions}
\label{sec3}

The main result of this section, Theorem \ref{t_fdd}, states that in
the time-scale $\theta^{(1)}_\epsilon$ the finite-dimensional
distributions of the diffusion $\bs x_\epsilon (t)$ converge to those
of the $\mc M_0$-valued Markov chain whose generator is given by $\mf
L_1$ introduced in \eqref{28}.

Denote by $\color{blue} D(\bb R_+, \mc M_0)$ the space of
right-continuous functions $\mb y: \bb R_+ \to \mc M_0$ with left
limits endowed with the Skorohod topology. Let $\color{blue} \mc Q_{\bs m}$,
$\bs m\in\mc M_0$, be the measure on $D(\bb R_+, \mc M_0)$ induced by
the continuous-time $\mc M_0$-valued Markov chain associated to the
generator $\mf L_1$ starting from $\bs m$. Expectation with respect to
$\mc Q_{\bs m}$ is also represented by $\mc Q_{\bs m}$.

\begin{thm}
\label{t_fdd}
Fix $\bs m\in \mc M_0$, and $\boldsymbol{x}\in\mathcal{D}(\bs
m)$. Then,
\begin{equation*}
\lim_{\epsilon\rightarrow0} \mathbb{E}_{\boldsymbol{x}}^{\epsilon}
\Big[\, \prod_{j=1}^{{\mf n}}
F_j (\boldsymbol{x}_{\epsilon}(\theta^{(1)}_{\epsilon} t_{j})) \, \Big]
\,=\, \mathcal{Q}_{\bs m}
\big[\, \prod_{j=1}^{{\mf n}} F_j (\mb y (t_{j})) \, \Big]
\end{equation*}
for all ${\mf n}\ge1$, $0<t_{1}<\cdots<t_{{\mf n}}$ and bounded continuous
functions $F_j\colon \bb R^d \to \bb R$, $1\le j\le {\mf n}$.
\end{thm}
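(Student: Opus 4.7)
The plan is to derive the finite-dimensional convergence by combining the Skorohod convergence of the projected trace process (Theorem \ref{t02}) with the in-well mixing of Corollary \ref{l15}, together with the Laplace concentration of the Gibbs measure restricted to a well onto the local minimum.

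First I would reduce to the case $\bs x \in \mc E(\bs m)$: by Corollary \ref{t_hitting}, $\tau_{\mc E(\mc M_0)} \le C/\epsilon = o(\theta^{(1)}_\epsilon)$ with probability $1 - o_\epsilon(1)$, and by Lemma \ref{lem_exit} the first well visited is $\mc E(\bs m)$ with probability $1 - o_\epsilon(1)$; the strong Markov property applied at $\tau_{\mc E(\mc M_0)}$ absorbs the initial condition into a rescaled time shift of order $o(1)$. Next, I would establish the following auxiliary estimate, which plays a central role: for any bounded continuous $F : \bb R^d \to \bb R$, any $\bs m' \in \mc M_0$, and any sequence $1 \prec \varrho_\epsilon \prec \theta^{(1)}_\epsilon$,
\begin{equation*}
\lim_{\epsilon \to 0} \sup_{\bs y \in \mc E(\bs m')} \big|\, \bb E_{\bs y}^\epsilon[F(\bs x_\epsilon(\varrho_\epsilon))] \,-\, F(\bs m')\,\big| \,=\, 0 \;.
\end{equation*}
This is immediate from Corollary \ref{l15}, the weak convergence $\mu_\epsilon^{\rm R} \to \delta_{\bs m'}$ (standard Laplace asymptotics on $\mc W^{2r_0}(\bs m')$), and Proposition \ref{p_FW} (which ensures the process does not exit $\mc W^{2r_0}(\bs m')$ in time $\varrho_\epsilon$), together with the continuity of $F$.

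For the main argument, fix a small $\eta > 0$ with $\eta < \min_j (t_j - t_{j-1})$ and apply the Markov property at times $\theta^{(1)}_\epsilon(t_j - \eta)$ in reverse order $j = \mf n, \dots, 1$. On the event $A_j = \{\bs y_\epsilon(t_j - \eta) \in \mc E(\mc M_0)\}$ the position $\bs y_\epsilon(t_j - \eta)$ belongs to a unique well $\mc E(\bs m_j)$, and the auxiliary estimate with $\varrho_\epsilon = \eta\,\theta^{(1)}_\epsilon$ yields $\bb E^\epsilon[F_j(\bs x_\epsilon(\theta^{(1)}_\epsilon t_j))\mid\bs y_\epsilon(t_j - \eta)] = F_j(\bs m_j) + o_\epsilon(1)$ uniformly on $A_j$. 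On the complement $A_j^c$ I would slightly shift $t_j - \eta$ to a nearby time at which the process lies in $\mc E(\mc M_0)$; the existence of such a time follows from $\bb E^\epsilon[\int_0^{t_j}\chi_{\mc E(\mc M_0)^c}(\bs y_\epsilon(s))\, ds] = o_\epsilon(1)$, a Fubini-Lemma \ref{l: lem1} consequence. Iterating reduces the expectation to $\bb E_{\bs x}^\epsilon[\prod_j F_j(\bs m_j)] + o_\epsilon(1)$, and on $\bigcap_j A_j$ we may identify $\bs m_j = \Phi(\bs y^{\rm T}_\epsilon(T_\epsilon(t_j - \eta)))$ since being inside a well at time $t_j - \eta$ makes $T_\epsilon$ strictly increasing there. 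Theorem \ref{t02} then delivers convergence to $\mc Q_{\bs m}[\prod_j F_j(\mb y(t_j - \eta))]$ at the continuity times $t_j - \eta$ of $\mb y$ (true for Lebesgue-a.e.\ $\eta$ since $\mb y$ has finitely many jumps on any bounded interval), and letting $\eta \to 0$ along such values with right-continuity of $\mb y$ and bounded convergence concludes the proof.

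The main obstacle is the careful coupling of the trace-process convergence (which pertains to $\bs y^{\rm T}_\epsilon$) with the original-time evaluation $F_j(\bs y_\epsilon(t_j))$: Corollary \ref{l15} is uniform only within a single well, so the contribution on $A_j^c$ must be controlled separately via the integrated estimate from Lemma \ref{l: lem1}, and the identification $\bs m_j = \Phi(\bs y^{\rm T}_\epsilon(T_\epsilon(t_j - \eta)))$ is rigorous only when $\bs y_\epsilon(t_j - \eta) \in \mc E(\mc M_0)$, which forces the auxiliary time shift step described above.
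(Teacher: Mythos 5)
Your overall strategy---reduce to a starting point in $\mc E(\bs m)$, pass to the trace process, and control the discrepancy between $\bs y_\epsilon(t_j)$ and the trace process at a slightly earlier time---is the same as the paper's, which proves the theorem by first establishing Proposition \ref{l11} (convergence of $\bb P[\cap_j\{\bs y_\epsilon(t_{j,\epsilon})\in\mc E(\bs m_j)\}]$ via Lemma \ref{l: lem0} and a ``probabilities sum to one'' sandwich) and then approximating each $F_j$ by its values at the local minima. However, your execution has a genuine gap at the central step. You invoke your auxiliary estimate with $\varrho_\epsilon=\eta\,\theta^{(1)}_\epsilon$, but this choice violates the estimate's own hypothesis $\varrho_\epsilon\prec\theta^{(1)}_\epsilon$: a time of order $\eta\,\theta^{(1)}_\epsilon$ is exactly the scale on which the process jumps between wells, so starting from $\bs y\in\mc E(\bs m_j)$ one has $\bb E^\epsilon_{\bs y}[F_j(\bs x_\epsilon(\eta\theta^{(1)}_\epsilon))]\to\sum_{\bs m''}p_\eta(\bs m_j,\bs m'')F_j(\bs m'')$, which differs from $F_j(\bs m_j)$ by a quantity of order $\eta$ that is \emph{not} $o_\epsilon(1)$. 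The conclusion can be salvaged because this error is $O(\eta)$ uniformly and you send $\eta\to0$ at the end, but proving the uniform $O(\eta)$ bound is precisely the content of the paper's Lemma \ref{l: lem0}, and it requires Lemma \ref{l17} (the process started in $\mc E(\bs m_j)$ does not reach another well before time $a\,\theta^{(1)}_\epsilon$, with probability tending to one as $\epsilon\to0$ then $a\to0$) together with Proposition \ref{p06} (at times $t\theta^{(1)}_\epsilon$, $t\in[2b,4b]$, the process is in some well with high probability). Neither of these appears in your argument for the main event $A_j$; Corollary \ref{l15} and Laplace asymptotics for $\mu^{\rm R}_\epsilon$ do not substitute for them, since mixing inside $\mc W^{2r_0}(\bs m_j)$ says nothing about whether the process has already escaped that neighborhood by time $\eta\theta^{(1)}_\epsilon$.

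Two smaller points. First, even in the regime $1\prec\varrho_\epsilon\prec\theta^{(1)}_\epsilon$ your auxiliary estimate is not ``immediate from Corollary \ref{l15}'': that corollary concerns times $t\theta^{(1)}_\epsilon$ with $t\in[2b,4b]$, and Proposition \ref{p_FW} only forbids exiting $\mc W^{2r_0}(\bs m')$ up to times $e^{(r_0-\eta)/\epsilon}$, which may be much shorter than a general $\varrho_\epsilon\prec\theta^{(1)}_\epsilon$; the statement you want is essentially \eqref{51} restricted to wells, which the paper proves separately with a nontrivial forward--backward argument in Section \ref{sec3}. Second, the backward induction does not close as described: after replacing $F_{\mf n}(\bs y_\epsilon(t_{\mf n}))$ by a function of $\bs y_\epsilon(t_{\mf n}-\eta)$, the next conditioning at $t_{\mf n-1}-\eta$ involves two distinct future times, so the factorization you rely on is not available; the clean route is to replace all the evaluation times simultaneously by trace-process evaluations (as in Lemma \ref{l: lem0} applied coordinatewise inside Proposition \ref{l11}) rather than iterating the Markov property.
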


The proof of Theorem \ref{t_fdd} is based on the next result.

\begin{prop}
\label{l11}
Fix $r_0>0$ small enough to fulfill the conditions above equation
\eqref{30}, and recall from this equation the definition of the wells
$\mc E(\bs m')$, $\bs m'\in \mc M_0$. Fix $\bs m\in \mc M_0$. Then,
\begin{equation*}
\lim_{\epsilon\rightarrow0} \mathbb{P}_{\boldsymbol{x}_\epsilon}^{\epsilon}
\Big[\, \bigcap_{j=1}^{{\mf n}}
\{\, \boldsymbol{x}_{\epsilon}(\theta^{(1)}_{\epsilon} t_{j,\epsilon} ))
\in \mc E(\bs m_j)\, \} \, \Big]
\,=\, \mathcal{Q}_{\bs m}
\big[\, \bigcap_{j=1}^{{\mf n}} \{ \mb y (t_{j}))
= \bs m_j \, \} \, \, \big]
\end{equation*}
for all ${\mf n}\ge 1$, $0<t_{1}<\cdots<t_{{\mf n}}$,
$\bs m_1, \dots, \bs m_{\mf n} \in \mc M_0$, and sequences
$\bs x_\epsilon \in \mc E(\bs m)$, $t_{j,\epsilon} \to t_j$.
\end{prop}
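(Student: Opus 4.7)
The plan is to compare $\bs y_\epsilon(t_{j,\epsilon}) := \bs x_\epsilon(\theta^{(1)}_\epsilon t_{j,\epsilon})$ with the trace process $\bs y^T_\epsilon$ evaluated at slightly earlier times, and to invoke Theorem \ref{t02}. Because the limiting Markov chain $\mb y(\cdot)$ has no deterministic times of discontinuity, Skorohod convergence of $\Phi(\bs y^T_\epsilon)$ automatically yields finite-dimensional convergence at any fixed $\mf n$-tuple of times. Thus the core of the argument is to bridge between $\bs y_\epsilon$ and $\bs y^T_\epsilon$ at the prescribed evaluation times.

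Applying the Markov property iteratively at the times $\theta^{(1)}_\epsilon t_{j,\epsilon}$ and inducting on $\mf n$, I first reduce the multi-time statement to the uniform single-time assertion
\[
\lim_{\epsilon\to0}\,\sup_{\bs x\in\mc E(\bs m)}
\bigl|\,\mathbb{P}^{\epsilon}_{\bs x}[\bs y_\epsilon(t_\epsilon) \in \mc E(\bs m_1)] - p_t(\bs m, \bs m_1)\,\bigr| \;=\; 0
\]
for all $\bs m, \bs m_1 \in \mc M_0$ and $t_\epsilon \to t > 0$. Uniformity in the starting point is essential to pass from the $j$-th conditional distribution (given $\bs y_\epsilon(t_{j-1,\epsilon})$) to the $(j-1)$-th via the Markov property.

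Given $\eta > 0$, I choose $a \in (0, t/2)$ small enough that Lemma \ref{l17} gives
\[
\sup_{\bs m''\in\mc M_0}\sup_{\bs y\in\mc E(\bs m'')}\mathbb{P}^{\epsilon}_{\bs y}\big[\tau_{\mc E(\mc M_0)\setminus\mc E(\bs m'')} \le a\,\theta^{(1)}_\epsilon\big] \le \eta
\]
for all small $\epsilon$, and such that $|p_{t-a}(\bs m,\bs m_1) - p_t(\bs m, \bs m_1)| \le \eta$ by Feller continuity of the jump semigroup. Setting $\sigma_\epsilon := S_\epsilon(t_\epsilon - a)$, which is a stopping time for $\bs y_\epsilon$, Lemma \ref{l: lem1} ensures $\sigma_\epsilon \le t_\epsilon - a/2$ with probability $1 - o_\epsilon(1)$; on this event $\bs y_\epsilon(\sigma_\epsilon) = \bs y^T_\epsilon(t_\epsilon - a) \in \mc E(\mc M_0)$ and $t_\epsilon - \sigma_\epsilon \in [a/2, a]$. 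Decomposing $\mathbb{P}^{\epsilon}_{\bs x}[\bs y_\epsilon(t_\epsilon) \in \mc E(\bs m_1)]$ over the well $\bs m''$ containing $\bs y_\epsilon(\sigma_\epsilon)$ and using the strong Markov property at $\sigma_\epsilon$, the contribution of terms with $\bs m'' \ne \bs m_1$ is $O(\eta)$ by Lemma \ref{l17}. Combining with the finite-dimensional convergence of $\Phi(\bs y^T_\epsilon(t_\epsilon - a))$ to $\mb y(t - a)$ supplied by Theorem \ref{t02}, one obtains
\[
\mathbb{P}^{\epsilon}_{\bs x}[\bs y_\epsilon(t_\epsilon)\in\mc E(\bs m_1)] = p_{t-a}(\bs m,\bs m_1) + O(\eta) + o_\epsilon(1),
\]
and letting $\eta \to 0$ closes the single-time statement.

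The main obstacle is to control, pointwise in $t$, the probability $\mathbb{P}^{\epsilon}_{\bs x}[\bs y_\epsilon(t_\epsilon) \in \mc E(\mc M_0)^c]$ needed to close the $\bs m'' = \bs m_1$ case of the decomposition; Lemma \ref{l: lem1} gives this only in integrated-in-time form. The resolution uses Corollary \ref{l15} (with $b = a/4$, so $[a/2,a]\subset[2b,4b]$): the law of $\bs y_\epsilon(t_\epsilon - \sigma_\epsilon)$ started from $\bs y \in \mc E(\bs m_1)$ agrees, to leading order, with the law initialized at the reference measure $\mu^R_\epsilon$, which is Laplace-concentrated on $\mc E(\bs m_1) \subset \mc W^{2r_0}(\bs m_1)$. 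This reduces the pointwise bound to the $o_\epsilon(1)$ negligibility of the time spent in transient excursions outside $\mc E(\mc M_0)$, yielding the missing pointwise estimate and completing the proof.
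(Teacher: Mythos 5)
Your overall architecture is close to the paper's: both arguments rest on Theorem \ref{t02}, Lemma \ref{l: lem1}, Lemma \ref{l17} and Corollary \ref{l15}, and both bridge between $\bs y_\epsilon(t_\epsilon)$ and the trace process at a slightly earlier time. The top-level organization differs: the paper does not attempt a two-sided uniform single-time estimate. It proves only the one-sided comparison $\mathbb{Q}^{\epsilon}_{\bs x}[\bs y^{\rm T}_\epsilon(t-3b)\in\mc E(\bs m')]\le \mathbb{Q}^{\epsilon}_{\bs x}[\bs y_\epsilon(t_\epsilon)\in\mc E(\bs m')]+R_\epsilon$ (Lemma \ref{l: lem0}), deduces $\mc Q_{\bs m}[\mb y(t)=\bs m']\le\liminf_\epsilon \mathbb{Q}^{\epsilon}_{\bs x_\epsilon}[\bs y_\epsilon(t_\epsilon)\in\mc E(\bs m')]$, and upgrades this to an equality by summing over $\bs m'\in\mc M_0$ and using that the left-hand probabilities sum to one while the right-hand ones sum to at most one. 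Your two-sided route is legitimate, but it forces you to prove the same pointwise avoidance estimate the paper needs, and that is where your proposal has a gap.

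The gap is in the last step. The estimate you need is $\sup_{s\in[a/2,a]}\sup_{\bs y\in\mc E(\bs m_1)}\mathbb{P}^{\epsilon}_{\bs y}[\bs y_\epsilon(s)\notin\mc E(\mc M_0)]=o_\epsilon(1)$ (this is the paper's Proposition \ref{p06}), and it does not follow from Corollary \ref{l15} alone. Corollary \ref{l15} lets you replace the starting point by $\mu^{\rm R}_\epsilon$, and stationarity of $\mu_\epsilon$ then bounds the resulting probability by $\mu_\epsilon(\mc E(\mc M_0)^c)/\mu_\epsilon(\mc W^{2r_0}(\bs m_1))$. This ratio is not small in general: $\mc E(\mc M_0)^c$ contains points whose potential is barely above $r_0$ (just outside the well of a global minimum), so $\mu_\epsilon(\mc E(\mc M_0)^c)$ is of order $e^{-r_0/\epsilon}$ up to polynomial factors, while $\mu_\epsilon(\mc W^{2r_0}(\bs m_1))\asymp \epsilon^{d/2}e^{-U(\bs m_1)/\epsilon}$; the ratio therefore diverges whenever $\bs m_1$ is not a global minimum. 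The paper resolves this by splitting $\mc E(\mc M_0)^c$ into the high-energy part $\{U\ge U(\bs m_1)+\eta/2\}$, where the stationarity bound does work, and the low-lying region $\mc R=(\mathbb{R}^d\setminus\mc E(\mc M_0))\cap\{U<U(\bs m_1)+\eta/2\}$, whose visit before time $4b\,\theta^{(1)}_\epsilon$ must be excluded by a separate hitting-time argument (Lemma \ref{l12}, which combines Lemma \ref{l17}, Corollary \ref{l16} and the fact \eqref{50} that $\mc R$ is disjoint from the valley of $\bs m_1$). Your phrase ``negligibility of the time spent in transient excursions outside $\mc E(\mc M_0)$'' refers to an integrated-in-time statement and cannot substitute for this pointwise control; without the $\mc R$ decomposition the step fails.
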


It follows from this result that
\begin{equation}
\label{42}
\lim_{\epsilon\rightarrow0} \mathbb{P}_{\boldsymbol{x}_\epsilon}^{\epsilon}
\Big[\, \bigcap_{j=1}^{{\mf n}}
\{\, \boldsymbol{x}_{\epsilon}(\theta^{(1)}_{\epsilon} t_{j,\epsilon} ))
\in \mc E(\mc M_0 )\, \} \, \Big] \;=\; 1
\end{equation}
for all $\bs m\in \mc M_0$, ${\mf n}\ge 1$, $0<t_{1}<\cdots<t_{{\mf n}}$,
$\bs m_1, \dots, \bs m_{\mf n} \in \mc M_0$, and sequences
$\bs x_\epsilon \in \mc E(\bs m)$, $t_{j,\epsilon} \to t_j$.

\subsection*{Proof of Theorem \ref{t_fdd}}

We prove the result for ${\mf n}=1$ as the arguments in the general case are
identical. Fix $t>0$, $\eta>0$ and a bounded continuous function
$F\colon \bb R^d\to \bb R$. By continuity, there exists $\delta_0>0$ such
that
\begin{equation}
\label{39}
\max_{\bs m \in\mc M_0}\,
\sup_{\bs x \in \mc W^{2\delta_0}(\bs m)} |\, F(\bs x) - F(\bs m)\,| \;\le\; \eta\;.
\end{equation}
Fix $r_0 <\delta_0$ small enough to fulfill the conditions of
Proposition \ref{l11}.  Consider the wells $\mc E(\bs m)$ defined by
\eqref{30}.

Recall from \eqref{41} that we represent by $\tau_{\ms A}$ the hitting
time of the set $\ms A$, and let
$\color{blue} \tau = \tau_{\mc E(\bs m)}$. By Lemma \ref{lem_exit},
Corollary \ref{t_hitting}, and the strong Markov property, as
$\bs x\in \mc D(\bs m)$ and $F$ is bounded,
\begin{equation*}
\mathbb{E}_{\boldsymbol{x}}^{\epsilon}
\big[\, F (\boldsymbol{x}_{\epsilon}(\theta^{(1)}_{\epsilon} t)) \, \big]
\;=\;
\mathbb{E}_{\boldsymbol{x}}^{\epsilon}
\Big[\, \mathbb{E}_{\boldsymbol{x}_\epsilon (\tau)}^{\epsilon} \big[\,
F (\boldsymbol{x}_{\epsilon}(\theta^{(1)}_{\epsilon} t -\tau )) \,
\big]\, \chi_{\tau \le \epsilon^{-1}} \, \Big] \,+\, R^{(1)}_\epsilon\;,
\end{equation*}
where $|R^{(1)}_\epsilon|\to 0$. The expectation on the right-hand
side has to be understood as the expectation of
$\mathbb{E}_{\boldsymbol{x}_\epsilon (\tau)}^{\epsilon} \big[\, F
(\boldsymbol{x}_{\epsilon}(\theta^{(1)}_{\epsilon} t - s )) \, \big]$
for $s=\tau$.  By definition of $r_0$, the wells $\mc E(\bs m')$,
$\bs m'\in\mc M_0$, and \eqref{39}, the right-hand side of the
previous equation is equal to
\begin{equation*}
\sum_{\bs m'\in \mc M_0}  F(\bs m') \;
\mathbb{E}_{\boldsymbol{x}}^{\epsilon}
\Big[\, \mathbb{P}_{\boldsymbol{x}_\epsilon (\tau)}^{\epsilon} \big[\,
\boldsymbol{x}_{\epsilon}(\theta^{(1)}_{\epsilon} t -\tau ) \in \ms
E(\bs m') \, \big]\, \chi_{\tau \le \epsilon^{-1}} \, \Big] \,+\,
R^{(1)}_\epsilon \,+\, R^{(2)}_\epsilon \,+\, R_\eta \;,
\end{equation*}
where $|R_\eta|\le \eta$ and
\begin{equation*}
|R^{(2)}_\epsilon| \;\le\; \Vert F\Vert_\infty\,
\sup_{\bs y\in \mc E(\bs m)} \sup_{t - (\epsilon
\theta^{(1)}_\epsilon)^{-1} \le s\le t}
\mathbb{P}_{\boldsymbol{y}}^{\epsilon} \big[\,
\boldsymbol{x}_{\epsilon}(\theta^{(1)}_{\epsilon} s ) \not\in \ms
E(\mc M_0) \, \big]\;.
\end{equation*}
By \eqref{42}, $R^{(2)}_\epsilon \to 0$. By Proposition \ref{l11},
Lemma \ref{lem_exit} and Corollary \ref{t_hitting}, as
$\epsilon\to 0$, the sum converges to
\begin{equation*}
\sum_{\bs m'\in \mc M_0}  F(\bs m') \;
\mc Q_{\bs m} \big[\,
\mb y (t) =\bs m'\, \big] \;=\;
\mc Q_{\bs m} \big[\, F(\mb y (t))\, \big] \;,
\end{equation*}
which completes the proof of the theorem. \qed

\subsection*{Proof of Proposition \ref{l11}}

The proof relies on a lemma, which appeared before in \cite[Lemma
3.1]{LLM} for discrete-valued Markov processes.

\begin{lem}
\label{l: lem0}
Fix $t>0$ and $\bs m$, $\bs m' \in \mc M_0$.
Then, for all $\boldsymbol{x}\in\mathcal{E}(\bs m)$, $b\in(0,\,t/3)$
and sequence $t_\epsilon \to t$,
\begin{equation*}
\mathbb{Q}_{\boldsymbol{x}}^{\epsilon}
\big[\,\bs y^{\rm T}_\epsilon\left(t-3b\right)\in\mathcal{E}(\bs
m')\,\big] \, \le\,
\mathbb{Q}_{\boldsymbol{x}}^{\epsilon}\big[\,
\boldsymbol{y}_{\epsilon}(t_\epsilon)\in\mathcal{E}(\bs m')\,\big]
\,+ \, R_{\epsilon}(\bs x,\,t,\,b)\;,
\end{equation*}
where,
\begin{equation*}
\lim_{b\rightarrow0}\,\limsup_{\epsilon\to0}\,\sup_{\boldsymbol{x}\in\mathcal{E}(\bs m)}\, R_{\epsilon}(\bs x,\,t,\,b)=0\;.
\end{equation*}
\end{lem}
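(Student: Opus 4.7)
The plan is to decompose the trace-process event via the stopping time $S_\epsilon(t-3b)$, apply the strong Markov property there, and estimate the two resulting error sources: a ``jump to another well'' piece controlled by Lemma \ref{l17}, and an ``in-transit at $t_\epsilon$'' piece controlled via Corollary \ref{l15} together with stationarity of $\mu_\epsilon$.

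First, using the identity $\bs y^{\rm T}_\epsilon(t-3b)=\bs y_\epsilon(S_\epsilon(t-3b))$, I would write
\begin{equation*}
\mathbb{Q}_{\bs x}^\epsilon[\bs y^{\rm T}_\epsilon(t-3b)\in \mc E(\bs m')]\le \mathbb{Q}_{\bs x}^\epsilon[\bs y_\epsilon(t_\epsilon)\in \mc E(\bs m')] + R_\epsilon^{(1)}+R_\epsilon^{(2)},
\end{equation*}
where $R_\epsilon^{(1)}:=\mathbb{Q}_{\bs x}^\epsilon[S_\epsilon(t-3b)>t-2b]$, which vanishes as $\epsilon\to 0$ by Lemma \ref{l: lem1} with $\delta=b$, and
\begin{equation*}
R_\epsilon^{(2)}:=\mathbb{Q}_{\bs x}^\epsilon\big[\bs y_\epsilon(S_\epsilon(t-3b))\in\mc E(\bs m'),\, S_\epsilon(t-3b)\le t-2b,\, \bs y_\epsilon(t_\epsilon)\notin\mc E(\bs m')\big].
\end{equation*}
For $\epsilon$ small enough that $|t_\epsilon-t|<b$, the elapsed real time $r_\epsilon:=t_\epsilon-S_\epsilon(t-3b)$ lies in $[b,4b]$ on the event under consideration. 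Applying the strong Markov property at the stopping time $S_\epsilon(t-3b)$ then gives
\begin{equation*}
R_\epsilon^{(2)}\le \sup_{r\in[b,4b]}\sup_{\bs w\in\mc E(\bs m')}\mathbb{Q}_{\bs w}^\epsilon\big[\bs y_\epsilon(r)\notin\mc E(\bs m')\big].
\end{equation*}

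Next I would split the event inside the supremum into $\{\bs y_\epsilon(r)\in\mc E(\mc M_0)\setminus\mc E(\bs m')\}$ and $\{\bs y_\epsilon(r)\notin\mc E(\mc M_0)\}$. The former is contained in $\{\tau_{\mc E(\mc M_0)\setminus\mc E(\bs m')}\le r\theta_\epsilon^{(1)}\}$ (translating back to the unsped process $\bs x_\epsilon$), whose probability by Lemma \ref{l17} applied with $a=4b$ is bounded by a quantity $c(b)$ satisfying $c(b)\to 0$ as $b\to 0$, uniformly in $\bs w\in\mc E(\bs m')$ after first letting $\epsilon\to 0$.

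The hard part is controlling the pointwise ``in-transit'' probability
\begin{equation*}
\sup_{r\in[b,4b]}\sup_{\bs w\in\mc E(\bs m')}\mathbb{Q}_{\bs w}^\epsilon[\bs y_\epsilon(r)\notin\mc E(\mc M_0)],
\end{equation*}
since assertion \eqref{57} only gives an $L^{1}$-in-time bound on $\chi_{\mc E(\mc M_0)^c}(\bs y_\epsilon(\cdot))$. Here I would invoke Corollary \ref{l15} (applied with $b/2$ in place of $b$ so as to cover the range $[b,4b]$) to replace this probability, up to $o_\epsilon(1)$, by $\mathbb{P}_{\mu_\epsilon^{\rm R}}^\epsilon[\bs x_\epsilon(r\theta_\epsilon^{(1)}-\varrho_\epsilon)\notin\mc E(\mc M_0)]$, with $\mu_\epsilon^{\rm R}$ the reflected stationary measure on $\mc W^{2r_0}(\bs m')$. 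Using the density bound $\mu_\epsilon^{\rm R}\le \mu_\epsilon/\mu_\epsilon(\mc W^{2r_0}(\bs m'))$ and invariance of $\mu_\epsilon$ under the unreflected semigroup yields the estimate $\mu_\epsilon(\mc E(\mc M_0)^c)/\mu_\epsilon(\mc W^{2r_0}(\bs m'))$; the Laplace asymptotics \eqref{32}--\eqref{55} imply this ratio vanishes as $\epsilon\to 0$. Collecting all contributions and taking $\epsilon\to 0$ followed by $b\to 0$ completes the proof.
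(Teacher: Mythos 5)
Your decomposition is essentially the paper's: you split off the event $\{S_\epsilon(t-3b)>t-\text{const}\cdot b\}$ (handled by Lemma \ref{l: lem1}), reduce the remainder to the event that the process sits in $\mc E(\bs m')$ at some time with elapsed distance of order $b$ from $t_\epsilon$, and then control the probability of landing at $t_\epsilon$ either in a different well (Lemma \ref{l17}) or outside all wells. The paper parametrizes the window with an $\alpha\in(2,3)$ precisely so that the elapsed times fall in $[2b,4b]$, matching the range in Proposition \ref{p06}; your choice effectively takes $\alpha=2$, which forces the covering of $[b,4b]$ by two applications of the relevant estimate (your single substitution $b\mapsto b/2$ only covers $[b,2b]$) — a cosmetic issue.

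The genuine gap is in your treatment of the ``in-transit'' term $\sup_{r}\sup_{\bs w\in\mc E(\bs m')}\mathbb{Q}^\epsilon_{\bs w}[\bs y_\epsilon(r)\notin\mc E(\mc M_0)]$. After passing to $\mu_\epsilon^{\rm R}$ via Corollary \ref{l15} and using stationarity, you arrive at the ratio $\mu_\epsilon(\mc E(\mc M_0)^c)/\mu_\epsilon(\mc W^{2r_0}(\bs m'))$ and assert it vanishes by \eqref{32}--\eqref{55}. It does not, in general: the complement $\mc E(\mc M_0)^c$ contains the shell $\{r_0<U<r_0+\delta\}$ around the well of a global minimum, so $\mu_\epsilon(\mc E(\mc M_0)^c)\gtrsim Z_\epsilon^{-1}e^{-(r_0+\delta)/\epsilon}$, whereas $\mu_\epsilon(\mc W^{2r_0}(\bs m'))\asymp Z_\epsilon^{-1}\epsilon^{d/2}e^{-U(\bs m')/\epsilon}$. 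For any non-global local minimum $\bs m'$ with $U(\bs m')>r_0$ the ratio diverges. This is exactly why the paper isolates this step as Proposition \ref{p06}: its proof first excises the low-lying region $\mc R(\bs m')$ of points outside the wells with $U<U(\bs m')+\eta/2$ — whose visit before time $4b\,\theta^{(1)}_\epsilon$ is shown to be unlikely by the hitting-time estimate Lemma \ref{l12}, not by stationarity — and only then applies the stationarity bound to $\bb R^d\setminus\{\mc E(\mc M_0)\cup\mc R\}$, where $U\ge U(\bs m')+\eta/2$ and the corresponding ratio genuinely vanishes. Your argument needs this extra excision (or a direct appeal to Proposition \ref{p06}) to close.
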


\begin{proof}
Fix $t>0$, $\bs m$, $\bs m' \in \mc M_0$,
$\boldsymbol{x}\in\mathcal{E}(\bs m)$, a sequence
$t_\epsilon \to t$, and $2<\alpha < 3$. By \eqref{eq:trace} and the
trivial fact that $S_{\epsilon}(t)\ge t$, for $b\in(0,\,t/3)$
\begin{equation*}
\mathbb{Q}_{\boldsymbol{x}}^{\epsilon}
[\,\bs y^{\rm T}_\epsilon(t-3b)\in\mathcal{E}(\bs m')\,]
\,=\, \mathbb{Q}_{\boldsymbol{x}}^{\epsilon}
[\,\bm{y}_{\epsilon} (S_{\epsilon}(t-3b)) \in\mathcal{E}(\bs m')\,]
\;\le\;
\mathbb{Q}_{\boldsymbol{x}}^{\epsilon}[\,A_{\epsilon}(t,\,b)\,]
\,+\, \mathbb{Q}_{\boldsymbol{x}}^{\epsilon}
[\,B_{\epsilon}(t,\,b)\,]\;,
\end{equation*}
where
\begin{gather*}
A_{\epsilon}(t,\,b) \,=\,
\{\,S_{\epsilon}(t-3b)>t-\alpha b\,\}\;,
\\
B_{\epsilon}(t,\,b)
\,=\,\{\,\bs{y}_{\epsilon}(s)\in\mathcal{E}(\bs m')\ \
\text{for some}\ s\in[t-3b,\,t- \alpha b]\,\}\ .
\end{gather*}
By Lemma \ref{l: lem1}, as $\alpha <3$,
\begin{equation*}
\limsup_{\epsilon\rightarrow0}\,
\sup_{\boldsymbol{x}\in\mathcal{E}(\mathcal{M}_0)}\,
\mathbb{Q}_{\boldsymbol{x}}^{\epsilon}[\,A_{\epsilon}(t,\,b)\,]=0\ .
\end{equation*}
On the other hand,
\begin{equation*}
\mathbb{Q}_{\boldsymbol{x}}^{\epsilon}[B_{\epsilon}(t,\,b)]
\le\mathbb{Q}_{\boldsymbol{x}}^{\epsilon}
[\, \bs{y}_{\epsilon}(t_\epsilon)\in\mathcal{E}(\bs m') \,]
+\mathbb{Q}_{\boldsymbol{x}}^{\epsilon}
[\,B_{\epsilon}(t,\,b),\,\bm{y}_{\epsilon}(t_\epsilon)
\notin\mathcal{E}(\bs m')\,]\;.
\end{equation*}
It remains to prove that
\begin{equation*}
\limsup_{b\rightarrow0}\,\limsup_{\epsilon\to 0}\,
\sup_{\bs x\in \mc E(\bs m)} \mathbb{Q}_{\boldsymbol{x}}^{\epsilon}
[\, B_{\epsilon}(t,\,b),\,\bm{y}_{\epsilon}(t_\epsilon)
\notin\mathcal{E}(\bs m')\,] \,=\, 0\;.
\end{equation*}
By Lemma \ref{l17}, the definition of $B_{\epsilon}(t,\,b)$, and
the strong Markov property,
\begin{equation*}
\limsup_{b\rightarrow0}\,\limsup_{\epsilon\to0}\,
\sup_{\boldsymbol{x}\in\mathcal{E}(\bs m)}\,
\mathbb{Q}_{\boldsymbol{x}}^{\epsilon}
[\,B_{\epsilon}(t,\,b),\,\bs{y}_{\epsilon}(t_\epsilon)\in
\mathcal{E}(\mc M_0) \setminus\mathcal{E}(\bs m')\,] \,=\, 0\;.
\end{equation*}
On the other hand, as $\alpha>2$, for $\epsilon$ sufficiently small,
$t_\epsilon -s \in [2b,4b]$ for all $s\in [t-3b, t-\alpha b]$. Hence,
by the strong Markov property and Proposition \ref{p06},
\begin{equation*}
\limsup_{b\rightarrow0}\, \limsup_{\epsilon\to0}\,
\sup_{\boldsymbol{x}\in\mathcal{E}(\bs m)}\,
\mathbb{Q}_{\boldsymbol{x}}^{\epsilon}
[\, B_{\epsilon}(t,\,b),\, \bm{y}_{\epsilon}(t_\epsilon)
\notin\mathcal{E}(\mc M_0)\, ] \;=\; 0\;.
\end{equation*}
The assertion of the lemma follows from the previous estimates.
\end{proof}

\begin{proof}[Proof of Proposition \ref{l11}]
The proof is similar to the one of \cite[Proposition 2.1]{LLM}.
We consider the case $\mf n=1$, the general one being similar.

Fix $t>0$, $\bs m$, $\bs m' \in \mc M_0$, and sequences
$\bs x_\epsilon \in \mc E(\bs m)$, $t_\epsilon \to t$. By Theorem
\ref{t02},
\begin{equation*}
\mc Q_{\bs m} [ \bs y(t) = \bs m']
\;=\; \lim_{\delta \to 0} \mc Q_{\bs m} [ \bs y(t-3\delta) = \bs m']
\;=\; \lim_{\delta \to 0}
\lim_{\epsilon \to 0} \bb Q^\epsilon_{\bs x_\epsilon}
[ \, \bs y^{\rm T}_\epsilon (t-3\delta) \in \mc E(\bs m') \, ] \;.
\end{equation*}
Thus, by Lemma \ref{l: lem0},
\begin{equation*}
\mc Q_{\bs m} [ \bs y(t) = \bs m'] \;\le \;
\liminf_{\epsilon\to 0} \bb Q^\epsilon_{\bs x_\epsilon}
[ \, \bs y_\epsilon  (t_\epsilon) \in \mc E(\bs m') \, ]
\;\le \;
\limsup_{\epsilon\to 0} \bb Q^\epsilon_{\bs x_\epsilon}
[ \, \bs y_\epsilon  (t_\epsilon) \in \mc E(\bs m') \, ]\;.
\end{equation*}
Since
\begin{equation*}
1 \;=\; \sum_{\bs m'\in \mc M_0} \mc Q_{\bs m} [ \bs y(t) = \bs m']
\quad\text{and}\quad
\sum_{\bs m'\in \mc M_0} \bb Q^\epsilon_{\bs x_\epsilon}
[ \, \bs y_\epsilon  (t_\epsilon) \in \mc E(\bs m') \, ] \;\le\; 1\;,
\end{equation*}
the inequalities in the penultimate formula must be identities for
each $\bs m\in \mc M_0$, which completes the proof of the proposition.
\end{proof}

\subsection*{Avoiding  wells}

We complete the proof Proposition \ref{l11} by showing that the
probability that the process is not in a well when it starts from a
well is very small. This is the content of Proposition \ref{p06}
below, the main result of this subsection.

\begin{prop}
\label{p06}
For all $\bs m \in \mc M_0$,
\begin{equation*}
\limsup_{b\rightarrow0}\,\limsup_{\epsilon\rightarrow0}\,
\sup_{\boldsymbol{x}\in\mathcal{E}(\bs m)}\,
\sup_{t\in[2b,\,4b]}\,\mathbb{Q}_{\boldsymbol{x}}^{\epsilon}
[\,\bs{y}_{\epsilon}(t)\notin\mathcal{E} (\mc M_0) \,] \;=\; 0\;.
\end{equation*}
\end{prop}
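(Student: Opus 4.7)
The plan is to combine Corollary \ref{l15}, which transfers the starting point to the locally stationary measure $\mu^{\mathrm R}_{\epsilon}$, with the integrated bound \eqref{57} from Theorem \ref{t02}, and then to promote the resulting averaged estimate to a pointwise one.

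First, I would apply Corollary \ref{l15} with $\mathcal{A}=(\mathcal{E}(\mc M_{0}))^{c}$: uniformly over $\boldsymbol{x}\in\mathcal{E}(\boldsymbol{m})$ and $t\in[2b,4b]$,
\[
\mathbb{Q}_{\boldsymbol{x}}^{\epsilon}[\boldsymbol{y}_{\epsilon}(t)\notin\mathcal{E}(\mc M_{0})] \;=\; g_{\epsilon}(t)\,+\,o_{\epsilon}(1),\qquad g_{\epsilon}(t)\,:=\,\mathbb{P}_{\mu_{\epsilon}^{\mathrm R}}^{\epsilon}[\boldsymbol{x}_{\epsilon}(t\theta_{\epsilon}^{(1)}-\varrho_{\epsilon})\notin\mathcal{E}(\mc M_{0})].
\]
Crucially, $g_{\epsilon}$ does not depend on the starting point $\boldsymbol{x}$. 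Next, Fubini together with \eqref{57} applied to $T=4b$ and any $\boldsymbol{x}\in\mathcal{E}(\boldsymbol{m})\subset\mathcal{E}(\mc M_{0})$ yields
\[
\int_{0}^{4b}\mathbb{Q}_{\boldsymbol{x}}^{\epsilon}[\boldsymbol{y}_{\epsilon}(u)\notin\mathcal{E}(\mc M_{0})]\,du \;=\; \mathbb{Q}_{\boldsymbol{x}}^{\epsilon}\Big[\int_{0}^{4b}\chi_{\mathcal{E}(\mc M_{0})^{c}}(\boldsymbol{y}_{\epsilon}(u))\,du\Big] \;=\; o_{\epsilon}(1),
\]
so that $\int_{2b}^{4b}g_{\epsilon}(u)\,du = o_{\epsilon}(1)$, and likewise for the analogous functions $g^{(\bs m'')}_{\epsilon}$ obtained from Corollary \ref{l15} with $\boldsymbol{m}$ replaced by each $\bs m''\in\mc M_{0}$.

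The third and most delicate step is to upgrade this averaged estimate to the required supremum bound. My plan is to use the Markov property at time $t_{1}\theta_{\epsilon}^{(1)}-\varrho_{\epsilon}$ to obtain, for $t_{1}<t_{2}$ in $[2b,4b]$,
\[
g_{\epsilon}(t_{2}) \;\leq\; g_{\epsilon}(t_{1})\,+\,\max_{\bs m''\in\mc M_{0}}\,\sup_{\boldsymbol{z}\in\mathcal{E}(\bs m'')}\,\mathbb{P}_{\boldsymbol{z}}^{\epsilon}[\boldsymbol{x}_{\epsilon}((t_{2}-t_{1})\theta_{\epsilon}^{(1)})\notin\mathcal{E}(\mc M_{0})],
\]
obtained by conditioning on whether $\boldsymbol{x}_{\epsilon}(t_{1}\theta_{\epsilon}^{(1)}-\varrho_{\epsilon})$ lies in $\mathcal{E}(\mc M_{0})$ or not. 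Choosing $t_{1}$ via Markov's inequality to be a point in $[2b,3b]$ where $\max_{\bs m''}g^{(\bs m'')}_{\epsilon}(t_{1}) = o_{\epsilon}(1)/b$, and reapplying Corollary \ref{l15} with the smaller parameter $b'=(t_{2}-t_{1})/4$ to convert the final supremum into $\max_{\bs m''}g^{(\bs m'')}_{\epsilon}(t_{2}-t_{1})+o_{\epsilon}(1)$, a finite bootstrap across the finite set $\mc M_{0}$ produces the uniform bound $\sup_{t\in[2b,4b]}g_{\epsilon}(t)=o_{\epsilon}(1)$, which combined with the first step completes the proof. The main obstacle is controlling the $o_{\epsilon}(1)$ errors that appear in these iterated applications of Corollary \ref{l15}, whose magnitudes depend on the auxiliary parameter $b'$; these must be aggregated carefully so as to remain negligible as $\epsilon\to 0$ and then $b\to 0$.
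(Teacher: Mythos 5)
Your first two steps are sound, and the overall strategy — pass to the starting-point-independent quantities $g^{(\bs m'')}_{\epsilon}(t)$ via Corollary \ref{l15}, extract an integrated bound from \eqref{57}, and upgrade it to a pointwise bound — is genuinely different from the paper's argument and can be made to work. The gap is in the upgrade step as you describe it. Writing $G(t):=\max_{\bs m''}g^{(\bs m'')}_{\epsilon}(t)$, the Markov property gives $G(t_2)\le G(t_1)+H(t_2-t_1)$ with $H(s)\le G(s)+o_\epsilon(1)$, so you are left with the value of $G$ at the \emph{forced} time $t_2-t_1$, over which you have no pointwise control; choosing $t_1\in[2b,3b]$ by Chebyshev controls only $G(t_1)$. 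Moreover, for $t_2$ close to $2b$ the constraint $t_1<t_2$ leaves essentially no room in $[2b,3b]$, and when $t_2-t_1$ is small the parameter $b'=(t_2-t_1)/4$ may be arbitrarily small (even $\epsilon$-dependent), so the $o_\epsilon(1)$ error in Corollary \ref{l15} is no longer controlled. The proposed ``finite bootstrap'' does not close this: each iteration replaces the uncontrolled term $G(s)$ by another uncontrolled term at a still smaller time, with no base case.

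The repair is a simultaneous pigeonhole rather than an iteration: let $t_1$ range over a fixed interval \emph{below} $2b$, say $[b/2,3b/2]$, so that $s=t_2-t_1\in[b/2,7b/2]$ stays bounded away from $0$; cover $[b/2,7b/2]$ by finitely many intervals $[2b',4b']$ with fixed $b'\in\{b/4,b/2,b\}$, so that $H(s)\le G(s)+o_\epsilon(1)$ uniformly there; and use $\int_{b/2}^{7b/2}G=o_\epsilon(1)$ to find, for each $t_2$, one $t_1$ with both $G(t_1)\le\delta$ and $G(t_2-t_1)\le\delta$ (each bad set has Lebesgue measure $o_\epsilon(1)/\delta$ inside an interval of length $b$). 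This yields $G(t_2)\le 2\delta+o_\epsilon(1)$ for every $t_2\in[2b,4b]$, and Corollary \ref{l15} converts this back into the statement. For comparison, the paper avoids the integrated-to-pointwise issue altogether: it introduces the low-lying set $\mc R=(\bb R^d\setminus\mc E(\mc M_0))\cap\{\bs x: U(\bs x)<U(\bs m)+\eta/2\}$, bounds $\mathbb{Q}^{\epsilon}_{\bs x}[\tau_{\mc R}\le t]$ by hitting-time estimates (Lemma \ref{l12}, resting on Lemma \ref{l17}), and handles the remaining event by Corollary \ref{l15} together with stationarity, since $\mu_\epsilon(\bb R^d\setminus\{\mc E(\mc M_0)\cup\mc R\})/\mu_\epsilon(\mc W^{2r_0}(\bs m))=o_\epsilon(1)$.
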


The proof of this proposition requires some preliminary estimates.
Fix $\color{blue} \eta\in(0,\,r_{0}/2)$ so that there is no critical
point $\bm{c} \in \mc C_0$ such that
$U(\bm{c})\in(U(\bs m'),\,U(\bs m')+\eta)$ for some
$\bs m'\in\mc M_0$.  Fix $\bs m\in \mc M_0$, and let
\begin{equation*}
{\color{blue} \mc R}  \,=\, \mathcal{R}(\bs m) \,=\,
(\mathbb{R}^{d}\setminus\mathcal{E} (\mc M_0) )
\cap \big\{\bm{x}\in\mathbb{R}^{d}:U(\bm{x})
< U(\bs m )+\eta/2\, \big\}\; .
\end{equation*}

Denote by $\mc W$ the connected component of the set
$\{\bs x\in \bb R^d: U(\bs x) < U(\bs m) + d^{(1)}\}$ which contains
$\bs m$. We claim that
\begin{equation}
\label{50}
\mc W \cap \mc R \;=\; \varnothing \;.
\end{equation}
Indeed, if $\bs y\in \mc R$, $U(\bs y) < U(\bs m) + \eta/2$. By
definition of $d^{(1)}$ and $\mc E(\bs m)$, all points
$\bs z \in \mc W$ such that $U(\bs z) \le U(\bs m) + \eta/2$ are
contained in $\mc E(\bs m)$. Hence,
$\mc W \cap \mc R = \mc E(\bs m) \cap \mc R$.  By definition of
$\mc R$, $\mc E(\bs m) \cap \mc R = \varnothing$, which completes the
proof of the claim.

\begin{lem}
\label{l12}
Fix $\bs m\in \mc M_0$.
Then,
\begin{equation*}
\limsup_{a\rightarrow0}\,
\limsup_{\epsilon\rightarrow0}\,
\sup_{\boldsymbol{x}\in\mathcal{E}(\bs m)}\,
\mathbb{Q}_{\boldsymbol{x}}^{\epsilon}
\big[\,\tau_{_{\mathcal{R}}} \le a \,\big]=0\;.
\end{equation*}
\end{lem}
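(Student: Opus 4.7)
The plan is to reduce to Lemma \ref{l17} by showing that, after the process hits $\mc R$, it enters a well different from $\mc E(\bs m)$ within additional time $O(\epsilon^{-1})$, which is $o(\theta^{(1)}_\epsilon)$. Writing $\tau^{\circ} := \tau_{\mc E(\mc M_0) \setminus \mc E(\bs m)}$ and applying the strong Markov property at $\tau_{\mc R}$ yields, for any $C > 0$,
\begin{equation*}
\bb P^{\epsilon}_{\bs x}[\tau_{\mc R} \le a\theta^{(1)}_\epsilon]
\;\le\;
\bb P^{\epsilon}_{\bs x}[\tau^\circ \le a\theta^{(1)}_\epsilon + C/\epsilon]
\,+\,
\sup_{\bs p \in \mc R}\bb P^{\epsilon}_{\bs p}[\tau^\circ > C/\epsilon]\;.
\end{equation*}
Since $C/\epsilon = o(\theta^{(1)}_\epsilon)$, the first term is bounded above by $\bb P^{\epsilon}_{\bs x}[\tau^\circ \le 2a\theta^{(1)}_\epsilon]$ for $\epsilon$ small, which vanishes as $\epsilon\to 0$ and then $a \to 0$ by Lemma \ref{l17}. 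The problem thus reduces to proving that the supremum on the right-hand side tends to zero as $\epsilon \to 0$.

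To achieve this, I would decompose $\mc R$ using the connected components of the bounded sub-level set $\{U < U(\bs m) + \eta/2\}$. Each such component contains at least one local minimum, so there are only finitely many components $\mc A_0,\dots,\mc A_k$, with $\bs m \in \mc A_0$. Since $\eta/2 < r_0$, $\mc A_0$ is contained in the connected component of $\{U\le U(\bs m)+r_0\}$ around $\bs m$, that is, in $\mc E(\bs m)$; hence $\mc R \subset \bigcup_{i\ge 1}\mc A_i$. Moreover, if $\bs m$ and another $\bs m'\in\mc M_0$ lay in the same component of $\{U < U(\bs m)+\eta/2\}$, they would be joined by a path below height $U(\bs m)+\eta/2 < U(\bs m)+d^{(1)}$, contradicting $\Theta(\bs m,\bs m')\ge U(\bs m)+d^{(1)}$; hence each $\mc A_i$ with $i\ge 1$ avoids $\bs m$. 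Next, choose $\eta_2\in(0,d^{(1)}-\eta/2)$ with no critical value of $U$ in $(U(\bs m)+\eta/2,\,U(\bs m)+\eta/2+\eta_2)$, and let $\mc B_i$ be the connected component of $\{U < U(\bs m)+\eta/2+\eta_2\}$ containing $\mc A_i$. The same connectivity argument gives $\bs m\notin\mc B_i$, so $\mc E(\mc B_i)\subset\mc E(\mc M_0)\setminus\mc E(\bs m)$, while the choice of $\eta_2$ ensures $\mc B_i\setminus\mc A_i$ contains no critical points. Theorem \ref{t_hitting2} applied to the pair $(\mc A_i,\mc B_i)$ then yields
\begin{equation*}
\limsup_{\epsilon\to 0}\max_{1\le i\le k}\sup_{\bs p\in\mc A_i}
\bb P^{\epsilon}_{\bs p}[\tau^{\circ} > C/\epsilon]
\;\le\;
\limsup_{\epsilon\to 0}\max_{1\le i\le k}\sup_{\bs p\in\mc A_i}
\bb P^{\epsilon}_{\bs p}[\tau_{\mc E(\mc B_i)} > C/\epsilon]
\;=\;0\;,
\end{equation*}
which is the required bound.

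The main obstacle is verifying that the enlarged components $\mc B_i$ still avoid $\bs m$ and pick up no new critical points, so that Theorem \ref{t_hitting2} applies and lands in $\mc E(\mc M_0)\setminus\mc E(\bs m)$. This rests on the strict separation $\eta/2+\eta_2 < d^{(1)}$, which is available because condition (a) in the definition of $r_0$ forces $r_0 < d^{(1)}/2$, so the choice $\eta < r_0/2$ made in the proof of Proposition \ref{p06} leaves ample room for a suitable $\eta_2$.
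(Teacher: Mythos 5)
Your proof is correct, but it follows a genuinely different route from the paper's. Both arguments hinge on the same two facts --- that after $\tau_{\mc R}$ the process reaches a well within a time $C/\epsilon \prec \theta^{(1)}_\epsilon$, and that fast transitions between distinct wells are excluded by Lemma \ref{l17} --- but you combine them differently. You prove the stronger statement that, uniformly over $\mc R$, the process hits $\mc E(\mc M_0)\setminus\mc E(\bs m)$ (not merely $\mc E(\mc M_0)$) within $C/\epsilon$, by covering $\mc R$ with sublevel-set components $\mc A_i$ not containing $\bs m$, enlarging them to components $\mc B_i$ that still exclude $\bs m$, and invoking the second part of Theorem \ref{t_hitting2}. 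Consequently you only ever need Lemma \ref{l17} at the single minimum $\bs m$. The paper instead splits, after $\tau_{\mc R}$, on whether the first well visited is $\mc E(\bs m)$ or not: the latter case is handled by Lemma \ref{l17} at $\bs m$, while the former is shown (via Corollary \ref{l16}) to force a prior passage through some $\mc E(\bs m')$ with $\bs m'\ne\bs m$, followed by a transition $\mc E(\bs m')\to\mc E(\bs m)$ in time $2a\theta^{(1)}_\epsilon$, which is then excluded by Lemma \ref{l17} applied at $\bs m'$; this is why the paper's bound carries the factor $2\max_{\bs m'\in\mc M_0}$. Your version dispenses with the case analysis and with the appeal to Lemma \ref{l17} at the intermediate minima, at the price of a slightly more careful construction of the enlarged components.

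One step deserves more justification than you give it: the hypothesis of Theorem \ref{t_hitting2} that $\mc B_i\setminus\mc A_i$ contains no critical point. Your $\eta_2$ excludes critical values in the open interval $(U(\bs m)+\eta/2,\,U(\bs m)+\eta/2+\eta_2)$, and the level $U(\bs m)+\eta/2$ itself is regular because the paper's choice of $\eta$ already excludes critical values in $(U(\bs m),\,U(\bs m)+\eta)$; but you must also rule out that $\mc B_i$ absorbs a second component $\mc A_j$, $j\ne i$, of the lower sublevel set, since the critical points inside such an $\mc A_j$ would sit in $\mc B_i\setminus\mc A_i$. This is excluded because two local minima lying in the same component of $\{U<U(\bs m)+\eta/2+\eta_2\}$ but in distinct components of $\{U<U(\bs m)+\eta/2\}$ would have a communication height in $[U(\bs m)+\eta/2,\,U(\bs m)+\eta/2+\eta_2)$, and communication heights are critical values (this follows from Lemma \ref{l_cap_saddle} together with the structure results of the appendix). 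The paper's own application of Corollary \ref{l16} at this point requires, and elides, essentially the same verification, so this is a gap of presentation rather than of substance.
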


\begin{proof}
By Lemma \ref{l17}, it suffices to show that
\begin{equation*}
\mathbb{Q}_{\boldsymbol{x}}^{\epsilon}\big[\,
\tau_{_{\mathcal{R}}} \le a \,\big]
\;\le\;
2\, \max_{\bs m'\in \mc M} \sup_{\bm{z}\in \mathcal{E}(\bs m')}
\mathbb{Q}_{\bm{z}}^{\epsilon}\big[\,
\tau_{\mc E(\mc M_0) \setminus \mathcal{E}(\bs m')} <
2 a \,\big ]
\,+\, R_{\epsilon}(\bs x) \;,
\end{equation*}
where $\sup_{\bs x \in \mc E(\bs m)} |R_{\epsilon}(\bs x)| \to 0$.

To prove the previous bound, first observe that
\begin{equation}
\label{45}
\mathbb{Q}_{\bm{x}}^{\epsilon} [\, \tau_{_{\mathcal{R}}} \le a \, ]
\,=\, \mathbb{Q}_{\bm{x}}^{\epsilon}
[\, \tau_{_{\mathcal{R}}} \le a \,,\, \sigma_{_{\mc E(\mc M_0)}} \le
\iota_\epsilon\,]
\,+\, \mathbb{Q}_{\bm{x}}^{\epsilon}
[\, \tau_{_{\mathcal{R}}}  \le a \,,\, \sigma_{_{\mc E(\mc M_0)}} >
\iota_\epsilon \, ] \;,
\end{equation}
where $\sigma_{_{\mc A}}$, $\mc A\subset\bb R^d$, is the first time
after $\tau_{\mc R}$ that the process visits $\mc A$:
\begin{equation*}
{\color{blue} \sigma_{_{\mc A}}} \;:=\;
\inf\{t> \tau_{\mc R} : \bs x_\epsilon (t) \in \mc A \,\}\;,
\end{equation*}
$\iota_\epsilon = a + \epsilon^{-1}/\theta^{(1)}_\epsilon$.  By the
strong Markov property, the second term on the right-hand is bounded
by
\begin{equation*}
\sup_{\bm{z}\in\mathcal{R}}
\mathbb{P}_{\bm{z}}^{\epsilon}\big[ \, \tau_{\mc E(\mc M_0)}
\ge \epsilon^{-1} \, \big]\ .
\end{equation*}
To keep notation simple, we replaced the measure
$\bb Q^\epsilon_{\bs z}$ by $\bb P^\epsilon_{\bs z}$.  By Corollary
\ref{t_hitting}, this expression is bounded by a remainder
$R_{\epsilon}(\bs x)$ such that
$\sup_{\bs x \in \mc E(\bs m)} |R_{\epsilon}(\bs x)| \to 0$.

We turn to the first term on the right-hand side of \eqref{45}.
It can be written as
\begin{equation}
\label{49}
\mathbb{Q}_{\bm{x}}^{\epsilon}
[\, \tau_{_{\mathcal{R}}} \le a \,,\, \sigma_{_{\mc E(\bs m)}} \le
\iota_\epsilon\,] \;+\;
\mathbb{Q}_{\bm{x}}^{\epsilon}
[\, \tau_{_{\mathcal{R}}} \le a \,,\, \sigma_{_{\mc E(\mc M_0) \setminus
\mc E(\bs m) }} \le \iota_\epsilon\,]
\end{equation}
By the strong Markov property, the first term is bounded by
\begin{equation*}
\sup_{\bm{z}\in\mathcal{R}}
\mathbb{P}_{\bm{z}}^{\epsilon}
\big[\, \tau_{\mathcal{E}(\bs m)}< 2 a \theta^{(1)}_{\epsilon} \,\big]
\;=\;
\max_{\mc A} \sup_{\bm{z}\in\mathcal{A}}
\mathbb{P}_{\bm{z}}^{\epsilon}
\big[\, \tau_{\mathcal{E}(\bs m)}<  2 a \theta^{(1)}_{\epsilon} \,\big] \;,
\end{equation*}
where the maximum is carried over all connected components of $\mc R$.
The number of connected component is finite because $U(x) \to\infty$
as $|x|\to\infty$.  Fix a connected component $\mc A$ of $\mc R$, and
let $\mc B$ be the connected component of
$\{\bm{x}\in\mathbb{R}^{d}:U(\bm{x})<U(\bs m)+ \eta \}$ containing
$\mathcal{A}$.  Since there are no critical points $\bs c\in \mc C_0$
such that $U(\bs c) \in (U(\bs m), U(\bs m) +\eta)$, by Corollary \ref{l16},
\begin{equation*}
\lim_{\epsilon \to 0} \sup_{\bm{z}\in\mathcal{A}}
\mathbb{P}_{\bm{z}}^{\epsilon}\big[\,
\tau_{\partial \mc B} < \tau_{_{\mathcal{E}(\mc B)}} \,\big] \; =\;
0 \;.
\end{equation*}
On the other hand, by \eqref{50},
$\mathcal{E}(\bs m) \subset \mathbb{R}^{d} \setminus
\mathcal{B}$, so that $\tau_{\partial \mc B} < \tau_{\mc E(\bs m)}$.
Hence,
\begin{equation*}
\sup_{\bm{z}\in\mathcal{A}}
\mathbb{P}_{\bm{z}}^{\epsilon}\big[\,
\tau_{_{\mathcal{E}(\bs m)}} < 2 a \theta^{(1)}_{\epsilon} \,\big] \;\le\;
\sup_{\bm{z}\in\mathcal{A}}
\mathbb{P}_{\bm{z}}^{\epsilon}\big[\,
\tau_{\mathcal{E}(\bs m)} <   2 a \theta^{(1)}_{\epsilon} \,,\,
\tau_{_{\mathcal{E}(\mc B)}} < \tau_{\mathcal{E}(\bs m)} \,\big ]
\;+\; o_\epsilon(1) \;.
\end{equation*}
By the strong Markov property, this expression is bounded by
\begin{equation*}
\sup_{\bm{z}\in \mathcal{E}(\mc B)}
\mathbb{P}_{\bm{z}}^{\epsilon}\big[\,
\tau_{\mathcal{E}(\bs m)} <   2 a \theta^{(1)}_{\epsilon} \,\big ]
\;+\; o_\epsilon(1) \;.
\end{equation*}
Since $\mc B$ and $\mc E(\bs m)$ are disjoint this expression is less
than or equal to
\begin{equation*}
\max_{\bs m'\in \mc M} \sup_{\bm{z}\in \mathcal{E}(\bs m')}
\mathbb{P}_{\bm{z}}^{\epsilon}\big[\,
\tau_{\mc E(\mc M_0) \setminus \mathcal{E}(\bs m')} <
2 a \theta^{(1)}_{\epsilon} \,\big ] \;+\; o_\epsilon(1) \;.
\end{equation*}

We turn to the second term of \eqref{49}.  Since
$\epsilon^{-1} \prec\theta_{\epsilon}^{(1)}$, it is bounded by
\begin{equation*}
\mathbb{P}^{\epsilon}_{\bm{x}} \big[\, \tau_{_{\mathcal{E} (\mc M_0)
\setminus\mathcal{E}(\bs m)}} < 2a\theta_{\epsilon} \,\big] \;,
\end{equation*}
which completes the proof of the lemma.
\end{proof}

\begin{proof}[Proof of Proposition \ref{p06}]
Recall the definition of the set $\mc R$ introduced just before Lemma
\ref{l12}. Denote by $\mc W$ the connected component of the set
$\{\bs x \in\mathbb{R}^{d}: U(\bs{x})<U(\bs m) + d^{(1)}\, \}$ which
contains $\bs m$. By \eqref{50}, $\mc R \cap \mc W =
\varnothing$. Clearly,
\begin{align*}
\mathbb{Q}_{\boldsymbol{x}}^{\epsilon}
\big[\, \boldsymbol{y}_{\epsilon}(t)
\in\mathbb{R}^{d}\setminus\mathcal{E} (\mc M_0) \,\big ]
\; \le \;
\mathbb{Q}_{\boldsymbol{x}}^{\epsilon}
\big[\, \bs{y}_{\epsilon} (t) \in \mathbb{R}^{d}
\setminus \{ \mathcal{E} (\mc M_0) \cup
\mathcal{R} \} \, \big ]
\;+\; \mathbb{Q}_{\boldsymbol{x}}^{\epsilon}
\big[\, \tau_{_{\mathcal{R}}} \le t\,\big]\;.
\end{align*}
Recall that $\eta<r_0/2$, choose a time-scale $\varrho_\epsilon$
satisfying \eqref{58}, and let
$\kappa_\epsilon = \varrho_\epsilon/\theta^{(1)}_\epsilon$.  By
Corollary \ref{l15}, the first term on the right-hand side is bounded
by
\begin{equation*}
\mathbb{Q}_{\mu_{\epsilon}^{\rm R}}^{\epsilon}
\big[\, \bm{y}_{\epsilon} (t-\kappa_{\epsilon})
\in \mathbb{R}^{d}\setminus \{\mathcal{E} (\mc M_0) \cup\mathcal{R} \}
\big] \,+\, o_{\epsilon}(1)\;,
\end{equation*}
where the error is uniform over $t\in [2b, 4b]$,
$\bs x\in \mc E(\bs m)$. As $\mu_\epsilon$ is the stationary state and
$\mu_{\epsilon}^{\rm R}$ the measure $\mu_\epsilon$ conditioned to
$\mathcal{W}^{2r_{0}}(\bm{m})$, the previous expression is equal to
\begin{align*}
\frac{\mu_{\epsilon}(\mathbb{R}^{d}\setminus
\{ \mathcal{E} (\mc M_0) \cup\mathcal{R} \} )}
{\mu_{\epsilon}(\mathcal{W}^{2r_{0}}(\bm{m}))}
\,+\, o_{\epsilon}(1) \; = \; o_{\epsilon}(1)\ ,
\end{align*}
where the error terms are uniform on $\bm{x}\in\mathcal{E}(\bm{m})$
and $t\in[2b,\,4b]$.

It remains to show that
\begin{equation*}
\limsup_{b\rightarrow0}\,\limsup_{\epsilon\rightarrow0}\,
\sup_{\boldsymbol{x}\in\mathcal{E}(\bm{m})}\,
\sup_{t\in[2b,\,4b]}\mathbb{Q}_{\boldsymbol{x}}^{\epsilon}
\left[\tau_{\mathcal{R}}\le t\right]=0\;.
\end{equation*}
This is a direct consequence of Lemma \ref{l12} since
$\mathbb{Q}_{\boldsymbol{x}}^{\epsilon} [\tau_{\mathcal{R}} \le t ]
\le \mathbb{Q}_{\boldsymbol{x}}^{\epsilon} [\tau_{\mathcal{R}}\le 4b
]$ for all $t\le 4b$.
\end{proof}

\subsection*{Proof of Theorem \ref{t00}}

The assertion of Theorem \ref{t00} in the time scale
$\theta^{(1)}_\epsilon$ is a particular case of Theorem \ref{t_fdd}.
We turn to the second claim.

Fix a time-scale $\varrho_\epsilon$ such that
$1\prec \varrho_\epsilon \prec \theta^{(1)}_\epsilon$,
$\bs m\in \mc M_0$, $\bs x\in \mc D(\bs m)$, $\eta>0$, and a bounded
continuous function $F$.  Define the wells $\mc E(\bs m')$,
$\bs m'\in\mc M_0$, as in the proof of Proposition \ref{l11}, to
fulfill \eqref{39}. First, assume that there exists $\epsilon_0$ such
that $\varrho_\epsilon \ge \epsilon^{-2}$ for all $\epsilon<\epsilon_0$.

By \cite[Theorem 2.1.2]{fw98}, there exists $T>0$ such that
\begin{equation*}
\bb P^\epsilon_x [ \bs x_\epsilon (T) \not\in \mc E (\bs m) \,] \,=\,
o_\epsilon(1)\;.
\end{equation*}
Hence, by the Markov property,
\begin{equation*}
\bb E^\epsilon_x [\, F(\bs x_\epsilon (\varrho_\epsilon))\,]
\;=\;
\bb E^\epsilon_x \Big[ \bb E^\epsilon_{ \bs x_\epsilon (T) }
\big[\, F(\bs x_\epsilon (\varrho_\epsilon - T)) \,\big] \,
\mb 1\{ \bs x_\epsilon (T) \in \mc E (\bs m)\,\}  \, \Big]
\,+\, o_\epsilon(1)\;.
\end{equation*}
As $\bs x_\epsilon (T)$ belongs to $\mc E (\bs m)$ and
$\varrho_\epsilon \prec \theta^{(1)}_\epsilon$, by Lemma \ref{l17},
inside the second expectation on the right-hand side we may insert the
indicator of the set
$\mc A_1 = \{\bs x_\epsilon (\varrho_\epsilon - T) \not \in \mc E(\mc
M_0) \setminus \mc E(\bs m) \}$ at a cost $ o_\epsilon(1)$. By Proposition
\ref{p_FW}, we may also insert the indicator of the set
$\mc A_2 = \{ U(\bs x_\epsilon (\varrho_\epsilon - T - \epsilon^{-1})
\le U(\bs m) + d^{(1)} + 2r_0\}$ at the same cost. Hence, the
left-hand side of the previous displayed equation is equal to
\begin{equation*}
\bb E^\epsilon_x \Big[ \bb E^\epsilon_{ \bs x_\epsilon (T) }
\big[\, F(\bs x_\epsilon (\varrho_\epsilon - T)) \, \bs 1\{\mc A_1
\cap \mc A_2\}\, \,\big] \,
\mb 1\{ \bs x_\epsilon (T) \in \mc E (\bs m)\,\}  \, \Big]
\,+\, o_\epsilon(1)\;.
\end{equation*}
By the Markov property the previous expectation is equal to
\begin{equation*}
\bb E^\epsilon_x \Big[ \, \bb E^\epsilon_{\bs x_\epsilon (T)}  \Big[
\bs 1\{\mc A_2\} \bb E^\epsilon_{ \bs x_\epsilon (\varrho_\epsilon - T - (1/\epsilon)) }
\big[\, F(\bs x_\epsilon (1/\epsilon)) \, \bs 1\{\mc A'_1\}\,\big] \, \Big]
\mb 1\{ \bs x_\epsilon (T) \in \mc E (\bs m)\,\}  \, \Big] \;,
\end{equation*}
where
$\mc A'_1 = \{\bs x_\epsilon (\epsilon^{-1}) \not \in \mc E(\mc M_0)
\setminus \mc E(\bs m) \}$. Since
$U(\bs x_\epsilon (\varrho_\epsilon - T - \epsilon^{-1})) \le U(\bs m)
+ d^{(1)} + 2r_0$, by Theorem \ref{t_hitting2} and Proposition
\ref{p_FW}, in the third expectation, we may insert the indicator of
the set
$\mc A_3 = \{\bs x_\epsilon (1/\epsilon) \in \mc E(\mc M_0) \}$ at a
cost $ o_\epsilon(1)$. If by bad luck, there are critical points
$\bs c$ such that $U(\bs c) = U(\bs m) + d^{(1)} + 2r_0$, we add to
this constant a positive value to make sure that this does not
happen. As
$\mc A_4 = \mc A'_1 \cap \mc A_3 = \{\bs x_\epsilon (\epsilon^{-1})
\in \mc E(\bs m) \}$, by \eqref{39}, the previous expression is equal
to
\begin{equation*}
F(\bs m) \,  \bb E^\epsilon_x \Big[ \, \bb E^\epsilon_{\bs x_\epsilon (T)}  \Big[
\bs 1\{\mc A_2\} \bb P^\epsilon_{ \bs x_\epsilon (\varrho_\epsilon - T - (1/\epsilon)) }
\big[\, \, \mc A_4 \,\big] \, \Big]
\mb 1\{ \bs x_\epsilon (T) \in \mc E (\bs m)\,\}  \, \Big] \;+\;
R(\epsilon , \eta) \;,
\end{equation*}
where $|R(\epsilon , \eta)| \le \eta + o_\epsilon(1)$. We may now go
backward in the argument to conclude that the previous expression is
equal to $F(\bs m)  + R(\epsilon , \eta)$, which completes the proof
of the theorem in the case where $\varrho_\epsilon \ge \epsilon^{-2}$
for all $\epsilon$ small.

Assume that this is not the case. We may suppose that
$\varrho_\epsilon \le \epsilon^{-2}$ for all $\epsilon$ small enough.
If there is a subsequence which does not satisfy this condition, it is
treated as in the first part of the proof.

By \cite[Theorem 2.1.2]{fw98}, there exists $T>0$ such that
\begin{equation*}
\bb P^\epsilon_x [ \bs x_\epsilon (T) \not\in \mc W^{r_0/2} (\bs m) \,] \,=\,
o_\epsilon(1)\;.
\end{equation*}
Hence, by the Markov property,
\begin{equation*}
\bb E^\epsilon_x [\, F(\bs x_\epsilon (\varrho_\epsilon))\,]
\;=\;
\bb E^\epsilon_x \Big[ \bb E^\epsilon_{ \bs x_\epsilon (T) }
\big[\, F(\bs x_\epsilon (\varrho_\epsilon - T)) \,\big] \,
\mb 1\{ \bs x_\epsilon (T) \in \mc W^{r_0/2} (\bs m) \,\}  \, \Big]
\,+\, o_\epsilon(1)\;.
\end{equation*}
As $\bs x_\epsilon (T) \in \mc W^{r_0/2} (\bs m)$, by Proposition
\ref{p_FW}, in the second expectation, we may insert the indicator of
the set
$\mc A = \{\bs x_\epsilon (\varrho_\epsilon - T ) \in \mc E(\bs m)
\}$ at a cost $ o_\epsilon(1)$. At this point, we may repeat the
arguments presented at the end of the first part of the proof to
conclude. \qed \smallskip

\appendix

\section{The potential $U$}

We present in this section elementary properties of the potential $U$
and the dynamical system \eqref{31}.  The main result establishes the
existence of a path which perfects the infimum in \eqref{Theta}.

\begin{prop}
\label{l05a}
Fix a local minimum $\bs m \in\mc M_0$. Then, there exist a local
minimum $\boldsymbol{m}'\in\mathcal{M}_{0}$, $\bs m'\neq \bs m$ and a
continuous path $\boldsymbol{z} \colon [0,1] \to \bb R^d$ such that
$\boldsymbol{z}(0)=\boldsymbol{m}$,
$\boldsymbol{z}(1)=\boldsymbol{m}'$, and
\begin{equation*}
\begin{gathered}
\max_{t\in[0,\,1]}U(\boldsymbol{z}(t)) \;=\;
U(\boldsymbol{z}(1/2)) \;=\; U(\bs m) \,+\, \Gamma(\bs m) \,=\,
\Theta(\boldsymbol{m},\,\boldsymbol{m}')\;,
\\
U(\bs z(s)) \,<\, U(\bs z(1/2))\;,\;\;
s\in [0,1]\setminus\{1/2\} \;.
\end{gathered}
\end{equation*}
Moreover, if $\bs z(\cdot)$ is such a path, then $\bs z(1/2)$ is a
saddle point of $U$.
\end{prop}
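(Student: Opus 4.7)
The plan is to identify a saddle of $U$ at the summit level $H := U(\bs m) + \Gamma(\bs m)$ connecting the well of $\bs m$ to that of another local minimum, and then assemble the required path explicitly in Morse coordinates at that saddle.

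First, by definition of $\Gamma(\bs m)$ there is a local minimum $\bs m^* \neq \bs m$ with $\Theta(\bs m, \bs m^*) = H$, so for every $\varepsilon > 0$ the two minima lie in a common connected component $V_\varepsilon$ of the compact set $\{U \leq H + \varepsilon\}$ (compactness follows from \eqref{26}). The nested intersection $V := \bigcap_{\varepsilon > 0} V_\varepsilon$ is thus a compact connected subset of $\{U \leq H\}$ containing both minima, and a Morse-coordinate check at the finitely many critical points of level $H$ shows $V$ is locally path-connected, hence path-connected. Let $A$ denote the connected component of the open set $\{U < H\}$ containing $\bs m$; observe $\bs m^* \notin A$ (else a path inside $A$ would give $\Theta(\bs m, \bs m^*) < H$). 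Pick a continuous path $\gamma \colon [0,1] \to V$ from $\bs m$ to $\bs m^*$ and set $t_1 := \sup\{t \in [0,1] : \gamma(s) \in A \text{ for all } s < t\}$. The point $\bs \sigma := \gamma(t_1)$ must satisfy $U(\bs \sigma) = H$ and be a critical point of $U$, for at a non-critical level-$H$ point $\{U < H\}$ is a connected half-space locally, so $\gamma$ would re-enter $A$ just past $t_1$. A Morse-coordinate case analysis of the local form $\{U < H\} \simeq \{|\bs x'|^2 > |\bs x''|^2\}$ with $\bs x' \in \bb R^k$, $\bs x'' \in \bb R^{d-k}$ forces the Morse index of $\bs \sigma$ to equal exactly $1$, since this local set is connected for every $k \neq 1$; thus $\bs \sigma$ is a classical saddle. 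Denoting by $A_1$ the distinct component of $\{U < H\}$ entered just after $t_1$, note that $A_1$ is bounded (as $\{U \leq H\}$ is compact) with $\partial A_1 \subset \{U = H\}$, so $U$ attains its minimum on $\overline{A_1}$ at an interior point, a local minimum $\bs m' \in A_1 \setminus \{\bs m\}$.

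In Morse coordinates $U = H - x_1^2 + x_2^2 + \cdots + x_d^2$ at $\bs \sigma$, with $\bs e_1$ pointing into $A$, set $\bs p := \bs \sigma + \eta \bs e_1 \in A$ and $\bs p' := \bs \sigma - \eta \bs e_1 \in A_1$ for small $\eta > 0$. Concatenating a path in $A$ from $\bs m$ to $\bs p$ (which exists by path-connectedness of the open connected set $A$, with $U < H$ throughout), the straight segment from $\bs p$ to $\bs \sigma$ (on which $U < H$ except at the endpoint), the symmetric segment from $\bs \sigma$ to $\bs p'$, and a path in $A_1$ from $\bs p'$ to $\bs m'$, and then reparametrizing so $\bs \sigma$ is reached precisely at $t = 1/2$, produces the required path $\bs z \colon [0,1] \to \bb R^d$. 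The identity $\Theta(\bs m, \bs m') = H$ then follows from the constructed path and the definitional bound $\Theta(\bs m, \bs m'') \geq U(\bs m) + \Gamma(\bs m) = H$ for every $\bs m'' \neq \bs m$.

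For the final assertion, the same local-connectivity principle runs in reverse: if $\bs z$ satisfies the stated conditions and $\bs \sigma' := \bs z(1/2)$ were either non-critical or critical of Morse index $\neq 1$, then by the Morse-coordinate analysis the set $\{U \leq H - \delta\}$ would contain a path-connected neighborhood joining $\bs z(1/2 - \eta)$ to $\bs z(1/2 + \eta)$ for some small $\delta, \eta > 0$; replacing the central segment of $\bs z$ by such a chord yields a path from $\bs m$ to $\bs m'$ with $\max U < H$, contradicting $\Theta(\bs m, \bs m') = H$. The main difficulty I foresee is the Morse-coordinate bookkeeping at $\bs \sigma$ — verifying that the two local cones of $\{U \leq H\}$ sit in distinct global components of $\{U < H\}$, and cleanly handling the degenerate indices $0$ and $d$ in the case analysis (local minima at level $H$ are excluded because $\bs z(s)$ lies in $\{U < H\}$ near $s = 1/2$, while at a local maximum the punctured neighborhood $\{U < H\}$ is locally connected for $d \geq 2$).
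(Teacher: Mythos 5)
Your overall strategy is the same as the paper's (locate a saddle at level $H=U(\bs m)+\Gamma(\bs m)$ on the common boundary of two distinct components of $\{U<H\}$, run the path through it in local coordinates, and prove the converse by a shortcutting argument), and several pieces are sound and even cleaner than the paper's: the identification of $\bs m'$ as the interior minimizer of $U$ on $\overline{A_1}$, the Morse--coordinate construction of the central segment (which avoids the paper's appeal to Hartman--Grobman and to hypothesis \eqref{48}), and the converse direction via shortcutting through a connected local sublevel set. But the step that locates the saddle is genuinely broken. You set $t_1=\sup\{t:\gamma(s)\in A\ \forall s<t\}$ and claim $\bs\sigma=\gamma(t_1)$ must be critical because otherwise ``$\gamma$ would re-enter $A$ just past $t_1$.'' That is not a contradiction with anything: the definition of $t_1$ only constrains $\gamma$ on $[0,t_1)$, and a path in $\mathcal H$ (the component of $\{U\le H\}$) can perfectly well first leave $A$ by touching $\partial A$ at a \emph{regular} point of the level set $\{U=H\}$, slide along the level set, or drop back into $A$. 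So $\gamma(t_1)$ need not be critical, and a fortiori ``the distinct component $A_1$ entered just after $t_1$'' is not well defined -- nothing forces the path to enter a different component of $\{U<H\}$ immediately after $t_1$. This is exactly the difficulty you flag at the end (the two local cones sitting in distinct global components): if both cones at your candidate point lie in $A$, the construction collapses, since $A$ contains no local minimum other than $\bs m$.

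The fix is to exit from the \emph{closure}: let $t_2=\inf\{t:\gamma(t)\notin\overline A\}$ (finite since $\bs m^*\notin\overline A$ by Lemma \ref{l_level_boundary} and Lemma \ref{l_118}). At a regular point of $\{U=H\}\cap\partial A$, and likewise at a critical point of index $0$, index $\ge 2$, or a local maximum, the whole of $\{U\le H\}\cap B(\bs x,r)$ is contained in $\overline A$ (the local strict sublevel set, being connected and meeting $A$, lies in $A$, and its closure absorbs the local level set), so the path cannot leave $\overline A$ there; hence $\gamma(t_2)$ is an index-$1$ critical point, and the points $\gamma(s)\notin\overline A$ with $s\downarrow t_2$ must lie in the second local cone, which therefore belongs to a component $A_1\neq A$ with $\gamma(t_2)\in\overline{A_1}$. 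The paper avoids path-tracking altogether: it decomposes $\mathcal H$ into the (finitely many) closures of the components of its interior and shows combinatorially that two of them must intersect (Lemmata \ref{l_pathcon_level} and \ref{l_105a-1}), then proves that any such intersection point is a saddle with exactly the two-cone structure (Lemma \ref{l_cap_saddle}). Either route works, but as written your argument does not yet produce the saddle or the second component.
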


\subsection*{Proof of Proposition \ref{l05a}}

The proof is based on three lemmata. Fix $\bs m\in\mc M_0$, and let
$\mathcal{W}$ be the connected component of
$\{\bm{x}\in\mathbb{R}^{d}:U(\bm{x})<U(\bm{m})+\Gamma(\bm{m})\}$
containing $\bm{m}$. By definition of $\Gamma(\bs m)$,
\begin{equation}
\label{a01}
\mc M_0 \cap \mc W \;=\; \{\bs m\}\;.
\end{equation}

\begin{lem}
\label{l_105a-1}
Fix $\bs m\in\mc M_0$ There is a connected component $\mathcal{W}'$ of
$\{\bm{x}\in\mathbb{R}^{d}:U(\bm{x})<U(\bm{m})+\Gamma(\bm{m})\}$ such
that $\mathcal{W}\cap\mathcal{W}'=\varnothing$ and
$\overline{\mathcal{W}}\cap\overline{\mathcal{W}'}\ne\varnothing$.
\end{lem}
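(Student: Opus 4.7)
The plan is to combine near-minimizing paths for the communication height $\Theta(\bs m, \bs m'')$ with a compactness extraction rooted in Morse theory. By the definition of $\Gamma(\bs m)$, there is a local minimum $\bs m'' \in \mc M_0 \setminus \{\bs m\}$ with $\Theta(\bs m, \bs m'') = H := U(\bs m) + \Gamma(\bs m)$, so for each integer $n \ge 1$ I fix a continuous path $\gamma_n\colon [0,1] \to \bb R^d$ with $\gamma_n(0) = \bs m$, $\gamma_n(1) = \bs m''$, and $\max_t U(\gamma_n(t)) \le H + 1/n$. By \eqref{a01} one has $\bs m'' \notin \mc W$, so each $\gamma_n$ must leave $\overline{\mc W}$ and eventually enter some other connected component of $\{U < H\}$.

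Next I would localize where $\gamma_n$ crosses from $\overline{\mc W}$ to another component by setting $v_n := \sup\{t : \gamma_n(t) \in \overline{\mc W}\}$ and $w_n := \inf\{t > v_n : U(\gamma_n(t)) < H\}$. A short continuity argument shows $\gamma_n(v_n) \in \partial \mc W$ with $U(\gamma_n(v_n)) = H$; moreover $U(\gamma_n(w_n)) = H$, and for $t$ slightly greater than $w_n$ the path enters a connected component $\mc W'_n$ of $\{U < H\}$ distinct from $\mc W$, so that $\gamma_n(w_n) \in \partial \mc W'_n$. The growth hypothesis \eqref{26} confines the images of all $\gamma_n$ to the compact set $\{U \le H+1\}$, and since $U$ is Morse with finitely many critical points only finitely many components of $\{U < H\}$ meet this compact set. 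Passing to a subsequence, I may assume $\mc W'_n \equiv \mc W'$ and $\gamma_n(v_n) \to \bs x \in \partial \mc W$, $\gamma_n(w_n) \to \bs y \in \partial \mc W'$ with $U(\bs x) = U(\bs y) = H$.

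The decisive step is to force $\bs x = \bs y$, which would give $\bs x \in \partial \mc W \cap \partial \mc W' \subset \overline{\mc W} \cap \overline{\mc W'}$ and finish the proof. The sub-arc $\gamma_n|_{[v_n, w_n]}$ lies in the thin slab $\{H \le U \le H + 1/n\}$ and avoids both $\mc W$ and $\mc W'$. I would close the argument by replacing $\gamma_n$ with a deformation obtained by following the downward gradient flow of $U$ outside small neighborhoods of $\mc C_0 \cap \{U = H\}$: the classical Morse-theoretic deformation lemma shows that any path whose $\max U$ approaches the critical value $H$ may be pushed so that its maximum is attained at a saddle $\bs \sigma \in \mc C_0$ with $U(\bs \sigma) = H$, and the Morse normal form at $\bs \sigma$ exhibits two distinct descending basins, one of which must be $\mc W$ (since we started there) and the other a distinct component $\mc W'$ (since we arrive at $\bs m'' \notin \mc W$). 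Hence $\bs x = \bs y = \bs \sigma \in \overline{\mc W} \cap \overline{\mc W'}$.

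The main obstacle is precisely this coincidence $\bs x = \bs y$: a pure compactness extraction does not by itself force the two limits to agree, because the joining arc in the slab can meander whenever the critical set $\mc C_0 \cap \{U = H\}$ contains more than one point. Executing the gradient-flow deformation cleanly requires holding the endpoints $\bs m$ and $\bs m''$ fixed throughout the deformation and ensuring that the resulting saddle is genuinely a mountain-pass separating $\mc W$ from the chosen component $\mc W'$, rather than a higher-index critical point whose unstable manifold descends into a single component; this is standard in the Morse-theoretic literature but is the technical heart of the argument.
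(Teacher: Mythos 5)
Your setup (near-minimizing paths $\gamma_n$ with $\max_t U(\gamma_n(t))\le H+1/n$, the crossing times $v_n$, $w_n$, and the finiteness of the components of $\{U<H\}$ meeting $\{U\le H+1\}$) is sound, but the proof has a genuine gap exactly where you flag it: the compactness extraction yields $\bs x\in\partial\mc W$ and $\bs y\in\partial\mc W'$ with $U(\bs x)=U(\bs y)=H$, and nothing so far prevents $\bs x\neq\bs y$ with $\overline{\mc W}\cap\overline{\mc W'}=\varnothing$ — the joining arc $\gamma_n|_{[v_n,w_n]}$ lives in the slab $\{H\le U\le H+1/n\}$, which collapses onto the level set $\{U=H\}$, not onto a point, so the arc may travel arbitrarily far along that level set. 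The gradient-flow deformation you invoke to close this gap is not carried out, and it is not an off-the-shelf citation in this setting: one must deform rel the endpoints, handle the (permitted) situation where several critical points of various indices sit at the exact level $H$, and rule out that the path crosses the slab through a critical point of index $\ge 2$, near which the local strict sublevel set is \emph{connected} and hence lies in a single component of $\{U<H\}$ (so such a crossing can be rerouted without ever leaving $\mc W$). That last dichotomy — index $1$ gives two local components below the critical level, index $\ge 2$ gives one — is precisely the content the paper must establish separately (via the Morse lemma) before any saddle can be produced. As written, your proof asserts its own conclusion at the decisive step.

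It is worth noting that the statement you are proving does not require producing a saddle at all; that stronger conclusion is deferred to Proposition \ref{l05a}. The paper's own route is purely point-set topological: it passes to the \emph{closed} sublevel set $\mc H=\{U\le H\}$, shows via a nested-compact-sets argument (Lemma \ref{l_105a-2}) that the target minimum $\bs m'$ lies in the same component $\mc H$ as $\bs m$, identifies the closures of the finitely many components of $\mc H^{o}$ with the closures of components of $\{U<H\}$, and then observes that if no two of these closed sets intersected, $\mc H$ could be separated by disjoint open sets, contradicting its connectedness. Reworking your argument along those lines — or fully executing the deformation lemma with the index dichotomy — is needed to make the proof complete.
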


The proof of this result is given in a subsection below.  Recall that
we denote by $B(\bm{x},\,r)$ the open ball of radius $r$ centered at
$\bs x$. Let
\begin{equation*}
\mathcal{A}(\bm{x},\,r)=\left(B (\bm{x},\,r)
\setminus\{\bm{x}\}\right)\cap\{\bm{y}
\in\mathbb{R}^{d}:U(\bm{y})<U(\bm{x})\}\ .
\end{equation*}

\begin{lem}
\label{l_cap_saddle}
Fix $H\in\bb R$, and let $\mathcal{W}_{1}$ and $\mathcal{W}_{2}$ be
two disjoint connected components of
$\{\bm{x}\in\mathbb{R}^{d}:U(\bm{x})<H\}$. If
$\overline{\mathcal{W}_{1}}\cap\overline{\mathcal{W}_{2}}\ne\varnothing$,
then
$\overline{\mathcal{W}_{1}}\cap\overline{\mathcal{W}_{2}} =
\partial\mathcal{W}_{1}\cap\partial\mathcal{W}_{2}$ and any element
$\bm{\sigma}$ of
$\overline{\mathcal{W}_{1}}\cap\overline{\mathcal{W}_{2}}$ is a saddle
point such that $U(\bs\sigma)=H$.  Moreover, for all $r>0$ small
enough, $\mathcal{A}(\bm{\sigma},\,r)$ has two connected components:
$\mathcal{A}(\bm{\sigma},\,r)\cap\mathcal{W}_{1}$ and
$\mathcal{A}(\bm{\sigma},\,r)\cap\mathcal{W}_{2}$
\end{lem}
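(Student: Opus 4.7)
The plan is to prove the three conclusions in sequence: the set identity $\overline{\mc W_1}\cap\overline{\mc W_2}=\partial\mc W_1\cap\partial\mc W_2$ together with the height $U(\bs\sigma)=H$; the assertion that any such $\bs\sigma$ has Morse index exactly one, i.e.\ is a saddle point in the sense of \eqref{47}; and the two-component description of $\mc A(\bs\sigma,r)$. Throughout I would use the standing Morse property of $U$, recorded at the start of Section \ref{sec1}.

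First I would fix $\bs\sigma\in\overline{\mc W_1}\cap\overline{\mc W_2}$ and observe that $\bs\sigma$ cannot belong to either $\mc W_i$: since $\mc W_1$ is open, any of its points admits a full neighborhood contained in $\mc W_1\subset\mc W_2^c$, and so cannot lie in $\overline{\mc W_2}$; symmetrically for $\mc W_2$. Thus $\bs\sigma\in\partial\mc W_1\cap\partial\mc W_2$ and, by continuity, $U(\bs\sigma)\le H$. If $U(\bs\sigma)<H$ held then $\bs\sigma$ would lie in some connected component $\mc W$ of the open set $\{U<H\}$; any neighborhood of $\bs\sigma$ meets $\mc W_1$, so $\mc W\cap\mc W_1\neq\varnothing$, forcing $\mc W=\mc W_1$ and the contradiction $\bs\sigma\in\mc W_1$. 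Hence $U(\bs\sigma)=H$, and the reverse inclusion $\partial\mc W_1\cap\partial\mc W_2\subset\overline{\mc W_1}\cap\overline{\mc W_2}$ is trivial.

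Next I would show that $\bs\sigma$ is a critical point of $U$. Suppose, for contradiction, that $(\nabla U)(\bs\sigma)\neq 0$. The implicit function theorem then provides a ball $B(\bs\sigma,r)$ in which $\{U=H\}$ is a smooth hypersurface cutting $B(\bs\sigma,r)$ into two open pieces, one contained in $\{U<H\}$ and the other in $\{U>H\}$. The first piece is connected and, being an open connected subset of $\{U<H\}$, lies inside a single connected component of $\{U<H\}$. But it meets both $\mc W_1$ and $\mc W_2$ (since $\bs\sigma$ lies in their common boundary), contradicting $\mc W_1\cap\mc W_2=\varnothing$. Hence $(\nabla U)(\bs\sigma)=0$, and by the Morse property $(\nabla^2U)(\bs\sigma)$ is nondegenerate of some index $k\in\{0,\dots,d\}$.

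Finally I would apply the Morse lemma to obtain coordinates $(y_1,\dots,y_d)$ centered at $\bs\sigma$ in which $U(\bs x)-U(\bs\sigma)=-y_1^2-\cdots-y_k^2+y_{k+1}^2+\cdots+y_d^2$. Via this chart the set $\mc A(\bs\sigma,r)$ becomes, for sufficiently small $r$, diffeomorphic to $B(0,r')\cap\{y_1^2+\cdots+y_k^2>y_{k+1}^2+\cdots+y_d^2\}$ for some $r'>0$; a direct inspection shows this set is empty for $k=0$, has exactly two connected components for $k=1$, and is connected for $k\ge 2$. Since $\bs\sigma\in\partial\mc W_1\cap\partial\mc W_2$, for small $r$ the set $\mc A(\bs\sigma,r)$ meets both $\mc W_i$, which lie in distinct connected components of $\{U<H\}$, so $\mc A(\bs\sigma,r)$ must have at least two components; we conclude $k=1$, i.e.\ $\bs\sigma$ is a saddle point. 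Denoting by $C_1,C_2$ the two components of $\mc A(\bs\sigma,r)$, each $C_i$ is an open connected subset of $\{U<H\}$ and is therefore contained in a single component of $\{U<H\}$. Since $\mc W_1$ and $\mc W_2$ each meet $\mc A(\bs\sigma,r)$ and are disjoint, one $C_i$ must lie in $\mc W_1$ and the other in $\mc W_2$, which yields $\mc A(\bs\sigma,r)\cap\mc W_i=C_i$ and finishes the proof. The only delicate point is the Morse-lemma component count, but this is elementary and poses no real obstacle.
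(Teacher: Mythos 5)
Your proposal is correct and follows essentially the same route as the paper: the paper simply factors your inlined arguments into auxiliary lemmata (the implicit-function-theorem step is its Lemma \ref{l_noncri_divide}, and the Morse-lemma component count for index $1$ versus index $\ge 2$ is its Lemmata \ref{l_saddle_2comp} and \ref{l_ind2_conn}), then concludes exactly as you do by noting that a connected $\mc A(\bs\sigma,r)$ would put points of $\mc W_1$ and $\mc W_2$ in the same component of $\{U<H\}$. The only cosmetic difference is that the paper rules out $k=0$ via its boundary lemma (Lemma \ref{l_level_boundary}) rather than via emptiness of $\mc A$, which is an equivalent observation.
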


The proof of this lemma is presented in a later subsection below.

\begin{lem}
\label{l_path_max_saddle}
Let $\bm{m}',\,\bm{m}''\in\mathcal{M}_{0}$ and let
$\bm{z}:[0,1]\to\mathbb{R}^{d}$ be a continuous path such that
\begin{equation*}
\bm{z}(0)=\bm{m}',\quad
\bm{z}(1)=\bm{m}'',
\quad U(\bm{z}(1/2))=\Theta(\bm{m}',\,\bm{m}'')\ ,
\end{equation*}
\begin{equation}
\label{f01}
U(\bm{z}(t))<U(\bm{z}(1/2)\ \text{for}\ t\in[0,\,1]\setminus\{1/2\}\ .
\end{equation}
Then, $\bm{z}(1/2)$ is a saddle point.
\end{lem}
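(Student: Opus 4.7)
The plan is to show that $\bs\sigma := \bs z(1/2)$ is a non-degenerate critical point of $U$ whose Morse index equals $1$, which is exactly the notion of saddle point used in this paper (cf. \eqref{47}). Since $U$ is Morse by assumption, every critical point is automatically non-degenerate, so the task reduces to ruling out three possibilities: (i) $\nabla U(\bs\sigma)\neq 0$; (ii) the Hessian $\nabla^{2}U(\bs\sigma)$ is positive definite (index $0$); (iii) the Hessian has two or more negative eigenvalues (index $\geq 2$). In each case the strategy is the same: produce a continuous path from $\bs m'$ to $\bs m''$ whose maximum of $U$ is strictly less than $U(\bs\sigma)=\Theta(\bs m',\bs m'')$, thereby contradicting the defining infimum in \eqref{Theta}.

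For (i), if $\nabla U(\bs\sigma)\neq 0$, choose a ball $B(\bs\sigma,r)$ on which $\nabla U\cdot\nabla U(\bs\sigma)>0$, and pick $\delta>0$ with $\bs z([1/2-\delta,1/2+\delta])\subset B(\bs\sigma,r/2)$; by \eqref{f01} we have the strict inequality $U(\bs z(1/2\pm\delta))<U(\bs\sigma)$. Translate the middle segment by a small vector $-s\,\nabla U(\bs\sigma)/|\nabla U(\bs\sigma)|$ and splice the translated curve back into $\bs z$ via two short line segments at the endpoints. For $s$ sufficiently small, the translated arc sees $U$ strictly decrease by the gradient inequality, and continuity at the splice points together with the strict inequality $U(\bs z(1/2\pm\delta))<U(\bs\sigma)$ keeps the short connecting segments below $U(\bs\sigma)$. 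The modified continuous path from $\bs m'$ to $\bs m''$ then has maximum $U$-value strictly below $U(\bs\sigma)$, which is the contradiction.

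For (ii) and (iii), apply the Morse lemma to obtain local coordinates $\bs y$ centered at $\bs\sigma$ in which
\begin{equation*}
U(\bs y)-U(\bs\sigma) \;=\; -y_{1}^{2}-\cdots-y_{k}^{2}+y_{k+1}^{2}+\cdots+y_{d}^{2}\;,
\end{equation*}
where $k$ is the Morse index. If $k=0$, then $\bs\sigma$ is a strict local minimum of $U$, which directly contradicts the strict inequality $U(\bs z(t))<U(\bs\sigma)$ for $t$ near $1/2$, $t\neq 1/2$. If $k\geq 2$, the set $\{\bs y\neq\bs 0 : -y_{1}^{2}-\cdots-y_{k}^{2}+y_{k+1}^{2}+\cdots+y_{d}^{2}<0\}$, intersected with a small ball around the origin, is path-connected: its projection onto the first $k$ coordinates covers $\mathbb{R}^{k}\setminus\{\bs 0\}$, which is connected for $k\geq 2$, and one can freely deform the remaining coordinates to $\bs 0$ while preserving negativity of the quadratic form. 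Choose $\delta>0$ so small that $\bs z([1/2-\delta,1/2+\delta])$ lies in the Morse chart; the points corresponding to $\bs z(1/2\pm\delta)$ lie in this strict sublevel set, so they can be connected by a continuous path on which $U<U(\bs\sigma)$. Replacing $\bs z|_{[1/2-\delta,1/2+\delta]}$ by this path yields a competitor with maximum strictly below $U(\bs\sigma)$, once again contradicting the definition of $\Theta(\bs m',\bs m'')$. We conclude $k=1$, so $\bs\sigma$ is a saddle.

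The main obstacle is the path-splicing argument in step (i): one has to choose the perturbation parameter $s$ small enough that the two straight segments connecting $\bs z(1/2\pm\delta)$ to their translates remain inside $B(\bs\sigma,r)$ and keep $U$ strictly below $U(\bs\sigma)$ by continuity, while $s$ must still be positive so that the gradient push actually decreases the maximum on the middle arc. Steps (ii) and (iii) are then straightforward topological consequences of the Morse normal form.
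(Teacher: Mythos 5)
Your proof is correct, and it reaches the conclusion by a partly different route than the paper. The second half (ruling out index $0$ and index $\ge 2$ via the Morse normal form and the path-connectedness of the punctured strict sublevel set, then splicing a detour into $\bs z$) is essentially the paper's own argument: your connectedness claim for $k\ge 2$ is exactly Lemma \ref{l_ind2_conn}, and the splicing is the one performed in the paper's proof. The genuine difference is in how criticality of $\bs z(1/2)$ is established. The paper flows the \emph{entire} path forward under the dynamical system $\dot{\bs x}=\bs b(\bs x)$ with $\bs b=-(\nabla U+\bs\ell)$, uses that $U$ is a strict Lyapunov function off $\mc C_0$ (which relies on the orthogonality $\nabla U\cdot\bs\ell=0$) and that the endpoints $\bs m'$, $\bs m''$ are fixed by the flow, and concludes that $U(\upsilon_{\bs z(1/2)}(s))$ cannot strictly decrease without producing a better competitor. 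You instead perturb only the middle arc by a small translation in the direction $-\nabla U(\bs\sigma)$ and control $U$ on the translated arc and the two connecting segments by the mean value theorem on a ball where $\nabla U\cdot\nabla U(\bs\sigma)>0$. Your version is more elementary and purely local --- it needs no ODE theory and never uses the field $\bs\ell$ --- and it treats all three cases by the same splicing mechanism; the paper's version is shorter once the flow machinery is in place and handles the whole path in one stroke. Both arguments are complete: in your case (i) the only delicate points (uniform strict decrease on the compact translated arc, and keeping the connecting segments below $U(\bs\sigma)$ for $s$ small) are exactly the ones you flag and they go through, since the maximum of $U$ over each compact piece is attained and hence strictly below $U(\bs\sigma)=\Theta(\bs m',\bs m'')$, contradicting the infimum in \eqref{Theta}.
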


\begin{proof}
Recall that we denote by $\upsilon_{\bs x} (t)$ the solution of the
ODE \eqref{31} starting from $\bs x$. For $s\ge 0$, let $\psi_s \colon
[0,1]\to\mathbb{R}^{d}$ be the continuous path defined by
\begin{equation*}
\psi_s(t)=\upsilon_{\bm{z}(t)} (s)\ .
\end{equation*}
As $U$ decreases along the solutions of the ODE,
\begin{equation*}
U(\psi_s (t)) \,=\, U(\upsilon_{\bm{z}(t)} (s))
\,\le\, U(\bm{z}(t))\ .
\end{equation*}

We claim that
\begin{equation}
\label{f03}
U(\psi_s(1/2)) \,=\, U(\bm{z}(1/2)) \,=\, U(\psi_0(1/2)) \quad
\text{for all $s>0$}\;.
\end{equation}
Suppose, by contradiction, that there exists $s_{0}>0$
such that $U(\psi_{s_0}(1/2))<U(\bm{z}(1/2))$.  By \eqref{f01}, for
all $t\ne1/2$,
\begin{equation*}
U(\psi_{s_0}(t))\le U(\bm{z}(t))<U(\bm{z}(1/2)) \;.
\end{equation*}
Thus, since by hypothesis, $U(\psi_{s_0}(1/2))<U(\bm{z}(1/2))$,
\begin{equation}
\label{f02}
\max_{t\in[0,1]}U(\psi_{s_0}(t))<U(\bm{z}(1/2))=\Theta(\bm{m}',\,\bm{m}'')\ .
\end{equation}
As $\bm{m}'$, $\bs m''$ are critical points,
$\upsilon_{\bs n}(s)=\bs n$ for $\bs n=\bs m'$, $\bs m''$, $s>0$, so
that
$\psi_{s_0} (0) = \upsilon_{\bs z(0)}(s_0) = \upsilon_{\bs m'}(s_0) =
\bs m'$, $\psi_{s_0} (1) = \bs m''$. Therefore, the continuous path
$\psi_{s_0}\colon [0,1]\to \bb R^d$ satisfies $\psi_{s_0}(0)=\bm{m}'$,
$\psi_{s_0}(1)=\bm{m}''$ and \eqref{f02}.  This contradicts to the
definition of $\Theta(\bm{m}',\,\bm{m}'')$, and completes the proof of
claim \eqref{f03}.

It follows from \eqref{f03} and from the fact $U$ strictly decreases
along trajectories which do not start from critical points that
$\bm{z}(1/2)$ is a critical point of $U$.

It remains to show that $\bm{z}(1/2)$ is a saddle point.  Clearly,
$\bm{z}(1/2)$ is not a local minimum.  Suppose, by contradiction, that
$\bm{z}(1/2)$ is not a saddle point. Then, by Lemma \ref{l_ind2_conn}
below, the set $\mathcal{A}(\bm{z}(1/2),\,r)$ is connected for
sufficiently small $r>0$. Since $\bm{z}$ is continuous, there is
$\eta_{0}=\eta_{0}(r)>0$ such that $\bm{z}(t)\in B(\bm{z}(1/2),\,r)$
for all $t\in[1/2-\eta_{0},\,1/2+\eta_{0}]$. Therefore,
$\bm{z}(t)\in\mathcal{A}(\bm{z}(1/2),\,r)$ for all
$t\in[1/2-\eta_{0},\,1/2+\eta_{0}]\setminus\{1/2\}$. Since
$\mathcal{A}(\bm{z}(1/2),\,r)$ is connected and open, it is path
connected.  Therefore, there is a continuous path
$\bm{z}_{1}:[1/2-\eta_{0},\,1/2+\eta_{0}]\to\mathcal{A}(\bm{z}(1/2),\,r)$
such that $\bm{z}_{1}(1/2\pm \eta_{0})=\bm{z}(1/2\pm \eta_{0})$.
Define a path $\bm{z}_{2}:[0,1]\to\mathbb{R}^{d}$ as
\begin{equation*}
\bm{z}_{2}(t)=\begin{cases}
\bm{z}(t) & t\in[0,1/2-\eta_{0})\cup(1/2+\eta_{0},1]\ ,\\
\bm{z}_{1}(t) & t\in[1/2-\eta_{0},\,1/2+\eta_{0}]\ .
\end{cases}
\end{equation*}
Thus, $\bm{z}_{2}$ is a continuous trajectory from $\bm{m}'$ to
$\bm{m}''$ such that $U(\bm{z}(t))<U(\bm{z}(1/2))$ for all
$t\in[0,\,1]$. This contradicts the definition of
$\Theta(\bm{m}',\,\bm{m}'')$, and completes the proof of the lemma.
\end{proof}

\begin{proof}[Proof of Proposition \ref{l05a}]
Fix $\bs m\in\mc M_0$.  Let $\mathcal{W}'$ be given by Lemma
\ref{l_105a-1}, and denote by $\bm{\sigma}$ an element of
$\overline{\mathcal{W}}\cap\overline{\mathcal{W}}'$.  By Lemma
\ref{l_cap_saddle}, $\bs\sigma$ is a saddle point,
$\bs \sigma \in \Upsilon (\bs m)$, and, for sufficiently small $r>0$,
$\mathcal{A}(\bm{\sigma},\,r)$ has two connected components
$\mathcal{A}(\bm{\sigma},\,r)\cap\mathcal{W}$ and
$\mathcal{A}(\bm{\sigma},\,r)\cap\mathcal{W}'$. By Hartman--Grobman
theorem, there are two continuous path
$\phi_{1},\,\phi_{2} \colon (-\infty, 0] \to\mathbb{R}^{d}$ such that
\begin{equation*}
\lim_{t\to-\infty}\phi_{j}(t)=\bm{\sigma}\ ,\quad
\phi_{1}(s)\in\mathcal{A}(\bm{\sigma},\,r)\cap\mathcal{W}\ ,\quad
\phi_{2}(s) \in \mathcal{A}(\bm{\sigma},\,r)\cap\mathcal{W}'
\end{equation*}
for all $s\le 0$. Since $\mc W$, $\mc W'$ are connected, we may extend
continuously these trajectories to $s>0$ in such a way that
$\phi_{1}(s) \in\mc W$, $\phi_{2}(s) \in\mc W'$ for all $s\ge 0$.
As $\bs\sigma\in \Upsilon (\bs m)$, by \eqref{48},
\begin{equation*}
\lim_{s\to\infty} \phi_{1}(s)= \bs m\;, \quad
\lim_{s\to\infty} \phi_{2}(s)= \bs m'\;,
\end{equation*}
where $\bs m'$ is a local minimum of $U$ one in $\mc W'$.

Concatenating the paths $\phi_1$, $\phi_2$ and reparametrizing it, we
obtain a continuous path $\bm{z}:[0,1]\to\mathbb{R}^{d}$ from $\bs m$
to $\bs m'$ such that $\bs z(1/2) = \bs \sigma$. By Lemma
\ref{l_cap_saddle}, $U(\bs \sigma) = U(\bs m) + \Gamma(\bs
m)$. Therefore, by construction, $\bs z(\cdot)$ fulfills all
conditions required in Proposition \ref{l05a}.

It remains to check the final assertion of the proposition, which
follows from Lemma \ref{l_path_max_saddle}.
\end{proof}

\subsection*{Proof of Lemma \ref{l_cap_saddle}}

Throughout this subsection, we will use the fact that an open
connected subset of $\bb R^d$ is path-connected.

\begin{lem}
\label{l_homeo}
Homeomorphisms preserve the number of open connected components.
\end{lem}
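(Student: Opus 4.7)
The plan is to establish a bijection between the set of connected components of a space $X$ and the set of connected components of its homeomorphic image $Y = \Phi(X)$, and to argue that openness is preserved under this correspondence.

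First I would recall the basic fact that the image of a connected set under a continuous map is connected: if $C \subseteq X$ is connected and $\Phi(C) = E_1 \sqcup E_2$ with $E_j$ relatively open and nonempty in $\Phi(C)$, then $C = (\Phi|_C)^{-1}(E_1) \sqcup (\Phi|_C)^{-1}(E_2)$ gives a disconnection of $C$, a contradiction. Applying this both to $\Phi$ and to the continuous inverse $\Phi^{-1}$, one sees that if $C$ is a connected component of $X$ then $\Phi(C)$ is connected, and it is moreover maximal: any connected set $D \subseteq Y$ with $\Phi(C) \subseteq D$ pulls back to a connected set $\Phi^{-1}(D) \supseteq C$ in $X$, which by maximality of $C$ forces $\Phi^{-1}(D) = C$ and hence $D = \Phi(C)$. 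Therefore $C \mapsto \Phi(C)$ sends components to components, and by the symmetric argument for $\Phi^{-1}$ this assignment is a bijection between the components of $X$ and those of $Y$.

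For the openness statement, I would use that $\mathbb{R}^{d}$ is locally connected, so that in any open subset of $\mathbb{R}^{d}$ the connected components are themselves open. Thus, when the spaces on which we apply the lemma are open subsets of $\mathbb{R}^{d}$ (which is the situation in which the result is used, e.g.\ in Lemma \ref{l_cap_saddle}), the components counted on each side are automatically open, and the bijection constructed above identifies open components with open components. There is no real obstacle here; the only point requiring care is to make sure that in the application one is taking $X$ and $Y$ to be (homeomorphic) open subsets of $\mathbb{R}^{d}$, so that local connectedness applies, and to use continuity of $\Phi^{-1}$ together with that of $\Phi$ to get the maximality in both directions.
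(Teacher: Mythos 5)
Your proof is correct and rests on the same key fact as the paper's: continuous images of connected sets are connected, applied to both $\Phi$ and $\Phi^{-1}$. The paper argues slightly more economically by counting (each component of $\mathcal{U}_1$ maps into a single component of $\mathcal{U}_2$, giving $n_2\le n_1$, and symmetrically), whereas you exhibit the explicit bijection $C\mapsto\Phi(C)$ and add the (correct, and implicitly used) remark that openness of components follows from local connectedness of open subsets of $\mathbb{R}^d$; these are cosmetic differences only.
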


\begin{proof}
Let $\mathcal{U}_1$ and $\mathcal{U}_2$ be open sets, and let
$\varphi:\mathcal{U}_1\to\mathcal{U}_2$ be a homeomorphism. Denote by
$\mc U_{j, 1}, \dots, \mc U_{j,n_j}$ the connected components of
$\mathcal{U}_j$, $j=1$, $2$. Since $\varphi$ is continuous,
$\varphi(\mathcal{U}_{1,k})$ is connected. As $\varphi$ is surjective,
$\mc U_2 = \cup_{1\le k\le n_1} \varphi(\mc U_{1,k})$, so that
$n_2 \le n_{1}$. Since $\varphi^{-1}$ is continuous, the same argument
yields the reverse inequality.
\end{proof}

\begin{lem}
\label{l_noncri_divide}Let $\bm{p}$ be non-critical point of $U$.
Then, for sufficiently small $r>0$, the manifold
$\{\bm{x}\in\mathbb{R}^{d}:U(\bm{x})=U(\bm{p})\}$ divides
$B(\bm{p},\,r)$ into two connected components which are
$B(\bm{p},\,r)\cap\{\bm{x}\in\mathbb{R}^{d}:U(\bm{x})<U(\bm{p})\}$ and
$B(\bm{p},\,r)\cap\{\bm{x}\in\mathbb{R}^{d}:U(\bm{x})>U(\bm{p})\}$.
In particular, $\mathcal{A}(\bm{p},\,r)$ is connected. Furthermore,
there is a continuous path $\bm{z}:[0,1]\to B(\bm{p},\,r)$ such that
\begin{equation*}
\bm{z}(0)=\bm{p}\ ,\ \bm{z}\left((0,1]\right)
\subset B(\bm{p},\,r)\cap\{\bm{x}\in\mathbb{R}^{d}:
U(\bm{x})<U(\bm{p})\}\ .
\end{equation*}
\end{lem}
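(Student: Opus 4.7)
The statement is a standard local separation result at a regular point of $U$. The plan is to use the implicit function theorem (in submersion form) to straighten the level sets of $U$ near $\bm p$, reduce everything to an elementary statement about a hyperplane separating a small open neighborhood of the origin, and transport the conclusion back via Lemma~\ref{l_homeo}.

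Since $\bm p$ is not a critical point, $\nabla U(\bm p)\neq 0$ and $U$ is a $C^1$-submersion near $\bm p$. By the implicit function theorem, there exist open neighborhoods $\mathcal{U}$ of $\bm p$ and $\mathcal{V}$ of $0\in\bb R^d$ and a $C^1$-diffeomorphism $\varphi\colon\mathcal{U}\to\mathcal{V}$ with $\varphi(\bm p)=0$ and $U\circ\varphi^{-1}(y_1,\dots,y_d)=U(\bm p)+y_1$. In these straightened coordinates, the sets $\{U=U(\bm p)\}$, $\{U<U(\bm p)\}$, $\{U>U(\bm p)\}$ correspond, respectively, to the hyperplane $\{y_1=0\}$ and the half-spaces $\{y_1<0\}$, $\{y_1>0\}$.

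Next, I would choose $r>0$ so small that: (i) $B(\bm p,r)\subset\mathcal{U}$; (ii) the image $\mathcal{V}_r:=\varphi(B(\bm p,r))$ contains a Euclidean ball $B(0,\rho')$ for some $\rho'>0$; and (iii) $\mathcal{V}_r$ is star-shaped with respect to $0$. Properties (i) and (ii) come from the continuity of $\varphi$ and $\varphi^{-1}$ at $\bm p$ and $0$, while property (iii) holds for all sufficiently small $r$ because $\varphi$ is a $C^1$-perturbation of its linear part $D\varphi(\bm p)$, which maps balls to centered ellipsoids. By Lemma~\ref{l_homeo} applied to the homeomorphism $\varphi\colon B(\bm p,r)\to\mathcal{V}_r$, it suffices to show that each of $\mathcal{V}_r\cap\{y_1>0\}$ and $\mathcal{V}_r\cap\{y_1<0\}$ is connected. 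Given $\bm y_0$ in either of these sets, star-shapedness implies that the segment $[\epsilon\bm y_0,\bm y_0]$ lies in $\mathcal{V}_r$ for every $\epsilon\in(0,1]$; it also keeps the sign of the first coordinate, and for $\epsilon>0$ small enough $\epsilon\bm y_0\in B(0,\rho')$, so the segment ends in the convex half-ball $B(0,\rho')\cap\{\pm y_1>0\}$. Any two points in the latter half-ball are joined by a segment lying in it, so the two half-space intersections are path-connected.

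Finally, for the descending path, set $\bm z(t):=\varphi^{-1}(-t\delta\bm e_1)$ with $\delta:=\rho'/2$, where $\bm e_1$ is the first coordinate vector; this is well defined since $\{-t\delta\bm e_1:t\in[0,1]\}\subset B(0,\rho')\subset\mathcal{V}_r$. The path is continuous with $\bm z(0)=\bm p$ and $\bm z(t)\in B(\bm p,r)$, and $U(\bm z(t))=U(\bm p)-t\delta<U(\bm p)$ for $t\in(0,1]$, as required. The only genuinely delicate point is the star-shapedness assertion (iii); the rest is a routine application of the implicit function theorem combined with Lemma~\ref{l_homeo}.
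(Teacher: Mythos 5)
Your proof is correct, but it takes a different route from the paper's. You straighten $U$ into a coordinate function via the submersion form of the implicit function theorem and then control the geometry of $\varphi(B(\bm p,r))$ through star-shapedness; the paper instead uses the graph form of the implicit function theorem (writing the level set locally as $x_d=g(\widetilde{\bm x})$), splits the ball into the regions above and below the graph, and shows by an intermediate-value/connectedness argument that $U-U(\bm p)$ has constant sign on each region, identifying the signs by moving along $\pm\nabla U(\bm p)$. Your approach trades the paper's implicit appeal to path-connectedness of the region above a continuous graph inside a ball for the star-shapedness assertion (iii), which you correctly flag as the one delicate point; it is in fact true and can be closed in a few lines: writing $\psi=\varphi^{-1}$, one has
\begin{equation*}
\frac{d}{dt}\,\bigl|\psi(t\bm y)-\bm p\bigr|^{2}
=2\,\bigl\langle \psi(t\bm y)-\bm p,\;(D\psi)(t\bm y)\,\bm y\bigr\rangle
=\frac{2}{t}\,\Bigl(\,\bigl|(D\psi)(\bm 0)\,t\bm y\bigr|^{2}+o\bigl(|t\bm y|^{2}\bigr)\Bigr)>0
\end{equation*}
for $\bm y\neq\bm 0$ small, by continuity of $D\psi$ and invertibility of $(D\psi)(\bm 0)$; hence $t\mapsto|\psi(t\bm y)-\bm p|$ is increasing along rays and $\varphi(B(\bm p,r))$ is star-shaped about the origin for $r$ small. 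With that detail supplied, your argument for the two connected components, for the connectedness of $\mathcal{A}(\bm p,r)$, and for the descending path $\bm z(t)=\varphi^{-1}(-t\delta\bm e_1)$ is complete and arguably cleaner than the paper's, at the cost of invoking the slightly stronger (though standard) straightening chart.
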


\begin{proof}
Fix $\bm{p}=(p_{1},\,\dots,\,p_{d})\in\mathbb{R}^{d}$ be a
non-critical point. Then, $\nabla U(\bm{p})\ne0$ so that there is
$1\le j\le d$ such that
\begin{equation*}
\frac{\partial U}{\partial x_{j}}(\bm{p})\ne0\ .
\end{equation*}
Assume, without loss of generality, that $j=d$. For
$\bm{x}\in\mathbb{R}^{d}$, let
\begin{equation*}
\widetilde{\bm{x}}=(x_{1,}\,\dots,\,x_{d-1})\ .
\end{equation*}
By the implicit function theorem, there exist $r>0$ and a
$C^{1}$-function $g:\mathbb{R}^{d-1}\to\mathbb{R}$ such that
\begin{equation*}
g(\widetilde{\bm{p}})=p_{d} \;, \quad
U(\widetilde{\bm{x}},\,g(\widetilde{\bm{x}}))=U(\bm{p})\ \text{for
all}\ \widetilde{\bm{x}}\in B_{d-1}(\widetilde{\bm{p}},\,r)\ ,
\end{equation*}
where $B_{d-1}(\widetilde{\bm{p}},\,r)$ is a $(d-1)$-dimensional
ball with radius $r>0$ centered at $\widetilde{\bm{p}}$.

Decompose the set $B(\bm{p},\,r)$ into three parts:
\begin{equation*}
\mathcal{P}_{1}=B(\bm{p},\,r)\cap\{(\widetilde{\bm{x}},
\,\bm{y})\in\mathbb{R}^{d}:\bm{y}>g(\widetilde{\bm{x}})\}\ ,
\end{equation*}
\begin{equation*}
\mathcal{P}_{2}=B(\bm{p},\,r)\cap\{(\widetilde{\bm{x}},
\,\bm{y})\in\mathbb{R}^{d}:\bm{y}<g(\widetilde{\bm{x}})\}\ ,
\end{equation*}
\begin{equation*}
\mathcal{P}_{3}=B(\bm{p},\,r)\cap\{(\widetilde{\bm{x}},
\,\bm{y})\in\mathbb{R}^{d}:\bm{y}=g(\widetilde{\bm{x}})\}\ .
\end{equation*}
By definition of $g$,
$\mathcal{P}_{3}=B(\bm{p},\,r)\cap\{\bm{x}\in\mathbb{R}^{d}:
U(\bm{x})=U(\bm{p})\}$
and
\begin{equation}
U(\bm{x})\ne U(\bm{p})\ \text{for all}\,
\bm{x}\in\mathcal{P}_{1}\cup\mathcal{P}_{2}\ .
\label{e_nocri_divide-1}
\end{equation}

Suppose that there is $\bm{x},\,\bm{y}\in\mathcal{P}_{1}$ such that
$U(\bm{x})<U(\bm{p})<U(\bm{y})$. As
$\mathcal{P}_{1}$ is path-connected, there is a path in
$\mathcal{P}_{1}$ connecting $\bm{x}$ to $\bm{y}$. Since $U$
is continuous, this path must pass through a point
$\bm{z}\in\mathcal{P}_{1}$ such that $U(\bm{z})=U(\bm{p})$, and this
contradicts \eqref{e_nocri_divide-1}. Therefore,
$U(\bm{x})>U(\bm{p})$ for all $\bm{x}\in\mathcal{P}_{1}$ or
$U(\bm{x})<U(\bm{p})$ for all $\bm{x}\in\mathcal{P}_{1}$.

Let $\bm{v}=\nabla U(\bm{p})$. For sufficiently small $\eta>0$,
$U(\bm{p}+\eta\bm{v})>U(\bm{p})$ and $U(\bm{p}-\eta\bm{v})< U(\bs p)$.
Thus, there is $\bm{x},\,\bm{y}\in B(\bm{p},\,r)$ such that
$U(\bm{x})<U(\bm{p})<U(\bm{y})$. Therefore, one of the sets
$\mathcal{P}_{1}$, $\mathcal{P}_{2}$ is
$B(\bm{p},\,r)\cap\{\bm{x}\in\mathbb{R}^{d}:U(\bm{x})<U(\bm{p})\}$ and
the other one is
$B(\bm{p},\,r)\cap\{\bm{x}\in\mathbb{R}^{d}:U(\bm{x})>U(\bm{p})\}$.

Finally, since $\mathcal{P}_{3}$ is the graph of a $C^{1}$ function,
there are paths $\bm{z}_{i}:[0,1]\to B(\bm{p},\,r)$ such
that
\begin{equation*}
\bm{z}_{i}(0)=\bm{p}\ ,\ \bm{z}_{i}\left((0,1]\right)
\subset\mathcal{P}_{i}\ .
\end{equation*}
This completes the proof of the lemma.
\end{proof}

\noindent {\it Critical points}. The next two lemmata provide the
number of connected componentes of the set $\mc A(\bs c, r)$,
$\bs c \in \mc C_0$, in terms of the index of the critical points $\bs
c$.

\begin{lem}
\label{l_saddle_2comp}
Let $\bm{\sigma}$ be a saddle point of $U$.
Then, for sufficiently small $r>0$, the set
$\mathcal{A}(\bm{\sigma},\,r)=\left(B(\bm{\sigma},\,r)
\setminus\{\bm{\sigma}\}\right)\cap\{\bs x\in\bb R^d :
U(\bs x) <U(\bm{\sigma})\}$
has exactly two connected components.
\end{lem}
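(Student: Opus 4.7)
The strategy is a standard Morse-lemma reduction combined with convexity in the model coordinates. In this paper's conventions a saddle point is a non-degenerate critical point of $U$ whose Hessian $\nabla^{2} U(\bs\sigma)$ has exactly one negative eigenvalue (cf.~\eqref{47}). The Morse lemma then produces a neighborhood $\mc U$ of $\bs\sigma$ and a $C^{1}$-diffeomorphism $\varphi\colon\mc U\to\mc V\subset\bb R^{d}$ satisfying $\varphi(\bs\sigma)=\bs 0$ and
\begin{equation*}
U(\varphi^{-1}(\bs y))\,-\,U(\bs\sigma)\;=\;Q(\bs y)\;:=\;-\,y_{1}^{2}+y_{2}^{2}+\cdots+y_{d}^{2}\;.
\end{equation*}
This reduces the problem to counting the connected components of $\{Q<0\}\cap W$ for suitable open neighborhoods $W$ of $\bs 0$ in $\bb R^{d}$.

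In the model coordinates, $\{Q<0\}=\mc C_{+}\sqcup\mc C_{-}$, where $\mc C_{\pm}:=\{\pm y_{1}>\sqrt{y_{2}^{2}+\cdots+y_{d}^{2}}\}$ is the strict superlevel set of the concave function $\bs y\mapsto\pm y_{1}-|(y_{2},\ldots,y_{d})|$, hence convex and path-connected. For any $\rho>0$ with $\overline{B(\bs 0,\rho)}\subset\mc V$, the set $B(\bs 0,\rho)\cap\{Q<0\}$ decomposes as the disjoint union of the two convex sets $B(\bs 0,\rho)\cap\mc C_{\pm}$, giving exactly two connected components. By Lemma \ref{l_homeo}, the preimage $\varphi^{-1}(B(\bs 0,\rho))\cap\{U<U(\bs\sigma)\}$ also has exactly two connected components $\mc K_{\pm}^{\rho}:=\varphi^{-1}(B(\bs 0,\rho)\cap\mc C_{\pm})$.

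It remains to transfer this count from the ``Morse balls'' $\varphi^{-1}(B(\bs 0,\rho))$ to the Euclidean balls $B(\bs\sigma,r)$ appearing in the statement. By continuity of $\varphi$ at $\bs\sigma$ and of $\varphi^{-1}$ at $\bs 0$, one first picks $\rho_{+}$ small enough that $\overline{B(\bs 0,\rho_{+})}\subset\mc V$, then $r>0$ small enough that $\varphi(B(\bs\sigma,r))\subset B(\bs 0,\rho_{+})$, and finally $\rho_{-}\in(0,\rho_{+})$ small enough that $\varphi^{-1}(B(\bs 0,\rho_{-}))\subset B(\bs\sigma,r)$. Decompose
\begin{equation*}
\mc A(\bs\sigma,r)\;=\;\mc A_{+}(r)\,\sqcup\,\mc A_{-}(r)\;,\qquad
\mc A_{\pm}(r)\;:=\;B(\bs\sigma,r)\cap\varphi^{-1}(\mc C_{\pm})\;.
\end{equation*}
These pieces are open and disjoint, and each is nonempty because $\varphi^{-1}(\pm t\bs e_{1})$ lies in $\mc A_{\pm}(r)$ for all sufficiently small $t>0$. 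The key step is to establish the connectedness of each $\mc A_{\pm}(r)$.

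I expect this last step to be the main technical obstacle, since $B(\bs\sigma,r)$ need not be convex or star-shaped in the Morse chart. My plan to overcome it is a two-scale convexity argument: for any $\bs x\in\mc A_{+}(r)$ one has $\varphi(\bs x)\in\mc C_{+}\cap B(\bs 0,\rho_{+})$, and convexity of $\mc C_{+}$ forces the straight segment in Morse coordinates from $\varphi(\bs x)$ to the reference point $(\rho_{-}/2)\bs e_{1}\in B(\bs 0,\rho_{-})\cap\mc C_{+}$ to lie entirely in $\mc C_{+}$. Taking $r$ small enough that $\varphi(B(\bs\sigma,r))$ is pinched between two close Euclidean balls centered at $\bs 0$ (which is possible because $\varphi$ is $C^{1}$ with invertible differential at $\bs\sigma$, so for small $r$ the image $\varphi(B(\bs\sigma,r))$ is uniformly close to the ellipsoid $D\varphi(\bs\sigma)\cdot B(\bs 0,r)$), this segment can be forced to stay inside $\varphi(B(\bs\sigma,r))$, and its $\varphi$-preimage furnishes a path in $\mc A_{+}(r)$ from $\bs x$ to the fixed reference point $\varphi^{-1}((\rho_{-}/2)\bs e_{1})$. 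The symmetric argument applied to $\mc A_{-}(r)$ then yields exactly two connected components, completing the proof.
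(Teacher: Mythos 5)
Your route is the same as the paper's: apply the Morse lemma to replace $U-U(\bs\sigma)$ by the quadratic model $Q$ and count components of $\{Q<0\}$ near the origin. The paper's own proof simply declares that $\mc A(\bs\sigma,r)$ is diffeomorphic to $[B(\bs 0,r)\setminus\{\bs 0\}]\cap\{Q<0\}$ and invokes Lemma \ref{l_homeo}; you correctly observe that the chart $\varphi$ does not carry Euclidean balls to Euclidean balls, so the component count must actually be transferred from chart neighborhoods to the balls $B(\bs\sigma,r)$ of the statement, and you rightly identify the connectedness of each piece $\mc A_{\pm}(r)=B(\bs\sigma,r)\cap\varphi^{-1}(\mc C_{\pm})$ as the only real issue. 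Your decomposition into the two disjoint, nonempty open pieces is correct and gives ``at least two components'' for free.

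The mechanism you propose for the remaining connectedness step does not, however, close the gap. First, $\varphi(B(\bs\sigma,r))$ is not pinched between two \emph{close} Euclidean balls: to leading order it is the ellipsoid $D\varphi(\bs\sigma)\cdot B(\bs 0,r)$, so the optimal inner and outer radii differ by the condition number of $D\varphi(\bs\sigma)$, which is fixed and in general large. Second, and more seriously, even a correct pinching $B(\bs 0,c_1r)\subset\varphi(B(\bs\sigma,r))\subset B(\bs 0,c_2r)$ says nothing about chords: the portion of the segment from $\varphi(\bs x)$ to $(\rho_-/2)\bs e_1$ that lies in the shell $B(\bs 0,c_2r)\setminus B(\bs 0,c_1r)$ — precisely the portion near $\varphi(\bs x)$ — is uncontrolled, and a set squeezed between two balls need not contain segments between its points. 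A natural first repair, retracting $\varphi(\bs x)$ radially into $B(\bs 0,\rho_-)$ before applying convexity, itself requires $\varphi(B(\bs\sigma,r))$ to be star-shaped about $\bs 0$, which is again not automatic for a $C^1$ chart and must be argued (e.g.\ after normalizing $D\varphi(\bs\sigma)=\bb I$ and quantifying the $o(r)$ error, or by upgrading the regularity of the chart). So the step you flagged as the main obstacle is indeed the crux, your proposal does not yet resolve it, and you should either supply the star-shapedness/normalization argument in full or restate the lemma for the chart neighborhoods $\varphi^{-1}(B(\bs 0,\rho))$, which is all that is used in Lemmata \ref{l_cap_saddle} and \ref{l_path_max_saddle}; the paper's proof silently assumes this transfer as well.
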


\begin{proof}
By\cite[Lemma 2.2]{M69}, since $U$ is nondegenerate at
$\bm{\sigma}$, the image of $U$ near $\bm{\sigma}$ is locally
diffeomorphic to the quadratic function $F:\bb R^d\to \bb R$ given by
\begin{equation*}
F(\bs x) \;=\; -\, x_{1}^{2}+\sum_{i=2}^{d}x_{i}^{2}\ .
\end{equation*}
Therefore, for sufficiently small $r>0$, $\mathcal{A}(\bm{\sigma},\,r)$
is diffeomorphic to the set
\begin{equation*}
[\, B(\bs 0 ,\,r) \setminus
\{\bm{0}\} \,] \cap F^{-1}((-\infty, 0)) \,=\,
[\, B(\bs 0 ,\,r) \setminus
\{\bm{0}\} \,] \cap
\{\bm{x}\in\mathbb{R}^{d}:
-x_{1}^{2}+\sum_{i=2}^{d}x_{i}^{2}<0\}\ .
\end{equation*}
Since the set on the right-hand side has two connected components, by
Lemma \ref{l_homeo}, $\mathcal{A}(\bm{\sigma},\,r)$ has also two
connected components.
\end{proof}

\begin{lem}
\label{l_ind2_conn}
Let $\bm{c}$ be a critical point of $U$ with index greater or equal to
$2$. Then, for sufficiently small $r>0$, $\mathcal{A}(\bm{c},\,r)$ is
path-connected.
\end{lem}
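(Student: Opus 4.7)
The plan is to reduce to the quadratic normal form via the Morse lemma, exactly as in the proof of Lemma \ref{l_saddle_2comp}, and then show that the model set is path-connected by an explicit radial deformation onto a punctured disk of dimension $\ge 2$.

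First, let $k\ge 2$ denote the index of $\bm c$. By \cite[Lemma 2.2]{M69}, since $\bm c$ is a non-degenerate critical point, there exist an open neighborhood $\mc U$ of $\bm c$, an open neighborhood $\mc V$ of $\bm 0 \in \bb R^d$, and a diffeomorphism $\varphi \colon \mc U \to \mc V$ with $\varphi(\bm c) = \bm 0$ such that
\begin{equation*}
U(\varphi^{-1}(\bm y)) \,=\, U(\bm c) \,-\, y_1^2 \,-\, \cdots \,-\, y_k^2 \,+\, y_{k+1}^2 \,+\, \cdots \,+\, y_d^2 \qquad \text{for all } \bm y \in \mc V.
\end{equation*}
Choose $r>0$ small enough so that $B(\bm c, r) \subset \mc U$. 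Then $\varphi$ restricts to a homeomorphism from $\mc A(\bm c, r)$ onto
\begin{equation*}
\mc A_0 \,:=\, \varphi(\mc A(\bm c, r)) \,=\, \big\{\, \bm y \in \varphi(B(\bm c, r)) \setminus \{\bm 0\} \,:\, y_1^2 + \cdots + y_k^2 \,>\, y_{k+1}^2 + \cdots + y_d^2 \,\big\}\;.
\end{equation*}
By Lemma \ref{l_homeo}, it suffices to prove that $\mc A_0$ is path-connected (for $r$ small).

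The key step is to construct, for $r$ small enough, an explicit path within $\mc A_0$ between any two points. Since $\varphi(B(\bm c, r))$ is an open neighborhood of the origin, it contains some open ball $B(\bm 0, \rho)$; the same radial argument below shows that $\mc A_0 \cap B(\bm 0, \rho)$ is path-connected, and any point of $\mc A_0$ can be connected to this inner region by a radial contraction $t \mapsto t \bm y$, which preserves the sign of the quadratic form and hence stays in $\mc A_0$. For any $\bm y \in \mc A_0 \cap B(\bm 0, \rho)$, write $\bm y = (\bm y^-, \bm y^+)$ with $\bm y^- = (y_1, \dots, y_k)$ and $\bm y^+ = (y_{k+1}, \dots, y_d)$. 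Then $\bm y^- \ne \bm 0$ because $|\bm y^-|^2 > |\bm y^+|^2 \ge 0$. Consider the path that first linearly contracts the positive coordinates, $t \mapsto (\bm y^-, (1-t)\bm y^+)$, $t\in[0,1]$, landing at the point $(\bm y^-, \bm 0)$ of the punctured disk $\mc D^* := \{(\bm z, \bm 0) : 0 < |\bm z| < \rho\}$. This path lies in $\mc A_0$ because along it the quadratic form stays strictly negative. The punctured disk $\mc D^*$ is path-connected precisely because $k \ge 2$ (here is where the hypothesis enters: for $k=1$ the set $\mc D^*$ would have two components). Hence any two points of $\mc A_0 \cap B(\bm 0, \rho)$ can be joined inside $\mc A_0$ by concatenating such contraction paths with a path inside $\mc D^*$, and the proof concludes.

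The main obstacle is minor and purely geometric: ensuring that the contraction paths stay inside the image $\varphi(B(\bm c, r))$. This is handled by first shrinking to a ball $B(\bm 0, \rho) \subset \varphi(B(\bm c, r))$ (possible since $\bm 0$ is interior), and noting that both the linear contraction $(\bm y^-, (1-t)\bm y^+)$ and the radial homotopy $t \mapsto t\bm y$ keep norms bounded by the norm of the endpoint, hence stay in $B(\bm 0, \rho)$. All the rest is a routine verification that the quadratic form has the required sign along the paths.
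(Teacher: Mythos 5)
Your proof follows the same route as the paper's: reduce to the quadratic normal form via the Morse lemma \cite[Lemma 2.2]{M69} and invoke Lemma \ref{l_homeo} to transfer connectedness from the model set $\{-\sum_{i\le k}y_i^2+\sum_{i>k}y_i^2<0\}$ back to $\mathcal{A}(\bm c,r)$. The only difference is that you spell out, via the contraction onto the punctured $k$-disk with $k\ge 2$, why the model set is path-connected — a point the paper simply asserts — so your argument is correct and, if anything, slightly more complete.
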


\begin{proof}
By \cite[Lemma 2.2]{M69}, since $U$ is nondegenerate at $\bm{c}$,
the image of $U$ near $\bm{c}$ is locally diffeomorphic to to the
quadratic function $F:\bb R^d\to \bb R$ given by
\begin{equation*}
F(\bs x) \;=\; -\, \sum_{i=1}^{k} x_{i}^{2}
\,+\, \sum_{i=k+1}^{d}x_{i}^{2}\ ,
\end{equation*}
where $k\ge 2$ is the index of $\bm{c}$. Therefore, for sufficiently
small $r>0$, $\mathcal{A}(\bm{c},\,r)$ is diffeomorphic to
\begin{equation*}
[\, B(\bs 0 ,\,r) \setminus
\{\bm{0}\} \,]
\cap \Big\{\bm{x}\in\mathbb{R}^{d}:
-\, \sum_{i=1}^{k} x_{i}^{2}
\,+\, \sum_{i=k+1}^{d}x_{i}^{2}<0 \Big\}\;.
\end{equation*}
Since this set is connected, by Lemma \ref{l_homeo},
$\mathcal{A}(\bm{c},\,r)$ is also connected, and therefore
path-connected.
\end{proof}

\smallskip \noindent {\it Level sets}. In this subsection, we examine
the connected components of the level sets of $U$.

\begin{lem}
\label{l_level_connected}
Fix $H\in\mathbb{R}$. Let $\mathcal{H}$ be a connected component of
$\{\bm{x}\in\mathbb{R}^{d}:U(\bm{x})<H\}$.  Let
$\mathcal{G}\subset\{\bm{x}\in\mathbb{R}^{d}:U(\bm{x})<H\}$ be a
connected set satisfying $\mathcal{G}\cap\mathcal{H}\ne\varnothing$.
Then, $\mathcal{G}\subset\mathcal{H}$.  The same assertion holds if we
replace all strict inequalities by inequalities.
\end{lem}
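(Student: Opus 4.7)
The statement is a standard fact in point-set topology about connected components, and the proof should be quite short.

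The plan is to invoke two elementary topological facts: (i) if two connected subsets $A,B$ of a topological space share at least one point, then $A\cup B$ is connected; and (ii) a connected component $\mathcal{H}$ of a set $S$ is, by definition, a maximal connected subset of $S$ — any connected subset of $S$ meeting $\mathcal{H}$ must therefore be contained in $\mathcal{H}$.

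Concretely, I would first observe that both $\mathcal{G}$ and $\mathcal{H}$ are connected subsets of $S:=\{\boldsymbol{x}\in\mathbb{R}^d:U(\boldsymbol{x})<H\}$ and that by assumption $\mathcal{G}\cap\mathcal{H}\neq\varnothing$. By (i), $\mathcal{G}\cup\mathcal{H}$ is a connected subset of $S$ that contains $\mathcal{H}$. Since $\mathcal{H}$ is a connected component of $S$, that is, a maximal connected subset of $S$, by (ii) we must have $\mathcal{G}\cup\mathcal{H}=\mathcal{H}$, which is exactly the assertion $\mathcal{G}\subset\mathcal{H}$.

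For the version with non-strict inequalities, the argument is identical once one sets $S':=\{\boldsymbol{x}\in\mathbb{R}^d:U(\boldsymbol{x})\le H\}$ instead of $S$; facts (i) and (ii) hold in any topological space, so no additional regularity of $S'$ (such as local connectedness) is required.

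There is no substantial obstacle here: the only thing to be careful about is the definition of connected component employed — I would make the maximality interpretation explicit so the one-line argument above can be quoted without ambiguity. No calculation, no use of the smoothness of $U$, and no use of the structure of $\mathbb{R}^d$ beyond the ambient topology is needed.
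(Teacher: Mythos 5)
Your proof is correct and takes essentially the same route as the paper: both arguments reduce to the maximality of a connected component (the paper phrases it as $\mathcal{H}$ being the largest connected subset of the level set containing a point of $\mathcal{G}\cap\mathcal{H}$, while you pass through the connectedness of $\mathcal{G}\cup\mathcal{H}$). No gap.
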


\begin{proof}
Let $\bm{x}_{0}\in\mathcal{H}$. Then, $\mathcal{H}$ is the largest
connected set $\mc F$ satisfying
\begin{equation*}
\bm{x}_{0}\in\mathcal{F}\subset\{\bm{x}\in\mathbb{R}^{d}:U(\bm{x})<H\}\ .
\end{equation*}
As $\mc G \cap \mc H \neq\varnothing$, there exists
$\bs x_0 \in \mc G \cap \mc H$. As $\mathcal{G}$ belongs to the
previous class, $\mathcal{G}\subset\mathcal{H}$.

The same proof yields the second assertion of the lemma.
\end{proof}

\begin{lem}
\label{l_level_boundary}
Fix $H\in\mathbb{R}$. Let $\mathcal{H}$ be a connected component of
the set $\{\bm{x}\in\mathbb{R}^{d}:U(\bm{x})<H\}$ or one of the set
$\{\bm{x}\in\mathbb{R}^{d}:U(\bm{x})\le H\}$. Then, $U(\bm{x}_{0})=H$
for all $\bm{x}_{0}\in\partial\mathcal{H}$. Moreover,
\begin{enumerate}
\item If $\mathcal{H}$ is an open set, then $\bm{x}_{0}$ is not a
local minimum
\item If $\mathcal{H}$ is a closed set, then $\bm{x}_{0}$ is not a
local maximum
\end{enumerate}
\end{lem}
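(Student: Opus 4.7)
The plan is to argue both cases by combining basic boundary/continuity considerations with the preceding Lemma \ref{l_level_connected}, which says that any connected subset of $\{U<H\}$ (resp.\ $\{U\le H\}$) that meets $\mathcal{H}$ must lie entirely in $\mathcal{H}$.

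First I would treat the open case where $\mathcal{H}$ is a component of $\{U<H\}$. Let $\bm{x}_0\in\partial\mathcal{H}$. Since $\mathcal{H}$ is open, $\bm{x}_0\notin\mathcal{H}$, so either $U(\bm{x}_0)\ge H$ by definition of $\mathcal{H}$ or $\bm{x}_0$ lies in another component of $\{U<H\}$; the latter is excluded because $\bm{x}_0$ is a limit of points of $\mathcal{H}$ and any small ball around a point of a different component would be disjoint from $\mathcal{H}$. On the other hand, approximating $\bm{x}_0$ by $\bm{x}_n\in\mathcal{H}$ and using continuity gives $U(\bm{x}_0)\le H$, hence $U(\bm{x}_0)=H$. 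If $\bm{x}_0$ were a local minimum, then on a small ball $B(\bm{x}_0,r)$ we would have $U\ge H$, so $B(\bm{x}_0,r)\cap\{U<H\}=\varnothing$, contradicting the fact that every neighborhood of $\bm{x}_0$ meets $\mathcal{H}\subset\{U<H\}$.

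Next I would handle the closed case where $\mathcal{H}$ is a component of $\{U\le H\}$. Let $\bm{x}_0\in\partial\mathcal{H}$; then $\bm{x}_0\in\mathcal{H}$ and in particular $U(\bm{x}_0)\le H$. If $U(\bm{x}_0)<H$, by continuity there is $r>0$ with $B(\bm{x}_0,r)\subset\{U<H\}\subset\{U\le H\}$; since $B(\bm{x}_0,r)$ is connected and contains $\bm{x}_0\in\mathcal{H}$, Lemma \ref{l_level_connected} forces $B(\bm{x}_0,r)\subset\mathcal{H}$, which makes $\bm{x}_0$ an interior point of $\mathcal{H}$ and contradicts $\bm{x}_0\in\partial\mathcal{H}$. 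Hence $U(\bm{x}_0)=H$. Analogously, if $\bm{x}_0$ were a local maximum, a small ball would satisfy $U\le U(\bm{x}_0)=H$, hence lie in $\{U\le H\}$; being connected and intersecting $\mathcal{H}$, it would lie in $\mathcal{H}$ by Lemma \ref{l_level_connected}, again contradicting $\bm{x}_0\in\partial\mathcal{H}$.

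No step is genuinely difficult here; the main subtlety is making sure one invokes the correct version (strict or non-strict) of Lemma \ref{l_level_connected} in each case, and that one carefully distinguishes between ``$\bm{x}_0\notin\mathcal{H}$'' and ``$U(\bm{x}_0)\ge H$'' in the open case, where the latter requires the connectedness lemma to rule out other components of $\{U<H\}$ accumulating at $\bm{x}_0$.
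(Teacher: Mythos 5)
Your proof is correct and follows essentially the same elementary route as the paper: continuity plus a small-ball argument for $U(\bm{x}_0)=H$, and Lemma \ref{l_level_connected} to absorb a connected ball into $\mathcal{H}$ and derive a contradiction with $\bm{x}_0\in\partial\mathcal{H}$. Your treatment of the local-maximum case is in fact slightly more direct than the paper's, which detours through a point of $\mathcal{H}^{o}$ and a component of the strict sublevel set instead of applying the non-strict version of Lemma \ref{l_level_connected} to the ball itself.
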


\begin{proof}
Fix $\bm{x}_{0}\in\partial\mathcal{H}$. Since $U$ is continuous,
$U(\bm{x}_{0})\le H$.  Assume by contradiction that $U(\bm{x}_{0})<H$.
Let $\mathcal{G}$ be the connected component of the set
$\{\bm{x}\in\mathbb{R}^{d}:U(\bm{x})<H\}$ containing
$\bm{x}_{0}$. Since $U$ is smooth, there exists $r>0$ such that
\begin{equation*}
\max_{\bm{y}\in B(\bm{x}_{0},\,r)}U(\bm{y})<H
\end{equation*}
As $x_0\in \partial \mc H$, there exists
$z\in B(\bm{x}_{0},\,r) \cap \mc H$. Hence, by the previous displayed
equation, $B(\bm{x}_{0},\,r) \subset \mc H$, so that $x_0\in\mc H$, in
contradiction to the fact that
$\bm{x}_{0}\in\partial\mathcal{H}$. This completes the proof of the
first assertion.

Suppose that $\mathcal{H}$ is a connected component of the set
$\{ \bs x \in\bb R^d : U(\bs x) <H\}$, and fix
$\bm{x}_{0}\in\partial\mathcal{H}$.  By the first assertion of the
lemma, $U(\bm{x}_{0})=H$. Suppose by contradiction that $\bm{x}_{0}$
is a local minimum. Then, there exists $r>0$ such that
$U(\bs y) \ge U(\bs x_0)$ for all $\bs y\in
B(\bm{x}_{0},\,r)$. Therefore,
$B(\bm{x}_{0},\,r)\cap\{\bm{x}\in\mathbb{R}^{d}:U(\bm{x})<H\}=\varnothing$,
so that $B(\bm{x}_{0},\,r)\cap\mathcal{H}=\varnothing$. This
contradicts the fact that $\bm{x}_{0}\in\partial\mathcal{H}$.

Suppose that $\mathcal{H}$ is a connected component of the set
$\{ \bs x \in\bb R^d : U(\bs x) \le H\}$, and fix
$\bm{x}_{0}\in\partial\mathcal{H}$. By the first assertion of the
lemma, $U(\bm{x}_{0})=H$. Suppose by contradiction that $\bm{x}_{0}$
is a local maximum. Then, there exists $r>0$ such that
$U(\bs y) < U(\bs x_0)$ for all
$\bs y\in B(\bm{x}_{0},\,r) \setminus \{\bs x_0\}$. Therefore,
$B(\bm{x}_{0},\,r)\setminus\{\bm{x}_{0}\}\subset \{ \bs x \in\bb R^d :
U(\bs x) < H\}$.

Since $\bm{x}_{0}\in\partial\mathcal{H}$,
$B(\bm{x}_{0},\,r)\cap\mathcal{H}^{o}\ne\varnothing$, where
$\mathcal{H}^{o}$ is the interior of $\mathcal{H}$.  Fix
$\bm{x}_{1}\in B(\bm{x}_{0},\,r)\cap\mathcal{H}^{o}$ and let
$\mathcal{G}$ be the connected component of
$\{\bm{x}\in\mathbb{R}^{d}:U(\bm{x})<H\}$ containing $\bm{x}_{1}$. As
$\bs x_1 \in \mc H^o \subset \mc H$, by definition of $\mc H$,
$\mc G \subset \mc H$.  On the other hand, by Lemma
\ref{l_level_connected},
$B(\bm{x}_{0},\,r)\setminus\{\bm{x}_{0}\}\subset\mathcal{G}$, so that
$B(\bm{x}_{0},\,r) \setminus\{\bm{x}_{0}\}\subset\mathcal{H}$.  As
$\bm{x}_{0}\in\mathcal{H}$, $B(\bm{x}_{0},\,r) \subset\mathcal{H}$.
This contradicts the fact that $\bs x_0 \in\partial\mathcal{H}$, and
completes the proof of the lemma.
\end{proof}

For the next lemma, we extend the definition of
$\Theta(\bs m, \bs m')$ to subsets of $\mc M_0$.  For two disjoint
non-empty subsets $\mathcal{M}'$ and $\mathcal{M}''$ of
$\mathcal{M}_{0}$, define
\begin{equation}
\label{ap02}
\Theta(\mathcal{M}',\,\mathcal{M}'') \;=\;
\min_{\boldsymbol{m}'\in\mathcal{M}',\,\boldsymbol{m}''\in\mathcal{M}''}
\Theta(\boldsymbol{m}',\,\boldsymbol{m}'')\;.
\end{equation}

\begin{lem}
\label{lap01}
Let $\mathcal{H}\subset\mathbb{R}^{d}$ be a connected component of
level set
$\{\boldsymbol{x}\in\mathbb{R}^{d}:U(\boldsymbol{x})<c_{0}\}$ for some
$c_{0}\in\mathbb{R}$. Let $\mathcal{M},\,\mathcal{M}'$ be disjoint
non-empty subsets of $\mathcal{M}_{0}$.
\begin{enumerate}
\item If $\mathcal{M},\,\mathcal{M}'\subset\mathcal{H}$, then
$\Theta(\mathcal{M},\,\mathcal{M}')<c_{0}$.
\item If $\mathcal{M}\subset\mathcal{H}$ and $\mathcal{M}'
\subset\mathbb{R}^{d}\setminus\mathcal{H}$,
then $\Theta(\mathcal{M},\,\mathcal{M}')\ge c_{0}$.
\end{enumerate}
\end{lem}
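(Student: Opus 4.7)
The proof splits naturally into two essentially independent parts, both of which are short topological arguments. The heavy lifting is already done by Lemma \ref{l_level_connected}, which will be used for part (2), and the only analytic ingredient is that a continuous function attains its maximum on a compact set.

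For part (1), the plan is to exploit path-connectedness. Since $\mathcal{H}$ is an open connected subset of $\mathbb{R}^{d}$, it is path-connected, so given $\bm m\in\mathcal{M}$ and $\bm m'\in\mathcal{M}'$ (both in $\mathcal{H}$), I would pick a continuous path $\bm z\colon[0,1]\to\mathcal{H}$ joining them and use this particular path as a competitor in the definition \eqref{Theta}. Compactness of $\bm z([0,1])$ forces $U\circ\bm z$ to attain its maximum at some $t^{\star}\in[0,1]$, and then $\bm z(t^{\star})\in\mathcal{H}\subset\{U<c_{0}\}$ gives $U(\bm z(t^{\star}))<c_{0}$. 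Hence $\Theta(\bm m,\bm m')<c_{0}$, and minimizing over pairs in $\mathcal{M}\times\mathcal{M}'$ yields $\Theta(\mathcal{M},\mathcal{M}')<c_{0}$. The delicate point here is obtaining the \emph{strict} inequality: it comes from the fact that the maximum along the compact path inside the open set $\{U<c_{0}\}$ is actually attained, rather than merely approached.

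For part (2), the plan is a contradiction argument via Lemma \ref{l_level_connected}. Fix $\bm m\in\mathcal{M}\subset\mathcal{H}$ and $\bm m'\in\mathcal{M}'\subset\mathbb{R}^{d}\setminus\mathcal{H}$, and let $\bm z\colon[0,1]\to\mathbb{R}^{d}$ be any continuous path from $\bm m$ to $\bm m'$. If one had $\max_{t\in[0,1]}U(\bm z(t))<c_{0}$, then $\bm z([0,1])$ would be a connected subset of $\{U<c_{0}\}$ intersecting $\mathcal{H}$ (it contains $\bm m$), so Lemma \ref{l_level_connected} would force $\bm z([0,1])\subset\mathcal{H}$, contradicting $\bm z(1)=\bm m'\notin\mathcal{H}$. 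Consequently every continuous path from $\bm m$ to $\bm m'$ satisfies $\max_{t}U(\bm z(t))\ge c_{0}$, so $\Theta(\bm m,\bm m')\ge c_{0}$, and minimizing over pairs gives $\Theta(\mathcal{M},\mathcal{M}')\ge c_{0}$.

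There is no real obstacle: both parts follow in a few lines once Lemma \ref{l_level_connected} and the openness of $\mathcal{H}$ are in hand. The only point worth flagging is the asymmetry between parts (1) and (2) in strict versus non-strict inequalities, which comes from the fact that $\{U<c_{0}\}$ is open while its complement is closed, so a path can only land in the complement by taking some value exactly equal to $c_{0}$ (or larger) along the way.
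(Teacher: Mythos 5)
Your proof is correct and follows essentially the same route as the paper: part (1) is the identical path-connectedness argument, and part (2) differs only cosmetically, in that you invoke Lemma \ref{l_level_connected} to derive a contradiction where the paper observes directly that any such path must cross $\partial\mathcal{H}$, on which $U=c_{0}$ by Lemma \ref{l_level_boundary}. Both versions of part (2) are sound, and your explicit remark that the strict inequality in part (1) comes from the maximum being attained on the compact path image is a welcome clarification.
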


\begin{proof}
Let $\mathcal{M},\,\mathcal{M}'$ be disjoint non-empty subsets of
$\mathcal{M}_{0}$ contained in $\mc H$.  Since $\mathcal{H}$ is open
connected set, it is a path connected set. Thus, there exists a
connected path $\boldsymbol{z}:[0,\,1]\rightarrow\mathcal{H}$ such
that $\boldsymbol{z}(0)\in\mathcal{M}$ and
$\boldsymbol{z}(1)\in\mathcal{M}'$.  Since
$\boldsymbol{z}(t)\in\mathcal{H}$ for all $t\in[0,\,1]$, we have
$\max_{t\in[0,\,1]}U(\boldsymbol{z}(t))<c_{0}$ and thus by
\eqref{ap02}, $\Theta(\mathcal{M},\,\mathcal{M}')<c_{0}$. This proves
the first assertion.

To prove the second assertion note that any path connecting
$\mathcal{M}$ and $\mathcal{M}'$ must pass through
$\partial\mathcal{H}$ on which the value of $U$ is $c_{0}$.
\end{proof}

\begin{proof}[Proof of Lemma \ref{l_cap_saddle}]
Let
$\bm{\sigma}\in\overline{\mathcal{W}_1}\cap\overline{\mathcal{W}_2}$.
We claim that $\bs \sigma \in \partial \mc W_1 \cap \partial \mc
W_2$. Indeed, by definition $\bs \sigma \in \overline{ \mc W_1}$.  It
remains to show that $\bs \sigma \not\in\mc W_1$. Assume, by
contradiction, that $\bs \sigma \in\mc W_1$. Then, there exists $r>0$
such that $B(\bs \sigma, r) \subset \mc W_1$. Since
$\mc W_1 \cap \mc W_2 = \varnothing$,
$B(\bs \sigma, r) \cap \mc W_2 = \varnothing$, which contradicts the
fact that $\bm{\sigma}\in\overline{\mathcal{W}_2}$. Thus,
$\bs \sigma \in \partial \mc W_1$. The same argument shows that
$\bs \sigma \in \partial \mc W_2$, proving the claim. By Lemma
\ref{l_level_boundary}, $U(\bs\sigma) = H$, and $\bs \sigma$ is not a
local minimum.

By definition, there exists $r>0$ such that
$B(\bm{\sigma},\,r)\cap\mathcal{W}_1\ne\varnothing$ and
$B(\bm{\sigma},\,r)\cap\mathcal{W}_2\ne\varnothing$.  Since
$\bs \sigma \in \partial \mc W_1 \cap \partial \mc W_2$,
$\bs\sigma \not\in \mc W_1 \cup \mc W_2$, so that
$\left(B(\bm{\sigma},\,r)\setminus\{\bm{\sigma}\}\right)
\cap\mathcal{W}_1\ne\varnothing$ and
$\left(B(\bm{\sigma},\,r)\setminus\{\bm{\sigma}\}\right)
\cap\mathcal{W}_2\ne\varnothing$.  Hence, by definition of
$\mathcal{W}_1$ and $\mathcal{W}_2$, $\mathcal{A}(\bm{\sigma},\,r)$ is
not empty.

We claim that $\mathcal{A}(\bm{\sigma},\,r)$ is not connected.
Suppose, by contradiction, that $\mathcal{A}(\bm{\sigma},\,r)$ is
connected. Let $\bm{x}_1\in B(\bm{\sigma},\,r)\cap\mathcal{W}_1$,
$\bm{x}_2\in B(\bm{\sigma},\,r)\cap\mathcal{W}_2$. Since
$U(\bm{x}_{j})< U(\bs \sigma)$,
$\bm{x}_{1},\,\bm{x}_{2}\in\mathcal{A}(\bm{\sigma},\,r)$.  Since
$\mathcal{A}(\bm{\sigma},\,r)$ is open, there exists a continuous path
$\bs z \colon [0,1] \to \bb R^d$ connecting $\bm{x}_{1}$ to
$\bm{x}_{2}$ in $\mathcal{A}(\bm{\sigma},\,r)$. In particular,
$\sup_{0\le t\le 1} U(\bs z(t)) < U(\bs\sigma) = H$. Since
$\bs x_1\in \mc W_1$ and $\mc W_1$ is a connected component of the set
$\{\bs x: U(\bs x) <H\}$, all points in this path, including
$\bs x_2$, belong to $\mc W_1$.  As $\bs x_2 \in\mc W_2$ and
$\mc W_1 \cap \mc W_2 = \varnothing$, this is a contradiction, which
proves the claim.

Since $\bs \sigma$ is not a local minimum, and
$\mathcal{A}(\bm{\sigma},\,r)$ is not empty and not connected, by
Lemmata \ref{l_noncri_divide}, \ref{l_ind2_conn}, $\bm{\sigma}$ is a
saddle point.  By Lemma \ref{l_saddle_2comp},
$\mathcal{A}(\bm{\sigma},\,r)$ has exactly two components. Let
$\mathcal{A}_{1}$, $\mc A_2$, be the connected component which
intersects with $\mc W_1$, $\mathcal{W}_2$, respectively. Since
$\mathcal{A}_{j}$ is a connected set contained in
$\{\bm{x}\in\mathbb{R}^{d}:U(\bm{x})< U(\bs\sigma)\}$, by Lemma
\ref{l_level_connected}, $\mathcal{A}_{1}\subset\mathcal{W}_1$ and
$\mathcal{A}_{2}\subset\mathcal{W}_2$.  Hence,
$\mathcal{A}_{1}\ne\mathcal{A}_{2}$ and
$\mathcal{A}_{1}=\mathcal{A}(\bm{\sigma},\,r)\cap\mathcal{W}_1$,
$\mathcal{A}_{2}=\mathcal{A}(\bm{\sigma},\,r)\cap\mathcal{W}_2$.
\end{proof}

\subsection*{Proof of Lemma \ref{l_105a-1}}

The proof relies on several lemmata.

\begin{lem}
\label{l_comp_conn}
Let $\mathcal{K}_{n}$ be a decreasing sequence of compact connected
sets and let $\mathcal{K}:=\bigcap_{n=1}^{\infty}\mathcal{K}_{n}$.
Then, $\mathcal{K}$ is connected.
\end{lem}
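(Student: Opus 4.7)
The plan is to argue by contradiction using the standard separation-plus-finite-intersection technique for nested compact sets. Assume $\mathcal{K}$ is disconnected, so that there exist non-empty disjoint sets $A, B$ closed in $\mathcal{K}$ with $\mathcal{K} = A \cup B$. Since $\mathcal{K}$ is compact (as an intersection of compact sets) and closed in $\mathbb{R}^{d}$, both $A$ and $B$ are compact subsets of $\mathbb{R}^{d}$. Because $\mathbb{R}^{d}$ is a normal (in fact metric) space, I can choose disjoint open sets $U, V \subset \mathbb{R}^{d}$ with $A \subset U$ and $B \subset V$.

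The heart of the proof is then to show that $\mathcal{K}_{n} \subset U \cup V$ for some $n$. To this end, consider the compact sets $\mathcal{L}_{n} := \mathcal{K}_{n} \setminus (U \cup V)$, which form a decreasing sequence of compact sets (closed subsets of the compact $\mathcal{K}_{n}$). Since $\mathcal{K} = A \cup B \subset U \cup V$, the intersection $\bigcap_{n} \mathcal{L}_{n} = \mathcal{K} \setminus (U \cup V) = \varnothing$. By the finite intersection property for compact sets in a Hausdorff space, there exists $n_{0}$ with $\mathcal{L}_{n_{0}} = \varnothing$, i.e.\ $\mathcal{K}_{n_{0}} \subset U \cup V$.

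Finally, since $A \subset \mathcal{K} \subset \mathcal{K}_{n_{0}}$ and $A$ is non-empty, $\mathcal{K}_{n_{0}} \cap U \neq \varnothing$; similarly $\mathcal{K}_{n_{0}} \cap V \neq \varnothing$. Together with the fact that $U$ and $V$ are disjoint and open, this provides a separation of $\mathcal{K}_{n_{0}}$ into two non-empty relatively open pieces, contradicting the connectedness of $\mathcal{K}_{n_{0}}$. Hence no such decomposition $A \cup B$ exists and $\mathcal{K}$ is connected.

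The main (mild) obstacle is the step that forces some $\mathcal{K}_{n}$ to lie entirely in $U \cup V$; this is where nested compactness is essential. The hypothesis that each $\mathcal{K}_{n}$ is compact cannot be dropped (otherwise one can produce counterexamples with nested connected but non-compact sets whose intersection is disconnected), so the argument must genuinely exploit both the descending chain condition and compactness, which it does via the finite intersection property above.
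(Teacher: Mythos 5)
Your proof is correct. It differs from the paper's argument in where the contradiction is produced, though both rest on the same ingredients (separating the limit set by disjoint open sets, Cantor's intersection theorem for nested compacts, and connectedness of the $\mathcal{K}_{n}$). You apply Cantor's theorem to the compact sets $\mathcal{K}_{n}\setminus(U\cup V)$ to force some $\mathcal{K}_{n_{0}}$ entirely into $U\cup V$, and then exhibit a separation of $\mathcal{K}_{n_{0}}$ itself; this is the standard textbook route and has the advantage that you explicitly justify, via normality of $\mathbb{R}^{d}$ and compactness of $A$ and $B$, why the separation of $\mathcal{K}$ can be realized by ambient disjoint open sets (a step the paper takes for granted). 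The paper instead shows, by a finite-covering argument, that each connected $\mathcal{K}_{n}$ must meet $\partial\,\mathcal{U}$, applies Cantor's theorem to the nested compacts $\mathcal{K}_{n}\cap\partial\,\mathcal{U}$ to produce a point of $\mathcal{K}$ on $\partial\,\mathcal{U}$, and derives the contradiction there: such a point would lie in the open set $\mathcal{V}$, which is disjoint from $\mathcal{U}$, contradicting its being a boundary point of $\mathcal{U}$. Your version is shorter and avoids the auxiliary covering argument; the paper's version localizes the contradiction at the limit set rather than at a finite stage. Either is a complete proof.
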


\begin{proof}
Suppose, by contradiction, that $\mathcal{K}$ is not connected. In
consequence, there are two disjoint open sets $\mathcal{U}$ and
$\mathcal{V}$ such that $\mathcal{K}\cap\mathcal{U}\ne\varnothing$,
$\mathcal{K}\cap\mathcal{V}\ne\varnothing$, and
$\mathcal{K}\subset\mathcal{U}\cup\mathcal{V}$.  Since
$\mathcal{K}_{n}\cap\mathcal{V} \ne\varnothing$ and
$\mc U \cap \mc V = \varnothing$,
$\mathcal{K}_{n}\setminus\mathcal{U}\ne\varnothing$.

We claim that $\mathcal{K}_{n}\cap\partial\mathcal{U}\ne\varnothing$.
Suppose by contradiction that
$\mathcal{K}_{n}\cap\partial\mathcal{U}=\varnothing$.  In this case,
$\bb R^d = [\mathcal{K}_{n}\cap\partial\mathcal{U}]^c =
\mathcal{K}_{n}^c\cup (\partial\mathcal{U})^c$, so that
$\mc K_n = \mc K_n \cap (\partial\mathcal{U})^c$. Hence,
$\mc K_n \setminus \mc U = \mc K_n \cap \mc U^c = \mc K_n \cap
(\partial\mathcal{U})^c \cap \mc U^c = \mc K_n \cap
[(\partial\mathcal{U}) \cup \mc U]^c = \mc K_n \cap \overline{ \mc
U}^c \subset \overline{ \mc U}^c$. Therefore, as $\overline{ \mc U}^c$
is an open set, for all
$\bm{x}\in\mathcal{K}_{n}\setminus\mathcal{U}$, there exists
$r(\bm{x})>0$ such that
$B(\bm{x},\,r(\bm{x}))\subset \overline{ \mc U}^c$.  Since
$\mathcal{K}_{n}$ is compact, $\mathcal{K}_{n}\setminus\mathcal{U}$ is
compact so that there are finitely many
$\bm{x}_{1},\,\dots,\,\bm{x}_{k}\in\mathcal{K}_{n}\setminus\mathcal{U}$
such that
\begin{equation*}
\mathcal{K}_{n}\setminus\mathcal{U}
\subset\bigcup_{j=1}^{k}B(\bm{x}_{j},\,r(\bm{x}_{j}))\ .
\end{equation*}
Therefore,
$\mathcal{K}_{n}\subset\mathcal{U}\cup\bigcup_{j=1}^{k}
B(\bm{x}_{j},\,r(\bm{x}_{j}))$.  However, since
$B(\bm{x}_{j},\,r(\bm{x}_{j}))\subset \overline{ \mc U}^c$
for all $j$,
$\mathcal{U}\cap\bigcup_{j=1}^{k}B(\bm{x}_{j},\,r(\bm{x}_{j}))=\varnothing$,
in contradiction with the connectedness of $\mathcal{K}_{n}$. This
proves the claim.

As $(\mathcal{K}_{n}\cap\partial\mathcal{U})$ is a decreasing sequence
of compact sets,
$\mathcal{K}\cap\partial\mathcal{U}=
\bigcap_{n=1}^{\infty}(\mathcal{K}_{n}\cap\partial\mathcal{U})\ne\varnothing$
by Cantor's intersection theorem.  Let
$\bm{x}_{0}\in\mathcal{K}\cap\partial\mathcal{U}$. Since $\mathcal{U}$
is open, $\bm{x}_{0}\notin\mathcal{U}$ so that
$\bm{x}_{0}\in\mathcal{V}$.  Since $\mathcal{V}$ is open, there exists
$r_{0}>0$ such that $B(\bm{x}_{0},\,r_{0})\subset\mathcal{V}$ so that
$B(\bm{x}_{0},\,r_{0})\cap\mathcal{U}=\varnothing$, which contradicts
the fact that $\bm{x}_{0}\in\partial\mathcal{U}$. This completes the
proof of the lemma.
\end{proof}

\begin{lem}
\label{l_decrea_comp}
Let $\mathcal{K}_{n}$ be a decreasing sequence of compact sets.
Suppose that $\mathcal{K}:=\bigcap_{n=1}^{\infty}\mathcal{K}_{n}$ is
contained in an open set $\mathcal{U}$. Then, there exists
$N\in\mathbb{N}$ such that $\mathcal{K}_{N}\subset\mathcal{U}$.
\end{lem}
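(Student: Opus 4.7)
The plan is to prove this classical topological fact by contradiction, using the finite intersection property of compact sets (equivalently, Cantor's intersection theorem, which is already invoked in the proof of Lemma \ref{l_comp_conn}).

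First, I would form the sets $\mathcal{L}_n := \mathcal{K}_n \cap \mathcal{U}^c$. Since $\mathcal{U}$ is open, $\mathcal{U}^c$ is closed, and each $\mathcal{L}_n$ is a closed subset of the compact set $\mathcal{K}_n$, hence compact. The sequence $(\mathcal{L}_n)$ inherits the monotonicity: $\mathcal{L}_{n+1} \subset \mathcal{L}_n$. Moreover,
\begin{equation*}
\bigcap_{n=1}^{\infty} \mathcal{L}_n \;=\; \Big(\bigcap_{n=1}^{\infty} \mathcal{K}_n\Big) \cap \mathcal{U}^c \;=\; \mathcal{K} \cap \mathcal{U}^c \;=\; \varnothing\;,
\end{equation*}
where the last equality uses the hypothesis $\mathcal{K} \subset \mathcal{U}$.

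Next, I would invoke the finite intersection property: a decreasing sequence of nonempty compact sets has nonempty intersection. Contrapositively, since $\bigcap_n \mathcal{L}_n = \varnothing$, there must exist $N \in \mathbb{N}$ with $\mathcal{L}_N = \varnothing$. This gives $\mathcal{K}_N \cap \mathcal{U}^c = \varnothing$, i.e., $\mathcal{K}_N \subset \mathcal{U}$, as claimed.

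There is no real obstacle here; the argument is one of the standard consequences of compactness, and the only thing worth emphasizing is that $\mathcal{L}_n$ is compact precisely because $\mathcal{U}$ was assumed open (so $\mathcal{U}^c$ is closed). If one prefers an explicit sequential argument instead of citing the finite intersection property, one can alternatively pick $\bm{x}_n \in \mathcal{K}_n \setminus \mathcal{U}$ for each $n$ (assuming for contradiction that no such $N$ exists), extract a subsequence converging in $\mathcal{K}_1$ to some $\bm{x}_\infty$, observe that $\bm{x}_\infty \in \mathcal{K}_m$ for every $m$ by closedness and the decreasing property, hence $\bm{x}_\infty \in \mathcal{K} \subset \mathcal{U}$, while simultaneously $\bm{x}_\infty \in \mathcal{U}^c$ since $\mathcal{U}^c$ is closed, yielding the desired contradiction.
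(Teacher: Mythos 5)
Your proof is correct; the paper's own argument is precisely the sequential version you give at the end (picking $\bm{x}_n \in \mathcal{K}_n \setminus \mathcal{U}$, extracting a convergent subsequence in the compact set $\mathcal{K}_1 \setminus \mathcal{U}$, and deriving a contradiction with $\mathcal{K} \subset \mathcal{U}$). Your primary formulation via the finite intersection property applied to $\mathcal{L}_n = \mathcal{K}_n \cap \mathcal{U}^c$ is just a repackaging of the same compactness fact, so there is no substantive difference.
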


\begin{proof}
Suppose, by contradiction, that each $\mc K_n$ is not contained in
$\mc U$.  Then, for each $n\in\mathbb{N}$, there exists
$\bm{x}_{n}\in\mathcal{K}_{n}\setminus\mathcal{U} \subset
\mathcal{K}_{1}\setminus\mathcal{U}$. As
$\mc {K}_{1}\setminus\mathcal{U}$ is compact, there is a
subsequence $(\bm{x}_{n}')_{n\ge1}$ which converges to a point
$\bm{x}_{0}\in\mathcal{K}_{1} \setminus\mathcal{U}$. Since
$\bm{x}_{j}'\in\mathcal{K}_{m}\setminus\mathcal{U}$ for all $j\ge m$,
$\bm{x}_{0}\in\mathcal{K}_{m}\setminus\mathcal{U}$.  Therefore,
\begin{equation*}
\bm{x}_{0}\in\bigcap_{n=1}^{\infty}(\mathcal{K}_{n}
\setminus\mathcal{U})=\mathcal{K}\setminus\mathcal{U}=\varnothing\ ,
\end{equation*}
which is a contradiction.
\end{proof}

\begin{lem}
\label{l_level_cover}
Let $\mathcal{H}$ be a connected component of the set
$\{\bm{x}\in\mathbb{R}^{d}:U(\bm{x})\le H\}$, and let $\mc U$ be an
open set containing $\mathcal{H}$. Then, there is $N\in\mathbb{N}$
such that the connected component of
\begin{equation*}
\{\bm{x}\in\mathbb{R}^{d}:U(\bm{x})\le H+\frac{1}{N}\}
\end{equation*}
containing $\mathcal{H}$ is contained in $\mc U$
\end{lem}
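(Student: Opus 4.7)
The plan is to realize $\mathcal{H}$ as the intersection of a decreasing sequence of compact connected neighborhoods obtained by slightly raising the level, and then invoke Lemma \ref{l_decrea_comp} to pull back the open cover.

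For each $n\in\mathbb{N}$, let $\mathcal{H}_n$ denote the connected component of $\{\bm{x}\in\mathbb{R}^d : U(\bm{x})\le H+1/n\}$ containing $\mathcal{H}$. (This is well defined: $\mathcal{H}$ itself is connected and contained in $\{U\le H+1/n\}$, so all its points lie in a single component.) Since $\{U\le H+1/n\}\subset \{U\le H+1/m\}$ whenever $n\ge m$, the variational characterization of connected components (Lemma \ref{l_level_connected}, closed version) gives $\mathcal{H}_n\subset \mathcal{H}_m$, so $(\mathcal{H}_n)_{n\ge 1}$ is a decreasing sequence. Moreover, by the coercivity assumption \eqref{26}, the sublevel set $\{U\le H+1\}$ is bounded, hence each closed set $\mathcal{H}_n$ is compact, and, being a connected component of a closed set in $\mathbb{R}^d$, is connected and closed.

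The next step is to identify $\mathcal{K}:=\bigcap_{n=1}^{\infty}\mathcal{H}_n$ with $\mathcal{H}$. By Lemma \ref{l_comp_conn}, $\mathcal{K}$ is connected. The inclusion $\mathcal{H}\subset \mathcal{K}$ is immediate from the construction. Conversely, if $\bm{x}\in\mathcal{K}$ then $U(\bm{x})\le H+1/n$ for every $n$, hence $U(\bm{x})\le H$, so $\mathcal{K}\subset \{U\le H\}$. Since $\mathcal{K}$ is a connected subset of $\{U\le H\}$ intersecting $\mathcal{H}$, the closed-set version of Lemma \ref{l_level_connected} yields $\mathcal{K}\subset \mathcal{H}$. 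Thus $\mathcal{K}=\mathcal{H}$.

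Finally, since $\mathcal{K}=\mathcal{H}\subset \mc U$ and $\mc U$ is open, Lemma \ref{l_decrea_comp} applied to the decreasing sequence of compact sets $(\mathcal{H}_n)$ produces an integer $N$ with $\mathcal{H}_N\subset \mc U$, which is exactly the desired conclusion. No step is a serious obstacle; the only point requiring minor care is to record explicitly the closed-set analogue of Lemma \ref{l_level_connected} (whose proof is identical to the stated version), which is precisely what pins $\mathcal{K}$ down to $\mathcal{H}$ in Step two.
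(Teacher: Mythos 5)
Your proof is correct and follows essentially the same route as the paper's: define the decreasing sequence of connected components at levels $H+1/n$, identify their intersection with $\mathcal{H}$ via Lemma \ref{l_comp_conn} and the closed-set version of Lemma \ref{l_level_connected}, and conclude with Lemma \ref{l_decrea_comp}. The only difference is that you spell out the monotonicity and compactness of the sequence (via the growth condition \eqref{26}), which the paper leaves implicit.
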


\begin{proof}
Let $\mathcal{R}_{n}$ be the connected component of
\begin{equation*}
\{\bm{x}\in\mathbb{R}^{d}:U(\bm{x}) \le H+\frac{1}{n}\}
\end{equation*}
containing $\mathcal{H}$ and let
$\mathcal{R}:=\bigcap_{n=1}^{\infty} \mc R_n$. Since
$\mathcal{H}\subset\mathcal{R}_{n}$ for all $n\ge1$,
$\mathcal{H}\subset\mathcal{R}$.  On the other hand, as
$(\mathcal{R}_{n})_{n\ge1}$ is a decreasing sequence of compact
connected sets, by Lemma \ref{l_comp_conn}, $\mathcal{R}$ is
connected.  Since $\mathcal{R}\cap\mathcal{H}\ne\varnothing$,
$\mathcal{R}\subset\{\bm{x}\in\mathbb{R}^{d}:U(\bm{x})\le H\}$ and
$\mathcal{H}$ is a connected component of
$\{\bm{x}\in\mathbb{R}^{d}:U(\bm{x})\le H\}$, by Lemma
\ref{l_level_connected}, $\mathcal{R}\subset\mathcal{H}$. By the
previous two estimates, $\mathcal{H}=\mathcal{R}$.

As $\mathcal{R}\subset\mc U$ by Lemma \ref{l_decrea_comp}, there
exists $N\in\mathbb{N}$ such that $\mathcal{R}_{N}\subset\mc U$.
\end{proof}

\begin{lem}
\label{l_pathcon_closure}
Fix $H\in\mathbb{R}$. Let $\mathcal{H}$ be a connected component of
$\{\bm{x}\in\mathbb{R}^{d}:U(\bm{x})<H\}$.  Then,
$\overline{\mathcal{H}}$ is path connected.
\end{lem}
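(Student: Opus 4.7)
The plan is to reduce the claim to the following local statement: for every boundary point $\bs x_0 \in \partial \mc H$ there exists a continuous path $\gamma \colon [0,1] \to \overline{\mc H}$ with $\gamma(0) = \bs x_0$ and $\gamma(t) \in \mc H$ for all $t \in (0,1]$. Once this is available, I would prove path-connectedness of $\overline{\mc H}$ by concatenation: given $\bs x, \bs y \in \overline{\mc H}$, I join $\bs x$ to some $\bs x' \in \mc H$ (trivially if $\bs x \in \mc H$, via the local path otherwise), then join $\bs x'$ to $\bs y' \in \mc H$ inside $\mc H$ itself (which is path-connected as an open connected subset of $\bb R^d$), and finally join $\bs y'$ to $\bs y$.

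To construct the local path at a boundary point $\bs x_0$, I would invoke Lemma \ref{l_level_boundary}, which gives $U(\bs x_0) = H$ and $\bs x_0$ is not a local minimum, and then distinguish two cases. If $\bs x_0$ is not a critical point of $U$, Lemma \ref{l_noncri_divide} produces directly a path $\bs z \colon [0,1] \to B(\bs x_0, r)$ with $\bs z(0) = \bs x_0$ and $\bs z((0,1]) \subset B(\bs x_0, r) \cap \{U < H\}$. Since $\bs x_0 \in \partial \mc H$, the open connected set $B(\bs x_0, r) \cap \{U < H\}$ contains points of $\mc H$, hence is contained in $\mc H$ by Lemma \ref{l_level_connected}, so $\bs z((0,1]) \subset \mc H$ as required.

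If instead $\bs x_0$ is a critical point, its Morse index $k$ is at least $1$. By the Morse lemma \cite[Lemma 2.2]{M69}, there is a local diffeomorphism $\varphi$ from a neighborhood of $\bs x_0$ onto a neighborhood of $\bs 0$ with $\varphi(\bs x_0) = \bs 0$ and $(U-H)\circ \varphi^{-1}(\bs y) = -\sum_{i=1}^{k} y_i^2 + \sum_{i=k+1}^{d} y_i^2$. Pick any witness point $\bs w \in \mc H \cap B(\bs x_0, r)$, which exists because $\bs x_0 \in \partial \mc H$, and set $\bs v := \varphi(\bs w)$, so that $-\sum_{i\le k} v_i^2 + \sum_{i>k} v_i^2 < 0$. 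The radial curve $\gamma(t) := \varphi^{-1}(t\bs v)$, $t \in [0,1]$, is continuous with $\gamma(0) = \bs x_0$ and $\gamma(1) = \bs w$; for every $t \in (0,1]$ the segment $\{s\bs v : s \in [t,1]\}$ lies in the (connected) local sublevel component containing $\bs v$, so $\gamma(t)$ and $\bs w$ belong to the same connected component of $B(\bs x_0, r) \cap \{U < H\}$. That component contains $\bs w \in \mc H$ and is therefore contained in $\mc H$ by Lemma \ref{l_level_connected}.

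The main obstacle is the critical-point case: in contrast to the non-critical case of Lemma \ref{l_noncri_divide}, the local sublevel set $B(\bs x_0,r)\cap\{U<H\}$ may split into several connected components (two at a saddle, as in Lemma \ref{l_saddle_2comp}), and only the one meeting $\mc H$ is admissible. The device of first selecting a witness point $\bs w \in \mc H$ close to $\bs x_0$ and then descending radially in Morse coordinates back to $\bs x_0$ automatically targets the correct component, but relies on the Morse normal form being available at every boundary critical point, which is guaranteed by the standing hypothesis that $U$ is a Morse function.
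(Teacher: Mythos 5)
Your proof is correct, and its skeleton (reduce to joining each boundary point to $\mc H$ by a path lying in $\mc H$ except at its endpoint, then concatenate through the path-connected open set $\mc H$) together with the non-critical case via Lemmata \ref{l_noncri_divide} and \ref{l_level_connected} coincides with the paper's argument. Where you genuinely diverge is the critical-point case: the paper invokes the Hartman--Grobman theorem and descends into $\mc H$ along the unstable manifold of the critical point for the dynamical system \eqref{31}, whereas you use the Morse normal form and the homogeneity of the quadratic model, selecting the correct component of the local sublevel set by first fixing a witness $\bs w\in\mc H\cap B(\bs x_0,r)$ and then contracting radially in Morse coordinates. Your device buys something the paper's two-line Case 2 leaves implicit: it makes explicit why the constructed path enters the component $\mc H$ rather than some other component of $\{U<H\}$ touching $\bs x_0$ (for the unstable-manifold argument one must still argue that some branch of the unstable manifold enters $\mc H$). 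It is also purely static, needing only that $U$ is Morse, not the structure of the drift $\bs b$. One small point to tighten: rather than arguing that the segment $\{s\bs v: s\in[t,1]\}$ stays in a fixed component of $B(\bs x_0,r)\cap\{U<H\}$ (its preimage under $\varphi$ need not remain in the ball), it is cleaner to observe directly that $\gamma((0,1])$ is a connected subset of $\{U<H\}$ containing $\bs w\in\mc H$, so Lemma \ref{l_level_connected} gives $\gamma((0,1])\subset\mc H$ at once.
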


\begin{proof}
As $\mc H$ is open and connected, it is path connected. It remains to
show that the boundary $\partial \mc H$ is path connected to $\mc
H$. Fix $\bs x_0\in \partial \mc H$.  By Lemma \ref{l_level_boundary},
$U(\bm{x}_{0})=H$.

Assume that $\bm{x}_{0}$ is not a critical point of $U$. By Lemma
\ref{l_noncri_divide}, there exists $r>0$ such that the manifold
$\{\bm{x}\in\mathbb{R}^{d}:U(\bm{x})=U(\bm{x}_{0})\}$ divides
$B(\bm{x}_{0},\,r)$ into two parts:
\begin{align*}
 & B(\bm{x}_{0},\,r)\cap\{\bm{x}\in\mathbb{R}^{d}:U(\bm{x})<U(\bm{x}_{0})\}\ ,\\
 & B(\bm{x}_{0},\,r)\cap\{\bm{x}\in\mathbb{R}^{d}:U(\bm{x})>U(\bm{x}_{0})\}\ .
\end{align*}
Since $\bm{x}_{0}\in\partial\mathcal{H}$,
$B(\bm{x}_{0},\,r)\cap\mathcal{H}\ne\varnothing$ so that
$B(\bm{x}_{0},\,r)\cap\{\bm{x}\in\mathbb{R}^{d}:U(\bm{x})
<U(\bm{x}_{0})\}\ne\varnothing$.  By Lemma \ref{l_level_connected},
$B(\bm{x}_{0},\,r)\cap\{\bm{x}\in\mathbb{R}^{d}:
U(\bm{x})<U(\bm{x}_{0})\}\subset\mathcal{H}$.  By Lemma
\ref{l_noncri_divide}, there is a path
$\bm{z}\colon [0,1]\to B(\bm{x}_{0},\,r)$ such that
$\bm{z}(0)=\bm{x}_{0}$ and
$\bm{z}\left((0,1]\right)\subset
B(\bm{x}_{0},\,r)\cap\{\bm{x}\in\mathbb{R}^{d}:
U(\bm{x})<U(\bm{x}_{0})\}\subset\mathcal{H}$.  Hence, $\bm{x}_{0}$ is
path-connected to $\mathcal{H}$.

Suppose that $\bm{x}_{0}$ is a critical point. By Lemma
\ref{l_level_boundary}, $\bm{x}_{0}$ is not a local minimum. By the
Hartman--Grobman Theorem, there is $T>0$ and a continuous path
$\bm{z}\colon [0,T]\to\mathbb{R}^{d}$ in the unstable manifold of
$\bm{x}_{0}$ such that $\bm{z}(0)=\bm{x}_{0}$ and
$\bm{z}\left((0,T]\right)\subset\mathcal{H}$. This completes the proof
of the lemma.
\end{proof}

\begin{lem}
\label{l_pathcon_level}
Let $\mathcal{H}$ be a connected component of
$\{\bm{x}\in\mathbb{R}^{d}:U(\bm{x})\le H\}$, and $\mathcal{H}^{o}$
the interior of $\mathcal{H}$. Denote by $\mc W_i$, $i\ge 1$, the
connected components of $\mathcal{H}^{o}$. Then, the number of
connected components is finite. Moreover,
\begin{enumerate}
\item Let $\mathcal{W}_{j}'$ be a connected component of
$\{\bm{x}\in\mathbb{R}^{d}:U(\bm{x})<H\}$ which intersects with
$\mathcal{W}_{j}$. Then,
$\overline{\mathcal{W}_{j}}=\overline{\mathcal{W}_{j}'}$.

\item $\mathcal{H}$ is path connected. In particular, for each $i$,
there is $j$ such that
$\overline{\mathcal{W}_{i}}\cap\overline{\mathcal{W}_{j}}\ne\varnothing$.
\end{enumerate}
\end{lem}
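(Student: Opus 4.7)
The argument has three steps: finiteness of the number of components, the identification $\overline{\mc W_j}=\overline{\mc W_j'}$, and path connectedness of $\mc H$. Throughout, I would use that by the growth condition \eqref{26}, the sublevel set $\{U\le H\}$ is compact, so $\mc H$ and each $\overline{\mc W_i}$ are compact, and also that $U$ has finitely many critical points. First I would show that every $\mc W_i$ contains at least one element of $\mc M_0$. Let $\bs x_i^\ast$ be a minimizer of $U$ on $\overline{\mc W_i}$. If $\bs x_i^\ast$ lies in $\partial \mc W_i$, then (since $\mc W_i$ is a connected component of the open set $\mc H^o$) one checks that $\bs x_i^\ast\in\partial\mc H$, so by Lemma \ref{l_level_boundary} we have $U(\bs x_i^\ast)=H$, forcing $U\equiv H$ on the open set $\mc W_i$; but this contradicts the Morse property. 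Hence $\bs x_i^\ast$ is an interior minimum, i.e.\ $\bs x_i^\ast\in\mc M_0$, and finiteness follows from $|\mc M_0|<\infty$.

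For part (1), suppose $\mc W_j'\cap\mc W_j\ne\varnothing$. Because $\mc W_j'$ is connected, contained in $\{U<H\}\subset\{U\le H\}$, and meets $\mc H$, Lemma \ref{l_level_connected} gives $\mc W_j'\subset\mc H$; since $\mc W_j'$ is open it lies in $\mc H^o$, and as it is connected and meets $\mc W_j$ we conclude $\mc W_j'\subset \mc W_j$. For the reverse containment I would show $\mc W_j\setminus \mc W_j'\subset \overline{\mc W_j'}$. Any $\bs x\in\mc W_j$ with $U(\bs x)=H$ is an interior point of $\{U\le H\}$, hence a local maximum of $U$; these are nondegenerate critical points of index $d$, of which there are finitely many. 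Removing a finite set of points from the connected open set $\mc W_j$ does not disconnect it, so $\mc W_j\cap\{U<H\}$ is connected, contained in $\{U<H\}$, and meets $\mc W_j'$, whence it equals $\mc W_j'$. Moreover, near each local maximum $\bs c\in \mc W_j\setminus\mc W_j'$ the set $\{U<H\}$ accumulates at $\bs c$ (by the Morse normal form), and those nearby points lie in $\mc W_j'$; thus $\bs c\in\overline{\mc W_j'}$. Combining gives $\mc W_j\subset\overline{\mc W_j'}$ and hence $\overline{\mc W_j}=\overline{\mc W_j'}$.

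For part (2), I would first establish $\mc H=\bigcup_i\overline{\mc W_i}$. Clearly $\bigcup_i\overline{\mc W_i}\subset\mc H$ since $\mc H$ is closed. Conversely, a point $\bs x\in\mc H$ either lies in $\mc H^o\subset\bigcup_i\mc W_i$, or lies in $\partial\mc H$; in the second case $U(\bs x)=H$ by Lemma \ref{l_level_boundary} and $\bs x$ is not a local minimum, and one shows $\bs x\in\overline{\mc H^o}$ exactly as in Lemma \ref{l_pathcon_closure}: if $\bs x$ is non-critical apply Lemma \ref{l_noncri_divide}, otherwise apply Hartman–Grobman to obtain a continuous path into $\{U<H\}\subset\mc H^o$. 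Now by part (1) and Lemma \ref{l_pathcon_closure}, each $\overline{\mc W_i}=\overline{\mc W_i'}$ is path connected. Form the graph with vertices $\{1,\dots,n\}$ and an edge between $i,j$ whenever $\overline{\mc W_i}\cap\overline{\mc W_j}\ne\varnothing$. If this graph were disconnected, splitting the vertex set into two nonempty classes $I,J$ with no edges would produce two disjoint closed sets $\bigcup_{i\in I}\overline{\mc W_i}$ and $\bigcup_{j\in J}\overline{\mc W_j}$ whose union is $\mc H$, contradicting the connectedness of $\mc H$. Hence the graph is connected, and concatenating paths within each $\overline{\mc W_i}$ with shared boundary points between adjacent ones yields a path between any two points of $\mc H$; the ``in particular'' statement then follows immediately from connectedness of this graph.

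The main subtlety is in part (1): one must verify that $\mc W_j$ contains a unique connected component of $\{U<H\}$, which relies on the dimensional observation that removing finitely many points from a connected open set in $\bb R^d$ (with $d\ge 2$) does not disconnect it. The second delicate point, needed for part (2), is the identification $\mc H=\overline{\mc H^o}$, which requires ruling out pathological boundary points of $\mc H$; this is where Lemma \ref{l_noncri_divide} and Hartman–Grobman are essential, exactly as in the proof of Lemma \ref{l_pathcon_closure}.
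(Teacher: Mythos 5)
Your proof is correct and follows essentially the same route as the paper's: for part (1) you identify the level-$H$ points of $\mc W_j$ as finitely many nondegenerate local maxima whose removal leaves a connected subset of $\{U<H\}$ that must coincide with $\mc W_j'$, and for part (2) you write $\mc H=\bigcup_i\overline{\mc W_i}$ and run the same compact-separation/graph-connectivity argument (your finiteness argument via a minimizer on $\overline{\mc W_i}$ is a minor, clean variant of the paper's). One small slip worth fixing: for a boundary point of the \emph{closed} component $\mc H$, Lemma \ref{l_level_boundary}(2) yields ``not a local maximum,'' not ``not a local minimum''; the fact you actually need for the Hartman--Grobman step --- that a critical point of $\partial\mc H$ at level $H$ cannot be a local minimum unless $\mc H$ degenerates to a single point, in which case the lemma is trivial --- requires a one-line separate observation.
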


\begin{proof}
Consider the open set $\mc W_1$. Since critical points of $U$ are not
isolated points, it is not possible to have $U(\bm{x})=H$ for all
$\bm{x}\in\mathcal{W}_{1}$.  Hence, $\mathcal{W}_{1}'$ well defined
and $\mathcal{W}_{1}\cap\mathcal{W}'_1 \neq \varnothing$. Let
$\bm{x}_{0}\in\mathcal{W}_{1}\cap\mathcal{W}'_1$.

\smallskip\noindent{\it Claim 1:} Let $\bs x_1\in \mc W_1$ such that
$U(\bs x_1) = H$. Then, $\bs x_1$ is a local maximum. \smallskip

Fix $\bs x_1\in \mc W_1$ such that $U(\bs x_1) = H$. Since
$\mathcal{W}_{1}$ is open, there exists $r_{1}>0$ such that
$B(\bm{x}_{1},\,r_{1})\subset\mathcal{W}_{1}$. Let $r_{1}$ be small
enough so that there is no critical point in
$B(\bm{x}_{1},\,r_{1})\setminus\{\bm{x}_{1}\}$.  Let
$\bm{y}\in B(\bm{x}_{1},\,r_{1})$ such that $U(\bm{y})=H$.  Since
$U(\bm{x})\le H$ for all $\bm{x}\in B(\bm{x}_{1},\,r_{1})$, $\bm{y}$
is a critical point (since $\nabla U(\bm{y})=0$). Hence,
$U(\bm{x})<H$ for all
$\bm{x}\in B(\bm{x}_{1},\,r_{1})\setminus\{\bm{x}_{1}\}$.  Therefore,
$\bm{x}_{1}$ is a local maximum, as claimed.

Let
\begin{equation*}
\widehat{\mathcal{W}}_{1}:=\{\bm{x}\in \mc W_1:
\bm{x}\,\text{is not a local maximum}\} \,\subset\, \mc W_1\ .
\end{equation*}
Since there are finitely many local maximum in $\mathcal{W}_{1}$,
$\widehat{\mathcal{W}}_{1}$ is open and connected. By Claim 1,
$U(\bm{x})<H$ for all $\bm{x}\in\widehat{\mathcal{W}}_{1}$.

By construction, $\bm{x}_{0}\in\widehat{\mathcal{W}}_{1}$.

\smallskip\noindent{\it Claim 2:}
$\widehat{\mathcal{W}}_{1}\subset\mathcal{W}_{1}'\subset\mathcal{W}_{1}$.
\smallskip

Since $\mathcal{W}_{1}'$ is a connected component of
$\{\bm{x}\in\mathbb{R}^{d}:U(\bm{x})<H\}$ intersecting with
$\widehat{\mathcal{W}}_{1}$ and since $U(\bm{x})<H$ for all
$\bm{x}\in\widehat{\mathcal{W}}_{1}$, by Lemma
\ref{l_level_connected},
$\widehat{\mathcal{W}}_{1}\subset\mathcal{W}_{1}'$.  Let
$\bm{x}_{2}\in\mathcal{W}_{1}'$. Since $\bs x_0 \in \mc W'_1$, there
is a continuous path $\bs z\colon [0,1]\to \mc W'_1$ in
$\mathcal{W}'_1$ from $\bm{x}_{0}$ to $\bm{x}_{2}$ such that
$ U(\bs z(t)) <H$ for all $0\le t\le 1$.  Since $\mathcal{H}$ is a
connected component of $\{\bm{x}\in\mathbb{R}^{d}:U(\bm{x})\le H\}$
containing $\bm{x}_{0}$, this path is contained in $\mathcal{H}$:
$\bs z(t) \in\mathcal{H}$ for all $0\le t\le 1$. As $U(\bs z(t))<H$,
by Lemma \ref{l_level_boundary}, $\bs z(t) \in\mathcal{H}^{o}$ for all
$0\le t\le 1$. As $\mc W_1$ is a connected component of $\mc H^o$ and
$\bs x_0\in \mc W_1$, $\bs x_2 \in\mathcal{W}_{1}$, as
claimed. \smallskip

By definition, the set $\mc W'_1$ contains a local minimum. By Claim
2. so does $\mc W_1$. Since the connected components are disjoints,
each one contains at least one local minimum of $U$ and there are a
finite number of critical points, the set $\mc H^o$ has a finite
number of connected components. This is the first assertion of the
lemma.

By Claim 2,
$\overline{\widehat{\mathcal{W}}_{1}} \subset\overline{\mathcal{W}'_1}
\subset\overline{\mathcal{W}_{1}}$.  Since local maxima
$\bm{y}\in\mathcal{W}_{1}$ are accumulation points,
$\overline{\widehat{\mathcal{W}}_{1}}=\overline{\mathcal{W}_{1}}$ so
that $\overline{\mathcal{W}_{1}'}=\overline{\mathcal{W}_{1}}$. This
proves the second assertion of the lemma.

Denote by $n$ the number of connected components of $\mc H^o$, so that
$\mathcal{H}=\overline{\mathcal{H}^{o}}=\overline{\bigcup_{i=1}^{n}
\mathcal{W}_{i}}=\bigcup_{i=1}^{n}\overline{\mathcal{W}_{i}}$.  By
Lemma \ref{l_pathcon_closure},
$\overline{\mathcal{W}_{i}} = \overline{\mathcal{W}_{1}'}$ is
path-connected.

\smallskip\noindent{\it Claim 3:} for all
$i \neq j\in \{1, \dots, n\}$, there exists
$i= i_{0},\,\dots,\,i_{k} = j$ such that
\begin{equation}
\label{ap01}
\overline{\mathcal{W}_{i_{m}}}\cap\overline{\mathcal{W}_{i_{m+1}}}
\ne \varnothing \;, \quad 0\le m < k\;.
\end{equation}
\smallskip

Suppose this property does not hold. Then, there exists
$i\neq j\in \{1, \dots, n\}$ which are not connected in the sense
\eqref{ap01}. Let $A$ be the set of indices in $\{1, \dots, n\}$ which
are connected to $i$ in the sense \eqref{ap01}. The sets
$\cup_{k\in A} \overline{\mathcal{W}_{k}}$,
$\cup_{k\not\in A} \overline{\mathcal{W}_{k}}$ are compact, disjoint
and non-empty.  Thus, there exist disjoint opens sets $\mc U$, $\mc V$
such that $\cup_{k\in A} \overline{\mathcal{W}_{k}} \subset \mc U$,
$\cup_{k\not\in A} \overline{\mathcal{W}_{k}} \subset \mc V$. This
contradicts the fact that
$\mc H = \bigcup_{i=1}^{n}\overline{\mathcal{W}_{i}} $ is connected,
and proves Claim 3.

Since each set $\overline{\mathcal{W}_{i}}$ is path-connected, by
property \eqref{ap01}, the set $\mathcal{H}$ is also path-connected.
\end{proof}

\begin{lem}
\label{l_105a-2}
The connected component of the set
$\{\bm{x}\in\mathbb{R}^{d}:U(\bm{x})\le\Theta(\bm{m}',\,\bm{m}'')\}$
containing $\bm{m}'$ also contains $\bm{m}''$.
\end{lem}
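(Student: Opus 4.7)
\noindent\textbf{Proof proposal for Lemma \ref{l_105a-2}.} Let $c := \Theta(\bm{m}',\,\bm{m}'')$. The plan is to build, by exhausting the infimum in the definition of $c$, a decreasing sequence of compact connected sublevel components each of which contains both $\bm{m}'$ and $\bm{m}''$, and then pass to the intersection using Lemma \ref{l_comp_conn}.

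First, for each $n\ge 1$, the definition of $\Theta(\bm{m}',\bm{m}'')$ furnishes a continuous path $\bm{z}_n\colon[0,1]\to\mathbb{R}^{d}$ with $\bm{z}_n(0)=\bm{m}'$, $\bm{z}_n(1)=\bm{m}''$ and $\max_{t\in[0,1]}U(\bm{z}_n(t))\le c+1/n$. Denote by $\mathcal{K}_n$ the connected component of the closed sublevel set $\{\bm{x}\in\mathbb{R}^{d}:U(\bm{x})\le c+1/n\}$ containing $\bm{m}'$. The growth hypothesis \eqref{26} forces every sublevel set of $U$ to be bounded, so $\mathcal{K}_n$ is compact. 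Since $\bm{z}_n([0,1])$ is a connected subset of $\{U\le c+1/n\}$ containing $\bm{m}'$, it is contained in $\mathcal{K}_n$, and hence $\bm{m}''\in\mathcal{K}_n$.

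Next I would argue that $(\mathcal{K}_n)_{n\ge 1}$ is decreasing. Indeed, $\mathcal{K}_{n+1}$ is a connected subset of $\{U\le c+1/n\}$ meeting $\mathcal{K}_n$ at $\bm{m}'$, so the closed-set version of Lemma \ref{l_level_connected} gives $\mathcal{K}_{n+1}\subset\mathcal{K}_n$. Apply Lemma \ref{l_comp_conn} to conclude that $\mathcal{K}:=\bigcap_{n\ge 1}\mathcal{K}_n$ is a (nonempty) connected compact set. Both $\bm{m}'$ and $\bm{m}''$ belong to $\mathcal{K}$, and for every $\bm{x}\in\mathcal{K}$ we have $U(\bm{x})\le c+1/n$ for all $n$, hence $\mathcal{K}\subset\{U\le c\}$.

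Finally, denote by $\mathcal{H}$ the connected component of $\{U\le c\}$ containing $\bm{m}'$. Since $\mathcal{K}$ is a connected subset of $\{U\le c\}$ meeting $\mathcal{H}$ at $\bm{m}'$, the closed-set version of Lemma \ref{l_level_connected} yields $\mathcal{K}\subset\mathcal{H}$, so that $\bm{m}''\in\mathcal{H}$, as claimed. The only delicate point is the monotonicity $\mathcal{K}_{n+1}\subset\mathcal{K}_n$ together with the compactness of each $\mathcal{K}_n$, which is exactly what allows Lemma \ref{l_comp_conn} to be invoked; every other step is a direct application of the preceding lemmas.
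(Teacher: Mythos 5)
Your proof is correct, but it takes a genuinely different route from the paper's. The paper argues by contradiction: assuming $\bm{m}''\notin\mathcal{H}$, it separates the compact component $\mathcal{H}$ from $\bm{m}''$ by an open set, invokes Lemma \ref{l_level_cover} to find an $n$ for which the component of $\{U\le\Theta(\bm{m}',\bm{m}'')+1/n\}$ containing $\mathcal{H}$ still misses $\bm{m}''$, and then derives $\Theta(\bm{m}',\bm{m}'')\ge\Theta(\bm{m}',\bm{m}'')+1/n$ from Lemma \ref{lap01}, a contradiction. You instead argue directly: you exhaust the infimum defining $\Theta$ by near-optimal paths, observe that each such path forces $\bm{m}''$ into the component $\mathcal{K}_n$ of $\{U\le c+1/n\}$ containing $\bm{m}'$, check monotonicity and compactness of the $\mathcal{K}_n$, and pass to the nested intersection via Lemma \ref{l_comp_conn}. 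Both arguments ultimately rest on the same topological fact (a decreasing intersection of compact connected sets is connected; in the paper this is hidden inside the proof of Lemma \ref{l_level_cover}, which shows $\bigcap_n\mathcal{R}_n=\mathcal{H}$), but yours bypasses Lemma \ref{l_decrea_comp}, Lemma \ref{lap01}, and the separation-by-open-sets step entirely, at the price of explicitly manipulating the approximating paths. Your version is arguably more self-contained and constructive; the paper's version reuses Lemmata \ref{l_level_cover} and \ref{lap01}, which it needs elsewhere anyway. All the individual steps you take (compactness of sublevel components from \eqref{26}, the closed-set version of Lemma \ref{l_level_connected} for both the path inclusion and the monotonicity, nonemptiness of the intersection) check out.
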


\begin{proof}
Let $\mathcal{H}$ be the connected component of the set
$\{\bm{x}\in\mathbb{R}^{d}:U(\bm{x})\le\Theta(\bm{m}',\,\bm{m}'')\}$
containing $\bm{m}'$. Suppose, by contradiction, that
$\bm{m}''\notin\mathcal{H}$. Since $\mathcal{H}$ and $\{\bm{m}''\}$
are compact sets, there is an open set $\mc U$ such that
$\mathcal{H}\subset\mc U$ and $\bm{m}''\notin\mc U$. By Lemma
\ref{l_level_cover}, there is $n\in\mathbb{N}$ such that the
connected component of
\begin{equation*}
\{\bm{x}\in\mathbb{R}^{d}:U(\bm{x})\le\Theta(\bm{m}',\,\bm{m}'')
+\frac{1}{n}\}
\end{equation*}
containing $\mathcal{H}$ is contained in $\mc U$. This connected
component does not contain $\bm{m}''$.  Thus, by Lemma \ref{lap01},
$\Theta(\bm{m}',\,\bm{m}'')\ge
\Theta(\bm{m}',\,\bm{m}'')+\frac{1}{n}$, which is a
contradiction.
\end{proof}

\begin{lem}
\label{l_118}
Let $\bm{m},\,\bm{m}'\in\mathcal{M}_{0}$ be two different local minima.
Then,
\[
U(\bm{m}), \, U(\bm m')<\Theta(\bm{m},\,\bm{m}')\ .
\]
\end{lem}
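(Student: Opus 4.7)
The plan is to argue by contradiction. We know trivially that $\Theta(\bs m, \bs m') \ge U(\bs m)$ and $\Theta(\bs m, \bs m') \ge U(\bs m')$, since any continuous path $\bs z\colon[0,1]\to\bb R^d$ with $\bs z(0)=\bs m$, $\bs z(1)=\bs m'$ satisfies $\max_t U(\bs z(t)) \ge \max\{U(\bs m), U(\bs m')\}$. So it suffices to show strict inequality in each case; by symmetry, I will only treat $U(\bs m) < \Theta(\bs m, \bs m')$.

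Suppose, for the sake of contradiction, that $\Theta(\bs m, \bs m') = U(\bs m)$. First I would apply Lemma \ref{l_105a-2} to obtain the connected component $\mathcal{H}$ of the sublevel set
\begin{equation*}
\{\bs x\in\bb R^d : U(\bs x) \le \Theta(\bs m, \bs m')\} \;=\; \{\bs x\in\bb R^d : U(\bs x) \le U(\bs m)\}
\end{equation*}
which contains both $\bs m$ and $\bs m'$. In particular $\mathcal{H}$ is connected and contains at least two distinct points.

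Next, I would use the assumption that $U$ is a Morse function to exploit the non-degeneracy of the local minimum $\bs m$. Since $(\nabla^2 U)(\bs m)$ is positive definite, a standard Taylor expansion provides an $r>0$ such that $U(\bs x) > U(\bs m)$ for every $\bs x\in B(\bs m, r)\setminus\{\bs m\}$. Consequently
\begin{equation*}
\mathcal{H}\cap B(\bs m, r) \;=\; \{\bs m\}\;,
\end{equation*}
so that $\{\bs m\}$ is a relatively open subset of $\mathcal{H}$. On the other hand $\{\bs m\}$ is closed in $\bb R^d$ and therefore closed in $\mathcal{H}$. Connectedness of $\mathcal{H}$ then forces $\mathcal{H}=\{\bs m\}$, contradicting $\bs m'\in\mathcal{H}$ with $\bs m'\neq \bs m$. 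The same argument with the roles of $\bs m$ and $\bs m'$ interchanged yields $U(\bs m') < \Theta(\bs m, \bs m')$.

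There is no real obstacle here: once Lemma \ref{l_105a-2} is in hand, the argument reduces to the elementary observation that a non-degenerate local minimum is an isolated point of any level set $\{U\le U(\bs m)\}$, combined with the fact that a connected set cannot contain an isolated point together with another point distinct from it.
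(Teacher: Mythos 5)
Your proof is correct. Every step checks out: the trivial bound $\Theta(\bm m,\bm m')\ge U(\bm m)$, the appeal to Lemma \ref{l_105a-2} to place $\bm m$ and $\bm m'$ in the same connected component $\mathcal H$ of $\{U\le\Theta(\bm m,\bm m')\}=\{U\le U(\bm m)\}$, the Taylor-expansion isolation of the non-degenerate minimum, and the clopen argument forcing $\mathcal H=\{\bm m\}$. There is no circularity, since the proof of Lemma \ref{l_105a-2} does not rely on Lemma \ref{l_118}.

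The route differs from the paper's in its mechanics, though both hinge on the same key fact that a non-degenerate local minimum is an isolated point of its own sublevel set. The paper argues directly: it chooses $\eta>0$ so small that the connected component of the \emph{open} set $\{U<U(\bm m)+2\eta\}$ containing $\bm m$ contains no other local minimum, and then invokes Lemma \ref{lap01}(2) to conclude $\Theta(\bm m,\bm m')\ge U(\bm m)+\eta$. You instead argue by contradiction at the exact level $U(\bm m)$, using the heavier Lemma \ref{l_105a-2} (which itself rests on the compactness machinery of Lemmata \ref{l_comp_conn}--\ref{l_level_cover}) together with an elementary connectedness argument. The paper's version buys a quantitative gap ($\Theta\ge U(\bm m)+\eta$) and uses only the lighter Lemma \ref{lap01}; yours avoids having to justify that the component of the slightly enlarged sublevel set contains no other local minimum, outsourcing the topological work to a lemma already proved. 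Both are legitimate; the paper's is marginally more self-contained within the appendix's logical order.
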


\begin{proof}
We only prove for $\bm m$ because $\Theta(\cdot, \, \cdot)$ is
symmetric. Since $\bm{m}$ is a local minimum, there exists $\eta>0$
such that $\bm{m}$ is a unique local minimum of a connected component
of $\{\bm{x}\in\mathbb{R}^{d}:U(\bm{x})<U(\bm{m})+2\eta\}$ containing
$\bm{m}$. Therefore, by Lemma \ref{lap01}, for all continuous path
$\bm{z}$ connecting local minimum $\bm{m}$ and any other local
minimum, we have
\[
\max_{t\in[0,1]}U(\bm{z}(t))>U(\bm{m})+\eta\ ,
\]
which implies
\[
\Theta(\bm{m},\,\bm{m}')\ge U(\bm{m})+\eta>U(\bm{m})\ .
\]
\end{proof}

\begin{proof}[Proof of Lemma \ref{l_105a-1}]
Fix $\bs m\in\mc M_0$.  Let $\mathcal{H}$ be the connected component
of the set
$\{\bm{x}\in\mathbb{R}^{d}: U(\bm{x}) \le U (\bm{m}) + \Gamma (\bs
m)\}$ containing $\bm{m}$. By definition, there exists
$\bm{m}' \in\mathcal{M}_{0}$ such that
\begin{equation*}
\Theta(\bm{m},\,\bm{m}')=\min_{\bm{m}''\in
\mathcal{M}_{0}\setminus\{\bm{m}\}}\Theta(\bm{m},\,\bm{m}'')
=U(\bm{m})+\Gamma(\bm{m})\;.
\end{equation*}
By Lemma \ref{l_105a-2}, $\bs m'\in \mc H$.

As in Lemma \ref{l_pathcon_level}, denote by $\mathcal{H}^{o}$ the
interior of $\mc H$. Let $\mathcal{W}_{j}$, $1\le i\le n$, the open
connected components of $\mc H^{o}$.  Assume that
$\bm{m}\in\mathcal{W}_{1}$.

We assert that $\bm{m}$ is the unique local minimum in
$\mathcal{W}_{1}$. Indeed, as in Lemma \ref{l_pathcon_level}, let
$\mathcal{W}_{1}'$ be a connected component of
$\{\bm{x}\in\mathbb{R}^{d}:U(\bm{x})< U(\bm{m})+\Gamma(\bm{m})\}$
which intersects with $\mathcal{W}_{1}$. By \eqref{a01}, $\mc W'_1$
contains one and only one local minimum of $U$. On the other hand, by
Claim 2 in Lemma \ref{l_pathcon_level}, $\mc W'_1 \subset \mc W_1$ and
all elements of $\mc W_1\setminus \mc W'_1$ are local maxima. This
proves the assertion.

By Lemma
\ref{l_118}, $U(\bm m')<\Theta(\bm m, \, \bm m')$. As
$\bs m'\in\mc H$, by Lemma \ref{l_level_boundary},
$\bm m' \in \mc H^{o}$ so that $n\ge2$.

By Lemma \ref{l_pathcon_level}-(2), there is $1<k\le n$ such that
$\overline{\mathcal{W}_{1}}\cap\overline{\mathcal{W}_{k}}
\ne\varnothing$.  Let $\mc W$, $\mathcal{W}'$ be connected components
of $\{\bm{x}\in\mathbb{R}^{d}:U(\bm{x})<\Theta(\bm{m},\,\bm{m}')\}$
intersecting with $\mc W_1$, $\mathcal{W}_{k}$, respectively. By Lemma
\ref{l_pathcon_level},
$\overline{\mathcal{W}}=\overline{\mathcal{W}_{1}}$ and
$\overline{\mathcal{W}'}=\overline{\mathcal{W}_{k}}$ so that
$\overline{\mathcal{W}}\cap\overline{\mathcal{W}'}\ne\varnothing$.
\end{proof}

\section{Extension of the vector field $\mathbf  b$}
\label{sec-ap4}

Fix $\bs m\in \mc M_0$.  In this subsection, we define a new vector
field $\bs b_0 \colon \bb R^d \to \bb R^d$ which coincides with
$\bs b$ in a neighborhood of $\bs m$ and satisfies the hypotheses of
Section \ref{sec-ap3}. Assume that $\bs m = \bs 0$, and let
\begin{equation*}
\mathbb{H}= (\nabla^{2}U) (\boldsymbol{0})\;,\;\;\;
\mathbb{L}= (D\boldsymbol{\ell})
(\ensuremath{\boldsymbol{0})}\;.
\end{equation*}
By Taylor expansion, for $\boldsymbol{x}\simeq\boldsymbol{0}$,
\begin{align*}
-\, \bs b (\boldsymbol{x})\cdot\mathbb{H}\boldsymbol{x}
\,=\, \left[(\mathbb{H+\mathbb{L}})\boldsymbol{x}
+O(|\boldsymbol{x}|^{2})\right]\cdot\mathbb{H}
\boldsymbol{x}=|\mathbb{H}\boldsymbol{x}|^{2}+O(|\boldsymbol{x}|^{3})\;,
\end{align*}
where the second equality comes from the fact that
$\mathbb{H}\mathbb{L}$ is skew-symmetric. Thus, there exists $r_5>0$
such that
\begin{equation}
=\, -\, \bs b (\boldsymbol{x})\cdot\mathbb{H}\boldsymbol{x}
\ge\frac{1}{2}\, |\mathbb{H}\boldsymbol{x}|^{2}
\;\;\;\;\text{for all\;}\boldsymbol{x}\in B(\bs 0,2r_5) \;.
\label{eq:cond1}
\end{equation}
If needed, modify the definition of $r_5>0$ for
\begin{equation}
| \bb K_{\bs x} \bs y |
\,\le\, \frac{1}{2}\, | \mathbb{H} \bs y | \;\;\;\;
\text{for all\;}\boldsymbol{x}\in B(\bs 0,2r_5) \;, \;\;
\bs y\in\bb R^d\;.
\label{eq:cond2}
\end{equation}
where $\bb K_{\bs x}  = (\nabla^{2}U+D\boldsymbol{\ell})(\boldsymbol{x})
-(\mathbb{H}+\mathbb{L})$.

For $\boldsymbol{x}\not \in B(\bs 0, r_5)$, let
\begin{equation*}
\boldsymbol{r}(\boldsymbol{x})=
\frac{r_5}{|\boldsymbol{x}|}\boldsymbol{x}
\in\partial B(\bs 0, r_5) \;
\end{equation*}
and let $\boldsymbol{b}_0:\mathbb{R}^{d}\rightarrow\mathbb{R}^{d}$ be
given by
\begin{equation}
\boldsymbol{b}_0 (\boldsymbol{x}) \;=\;
\left\{
\begin{aligned}
& \boldsymbol{b}(\boldsymbol{x}) \;, \;\;
\boldsymbol{x}\in B(\bs 0, r_5)
\\
& \boldsymbol{b}(\boldsymbol{r}(\boldsymbol{x}))
+ (D\boldsymbol{b}) (\boldsymbol{r}(\boldsymbol{x}))(\boldsymbol{x}
-\boldsymbol{r}(\boldsymbol{x})) \;, \;\;
\boldsymbol{x}\in B(\bs 0, r_5)^{c}\;.
\end{aligned}
\right.
\label{eq:vecb}
\end{equation}

The main result of this section reads as follows.

\begin{prop}
\label{pap4}
The vector field $\boldsymbol{b}_0$ fullfils all conditions of
Section \ref{sec-ap3}. Condition (2) holds for $r_3=r_5$.
\end{prop}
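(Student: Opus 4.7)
The plan is to verify each of the four conditions (1)--(4) of Section~\ref{sec-ap3} for the explicit extension $\bs b_0$ given in \eqref{eq:vecb}, taking $U_0 = U$, $\bs \ell_0 = \bs \ell$, and identifying the local minimum $\bs m$ with $\bs 0$, so that $\mathbb{H}_0 = \mathbb{H}$ and $\mathbb{L}_0 = \mathbb{L}$. By construction, $\bs b_0$ coincides with $\bs b = -(\nabla U + \bs \ell)$ on $B(\bs 0, r_5)$ and, outside this ball, is obtained by affinely extrapolating $\bs b$ along rays from the sphere $\partial B(\bs 0, r_5)$ via its value and derivative at the radial projection $\bs r(\bs x) = (r_5/|\bs x|)\,\bs x$. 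Condition (2) with $r_3 = r_5$ is therefore immediate, and once the contraction of condition (4) is established, condition (1) (unique, stable zero at $\bs 0$) follows from the standard Lyapunov argument with $V(\bs x) = \bs x\cdot\mathbb{H}\bs x$ together with nondegeneracy of $\mathbb{H}$ at the Morse minimum: inside $B(\bs 0, r_5)$ a would-be second zero would force $\nabla U = \bs \ell = \bs 0$ by the orthogonality \eqref{27}, contradicting the absence of other critical points, while outside $B(\bs 0, r_5)$ any zero of $\bs b_0$ would force $\mathbb{H}\bs x = \bs 0$ by \eqref{eq:contraction} and hence $\bs x = \bs 0$.

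I would dispatch condition (3) next as a routine exercise. For $|\bs x| > r_5$, $\bs r(\bs x)$ lies on the compact sphere $\partial B(\bs 0, r_5)$, so $\bs b(\bs r(\bs x))$ and $(D\bs b)(\bs r(\bs x))$ are uniformly bounded; together with $|\bs x - \bs r(\bs x)| \le |\bs x|$, formula \eqref{eq:vecb} gives $|\bs b_0(\bs x)| \le C\,|\bs x|$. Differentiating \eqref{eq:vecb} and noting that $\|D\bs r(\bs x)\| = O(1/|\bs x|)$ yields an $L^\infty$ bound on $D\bs b_0$ outside any fixed ball, hence in particular $\|D\bs b_0(\bs x)\| \le C\,|\bs x|$ for $|\bs x|$ large. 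The same derivative computation also verifies $\bs b_0 \in C^1(\mathbb{R}^d)$: the two expressions match at $|\bs x| = r_5$ and all gluing terms in $D\bs b_0$ vanish there because $\bs x - \bs r(\bs x) = \bs 0$ on the sphere.

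The main obstacle, and the only substantive step, is the contraction \eqref{eq:contraction}. Inside $B(\bs 0, r_5)$ it is exactly the hypothesis \eqref{eq:cond1}. For $|\bs x| > r_5$, I would parameterize $\bs x = t\,\bs r$ with $\bs r = \bs r(\bs x) \in \partial B(\bs 0, r_5)$ and $t = |\bs x|/r_5 \ge 1$, so that $\bs x - \bs r = (t-1)\bs r$, and expand
\begin{equation*}
-\,\bs b_0(\bs x)\cdot \mathbb{H}\bs x \;=\; -\,t\,\bs b(\bs r)\cdot \mathbb{H}\bs r \;-\; t(t-1)\,[(D\bs b)(\bs r)\,\bs r]\cdot \mathbb{H}\bs r\;.
\end{equation*}
The first summand is bounded below by $(t/2)\,|\mathbb{H}\bs r|^2$ by \eqref{eq:cond1}, since $\bs r \in B(\bs 0, 2r_5)$. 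For the second, I would write $(D\bs b)(\bs r) = -(\mathbb{H}+\mathbb{L}) - \mathbb{K}_{\bs r}$ with $\mathbb{K}_{\bs r}$ as in \eqref{eq:cond2}, and exploit the skew-symmetry of $\mathbb{H}\mathbb{L}$ (a consequence of $\nabla U\cdot\bs\ell\equiv 0$ via \cite[Lemma 4.5]{LS-22}) to kill the cross term $\mathbb{L}\bs r\cdot\mathbb{H}\bs r = \bs r\cdot\mathbb{L}^{\dagger}\mathbb{H}\bs r = -\,\bs r\cdot\mathbb{H}\mathbb{L}\bs r = 0$. What remains is $|\mathbb{H}\bs r|^2 + \mathbb{K}_{\bs r}\bs r\cdot\mathbb{H}\bs r$, and Cauchy--Schwarz combined with \eqref{eq:cond2} bounds the latter in absolute value by $\tfrac{1}{2}|\mathbb{H}\bs r|^2$. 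Summing yields $-\,\bs b_0(\bs x)\cdot\mathbb{H}\bs x \ge (t^2/2)\,|\mathbb{H}\bs r|^2 = \tfrac{1}{2}|\mathbb{H}\bs x|^2$, which is exactly \eqref{eq:contraction}. This is precisely the computation that the calibrations \eqref{eq:cond1}--\eqref{eq:cond2} and the affine-radial prescription \eqref{eq:vecb} were designed to make work, and once it is in hand the remaining conditions fall into place by the routine arguments sketched above.
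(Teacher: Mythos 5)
Your proposal is correct and follows essentially the same route as the paper: the paper also verifies conditions (1) and (2) directly from the contraction estimate and the definition of $\bs b_0$, proves the growth bounds and $C^1$-regularity (Lemma \ref{prop:b1}) exactly as you sketch, and establishes \eqref{eq:contraction} (Lemma \ref{prop:b2}) by the same splitting of $-\bs b_0(\bs x)\cdot\mathbb{H}\bs x$ into the $\bs b(\bs r(\bs x))$ term handled by \eqref{eq:cond1} and the $(D\bs b)(\bs r(\bs x))$ term handled by the skew-symmetry of $\mathbb{H}\mathbb{L}$ together with \eqref{eq:cond2}. Your parameterization $\bs x = t\,\bs r$ is just a cosmetic rewriting of the paper's factors $r_5/|\bs x|$ and $1 - r_5/|\bs x|$, and the two contributions sum to $\tfrac12|\mathbb{H}\bs x|^2$ in exactly the same way.
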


The proof relies on two lemmata.

\begin{lem}
\label{prop:b1}
The vector field $\boldsymbol{b}_0$ belongs to
$C^{1}(\mathbb{R}^{d},\,\mathbb{R}^{d})$. Moreover, there exists a finite
constant $C_{1}$ such that
\begin{equation}
|\boldsymbol{b}_0(\boldsymbol{x})|\le
C_{1}|\boldsymbol{x}|
\quad\text{and}\quad
\left\Vert D\boldsymbol{b}_0 (\boldsymbol{x})\right\Vert
\le C_{1}|\boldsymbol{x}|
\label{eq:cond_b}
\end{equation}
for all $\boldsymbol{x}\in B(\bs 0, r_5)^{c}$.
\end{lem}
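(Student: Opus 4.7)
The plan is to split the claim into (i) the $C^1$ regularity of $\boldsymbol{b}_0$ and (ii) the linear growth bounds on $\boldsymbol{b}_0$ and $D\boldsymbol{b}_0$ outside $B(\bs 0, r_5)$. Both parts hinge on the fact that the formula outside the ball is the first-order Taylor expansion of $\boldsymbol{b}$ around the radial projection $\boldsymbol{r}(\boldsymbol{x}) = (r_5/|\boldsymbol{x}|)\,\boldsymbol{x}$, which lies on the compact sphere $\partial B(\bs 0,r_5)$ where $\boldsymbol{b}$ and $D\boldsymbol{b}$ are bounded.

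\textbf{Regularity.} Inside $B(\bs 0, r_5)$ we have $\boldsymbol{b}_0 = \boldsymbol{b}$, which is $C^1$ because $U\in C^3$ and $\boldsymbol{\ell}\in C^2$. Outside $\overline{B(\bs 0, r_5)}$ the map $\boldsymbol{r}$ is smooth (away from the origin), so $\boldsymbol{b}_0$ is $C^1$ as a composition and product of $C^1$ functions. The only nontrivial point is to verify that value and derivative match across $\partial B(\bs 0, r_5)$. Continuity is immediate since $\boldsymbol{r}(\boldsymbol{x}) = \boldsymbol{x}$ and $\boldsymbol{x}-\boldsymbol{r}(\boldsymbol{x}) = 0$ there. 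For the derivative from the outside, I would differentiate
\begin{equation*}
D\boldsymbol{b}_0(\boldsymbol{x}) \;=\; (D\boldsymbol{b})(\boldsymbol{r}(\boldsymbol{x}))\,D\boldsymbol{r}(\boldsymbol{x}) \;+\; \big[\,D(D\boldsymbol{b})(\boldsymbol{r}(\boldsymbol{x}))\,D\boldsymbol{r}(\boldsymbol{x})\,\big]\,(\boldsymbol{x}-\boldsymbol{r}(\boldsymbol{x})) \;+\; (D\boldsymbol{b})(\boldsymbol{r}(\boldsymbol{x}))\,\big(I - D\boldsymbol{r}(\boldsymbol{x})\big)\;,
\end{equation*}
and evaluate on $\partial B(\bs 0, r_5)$: the middle term vanishes because $\boldsymbol{x}-\boldsymbol{r}(\boldsymbol{x}) = 0$, and the remaining two pieces collapse to $(D\boldsymbol{b})(\boldsymbol{x})$, which agrees with the derivative from the inside.

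\textbf{Growth bounds.} On the compact sphere $\partial B(\bs 0, r_5)$, the quantities $|\boldsymbol{b}|$ and $\|D\boldsymbol{b}\|$ are bounded by some $M<\infty$. For $|\boldsymbol{x}|>r_5$, the radial geometry gives $|\boldsymbol{x}-\boldsymbol{r}(\boldsymbol{x})| = |\boldsymbol{x}|-r_5 \le |\boldsymbol{x}|$, so
\begin{equation*}
|\boldsymbol{b}_0(\boldsymbol{x})| \;\le\; M \,+\, M\,|\boldsymbol{x}| \;\le\; C_1\,|\boldsymbol{x}|
\end{equation*}
for $C_1 = M(1 + r_5^{-1})$, proving the first inequality. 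For $D\boldsymbol{b}_0$, the explicit formula above (valid for $|\boldsymbol{x}|>r_5$) is controlled using $\|D\boldsymbol{r}(\boldsymbol{x})\| = O(1/|\boldsymbol{x}|)$ (from $\partial_j r_i(\boldsymbol{x}) = r_5(\delta_{ij}/|\boldsymbol{x}| - x_i x_j/|\boldsymbol{x}|^3)$) together with the boundedness of $\boldsymbol{b}$, $D\boldsymbol{b}$ and $D^2\boldsymbol{b}$ on $\partial B(\bs 0, r_5)$; the middle term is $O(1/|\boldsymbol{x}|)\cdot|\boldsymbol{x}|=O(1)$, hence $\|D\boldsymbol{b}_0(\boldsymbol{x})\|$ is uniformly bounded. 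A uniform bound in turn implies the claimed linear bound on $\{|\boldsymbol{x}|>r_5\}$ after enlarging $C_1$ if necessary.

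The proof is mostly routine; the only subtlety is the $C^1$ matching on $\partial B(\bs 0, r_5)$, where one must use the Taylor-remainder structure of the extension and the vanishing of $\boldsymbol{x}-\boldsymbol{r}(\boldsymbol{x})$ on the sphere to absorb the term involving $D^2\boldsymbol{b}$. Everything else is a direct calculation from the definition of $\boldsymbol{b}_0$ together with compactness of $\partial B(\bs 0, r_5)$.
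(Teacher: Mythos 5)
Your proof is correct and follows essentially the same route as the paper: write the extension as the first-order Taylor expansion about the radial projection $\boldsymbol{r}(\boldsymbol{x})$, differentiate via the product and chain rules, use $\|D\boldsymbol{r}(\boldsymbol{x})\| = O(1/|\boldsymbol{x}|)$ together with boundedness of $\boldsymbol{b}$, $D\boldsymbol{b}$, $D^2\boldsymbol{b}$ on the compact sphere, and match the derivative across $\partial B(\bs 0,r_5)$ using the vanishing of $\boldsymbol{x}-\boldsymbol{r}(\boldsymbol{x})$ there. Your boundary-matching computation is in fact slightly more explicit than the paper's, but the argument is the same.
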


\begin{proof}
By a straightforward computation, for $|\boldsymbol{x}| > r_5$,
\begin{equation}
(\partial_{x_k}\boldsymbol{b}_0) (\boldsymbol{x})
\,=\, \partial_{x_k} \big\{\, \left[D\boldsymbol{b}(\bm{r}(\bm{x}))\right]
(\bm{x}-\bm{r}(\bm{x})) \,\big\}
\,+\, \partial_{x_k} \{ \, \boldsymbol{b} (\bm{r}(\bm{x})) \,\} \;.
\label{eq:part_b}
\end{equation}
Since
\begin{equation*}
\partial_{x_k}\bm{r}(\bm{x})
\,=\, r_5\, \frac{\boldsymbol{e}_{k}}{|x|}
\,-\, \frac{x_{k}}{|\boldsymbol{x}|^{3}}\, \boldsymbol{x}\;,
\end{equation*}
the matrix $\partial_{x_k}\left[D\boldsymbol{b}(\bm{r}(\bm{x}))\right]$
is uniformly bounded on $B(\bs 0, r_5)^{c}$.

Since $\boldsymbol{x}\rightarrow\boldsymbol{r}(\boldsymbol{x})$ as
$\boldsymbol{x}$ approaches $\partial B(\bs 0, r_5)$, the
boundedness of
$\partial_{x_k}\left[D\boldsymbol{b}(\bm{r}(\bm{x}))\right]$ yields
that
$\partial_{x_k}\boldsymbol{b}_0(\boldsymbol{x})\rightarrow
\partial_{x_k}\boldsymbol{b}(\boldsymbol{x})$ as $\boldsymbol{x}$
approaches to $\partial B(\bs 0, r_5)$.  This proves that
$\boldsymbol{b}_0\in C^{1}(\mathbb{R}^{d},\,\mathbb{R}^{d})$.  The
first assertion of (\ref{eq:cond_b}) follows from the definition of
$\bs b_0$. The second one from (\ref{eq:part_b}) and the boundedness
of $\partial_{k}\left[D\boldsymbol{b}(\bm{r}(\bm{x}))\right]$ on
$B(0,r_5)^{c}$.
\end{proof}

\begin{lem}
\label{prop:b2}
For all $\boldsymbol{x}\in\mathbb{R}^{d}$,
\begin{equation*}
-\, \boldsymbol{b}_0 (\boldsymbol{x})\cdot\mathbb{H}\boldsymbol{x}
\ge\frac{1}{2}|\mathbb{H}\boldsymbol{x}|^{2}
\end{equation*}
\end{lem}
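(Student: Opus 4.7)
The strategy is to split into two cases according to whether $\bs x$ lies inside or outside $B(\bs 0,r_5)$, and to exploit the fact that when $\bs x\notin B(\bs 0,r_5)$ the vectors $\bs x$ and $\bs r(\bs x)$ are proportional, which will allow us to transfer the estimate on the boundary sphere back to $\bs x$ with the correct dilation factor.

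First, if $\bs x\in B(\bs 0,r_5)$ then $\bs b_0(\bs x)=\bs b(\bs x)$ by \eqref{eq:vecb}, and since $B(\bs 0,r_5)\subset B(\bs 0,2r_5)$, condition \eqref{eq:cond1} gives the claim immediately.

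For the case $\bs x\notin B(\bs 0,r_5)$, set $\bs y:=\bs r(\bs x)=r_5\bs x/|\bs x|\in\partial B(\bs 0,r_5)$ and $t:=|\bs x|/r_5\ge1$, so that $\bs x=t\bs y$ and $\mathbb{H}\bs x=t\mathbb{H}\bs y$. By the definition \eqref{eq:vecb},
\begin{equation*}
-\bs b_0(\bs x)\cdot\mathbb{H}\bs x \;=\; t\bigl[-\bs b(\bs y)\cdot\mathbb{H}\bs y\bigr] \;+\; t(t-1)\bigl[-(D\bs b)(\bs y)\bs y\cdot\mathbb{H}\bs y\bigr].
\end{equation*}
The plan is to show each bracket is bounded below by $\tfrac12|\mathbb{H}\bs y|^2$, which will yield the lower bound $\tfrac{t^2}{2}|\mathbb{H}\bs y|^2=\tfrac12|\mathbb{H}\bs x|^2$. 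The first bracket is $\ge \tfrac12|\mathbb{H}\bs y|^2$ directly from \eqref{eq:cond1}, since $\bs y\in B(\bs 0,2r_5)$. For the second bracket, write $(D\bs b)(\bs y)=-(\mathbb{H}+\mathbb{L})-\mathbb{K}_{\bs y}$ with the notation preceding \eqref{eq:cond2}, so that
\begin{equation*}
-(D\bs b)(\bs y)\bs y\cdot\mathbb{H}\bs y \;=\; \mathbb{H}\bs y\cdot\mathbb{H}\bs y \;+\; \mathbb{L}\bs y\cdot\mathbb{H}\bs y \;+\; \mathbb{K}_{\bs y}\bs y\cdot\mathbb{H}\bs y.
\end{equation*}
The middle term vanishes because $\mathbb{H}\mathbb{L}$ is skew-symmetric (by \cite[Lemma 4.5]{LS-22}, applied at the critical point $\bs m=\bs 0$): indeed $\mathbb{L}\bs y\cdot\mathbb{H}\bs y=\bs y\cdot\mathbb{L}^{\dagger}\mathbb{H}\bs y=-\bs y\cdot\mathbb{H}\mathbb{L}\bs y=0$. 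For the last term, \eqref{eq:cond2} gives $|\mathbb{K}_{\bs y}\bs y|\le\tfrac12|\mathbb{H}\bs y|$, so Cauchy--Schwarz yields $|\mathbb{K}_{\bs y}\bs y\cdot\mathbb{H}\bs y|\le\tfrac12|\mathbb{H}\bs y|^2$. Combining these, the second bracket is also $\ge\tfrac12|\mathbb{H}\bs y|^2$, and the proof is complete.

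The only delicate point is recognizing that the identity $\bs x=t\bs y$ lets $t^2$ factor out cleanly on both sides: one power of $t$ from $\mathbb{H}\bs x=t\mathbb{H}\bs y$, and the remaining power is distributed between the two summands $t$ and $t(t-1)$ produced by the linearization formula \eqref{eq:vecb}, conspiring to sum to exactly $t^2$. No smallness on $r_5$ beyond what is already imposed by \eqref{eq:cond1} and \eqref{eq:cond2} is required.
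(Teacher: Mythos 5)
Your argument is correct and is essentially the paper's own proof: the same split into $\bs x\in B(\bs 0,r_5)$ versus its complement, the same decomposition of $-\bs b_0(\bs x)\cdot\mathbb{H}\bs x$ into the $\bs b(\bs r(\bs x))$ term and the $(D\bs b)(\bs r(\bs x))$ term, and the same three ingredients \eqref{eq:cond1}, \eqref{eq:cond2}, and the skew-symmetry of $\mathbb{H}\mathbb{L}$. The only cosmetic difference is that you rescale to the boundary point $\bs y=\bs r(\bs x)$ and track the factor $t=|\bs x|/r_5$ (obtaining coefficients $t$ and $t(t-1)$ summing to $t^2$), whereas the paper keeps everything in terms of $\bs x$ (obtaining coefficients $r_5/|\bs x|$ and $1-r_5/|\bs x|$ summing to $1$); these are identical computations.
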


\begin{proof}
By (\ref{eq:cond1}) the condition is satisfied for
$\boldsymbol{x}\in B(\bs 0, 2r_5)$. Fix
$\boldsymbol{x}\not \in B(\bs 0, r_5)$ so that
\begin{equation}
-\, \boldsymbol{b}_0 (\boldsymbol{x})\cdot\mathbb{H}\bm{x}
\,=\, -\, \boldsymbol{b}(\bm{r}(\bm{x}))\cdot\mathbb{H}\bm{x}
\,-\, (D\boldsymbol{b}) (\bm{r}(\bm{x}))(\bm{x}
-\bm{r}(\bm{x}))\cdot\mathbb{H}\bm{x}
\label{eq:cs1}
\end{equation}
Since
$\boldsymbol{x}= (|\boldsymbol{x}|/r_5)\,
\boldsymbol{r}(\boldsymbol{x})$ and since
$\boldsymbol{r}(\boldsymbol{x})\in B(\bs 0, 2r_5)$, by
(\ref{eq:cond1}), the first term on the right-hand side can be
estimated by
\begin{equation*}
-\, \frac{|\boldsymbol{x}|}{r_5} \, \bs b (\bm{r}(\bm{x}))\cdot
\mathbb{H}\bm{r}(\bm{x})
\, \ge\, \frac{|\boldsymbol{x}|}{2r_5}\, |\,
\mathbb{H}\bm{r}(\bm{x})\, |^{2}
\,=\, \frac{r_5}{2|\bm{x}|}\, |\mathbb{H}\bm{x}|^{2}\;.
\end{equation*}
For the second term, write
\begin{equation*}
-\, (D\boldsymbol{b}) (\bm{r}(\bm{x})) \,=\, \mathbb{H}
+\mathbb{L}+\mathbb{K}_{\bs x} \;\;\;
\text{and}\;\;\;\;\bm{x}-\bm{r}(\bm{x})
=\big(\, 1-\frac{r_5}{|\boldsymbol{x}|} \, \big)\, \boldsymbol{x}\;.
\end{equation*}
Since  $\mathbb{HL}$ is skew-symmetry, the second term of
(\ref{eq:cs1}) is equal to
\begin{align*}
& \big(\, 1-\frac{r_5}{|\boldsymbol{x}|} \, \big)\,
\left(\mathbb{H}+\mathbb{L}+\mathbb{K}_{\bs x} \right)
\boldsymbol{x}\cdot\mathbb{H}\bm{x}
\\
& \quad =\; \big(\, 1-\frac{r_5}{|\boldsymbol{x}|} \, \big)\,
\left(\left|\mathbb{H}\boldsymbol{x}\right|^{2}
+\mathbb{K}_{\bs x} \boldsymbol{x}\cdot\mathbb{H}\bm{x}\right)
\ge\frac{1}{2}\, \big(\, 1-\frac{r_5}{|\boldsymbol{x}|} \, \big)\,
\left|\mathbb{H}\boldsymbol{x}\right|^{2}\;.
\end{align*}
The last inequality comes from (\ref{eq:cond2}). Adding the previous
estimates completes the proof of the lemma.
\end{proof}

\begin{proof}[Proof of Proposition \ref{pap4}]
To check the first condition, suppose that
$\boldsymbol{b}_0 (\boldsymbol{x})=0$ for some
$\boldsymbol{x}\in\mathbb{R}^{d}$. Lemma \ref{prop:b2} implies that
$\boldsymbol{x}=\boldsymbol{0}.$ Thus $\boldsymbol{0}$ is the only
equilibrium of the dynamical system \eqref{eq:x_0}. Since the behavior
of this ODE near $\boldsymbol{0}$ is identical to that of
$\boldsymbol{x}(\cdot)$, the origin is a stable equilibrium.
Condition (2) in Section \ref{sec-ap3} for $r_3=r_5$ follows from the
definition of $\bs b_0$. The third and fourth conditions have been
derived in Lemmata \ref{prop:b1} and \ref{prop:b2}, respectively.
\end{proof}

\section{Potential Theory}
\label{sec-ap2}

In sake of completeness we introduce in this section the capacity
between sets.  Fix two disjoint non-empty bounded domains
$\mathcal{A}$ and $\mathcal{B}$ of $\mathbb{R}^{d}$ with
$C^{2,\,\alpha}$-boundaries for some $\alpha\in(0,\,1)$. Assume that
the perimeters of $\mc A$, $\mc B$ are finite and that the distance
between the sets is positive. Let
$\Omega=(\overline{\mathcal{A}}\cup\overline{\mathcal{B}})^{c}$ so
that $\partial\Omega=\partial\mathcal{A}\cup\partial\mathcal{B}$.

The equilibrium potentials $h_{\mathcal{A},\mathcal{\,B}}^{\epsilon}$
between $\mathcal{A}$ and $\mathcal{B}$ with respect to the processes
$\boldsymbol{x}_{\epsilon}(\cdot)$ is given by
\begin{equation*}
h_{\mathcal{A},\mathcal{\,B}}^{\epsilon}\,(\boldsymbol{x})
\,=\,\mathbb{P}_{\boldsymbol{x}}^{\epsilon}
\,[\,\tau_{\mathcal{A}}<\tau_{\mathcal{B}\:}]\;, \quad
\boldsymbol{x}\in\mathbb{R}^{d}\;,
\end{equation*}
and the capacity by
\begin{equation*}
\textup{cap}_{\epsilon}(\mathcal{A},\,\mathcal{B})\,=\,
\epsilon\int_{\Omega}|\nabla
h_{\mathcal{A},\,\mathcal{B}}^{\epsilon}|^{2}\,d\mu_{\epsilon} \;.
\end{equation*}
We refer to \cite{LMS} for equivalent formulations and properties of
the capacity.

\section{Analysis of a linear ODE}
\label{sec:ODE}

In this section, we prove Lemma \ref{lem_esclin}. To simplify
notation, we fix $\boldsymbol{c}\in\mathcal{Y}_{0}$, assumed to be
equal to $0$, $\boldsymbol{c}=\boldsymbol{0}$, and write
$\mathbb{A}=-(\mathbb{H}^{\bm{c}}+\mathbb{L}^{\bm{c}})$. By Lemma
\ref{lem:hyper}, the matrix $\mathbb{A}$ is invertible and does not
have a pure imaginary eigenvalue. All the results given in this
section holds for such a matrix $\mathbb{A}$.

\subsection*{Real Jordan canonical form}

Suppose that a matrix $\mathbb{K}$ can be written as a block matrix of
the form
\begin{equation*}
\mathbb{K}=\begin{bmatrix}\mathbb{K}_{1} & \mathbb{O} & \mathbb{O}\\
\mathbb{O} & \ddots & \mathbb{O}\\
\mathbb{O} & \mathbb{O} & \mathbb{K}_{n}
\end{bmatrix}
\;,
\end{equation*}
where $\mathbb{K}_{1},\dots,\,\mathbb{K}_{n}$ are matrices of possibly
different sizes and $\mathbb{O}$ denotes the zero matrix of suitable
size. We represent such a matrix as
$\mathbb{K}=\text{diag}(\mathbb{K}_{1},\,\dots,\,\mathbb{K}_{n})$.

We start by a review of a real Jordan canonical form of $\mathbb{A}$.
By \cite[Theorem 2.5]{ODE}, there exists an invertible matrix
$\mathbb{U}$ such that
\begin{equation}
\label{62}
\mathbb{A}=\mathbb{U}\mathbb{J}\mathbb{U}^{-1}
\end{equation}
where $\mathbb{J}$ is of the form
\begin{equation*}
\mathbb{J}={\rm
diag}(\mathbb{E}_{1}^{-},\,\dots,\,\mathbb{E}_{u_{1}}^{-},\,
\mathbb{F}_{1}^{-},\,\dots,\,\mathbb{F}_{u_{2}}^{-},\,
\mathbb{E}_{1}^{+},\,\dots,\,\mathbb{E}_{s_{1}}^{+},\,
\mathbb{F}_{1}^{+},\,\dots,\,\mathbb{F}_{s_{2}}^{+})\,
\end{equation*}
\begin{equation*}
\mathbb{E}_{k}^{\pm}=\left[\begin{array}{ccc}
\lambda_{\,k}^{\pm} & 1 & 0\\
0 & \ddots & 1\\
0 & 0 & \lambda_{\,k}^{\pm}
\end{array}\right]\ \;, \quad
\mathbb{F}_{k}^{\pm}=\left[\begin{array}{ccc}
\mathbb{B}_{k}^{\pm} & \mathbb{I}_{2} & 0\\
0 & \ddots & \mathbb{I}_{2}\\
0 & 0 & \mathbb{B}_{k}^{\pm}
\end{array}\right]\;,
\end{equation*}
and
\begin{equation*}
\mathbb{I}_{2}=\left[\begin{array}{cc}
1 & 0\\
0 & 1
\end{array}\right]\ \;, \quad \ \mathbb{B}_{k}^{\pm}
=\left[\begin{array}{cc}
\alpha_{k}^{\pm} & \beta_{k}^{\pm}\\
-\beta_{k}^{\pm} & \alpha_{k}^{\pm}
\end{array}\right]\ .
\end{equation*}
In this formula, $\lambda_{k}^{+}$ and $\alpha_{k}^{+}$ are positive
while $\lambda_{k}^{-}$ and $\alpha_{k}^{-}$ are negative real
numbers. The eigenvalues of $\mathbb{A}$ are $\lambda_{k}^{\pm}$ and
$\alpha_{k}^{\pm}+i\beta_{k}^{\pm}$. Thus, by Lemma \ref{lem:hyper},
$\lambda_{k}^{\pm}$ and $\alpha_{k}^{\pm}$ cannot be $0$. (Note that
the real numbers $\beta^\pm_k$ are also different from $0$ because the
eigenvalues $\alpha_{k}^{\pm}+i\beta_{k}^{\pm}$ are complex numbers,
but this will not be used below).

By \eqref{62}
\begin{equation*}
e^{t\mathbb{A}}=\mathbb{U}e^{t\mathbb{J}}\mathbb{U}^{-1}
\end{equation*}
where
\begin{equation}
e^{t\mathbb{J}}={\rm diag}(e^{t\mathbb{E}_{1}^{-}},\,\dots,\,e^{t\mathbb{E}_{u_{1}}^{-}},\,e^{t\mathbb{F}_{1}^{-}},\,\dots,\,e^{t\mathbb{F}_{u_{2}}^{-}},\,e^{t\mathbb{E}_{1}^{+}},\,\dots,\,e^{t\mathbb{E}_{s_{1}}^{+}},\,e^{t\mathbb{F}_{1}^{+}},\,\dots,\,e^{t\mathbb{F}_{s_{2}}^{+}})\ .\label{e: Jordan_exp}
\end{equation}
Suppose that $\mathbb{E}_{k}^{\pm}$ is a $j\times j$ matrix.  An
elementary computation yields that
\begin{equation}
\label{e:jor1}
e^{t\mathbb{E}_{k}^{\pm}}=e^{t\lambda_{k}^{\pm}}\left[\begin{array}{cccccc}
1 & t & \frac{t^{2}}{2} & \cdots & \frac{t^{j-1}}{(j-1)!} & \frac{t^{j}}{j!}\\
0 & 1 & t & \cdots & \frac{t^{j-2}}{(j-2)!} & \frac{t^{j-1}}{(j-1)!}\\
\vdots & \vdots & \vdots & \ddots & \vdots & \vdots\\
0 & 0 & 0 & \cdots & 1 & t\\
0 & 0 & 0 & \cdots & 0 & 1
\end{array}\right]\;.
\end{equation}
Similarly, if $\mathbb{F}_{k}^{\pm}$ is a $2j\times2j$ matrix,
\begin{equation}
e^{t\mathbb{F}_{k}^{\pm}}=e^{t\alpha_{k}^{\pm}}\left[\begin{array}{cccccc}
\mathbb{S} & t\mathbb{S} & \frac{t^{2}}{2}\mathbb{S} & \cdots & \frac{t^{j-1}}{(j-1)!}\mathbb{S} & \frac{t^{j}}{j!}\mathbb{S}\\
\mathbb{O} & \mathbb{S} & t\mathbb{S} & \cdots & \frac{t^{j-2}}{(j-2)!}\mathbb{S} & \frac{t^{j-1}}{(j-1)!}\mathbb{S}\\
\vdots & \vdots & \vdots & \ddots & \vdots & \vdots\\
\mathbb{O} & \mathbb{O} & \mathbb{O} & \cdots & \mathbb{S} & t\mathbb{S}\\
\mathbb{O} & \mathbb{O} & \mathbb{O} & \cdots & \mathbb{O} & \mathbb{S}
\end{array}\right]\;,
\label{jor2}
\end{equation}
where $\mathbb{O}$ denotes the $2\times2$ zero matrix and
\begin{equation*}
\mathbb{S}=\left[\begin{array}{cc}
\cos(t\beta_{k}^{\pm}) & \sin(t\beta_{k}^{\pm})\\
-\sin(t\beta_{k}^{\pm}) & \cos(t\beta_{k}^{\pm})
\end{array}\right]\;.
\end{equation*}

\subsection*{Stable and unstable manifolds}

Recall from \eqref{34} that we represent by $\upsilon_{L,\bs x} (t)$
the solution of the linear ODE \eqref{34}. With the notation of this
section, it can be written as
\begin{equation}
\label{e: sol_ODElin}
\frac{d}{dt} \upsilon_{L,\bs x} (t)
\,=\, \bb A\, \upsilon_{L,\bs x} (t) \;, \quad
\upsilon_{L,\bs x} (t)
\;=\; e^{t\mathbb{A}}\boldsymbol{x}
=\mathbb{U} e^{t\mathbb{J}}\mathbb{U}^{-1}\boldsymbol{x}\ .
\end{equation}

Recall that $\mathcal{M}_{L,s}$, $\mathcal{M}_{L,u}$ represent the
stable, unstable manifold of $\bs c = \bs 0$ for the linear ODE
\eqref{34}.  By \eqref{e: sol_ODElin},
\begin{equation*}
\mathcal{M}_{L,u} :=\big\{ \bm{y}\in\mathbb{R}^{d}:
\lim_{t\to-\infty}e^{t\mathbb{J}}\mathbb{U}^{-1}\bm{y}=\bm{0}\, \big\}
\ , \quad
\mathcal{M}_{L,s} :=\big\{ \, \bm{y}\in\mathbb{R}^{d}:
\lim_{t\to+\infty}e^{t\mathbb{J}}\mathbb{U}^{-1}\bm{y}=\bm{0}\, \big\} \ .
\end{equation*}
Denote by $m\in\mathbb{N}$ the size of the matrix
${\rm
diag}(e^{t\mathbb{E}_{1}^{-}},\,\dots,\,e^{t\mathbb{E}_{u_{1}}^{-}},
\,e^{t\mathbb{F}_{1}^{-}},\,\dots,\,e^{t\mathbb{F}_{u_{2}}^{-}})$, and
by $\{\boldsymbol{u}_{1},\,\dots,\,\boldsymbol{u}_{d}\}$ the column
vectors of $\mathbb{U}$,
($\boldsymbol{u}_{i}=\mathbb{U}\boldsymbol{e}_{i}$, where
$\{\bm{e}_{1},\,\dots,\,\bm{e}_{d}\}$ stands for the canonical basis
of $\mathbb{R}^{d}$). By \eqref{e: Jordan_exp}, \eqref{e:jor1}, and
\eqref{jor2},
\begin{equation}
\mathcal{M}_{L,u} =\left\langle \bm{u}_{1},\,\dots,\,
\bm{u}_{m}\right\rangle \;\;\;\text{and\;\;\;}
\mathcal{M}_{L,s} =\left\langle \bm{u}_{m+1},\,\dots,\,
\bm{u}_{d}\right\rangle \;,\label{eq:AuAs}
\end{equation}
where $\left\langle S\right\rangle $ denotes the vector space spanned
by $S$. The following lemma is a direct consequence of the discussion
above.

\begin{lem}
\label{lem_bdbarx}
There exists $C_{0}>0$ such that for all $\bs y\in \mc M_{L,s}$
and $t\ge0$,
\begin{equation*}
\Vert\upsilon_{L,\bs y} (t)\Vert
\le C_{0}\Vert\boldsymbol{y}\Vert\;.
\end{equation*}
\end{lem}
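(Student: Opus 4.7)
The plan is to read off the desired bound directly from the explicit form of $e^{t\mathbb{J}}$ given in \eqref{e: Jordan_exp}, \eqref{e:jor1} and \eqref{jor2}. From \eqref{e: sol_ODElin}, one has $\upsilon_{L,\bs y}(t) = \mathbb{U}\, e^{t\mathbb{J}}\, \mathbb{U}^{-1} \bs y$. Since $\bs y \in \mathcal{M}_{L,s}$, the characterization \eqref{eq:AuAs} shows that $\mathbb{U}^{-1} \bs y$ is supported on precisely those coordinates corresponding to Jordan blocks whose diagonal entries are eigenvalues with negative real part. Because $\mathbb{J}$ is block-diagonal, $e^{t\mathbb{J}}\,\mathbb{U}^{-1}\bs y$ involves only those stable blocks, and the problem reduces to controlling the norm of the exponential of each such block for $t \ge 0$.

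Next I would inspect the stable blocks one at a time. For a real stable block $\mathbb{E}$ with eigenvalue $\lambda<0$, the entries of $e^{t\mathbb{E}}$ displayed in \eqref{e:jor1} are of the form $e^{t\lambda}\, t^{p}/p!$ for $0\le p\le j-1$; since $t\mapsto e^{t\lambda}\, t^{p}$ vanishes at $t=0^{+}$ in the non-trivial cases and at $t=+\infty$, it is uniformly bounded on $[0,\infty)$. For a complex stable block $\mathbb{F}$ with eigenvalue $\alpha\pm i\beta$, $\alpha<0$, the entries displayed in \eqref{jor2} are products of a polynomial-exponential factor $e^{t\alpha}\,t^{p}/p!$ and a trigonometric factor bounded by $1$, and the same argument applies. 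Consequently there exists a finite constant $M$ such that $\sup_{t\ge 0}\lVert e^{t\mathbb{J}}\boldsymbol{z}\rVert\le M\lVert\boldsymbol{z}\rVert$ for every $\boldsymbol{z}$ supported on the stable coordinate subspace.

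Combining this with the fixed operator-norm bounds $\lVert\mathbb{U}\rVert,\lVert\mathbb{U}^{-1}\rVert<\infty$ yields
\begin{equation*}
\lVert\upsilon_{L,\bs y}(t)\rVert \,\le\, \lVert\mathbb{U}\rVert\,\lVert e^{t\mathbb{J}}\mathbb{U}^{-1}\bs y\rVert \,\le\, M\,\lVert\mathbb{U}\rVert\,\lVert\mathbb{U}^{-1}\rVert\,\lVert\bs y\rVert \,=\, C_{0}\,\lVert\bs y\rVert
\end{equation*}
for all $t\ge 0$ and all $\bs y\in\mathcal{M}_{L,s}$, which is the asserted bound. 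The argument is essentially pure linear algebra once the real Jordan decomposition is in hand; the only point requiring attention is the bookkeeping of which blocks constitute the stable subspace, and this is immediate from the definition of $\mathcal{M}_{L,s}$ and the sign conventions declared below \eqref{62}. No genuine obstacle arises.
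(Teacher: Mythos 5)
Your proof is correct and follows essentially the same route as the paper's: both reduce to the real Jordan form via \eqref{e: sol_ODElin} and \eqref{eq:AuAs} and rest on the observation that $e^{t\lambda}t^{p}$ with $\lambda<0$ (times a bounded trigonometric factor for the complex blocks) is uniformly bounded on $[0,\infty)$. The paper merely packages this as $\Vert\upsilon_{L,\bs y}(t)\Vert\le e^{-\gamma t}P(t)\Vert\bs y\Vert$ with $\gamma$ the smallest modulus of the stable eigenvalues and $P$ a polynomial, which is the same estimate as your entrywise bound.
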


\begin{proof}
Write
$\gamma=\min\{|\lambda_{1}^{-}|,\,\dots,
\,|\lambda_{u_{1}}^{-}|,\,|\alpha_{1}^{-}|,\,\dots,\,|\alpha_{u_{2}}^{-}|\}>0$.
Then, by \eqref{e: Jordan_exp}, \eqref{e:jor1}, \eqref{jor2},
\eqref{e: sol_ODElin}, and \eqref{eq:AuAs}, it is clear that there
exists a polynomial $P(t)$ depending only on $d$ such that
\begin{equation*}
\Vert\upsilon_{L,\bs y} (t)\Vert\le e^{-\gamma t}P(t)\Vert\boldsymbol{y}\Vert\;.
\end{equation*}
The conclusion of lemma follows immediately.
\end{proof}

By \eqref{eq:AuAs}, $\mathbb{R}^{d}=\mc M_{L,u}\oplus\mc M_{L,s}$.
Hence, for each $\bm{y}\in\mathbb{R}^{d}$, there exists a unique
decomposition
\begin{equation}
\bm{y}=\boldsymbol{v}^{u}(\boldsymbol{y})
+\bm{v}^{s}(\boldsymbol{y})\ ,\label{eq:decus}
\end{equation}
such that $\boldsymbol{v}^{u}(\boldsymbol{y})\in\mc M_{L,u}$ and
$\bm{v}^{s}(\boldsymbol{y})\in\mc M_{L,s}$. Next lemma provides the
basic property of this decomposition.

\begin{lem}
\label{l: v_s<x}
There is $c_{0}<\infty $ such that
\begin{equation*}
\|\boldsymbol{v}^{s}(\bm{y})\|\le c_{0}\|\bm{y}\|\
\text{for all}\ \bm{y}\in\mathbb{R}^{d}\;.
\end{equation*}
\end{lem}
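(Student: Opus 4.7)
The plan is to exploit the explicit basis of $\mathbb{R}^d$ provided by the columns of the matrix $\mathbb{U}$ from the real Jordan decomposition \eqref{62}, together with the identification \eqref{eq:AuAs} of $\mathcal{M}_{L,u}$ and $\mathcal{M}_{L,s}$ as spans of disjoint pieces of this basis. Since $\mathbb{U}$ is invertible, $\{\bm{u}_1,\dots,\bm{u}_d\}$ is a basis of $\mathbb{R}^d$, so for each $\bm{y}\in\mathbb{R}^d$ there are unique coefficients $c_1,\dots,c_d$ with $\bm{y}=\sum_{i=1}^d c_i\bm{u}_i$, and these coefficients are just the coordinates of $\mathbb{U}^{-1}\bm{y}$ in the canonical basis.

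First I would observe that, by \eqref{eq:decus} and \eqref{eq:AuAs}, uniqueness of the decomposition forces
\begin{equation*}
\bm{v}^u(\bm{y})\,=\,\sum_{i=1}^{m} c_i\,\bm{u}_i\;,\qquad
\bm{v}^s(\bm{y})\,=\,\sum_{i=m+1}^{d} c_i\,\bm{u}_i\;.
\end{equation*}
In particular, $\bm{y}\mapsto\bm{v}^s(\bm{y})$ is a linear map.

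Next I would bound the norm: by the triangle and Cauchy--Schwarz inequalities,
\begin{equation*}
\|\bm{v}^s(\bm{y})\|\,\le\,\sum_{i=m+1}^{d}|c_i|\,\|\bm{u}_i\|
\,\le\,\Big(\max_{1\le i\le d}\|\bm{u}_i\|\Big)\sqrt{d-m}\;\Big(\sum_{i=1}^{d}c_i^{2}\Big)^{1/2}
\,=\,\Big(\max_{1\le i\le d}\|\bm{u}_i\|\Big)\sqrt{d-m}\;\|\mathbb{U}^{-1}\bm{y}\|\;,
\end{equation*}
and then $\|\mathbb{U}^{-1}\bm{y}\|\le\|\mathbb{U}^{-1}\|\,\|\bm{y}\|$ gives the claim with
\begin{equation*}
c_0\,:=\,\Big(\max_{1\le i\le d}\|\bm{u}_i\|\Big)\sqrt{d-m}\;\|\mathbb{U}^{-1}\|\;.
\end{equation*}

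There is no real obstacle here: the lemma is a finite-dimensional linear-algebra statement, a consequence of the automatic boundedness of a linear projection on $\mathbb{R}^d$ associated with the direct sum $\mathbb{R}^d=\mathcal{M}_{L,u}\oplus\mathcal{M}_{L,s}$. The only thing to be careful about is to state it in terms of quantities that have already been introduced in this appendix (the matrix $\mathbb{U}$ and its columns), rather than invoking a general Banach-space projection theorem, so that the constant $c_0$ is made explicit and the proof remains self-contained within the notation of Section \ref{sec:ODE}.
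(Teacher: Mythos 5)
Your proof is correct, but it takes a different route from the paper's. The paper does not use the Jordan basis at this point: it first proves an auxiliary statement (Lemma \ref{lem:Cauchy}) asserting, via a compactness argument on the unit spheres, that two subspaces $V,W$ with $V\cap W=\{\bm 0\}$ satisfy $\sup |\langle \bm v,\bm w\rangle|/(\|\bm v\|\|\bm w\|)=1-\zeta<1$; it then applies this with $V=\mathcal{M}_{L,u}$, $W=\mathcal{M}_{L,s}$, uses Young's inequality to absorb the cross term $2\langle \bm v^u(\bm y),\bm v^s(\bm y)\rangle$ in the expansion of $\|\bm y\|^2=\|\bm v^u+\bm v^s\|^2$, and rearranges to get $\|\bm v^s(\bm y)\|^2\le c\,\|\bm y\|^2$. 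You instead read off the decomposition explicitly from \eqref{eq:AuAs}: since $\{\bm u_1,\dots,\bm u_d\}$ is a basis and the two manifolds are spanned by complementary subsets of it, uniqueness in the direct sum identifies $\bm v^s(\bm y)=\sum_{i>m}(\mathbb U^{-1}\bm y)_i\,\bm u_i$, and the bound follows from Cauchy--Schwarz and $\|\mathbb U^{-1}\bm y\|\le\|\mathbb U^{-1}\|\,\|\bm y\|$. Your argument is shorter, avoids the compactness lemma entirely, and yields a constant that is explicit in terms of $\mathbb U$; its only cost is that it is tied to the specific basis produced by the Jordan decomposition, whereas the paper's argument applies verbatim to any direct-sum decomposition of $\mathbb R^d$ (which is why the paper isolates Lemma \ref{lem:Cauchy} as a standalone statement). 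Both are complete proofs of the lemma.
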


The proof is based on the following elementary result.

\begin{lem}
\label{lem:Cauchy}Let $V$ and $W$ be subspaces of $\mathbb{R}^{d}$
such that $V\cap W=\{\bm{0}\}$. Then, there exists $\zeta=\zeta(V,\,W)>0$
such that
\begin{equation*}
\sup_{\bm{v}\in V\setminus\{\boldsymbol{0}\},\,\bm{w}\in W\setminus\{\boldsymbol{0}\}}\frac{|\langle\bm{v},\,\bm{w}\rangle|}{\|\bm{v}\|\|\bm{w}\|}=1-\zeta
\end{equation*}
where sup is taken over all non-zero vectors.
\end{lem}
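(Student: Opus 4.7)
\medskip

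The plan is to use a compactness-plus-Cauchy--Schwarz argument. Since the ratio $|\langle\bm v,\bm w\rangle|/(\|\bm v\|\,\|\bm w\|)$ is invariant under positive scalings of $\bm v$ and $\bm w$ separately, I would first reduce the supremum to one taken over the unit spheres
\begin{equation*}
S_V \,:=\, \{\bm v\in V : \|\bm v\|=1\}\,, \qquad
S_W \,:=\, \{\bm w\in W : \|\bm w\|=1\}\,.
\end{equation*}
Both $S_V$ and $S_W$ are closed and bounded subsets of $\mathbb R^d$, hence compact, and the map $(\bm v,\bm w)\mapsto |\langle\bm v,\bm w\rangle|$ is continuous on the compact product $S_V\times S_W$. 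Therefore the supremum is attained at some pair $(\bm v_0,\bm w_0)\in S_V\times S_W$.

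Next I would invoke the Cauchy--Schwarz inequality, which gives
\begin{equation*}
|\langle \bm v_0,\bm w_0\rangle| \,\le\, \|\bm v_0\|\,\|\bm w_0\| \,=\, 1,
\end{equation*}
with equality if and only if $\bm v_0$ and $\bm w_0$ are proportional, i.e.\ $\bm w_0 = \pm \bm v_0$. In the equality case, $\bm v_0 \in V \cap W = \{\bm 0\}$, which contradicts $\|\bm v_0\|=1$. Consequently the attained value must be strictly less than $1$, and one may set
\begin{equation*}
\zeta \,:=\, 1\,-\,|\langle\bm v_0,\bm w_0\rangle|\,>\,0,
\end{equation*}
which depends only on the subspaces $V$ and $W$.

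There is no real obstacle here: the only ingredients are scale invariance to pass to the unit spheres, compactness to get attainment of the supremum, and the equality case of Cauchy--Schwarz combined with $V\cap W=\{\bm 0\}$ to rule out the value $1$. The statement in the lemma is written with an equality sign, so strictly speaking I should also note that once the supremum equals $1-\zeta$ with $\zeta>0$, setting $\zeta(V,W)$ to this value makes the identity hold by definition.
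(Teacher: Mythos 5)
Your proof is correct and follows essentially the same route as the paper's: scale invariance to reduce to unit vectors, compactness of the product of unit spheres to attain the supremum, and the equality case of Cauchy--Schwarz together with $V\cap W=\{\bm 0\}$ to exclude the value $1$. No gaps.
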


\begin{proof}
Let us define $F:(V\setminus\{\boldsymbol{0}\})
\times(W\setminus\{\boldsymbol{0}\})\rightarrow\mathbb{R}$
as
\begin{equation*}
F(\boldsymbol{v},\,\boldsymbol{w})=\frac{|\langle\bm{v},\,\bm{w}\rangle|}{\|\bm{v}\|\|\bm{w}\|}\;.
\end{equation*}
Since $F(c\boldsymbol{v},\,c'\boldsymbol{w})=F(\boldsymbol{v},\,\boldsymbol{w})$
for all $c,\,c'\neq0$, we have
\begin{equation*}
\sup_{\bm{v}\in V\setminus\{\boldsymbol{0}\},\,\bm{w}\in W\setminus\{\boldsymbol{0}\}}F(\boldsymbol{v},\,\boldsymbol{w})=\sup_{\bm{v}\in V,\,\bm{w}\in W:\,\Vert\boldsymbol{v}\Vert=\Vert\boldsymbol{w}\Vert=1}F(\boldsymbol{v},\,\boldsymbol{w})\;.
\end{equation*}
Since the set $S_{0}=\{(\boldsymbol{v},\,\boldsymbol{w}):\bm{v}\in V,\,\bm{w}\in W:\,\Vert\boldsymbol{v}\Vert=\Vert\boldsymbol{w}\Vert=1\}$
is compact and $F(\cdot,\,\cdot)$ is continuous, the function $F$
achieve the maximum at certain $(\boldsymbol{v}^{*},\,\boldsymbol{w}^{*})\in S_{0}$.
Then
\begin{equation*}
\sup_{\bm{v}\in V\setminus\{\boldsymbol{0}\},\,\bm{w}\in W\setminus\{\boldsymbol{0}\}}F(\boldsymbol{v},\,\boldsymbol{w})=F(\boldsymbol{v}^{*},\,\boldsymbol{w}^{*})\;.
\end{equation*}
Note that $F(\boldsymbol{v}^{*},\,\boldsymbol{w}^{*})<1$ by the
Cauchy-Schwarz inequality (the equality cannot hold because of the
assumption $V\cap W=\{\bm{0}\}$). This completes the proof.
\end{proof}

\begin{proof}[Proof of Lemma \ref{l: v_s<x}]
Since $\mc M_{L,u}\cap\mc M_{L,s}=\{\boldsymbol{0}\}$, by
Lemma \ref{lem:Cauchy}, there exists a constant $c>1$ such that,
for all $\boldsymbol{y}\in\mathbb{R}^{d}$,
\begin{equation*}
|\langle\bm{v}^{u}(\boldsymbol{y}),\,\boldsymbol{v}^{s}(\boldsymbol{y})\rangle|
\;\le\; \sqrt{(c-1)/c}\|\bm{v}^{u}(\boldsymbol{y})\|
\|\boldsymbol{v}^{s}(\boldsymbol{y})\|\\
\;\le\; \frac{1}{2}\|\bm{v}^{u}(\boldsymbol{y})\|^{2}
+\frac{c-1}{2c}\|\boldsymbol{v}^{s}(\boldsymbol{y})\|^{2}\;,
\end{equation*}
where we applied Young's inequality in the last step.
Therefore,
\begin{equation*}
-2c\langle\bm{v}^{u}(\boldsymbol{y}),\,
\boldsymbol{v}^{s}(\boldsymbol{y})\rangle
\le c\|\bm{v}^{u}(\boldsymbol{y})\|^{2}+(c-1)\|
\boldsymbol{v}^{s}(\boldsymbol{y})\|^{2}\;.
\end{equation*}
Reorganizing, we obtain
\begin{equation*}
\|\boldsymbol{v}^{s}(\boldsymbol{y})\|^{2}\le
c\,(\Vert\bm{v}^{u}(\boldsymbol{y})\|^{2}
+2\langle\bm{v}^{u}(\boldsymbol{y}),\,
\boldsymbol{v}^{s}(\boldsymbol{y})\rangle
+\|\boldsymbol{v}^{s}(\boldsymbol{y})\|^{2})=
c\, \Vert\boldsymbol{y}\Vert^{2}\ .
\end{equation*}
This completes the proof.
\end{proof}

\begin{proof}[Proof of Lemma \ref{lem_esclin}]
Recall the constant $C_{0}$ and $c_{0}$ from Lemmata \ref{lem_bdbarx}
and \ref{l: v_s<x}, respectively, and define $r>0$ as
\begin{equation}
r=\frac{a}{3C_{0}c_{0}}\;\cdot
\label{eq:defr}
\end{equation}

Suppose that $\boldsymbol{y}\in\mathcal{B}(\boldsymbol{0},\,r)$.  As
in \eqref{eq:decus}, decompose $\bs y\in\mathbb{S}^{d}$ into
\begin{equation*}
\bs y=\boldsymbol{v}^{u}(\bs y)+\bm{v}^{s}(\bs y)\ ,
\end{equation*}
so that
\begin{equation*}
\upsilon_{L,\bs y}(t) = \upsilon_{L,\bs v^u(\bs y)}(t)
+\upsilon_{L,\bs v^s (\bs y)}(t) \;.
\end{equation*}
Note that $\upsilon_{L,\bs v^u(\bs y)}(t) \in\mc M_{L,u}$ and
$\upsilon_{L,\bs v^s (\bs y)}(t) \in\mc M_{L,s}$ for all $t\ge0$ since
$\mc M_{L,u}$ and $\mc M_{L,s}$ are invariant under the dynamical
system \eqref{34}.  Recall the definition \eqref{eq:overlinet} of
$t_L(\cdot)$ and write
\begin{equation*}
\bm{w}^{u}(\boldsymbol{y}) \;:=\;
\upsilon_{L,\bs v^u(\bs y)}(t_L(\bs y))
\in \mc M_{L,u}
\;\;\;\text{and\;\;\;}
\bm{w}^{s}(\boldsymbol{y}) \;:=\;
\upsilon_{L,\bs v^s(\bs y)}(t_L(\bs y))
\in \mc M_{L,s} \;,
\end{equation*}
so that
\begin{equation}
\bs e_L (\bs y)=\bm{w}^{u}(\bm{y})+\bm{w}^{s}(\bm{y})\label{eq:decpb}
\end{equation}
By \eqref{eq:overlinet}, $\Vert \bs e_L(\bs y) \Vert =r_{1}$.
Moreover, by Lemmata \ref{lem_bdbarx}, \ref{l: v_s<x}, and by the
definition \eqref{eq:defr} of $r$,
\begin{equation}
\|\bm{w}^{s}(\bm{y})\|\le
C_{0}\Vert\boldsymbol{v}^{s}(\boldsymbol{y})\Vert\le
C_{0}c_{0}\Vert\boldsymbol{y}\Vert\le C_{0}c_{0}r=\frac{a}{3} \;\cdot
\label{eq:wr1}
\end{equation}
Therefore, by the triangle inequality,
\begin{align*}
\Big\Vert \, \bs e_L(\bs y)-\frac{r_{1}}{\|\bm{w}^{u}(\bm{y})\|}
\bm{w}^{u}(\bm{y}) \,\Big\Vert & \;\le\; \Big\Vert \,
\bm{w}^{u}(\bm{y})-\frac{r_{1}}{\|\bm{w}^{u}(\bm{y})\|}
\bm{w}^{u}(\bm{y})\,\Big \Vert \;+\;
\|\bm{w}^{s}(\bm{y})\|\\
& =\; \Big|\, 1-\frac{r_{1}}{\|\bm{w}^{u}(\bm{y})\|}\, \Big|
\, \left\Vert
\bm{w}^{u}(\bm{y})\right\Vert \;+\; \|\bm{w}^{s}(\bm{y})\|\;.
\end{align*}
Since $\Vert\bs e_L(\bs y) \Vert =r_{1}$, this expression is equal to
\begin{equation*}
\big|\, \|\bm{w}^{u}(\bm{y})\|-r_{1}\,\big|
\;+\; \|\bm{w}^{s}(\bm{y})\| \;=\;
\big|\, \|\bm{w}^{u}(\bm{y})\| \,-\,
\Vert\bs e_L(\bs y)\Vert\,\big|  \;+\; \|\bm{w}^{s}(\bm{y})\|\;.
\end{equation*}
By \eqref{eq:decpb} and \eqref{eq:wr1}, this expression is bounded by
$2\, \|\bm{w}^{s}(\bm{y})\|< (2/3) \, a$. This completes the proof of
the lemma since
\begin{equation*}
\frac{r_{1}}{\|\bm{w}^{u}(\bm{y})\|}
\, \bm{w}^{u}(\bm{y})\in\mc M_{L,u}\cap\partial
\mathcal{B}(\boldsymbol{0},\,r_{1})\ .
\end{equation*}
\end{proof}






\section*{Declarations}

The authors have no competing interests to declare that are relevant
to the content of this article.

Data sharing not applicable to this article as no datasets were
generated or analysed during the current study.

C. L. has been partially supported by FAPERJ CNE E-26/201.117/2021,
and by CNPq Bolsa de Produtividade em Pesquisa PQ 305779/2022-2.
J. L. was supported by the KIAS Individual Grant (MG093101) at Korea Institute for Advanced Study, and by the NRF grant funded by the Korea government (No. 2022R1A6A3A13065174, 2022R1F1A106366811).
I. S. was supported by the National Research
Foundation of Korea (NRF) grant funded by the Korea government (MSIT)
(No. 2022R1F1A106366811, 2022R1A5A600084012, 2023R1A2C100517311) and
Samsung Science and Technology Foundation (Project Number
SSTF-BA1901-03).
In addition, I. S. and J. L. are also supported by Seoul National University Research Grant in 2022.

\end{document}